\newtheorem{theorem}{Theorem}[section]
\newtheorem{corollary}{Corollary}[section]
\newtheorem{lemma}{Lemma}[section]
\newtheorem{proposition}{Proposition}[section]
\theoremstyle{remark}
\newtheorem{remark}{Remark}[section]
\theoremstyle{definition}
\numberwithin{equation}{section}
\newcommand{\N}{{\mathbb N}}
\newcommand{\R}{{\mathbb R}}
\renewcommand{\O}{\mathcal{O}}
\renewcommand{\I}{\mathcal{I}}
\renewcommand{\H}{{\mathcal{H}_q}}
\newcommand{\Hrad}{{\mathcal{H}_{q,rad}}}
\newcommand{\D}{{\mathcal{D}_q}}
\newcommand{\Drad}{{\mathcal{D}_{q,rad}}}
\renewcommand{\P}{\mathcal{P}}
\newcommand{\eps}{{\varepsilon}}
\newcommand{\weakto}{\rightharpoonup}
\newcommand{\tV}{\widetilde{V}}
\newcommand{\abs}[1]{\lvert #1 \rvert}
\begin{document}
\title[Limit profiles for Choquard equations with local repulsion]
{Limit profiles for singularly perturbed\\ Choquard equations with local repulsion}%

\author{Zeng Liu}
\thanks{ZL was supported by NSFC Grant Numbers 11901418, 11771319, 12171470.}
\address{Department of Mathematics\\ Suzhou University of Science and Technology\\ Suzhou 215009\\ P.R. China}
\email{zliu@mail.usts.edu.cn (Zeng Liu)}

\author{Vitaly Moroz}
\address{Department of Mathematics\\ Swansea University\\ Fabian Way\\
Swansea SA1~8EN\\ Wales, United Kingdom}	
\email{v.moroz@swansea.ac.uk}

\keywords{Choquard equations; Riesz potential;
nonlocal semilinear elliptic problem, Poho\v{z}aev identity, variational method, groundstate}

\subjclass[2010]{35J61 (Primary) 35B09, 35B33, 35B40, 35Q55, 45K05}

\date{\today}

\begin{abstract}
We study Choquard type equation of the form
$$-\Delta u +\varepsilon u-(I_{\alpha}*|u|^p)|u|^{p-2}u+|u|^{q-2}u=0\quad\text{in $\mathbb R^N$},\eqno{(P_\varepsilon)}$$
where $N\geq3$, $I_\alpha$ is the Riesz potential with $\alpha\in(0,N)$, $p>1$, $q>2$ and $\varepsilon\ge 0$.
Equations of this type describe collective behaviour of self-interacting many-body systems.
The nonlocal nonlinear term represents long-range attraction while the local nonlinear term represents short-range repulsion.
In the first part of the paper for a nearly optimal range of parameters we prove the existence and study regularity and qualitative properties of positive groundstates of $(P_0)$ and of $(P_\varepsilon)$ with $\varepsilon>0$. We also study the existence of a compactly supported groundstate for an integral Thomas--Fermi type equation associated to $(P_\eps)$.
In the second part of the paper,  for $\varepsilon\to 0$ we identify six different asymptotic regimes and provide a characterisation of the limit profiles of the groundstates of $(P_\varepsilon)$ in each of the regimes. We also outline three different asymptotic regimes in the case $\varepsilon\to\infty$. In one of the asymptotic regimes positive groundstates of $(P_\varepsilon)$ converge to a compactly supported Thomas--Fermi limit profile. This is a new and purely nonlocal phenomenon that can not be observed in the local prototype case of $(P_\varepsilon)$ with $\alpha=0$.  In particular, this provides a justification for the Thomas--Fermi approximation in astrophysical models of self--gravitating Bose--Einstein condensate.
\end{abstract}

\maketitle

\setcounter{tocdepth}{1}
\tableofcontents

\newpage
\section{Introduction}

\subsection{Background}
We are concerned with the asymptotic properties of positive groundstate solutions of the Choquard type equation
\begin{equation}\tag{$P_\eps$}\label{eqPeps}
 -\Delta u +\eps u-(I_{\alpha}*|u|^p)|u|^{p-2}u+|u|^{q-2}u=0\quad\text{in $\R^N$},
\end{equation}
where $N\geq 3$, $p>1$, $q>2$ and $\eps\ge 0$. Here $I_\alpha(x):=A_\alpha|x|^{-(N-\alpha)}$ is the Riesz potential with $\alpha\in(0,N)$ and $*$ denotes the standard convolution in $\R^N$. The choice of the normalisation constant $A_\alpha:=\frac{\Gamma((N-\alpha)/2)}{\pi^{N/2}2^{\alpha}\Gamma(\alpha/2)}$
ensures that $I_\alpha(x)$ could be interpreted as the Green function of $(-\Delta)^{\alpha/2}$ in $\R^N$, and that the semigroup property $I_{\alpha+\beta}=I_\alpha*I_\beta$ holds for all $\alpha,\beta\in(0,N)$ such that
$\alpha+\beta<N$, see for example \cite{DuPlessis}*{pp.\thinspace{}73--74}.

Equation
\begin{equation*}\tag{$\mathscr{C}$}\label{eqC}
-\Delta v+\eps v-(I_{\alpha}*|v|^p)|v|^{p-2}v=0\quad\text{in $\R^N$}
\end{equation*}
is often known as the Choquard equation and had been studied extensively during the last decade, see \cite{MS17} for a survey. In this work we are interested in the case when the standard Choquard equation is modified by including the local repulsive $|u|^{q-2}u$ term.

If $u_\eps$ is a solution of \eqref{eqPeps} with $N=3$, $\alpha=2$, $p=2$ and $q=4$ then $\psi(t,x):=e^{i\eps t}u_\eps(x)$ is a standing wave solution of the time-dependent equation
\begin{equation}\label{GPP}
i\partial_t\psi=-\Delta\psi-(I_2*|\psi|^2)\psi+|\psi|^2\psi, \quad (t,x)\in \R\times\R^3.
\end{equation}
Equation of this form models, in particular, self--gravitating Bose--Einstein condensates with repulsive short--range interactions, which describe astrophysical objects such as boson stars and, presumably, dark matter galactic halos. In this context, \eqref{GPP} was introduced and studied under the name of Gross--Pitaevskii--Poisson equation in \citelist{\cite{Wang}\cite{Bohmer-Harko}\cite{Chavanis-11}}, see a survey paper \cite{Chavanis-15}.

More generally, equation \eqref{eqPeps} can be seen as a stationary NLS with  an attractive long range interaction, represented by the nonlocal term, coupled with a repulsive short range interaction, represented by the local nonlinearity. While for the most of the relevant physical applications $p=2$, the values $p\neq 2$ may appear in several relativistic models of the density functional theory \citelist{\cite{BPO-2002}\cite{BLS-2008}\cite{BGT-2012}}.

In this work we are specifically interested in the case where $\eps>0$ is a small (or large) parameter and all other parameters are fixed.
Our main goal is to understand the behaviour of groundstate solutions of \eqref{eqPeps} when $\eps\to 0$.
We also discuss the case $\eps\to\infty$, which is to some extent dual to $\eps\to 0$.
The local prototype of \eqref{eqPeps} and a formal limit of \eqref{eqPeps} as $\alpha\to 0$ is the equation
\begin{equation}\label{eq01-loc}
 -\Delta u +\eps u-|u|^{2p-2}u+|u|^{q-2}u=0\quad\text{in $\R^N$}.
\end{equation}
It is well--known that this equation admits a unique positive solution in $H^1(\R^N)$ for any $1<2p<q<\infty$ provided that $\eps>0$ is sufficiently small, and has no finite energy solutions for large $\eps$. This result goes back to Strauss \cite{Strauss}*{Example 2} and Berestycki and Lions \cite{BL-I}*{Example 2} (see \cite{MM-14}*{Theorem A} for a precise existence statement and further references). A complete characterization of all possible asymptotic regimes in \eqref{eq01-loc} as $\eps\to 0$ was obtained in \cite{MM-14}, see also earlier work \cite{Muratov}. Essentially, {\em three} different limit regimes were identified in \cite{MM-14}, depending on whether $p$ is less, equal or bigger than the critical Sobolev exponent $p^\ast=\frac{N}{N-2}$.
Recently, \eqref{eq01-loc} had been revisited in \cite{Lewin} where nondegeneracy of ground--sates and sharp asymptotics of the $L^2$--norm of the ground states as $\eps\to 0$ had been described in connection with the uniqueness conjecture in the $L^2$--constraint minimization problem associated to \eqref{eq01-loc}.
See also \cite{MSVM}, where the same problem is studied with the opposite sign of the $|u|^{q-2}u$--term.

\subsection{Existence and properties of groundstates for \eqref{eqPeps}}
We are not aware of a systematic study of ground--sates of Choquard equation \eqref{eqPeps}. First existence results seem to appear in \cite{Mugnai} in the case $N=3$, $\alpha=2$, $p=2$. See also \citelist{\cite{Seok14}\cite{LiLiShi}\cite{Bhattarai}\cite{Feng-Chen-Ren}} and references therein for further results which however do not cover the optimal ranges of parameters. The planar case with the logarithmic convolution kernel was studied in \cite{Weth} but since the kernel is sign-changing this requires different techniques. Near optimal existence results for the Choquard equation of type \eqref{eqPeps} with an attractive local perturbation (the opposite sign of the local nonlinear term) were recently obtained in \citelist{\cite{Ma-Li-Zhang-19}\cite{Ma-Li-19}}.

Our first goal in this work is to establish the existence of ground-sate solutions of Choquard equation \eqref{eqPeps} for an optimal range of parameters. By a {\em groundstate} solution of \eqref{eqPeps} we understand a
weak solution $u \in H^1(\R^N)\cap L^q(\R^N)$ which has a minimal energy
\begin{equation*}\label{eq02}
\mathcal{I}_{\eps}(u):=\frac{1}{2}\int_{\R^N}|\nabla u|^2dx+\frac{\eps}{2}\int_{\R^N}|u|^2dx-\frac{1}{2p}\int_{\R^N}(I_{\alpha}*|u|^p)|u|^pdx+\frac{1}{q}\int_{\R^N}|u|^q dx
\end{equation*}
amongst all nontrivial finite energy solutions of \eqref{eqPeps}.
Remarkably, and in contrast with its local prototype \eqref{eq01-loc}, we prove that ground states for \eqref{eqPeps} exist for every $\eps>0$. We also establish some qualitative properties of the solutions \eqref{eqPeps} such as regularity and decay at infinity. These properties are similar to the standard Choquard equation \eqref{eqC}. Note that we do not study the uniqueness or non-degeneracy of the groundstates of \eqref{eqPeps} and we are not aware of any even partial results in this direction. We believe this is a very difficult open problem. Our results do not rely and do not require the uniqueness or non-degeneracy.

Essential tools to control the nonlocal term in $\mathcal{I}_\eps$ are  the Hardy--Littlewood--Sobolev (HLS) inequality
\begin{equation}\label{HLS}
\int_{\R^N}(I_{\alpha}*|u|^p)|u|^pdx\leq \mathcal{C}_{\alpha}\|u\|^{2p}_{\frac{2Np}{N+\alpha}}\qquad\forall u\in L^\frac{2Np}{N+\alpha}(\R^N),
\end{equation}
which is valid for any $p\ge 1$ (and $\mathcal{C}_\alpha$ is independent of $p$);
and the Sobolev inequality
\begin{equation}\label{Sobolev}
	\|\nabla u\|_2^2\ge \mathcal{S}_*\|u\|_{2^*}^2\qquad\forall u\in D^1(\R^N),
\end{equation}
where $2^*=\frac{2N}{N-2}$ is the critical Sobolev exponent and $D^1(\R^N)$ denotes the homogeneous Sobolev space with the norm $\|u\|_{D^1(\R^N)}=\|\nabla u\|_{L^2}$.
The values of the sharp constants $\mathcal{S}_*>0$ and $\mathcal{C}_{\alpha}>0$ are known explicitly \citelist{\cite{Lieb}\cite{Lieb-Loss}}.
HLS and Sobolev inequalities can be used to control the nonlocal term in the two cases:

\begin{itemize}
\item
if $\frac{N+\alpha}{N}\le p\le \frac{N+\alpha}{N-2}$ then $L^\frac{2Np}{N+\alpha}(\R^N)\subset L^2\cap L^{2^*}(\R^N)$\smallskip
\item
if $p\ge \frac{N+\alpha}{N}$ and $q\ge \frac{2Np}{N+\alpha}$ then $L^\frac{2Np}{N+\alpha}(\R^N)\subset L^2\cap L^q(\R^N)$
\end{itemize}

\noindent
The two cases have non-empty intersection but this is not significant for us at this moment. In each of these two cases,
$\mathcal I_\eps: H^1(\R^N)\cap L^q(\R^N)\to\R$ is well defined and critical points of $\mathcal{I}_{\eps}$
are solutions of \eqref{eqPeps}.

Our main existence result for \eqref{eqPeps} is the following.
\begin{theorem}\label{thm01}
Let $\frac{N+\alpha}{N}<p<\frac{N+\alpha}{N-2}$ and $q>2$, or $p\geq\frac{N+\alpha}{N-2}$ and $q>\frac{2Np}{N+\alpha}$.
Then for each $\eps>0$, equation \eqref{eqPeps} admits a positive spherically symmetric ground state solution $u_\eps \in H^1\cap L^1\cap C^2(\R^N)$ that is a monotone decreasing function of $|x|$.
Moreover, there exists $C_\eps>0$ such that
\begin{itemize}
	\item if \(p > 2\),
	\[
	\lim_{\abs{x} \to \infty} u_\eps(x) \abs{x}^{\frac{N - 1}{2}} e^{\sqrt{\eps}\abs{x}}=C_\eps,
	\]
	\item if \(p = 2\),
	\[
	\lim_{\abs{x} \to \infty} u_\eps(x) \abs{x}^{\frac{N - 1}{2}} \exp \int_{\nu}^{\abs{x}} \sqrt{\eps - \tfrac{\nu^{N - \alpha}}{s^{N - \alpha}}} \,ds=C_\eps,\quad\text{where $\nu:=\big(A_\alpha\|u_\eps\|_2^2\big)^\frac{1}{N - \alpha} $},
	\]
	\item if \(p < 2\),
	$$\lim_{x\to\infty}u_\eps(x)|x|^\frac{N-\alpha}{2-p}=\left(\eps^{-1}A_\alpha\|u_\eps\|_p^p\right)^{\frac1{2-p}}.$$
\end{itemize}
\end{theorem}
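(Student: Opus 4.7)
My plan is to obtain the ground state as a minimiser of $\mathcal{I}_\eps$ on the Pohozaev manifold. Testing \eqref{eqPeps} against $x\cdot\nabla u$ gives the Pohozaev identity
\begin{equation*}
P_\eps(u):=\tfrac{N-2}{2}\|\nabla u\|_2^2+\tfrac{N\eps}{2}\|u\|_2^2-\tfrac{N+\alpha}{2p}\int_{\R^N}(I_\alpha*|u|^p)|u|^p\,dx+\tfrac{N}{q}\|u\|_q^q=0,
\end{equation*}
and for $u\not\equiv0$ the dilations $u_t(x)=u(x/t)$ make $t\mapsto\mathcal{I}_\eps(u_t)$ positive for small $t>0$ and tend to $-\infty$ as $t\to\infty$, since the nonlocal term scales like $t^{N+\alpha}$ and eventually dominates the other three terms (of exponents $N-2,N,N$). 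Hence $\mathcal{I}_\eps(u_t)$ attains a unique strict maximum at some $t(u)>0$ where $P_\eps(u_{t(u)})=0$. I would then set $m_\eps=\inf_{\mathcal{P}_\eps}\mathcal{I}_\eps$ on the Pohozaev manifold $\mathcal{P}_\eps=\{u\neq0:P_\eps(u)=0\}$, check that $m_\eps$ coincides with the mountain-pass value of $\mathcal{I}_\eps$, and construct a minimiser.

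To compactify this minimisation I would first pass to the Schwarz symmetric-decreasing rearrangement: by the P\'olya--Szeg\H{o} inequality $\|\nabla u^*\|_2\le\|\nabla u\|_2$, the $L^2$ and $L^q$ norms are preserved, and by the Riesz rearrangement inequality the nonlocal double integral only grows, so after a single rescaling back onto $\mathcal{P}_\eps$ the symmetrised sequence stays admissible with energy $\le m_\eps$. Boundedness of a radial minimising sequence in $H^1\cap L^q$ follows from $P_\eps=0$ together with the HLS inequality \eqref{HLS} and Sobolev's inequality \eqref{Sobolev}; weak compactness, the compact embedding $H^1_{rad}\hookrightarrow L^s$ for $s\in(2,2^*)$, and a Brezis--Lieb decomposition of the Choquard term then extract a nontrivial radial decreasing weak limit $u_\eps$, which solves \eqref{eqPeps} once one checks by a scaling argument that the Lagrange multiplier vanishes. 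Positivity of $u_\eps$ follows by replacing $u_\eps$ with $|u_\eps|$ and invoking the strong maximum principle, while $u_\eps\in C^2$ comes from a standard bootstrap using elliptic regularity and HLS applied to the convolution source; the $L^1$ integrability is then a consequence of the decay estimates proved next.

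For the pointwise decay I would exploit the integral representation
\begin{equation*}
u_\eps=G_\eps*\bigl[(I_\alpha*u_\eps^p)u_\eps^{p-1}-u_\eps^{q-1}\bigr],
\end{equation*}
where $G_\eps$ is the Bessel kernel of $-\Delta+\eps I$ satisfying $G_\eps(x)\sim c_N|x|^{-(N-1)/2}e^{-\sqrt{\eps}|x|}$. The radial monotonicity of $u_\eps$ and $u_\eps\in L^1$ yield $(I_\alpha*u_\eps^p)(x)\sim A_\alpha\|u_\eps\|_p^p|x|^{-(N-\alpha)}$ at infinity. When $p>2$ the source decays strictly faster than $u_\eps$, so convolution with $G_\eps$ transmits its own exponential rate to $u_\eps$, giving the first limit. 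When $p<2$ the factor $u_\eps^{p-2}$ blows up as $u_\eps\to0$ and the dominant balance $\eps u_\eps\approx A_\alpha\|u_\eps\|_p^p|x|^{-(N-\alpha)}u_\eps^{p-1}$ forces an algebraic profile with exponent $(N-\alpha)/(2-p)$ and the stated prefactor, which I would make rigorous by constructing radial sub- and super-solutions of this exact form. The main obstacle is the borderline case $p=2$: here $(I_\alpha*u_\eps^2)(x)\sim\nu^{N-\alpha}|x|^{-(N-\alpha)}$ is a long-range linear perturbation of the operator producing a variable exponential rate, and one needs sharp radial barriers of the form $r^{-(N-1)/2}\exp\bigl(-\int_\nu^r\sqrt{\eps-\nu^{N-\alpha}/s^{N-\alpha}}\,ds\bigr)$ together with a delicate WKB-type matching argument, in which the nonlinear remainder must be controlled uniformly against the barrier, to extract the precise limit.
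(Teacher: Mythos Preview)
Your overall strategy---minimising on the Poho\v{z}aev manifold, passing to the radial decreasing rearrangement, and showing via a Lagrange-multiplier computation that the constraint is natural---is exactly what the paper does. The decay analysis in the three regimes $p>2$, $p=2$, $p<2$ is also along the right lines and parallels the paper's adaptation of \cite{MS13}.

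There are, however, two genuine gaps in the supercritical regime $p\ge\frac{N+\alpha}{N-2}$, $q>\frac{2Np}{N+\alpha}$ (where in particular $q>2^*$). First, your compactness step relies on ``the compact embedding $H^1_{rad}\hookrightarrow L^s$ for $s\in(2,2^*)$'', but the Choquard term requires control in $L^{\frac{2Np}{N+\alpha}}$ and here $\frac{2Np}{N+\alpha}>2^*$; the paper closes this by using the \emph{two} Strauss radial bounds coming from $L^2$ and $L^{q^*}$ to dominate the sequence by a fixed integrable profile, and then applies the nonlocal Brezis--Lieb lemma with high local integrability. Second, and more seriously, the claim that $u_\eps\in L^\infty\cap C^2$ follows from a ``standard bootstrap'' does not go through when $p\ge\frac{N+\alpha}{N-2}$: the nonlocal source $(I_\alpha*u^p)u^{p-1}$ is then too supercritical for the usual Moser/Brezis--Kato iteration based on $-\Delta u\le(I_\alpha*u^p)u^{p-1}$. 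The paper's key device is to retain the repulsive term and work instead with the subsolution inequality
\[
-\Delta u+u^{q-1}\le (I_\alpha*u^p)u^{p-1},
\]
together with a contraction estimate (Theorem~\ref{t-Ponce} in the appendix, due to Ponce) of the form $\|v^{q-1}\|_s\le\|f\|_s$ for nonnegative subsolutions of $-\Delta v+v^{q-1}\le f$. This is what drives the $L^s$-iteration up to $L^\infty$ in the supercritical case, and it is not a standard tool.

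Finally, your ordering is circular: you defer $u_\eps\in L^1$ to the decay section, but your decay argument opens with ``$u_\eps\in L^1$ yield $(I_\alpha*u_\eps^p)(x)\sim A_\alpha\|u_\eps\|_p^p|x|^{-(N-\alpha)}$''. The paper proves $L^1$ \emph{first}, by an independent downward $L^s$-iteration based on $-\Delta u+\eps u\le(I_\alpha*u^p)u^{p-1}$ and the boundedness of $(-\Delta+\eps)^{-1}$ on every $L^s$, and only then establishes the decay asymptotics.
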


\begin{figure}[t]
	\centering

\tikzset{every picture/.style={line width=0.75pt}} 

\begin{tikzpicture}[x=0.75pt,y=0.75pt,yscale=-0.75,xscale=0.75]
	
	
	
	\tikzset{
		pattern size/.store in=\mcSize,
		pattern size = 5pt,
		pattern thickness/.store in=\mcThickness,
		pattern thickness = 0.3pt,
		pattern radius/.store in=\mcRadius,
		pattern radius = 1pt}
	\makeatletter
	\pgfutil@ifundefined{pgf@pattern@name@_ro2tk9cc0}{
		\pgfdeclarepatternformonly[\mcThickness,\mcSize]{_ro2tk9cc0}
		{\pgfqpoint{0pt}{-\mcThickness}}
		{\pgfpoint{\mcSize}{\mcSize}}
		{\pgfpoint{\mcSize}{\mcSize}}
		{
			\pgfsetcolor{\tikz@pattern@color}
			\pgfsetlinewidth{\mcThickness}
			\pgfpathmoveto{\pgfqpoint{0pt}{\mcSize}}
			\pgfpathlineto{\pgfpoint{\mcSize+\mcThickness}{-\mcThickness}}
			\pgfusepath{stroke}
	}}
	\makeatother
	\tikzset{every picture/.style={line width=0.75pt}} 
	
	
	
	
	\draw  [draw opacity=0][fill={rgb, 255:red, 248; green, 231; blue, 28 }  ,fill opacity=0.5 ][line width=0]  (310,310) -- (150,510) -- (150,470) -- cycle ;
	

	\draw  [draw opacity=0][fill={rgb, 255:red, 208; green, 2; blue, 27 }  ,fill opacity=0.15 ] (310,310) -- (460,310) -- (460,510) -- (310,510) -- cycle ;
	\draw  [draw opacity=0][fill={rgb, 255:red, 208; green, 2; blue, 27 }  ,fill opacity=0.15 ] (110,130) -- (150,130) -- (150,510) -- (110,510) -- cycle ;
	\draw  [draw opacity=0][fill={rgb, 255:red, 208; green, 2; blue, 27 }  ,fill opacity=0.15 ] (460,130) -- (310,310) -- (460,310) -- cycle ;
	\draw  [dash pattern={on 0.84pt off 2.51pt}]  (110,310) -- (460,310) ;
	
	\draw  [draw opacity=0][fill={rgb, 255:red, 126; green, 211; blue, 33 }  ,fill opacity=0.2 ][line width=0]  (310,310) -- (150,510) -- (150,130) -- (310,130)-- cycle ;
	
	\draw  [draw opacity=0][fill={rgb, 255:red, 248; green, 231; blue, 28 }  ,fill opacity=0.5 ][line width=0]  (310,310) -- (150,510) -- (150,470) -- cycle ;
	
	\draw [line width=1.5]  (71.07,510) -- (476.43,510)(110,112.97) -- (110,551.94) (469.43,505) -- (476.43,510) -- (469.43,515) (105,119.97) -- (110,112.97) -- (115,119.97)  ;
	\draw    (360,505) -- (359.92,513.61) ;


	\draw [color={rgb, 255:red, 126; green, 211; blue, 33 }  ,draw opacity=1 ][line width=1.0]  [dash pattern={on 5.63pt off 4.5pt}]  (110,510) -- (460,160) ;
	
	\draw [color={rgb, 255:red, 74; green, 144; blue, 226 }  ,draw opacity=1 ][line width=2.5]    (310,130) -- (310,310) ;

	\draw [color={rgb, 255:red, 208; green, 2; blue, 27 }  ,draw opacity=1 ][fill={rgb, 255:red, 234; green, 178; blue, 85 }  ,fill opacity=1 ][line width=0.75]    (150,130) -- (150,510) ;

	\draw [color={rgb, 255:red, 208; green, 2; blue, 27 }  ,draw opacity=1 ][line width=0.75]  [dash pattern={on 4.5pt off 4.5pt}]  (150,510) -- (310,510) ;

	\draw [color={rgb, 255:red, 208; green, 2; blue, 27 }  ,draw opacity=1 ][line width=0.75]    (310,310) -- (310,510) ;

	\draw [color={rgb, 255:red, 74; green, 144; blue, 226 }  ,draw opacity=1 ][line width=3]    (590,250) -- (470,250) ;

	\draw [color={rgb, 255:red, 144; green, 19; blue, 254 }  ,draw opacity=1 ][line width=3]    (470,370) -- (590,370) ;
	
	\draw [color={rgb, 255:red, 126; green, 211; blue, 33 }  ,draw opacity=1 ][line width=2.5]    (150,470) -- (310,310) ;

	\draw [color={rgb, 255:red, 126; green, 211; blue, 33 }  ,draw opacity=1 ][line width=3]    (590,310) -- (470,310) ;
	
	\draw [color={rgb, 255:red, 189; green, 16; blue, 224 }  ,draw opacity=1 ][line width=2.5]    (150,510) -- (310,310) ;

	\draw [color={rgb, 255:red, 208; green, 2; blue, 27 }  ,draw opacity=1 ][line width=0.75]    (460,130) -- (310,310) ;
	\draw [shift={(310,310)}, rotate = 129.81] [color={rgb, 255:red, 208; green, 2; blue, 27 }  ,draw opacity=1 ][fill={rgb, 255:red, 208; green, 2; blue, 27 }  ,fill opacity=1 ][line width=0.75]      (0, 0) circle [x radius= 3.35, y radius= 3.35]   ;
	

	\draw (145,525) node [scale=0.9]  {$\frac{N+\alpha }{N}$};
	\draw (102.5,111) node [scale=0.9]  {$q$};
	\draw (467.5,519) node [scale=0.9]  {$p$};
	\draw (117.5,519) node [scale=0.9]  {$1$};
	\draw (102.5,499) node [scale=0.9]  {$2$};
	\draw (102,300) node [scale=0.9]  {$2^{*}$};
	\draw (305,525) node [scale=0.9]  {$\frac{N+\alpha }{N-2}$};
	\draw (362,520) node [scale=0.9]  {$2^{*}$};
	\draw  [color={rgb, 255:red, 0; green, 0; blue, 0 }  ,draw opacity=1 ][fill={rgb, 255:red, 74; green, 144; blue, 226 }  ,fill opacity=1 ][line width=0.75]   (250.5, 468.5) circle [x radius= 18.44, y radius= 18.26]   ;
	\draw (250.5,468.5) node [scale=0.9]  {$P_{0}$};
	\draw  [fill={rgb, 255:red, 74; green, 144; blue, 226 }  ,fill opacity=1 ]  (350.5, 201.5) circle [x radius= 18.44, y radius= 18.26]   ;
	\draw (350.5,201.5) node [scale=0.9]  {$P_{0}$};
	\draw  [fill={rgb, 255:red, 126; green, 211; blue, 33 }  ,fill opacity=1 ]  (225.5, 311) circle [x radius= 55.96, y radius= 18.73]   ;
	\draw (225.5,311) node [scale=0.9]  {$Choquard$};
	\draw (499,155) node [scale=0.9]  {$q=2\frac{2p+\alpha }{2+\alpha}$};
	\draw (495,125) node [scale=0.9]  {$q=\frac{2Np}{N+\alpha}$};
	\draw (523.5,358.5) node  [align=left] {{\small \textit{Critical TF}}};
	\draw (528,238.5) node  [align=left] {{\small \textit{Critical Choquard}}};
	\draw (527,298) node [scale=0.9] [align=left] {\textit{Selfrescaling}};
	
	\draw (527,448) node [scale=0.9] [align=left] {\huge{$\boxed{\eps\to 0}$}};

	\draw  [fill={rgb, 255:red, 248; green, 231; blue, 28 }  ,fill opacity=1 ][line width=0.75]   (168, 469) circle [x radius= 14.26, y radius= 11.73]   ;
	\draw (168,469) node [scale=0.9]  {$T\!F$};


\end{tikzpicture}

\caption{Six limit regimes for $(P_\eps)$ as $\eps\to 0$ on the $(p,q)$--plane} \label{fig:M1}
\end{figure}

The existence range of Theorem \ref{thm01} is optimal.
This follows from the Poho\v zaev identity argument, see Corollary \ref{c-non}.
We emphasise that no upper restrictions on $p$ and $q$ are needed
and in particular, $q$ could take Sobolev supercritical values, i.e. $q>2^*$ (see Figure \ref{fig:M1}).
The decay rates of ground states at infinity are exactly the same as in the standard Choquard case, compare Theorem \ref{thmC} below or \cite{MS13}*{Theorem 4}. For a discussion of the implicit exponential decay in the case $p=2$ we refer to \cite{MS13}*{pp.157-158} or \cite{MVS-JDE}*{Section 6.1}.
\smallskip

Our main goal in this paper is to understand and classify the asymptotic profiles as $\eps\to 0$ and $\eps\to\infty$ of the groundstates $u_\eps$, constructed in Theorem \ref{thm01}. Remarkably, our study uncovers a novel and rather complicated limit structure of the problem, with six different limit equations (see Figure \ref{fig:M1}) as $\eps\to 0$:

\begin{itemize}[leftmargin=2em]
	\item {\em Formal limit} when the family of ground states $u_\eps$ converges to a groundstate of the {\em formal} limit equation
	\begin{equation}\tag{$P_0$}\label{eqP0}
	-\Delta u -(I_{\alpha}*|u|^p)|u|^{p-2}u+|u|^{q-2}u=0\quad\text{in $\R^N$}.
	\end{equation}
	The existence and qualitative properties of groundstate for \eqref{eqP0} for the optimal range of parameters is new and is studied in Section \ref{s5}, see Theorem \ref{thmP0}. The convergence of the groundstates to the limit profile is proved in Theorem \ref{thm02-lim}.
	\item {\em Choquard limit} when the rescaled family $$v_\eps(x):=\eps^{-\frac{2+\alpha}{4(p-1)}}u_\eps\big(\eps^{-\frac12}x\big)$$ converges to a groundstate of the standard Choquard equation
	\begin{equation*}\tag{$\mathscr{C}$}
	-\Delta v+v-(I_{\alpha}*|v|^p)|v|^{p-2}v=0\quad\text{in $\R^N$},
	\end{equation*}
	which was studied in \cite{MS13}. The convergence is proved in Theorem \ref{thmC-lim}.
	\item
	{\em Thomas--Fermi limit} when the rescaled family $$v_\eps(x):=\eps^{-\frac{1}{q-2}}u_\eps\big(\eps^{-\frac{4-q}{\alpha(q-2)}} x\big)$$
	converges to a groundstate of the {\em Thomas--Fermi} type integral equation
	\begin{equation*}\tag{$T\!F$}\label{eqTF}
	v-(I_{\alpha}*|v|^p)|v|^{p-2}v+|v|^{q-2}v=0\quad\text{in $\R^N$}.
	\end{equation*}
	The existence and qualitative properties of groundstate for \eqref{eqTF} for $p\neq 2$ will be studied in the forthcoming work \cite{TF}. In this paper we consider only the case $p=2$ which is well known in the literature when $\alpha=2$ \citelist{\cite{Auchmuty-Beals}\cite{Lions-81}\cite{Brezis-Benilan}} and was studied recently in \citelist{\cite{Carrillo-CalcVar}\cite{Carrillo-NA}} for the general $\alpha\in(0,N)$, yet for the range of powers $q$ which is incompatible with our assumptions. In Theorem \ref{thmTF} we prove the existence and some qualitative properties of a groundstate for \eqref{eqTF} with $p=2$ for the optimal range $q>\frac{4N}{N+\alpha}$. This extends some of the existence results in \citelist{\cite{Carrillo-CalcVar}\cite{Carrillo-NA}}.
	The convergence of $v_\eps$ to a groundstate of \eqref{eqTF} is proved in Theorem \ref{t-TF-0} for $p=2$ and $\alpha=2$ (the general case $p\neq 2$ and $\alpha\neq 2$ will be studied in \cite{TF}). Remarkably, for $p=2$ the limit groundstates of \eqref{eqTF} are compactly supported, so the rescaled groundstates $v_\eps$ develops a steep ``corner layer'' as $\eps\to 0$!

	\item
	{\em Critical Choquard regime}, when the family of ground states $u_\eps$ converges after an {\em implicit rescaling}
	$$v_\eps(x):=\lambda_\eps^\frac{N-2}{2}u_\eps(\lambda_\eps x)$$
	to a groundstate of the {\em critical Choquard equation}
	\begin{equation*}\tag{$\mathscr{C}_{HL}$}\label{eqC0}
	-\Delta v=(I_{\alpha}*|v|^\frac{N+\alpha}{N-2})|v|^{\frac{N+\alpha}{N-2}-2}v,\qquad v\in D^1(\R^N).
	\end{equation*}
	A detailed characterisation of the ground states of \eqref{eqC0} was recently obtained in \citelist{\cite{Chen}\cite{Minbo-Du}}. In Theorem \ref{t-CC-0} we derive a sharp two--sided asymptotic characterisation of the rescaling $\lambda_\eps$, following the ideas developed in the local case in \cite{MM-14}.

	\item
	{\em Self--similar regime} $q=2\frac{2p+\alpha}{2+\alpha}$,  when ground states $u_\eps$ are obtained as rescalings of the groundstate $u_1$, i.e.
	$$u_1(x)=\eps^{-\frac{2+\alpha}{4(p-1)}}u_\eps\big(\eps^{-\frac12}x\big)$$
	\item
	{\em the Critical Thomas--Fermi regime}, when the family of ground states $u_\eps$ converges  after an {\em implicit rescaling}
	$$v_\eps(x):=\lambda_\eps^\frac{N+\alpha}{2p}u_\eps(\lambda_\eps x)$$
	to a groundstate of the {\em critical Thomas--Fermi equation}
	\begin{equation}\tag{$T\!F_*$}\label{eqTF0}
	|v|^{q-2}v=(I_{\alpha}*|v|^p)|v|^{p-2}v,\qquad v\in L^q(\R^N).
	\end{equation}
	Groundstates of this equation correspond to the minimizers of the Hardy--Littlewood--Sobolev inequality and completely characterised by Lieb in \cite{Lieb}.
	In Theorem \ref{t-HLS-0} we derive a two--sided asymptotic characterisation of the rescaling $\lambda_\eps$.
\end{itemize}
Self--similar, Thomas--Fermi and critical Thomas--Fermi regimes are specific to the nonlocal case only. When $\alpha=0$ they all ``collapse'' into the case $p=q$, which is degenerate for the local prototype equation \eqref{eq01-loc}.
Three other regimes could be traced back to the local equation \eqref{eq01-loc} studied in \cite{MM-14}.

When $\eps\to\infty$ the limit structure is simpler. Only Choquard, Thomas--Fermi and self--similar regimes are relevant (see Figure \ref{fig:M2}) and there are no critical regimes. In particular, the Thomas--Fermi limit with $\eps\to \infty$ appears in the study of the stationary Gross--Pitaevskii--Poisson equation \eqref{GPP}, see Remark \ref{rGPP}.
\smallskip

The precise statements of our results for $\eps\to 0$ are given in Section \ref{s2}.
In Section \ref{s3} we outline the results for $\eps\to\infty$ and discuss the connection with astrophysical models of self--gravitating Bose--Einstein condensate.
In Section \ref{s4} we prove Theorem \ref{thm01}.
In Sections \ref{s5} and \ref{sTF} we establish the existence and basic properties of groundstates for the ``zero--mass'' limit equation $(P_0)$ and for the Thomas--Fermi equation \eqref{eqTF}.
In Sections \ref{s6} and \ref{s7} we study the asymptotic profiles of the groundstates of \eqref{eqPeps} in the {\em non-critical} and {\em critical} regimes respectively. Finally, in the Appendix we discuss a contraction inequality which was communicated to us by Augusto Ponce and which
we used as a key tool in several regularity proofs.

\subsubsection*{\bf Asymptotic notations}
For real valued functions $f(t), g(t) \geq 0$ defined on a subset of $\R_+$, we write:
\smallskip

$f(t)\lesssim g(t)$ if there exists $C>0$ independent of $t$
such that $f(t) \le C g(t)$;

$f(t)\gtrsim g(t)$ if $g(t)\lesssim f(t)$;

$f(t)\sim g(t)$ if $f(t)\lesssim g(t)$ and $f(t)\gtrsim g(t)$;

$f(t)\simeq g(t)$ if $f(t) \sim g(t)$ and $\lim_{t\to 	0}\frac{f(t)}{g(t)}=1$.

\noindent
Bearing in mind that $f(t),g(t) \geq 0$, we write $f(t)=\O(g(t))$ if $f(t)\sim g(t)$, and $f(t)=o(g(t))$ if $\lim\frac{f(t)}{g(t)}=0$.
As usual, $B_R=\{x\in\R^N:|x|<R\}$ and $C,c,c_1$ etc., denote generic positive constants.

\section{Asymptotic profiles as $\eps\to 0$}\label{s2}
Our main goal in this work is to understand the asymptotic behaviour of the constructed in Theorem \ref{thm01} groundstate solutions $u_\eps$ of \eqref{eqPeps} in the limits $\eps\to 0$ and $\eps\to\infty$.

\subsection{Formal limit $(P_0)$}
Loosely speaking, the elliptic regularity implies that $u_\eps$ converges as $\eps\to 0$ to a nonnegative radial solution of the {\em formal} limit equation
\begin{equation}\tag{$P_0$}
-\Delta u -(I_{\alpha}*|u|^p)|u|^{p-2}u+|u|^{q-2}u=0\quad\text{in $\R^N$}.
\end{equation}
However, \eqref{eqP0} becomes a meaningful limit equation for \eqref{eqPeps} only in the situation when \eqref{eqP0} admits a nontrivial nonnegative solution.
Otherwise the information that $u_\eps$ converges to zero (trivial solution of \eqref{eqP0}) does not reveal any information about the limit profile of $u_\eps$. We prove in this work the following existence result for \eqref{eqP0}.

\begin{theorem}\label{thmP0}
	Let $\frac{N+\alpha}{N}<p<\frac{N+\alpha}{N-2}$ and $2<q<\frac{2Np}{N+\alpha}$, or $p>\frac{N+\alpha}{N-2}$ and $q>\frac{2Np}{N+\alpha}$.
	Then equation \eqref{eqP0} admits a positive spherically symmetric groundstate solution $u_0 \in D^1\cap L^q\cap C^2(\R^N)$
	which is a monotone decreasing function of $|x|$.
	Moreover,
	\begin{itemize}
		\item if \(p < \frac{N+\alpha}{N-2}\) then $u\in L^1(\R^N)$,
		\item if \(p >  \frac{N+\alpha}{N-2}\) then
		$$u_0\gtrsim|x|^{-(N-2)}\quad\text{as $|x|\to\infty$},$$
		and if $p>\max\big\{\frac{N+\alpha}{N-2},\frac23\big(1+\frac{N+\alpha}{N-2}\big)\big\}$ then
		$$u_0\sim|x|^{-(N-2)}\quad\text{as $|x|\to\infty$}.$$
	\end{itemize}
\end{theorem}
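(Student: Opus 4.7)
The plan is to construct a groundstate by minimising the energy on the Poho\v{z}aev manifold, in the spirit of the zero--mass analysis of Moroz--Van Schaftingen for the Choquard equation combined with Berestycki--Lions type arguments to accommodate the local $L^q$--term. Work in the reflexive space $X:=D^1(\R^N)\cap L^q(\R^N)$. By the HLS inequality \eqref{HLS} together with the interpolation $L^q\cap L^{2^*}\hookrightarrow L^{\frac{2Np}{N+\alpha}}$ --- valid in both parameter regimes of the theorem since $\frac{2Np}{N+\alpha}$ lies strictly between $q$ and $2^*$ --- the nonlocal term $D(u):=\int_{\R^N}(I_\alpha*|u|^p)|u|^p\,dx$ is of class $C^1$ on $X$, hence so is $\mathcal{I}_0$. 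Testing the equation against $x\cdot\nabla u$ yields the Poho\v{z}aev identity
$$P_0(u):=\tfrac{N-2}{2}\|\nabla u\|_2^2+\tfrac{N}{q}\|u\|_q^q-\tfrac{N+\alpha}{2p}D(u)=0.$$
For the dilation $u_\lambda(x):=u(x/\lambda)$ the three scaling exponents $N-2<N<N+\alpha$ are strictly ordered, so for every $u\in X\setminus\{0\}$ there is a unique $\lambda^\star(u)>0$ with $u_{\lambda^\star}\in\mathcal{P}_0:=\{u\in X\setminus\{0\}:P_0(u)=0\}$, and this $\lambda^\star$ maximises $\lambda\mapsto\mathcal{I}_0(u_\lambda)$. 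Consequently $c_0:=\inf_{\mathcal{P}_0}\mathcal{I}_0=\inf_{u\in X\setminus\{0\}}\max_{\lambda>0}\mathcal{I}_0(u_\lambda)$ admits a mountain--pass characterisation. Using $P_0=0$ to eliminate $D(u)$ gives on $\mathcal{P}_0$
$$\mathcal{I}_0(u)=\tfrac{\alpha+2}{2(N+\alpha)}\|\nabla u\|_2^2+\tfrac{\alpha}{q(N+\alpha)}\|u\|_q^q,$$
and feeding HLS back into $P_0$ yields a uniform lower bound on $\|u\|_X$ over $\mathcal{P}_0$, so $c_0>0$.

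By Schwarz symmetrisation (which decreases $\|\nabla u\|_2$, preserves $\|u\|_q$ and increases $D(u)$ by the Riesz rearrangement inequality) one may restrict to radial, nonincreasing, nonnegative minimising sequences $(u_n)$, which are bounded in $X_{\mathrm{rad}}$ by the reduced energy formula above. Extract a weak limit $u_0$. The key compactness input is that radial functions in $X$ satisfy the Strauss--type pointwise bounds $|u(x)|\lesssim|x|^{-(N-2)/2}\|\nabla u\|_2$ and $|u(x)|\lesssim|x|^{-N/q}\|u\|_q$, which together give a compact embedding $X_{\mathrm{rad}}\hookrightarrow L^r(\R^N)$ for every $r$ strictly between $q$ and $2^*$. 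Since $\frac{2Np}{N+\alpha}$ lies strictly inside this open window in both regimes, $u_n\to u_0$ in $L^{\frac{2Np}{N+\alpha}}$, hence $D(u_n)\to D(u_0)$. Weak lower semicontinuity of the remaining terms then gives $P_0(u_0)\leq 0$, while $u_0\not\equiv 0$ because $c_0>0$. A strict inequality $P_0(u_0)<0$ would allow rescaling to $u_{0,\lambda^\star}$ with $\lambda^\star<1$, producing $\mathcal{I}_0(u_{0,\lambda^\star})<c_0$, a contradiction; hence $u_0\in\mathcal{P}_0$ achieves $c_0$. Because $\mathcal{P}_0$ is a natural constraint (from $\tfrac{d}{d\lambda}\mathcal{I}_0(u_\lambda)=\lambda^{-1}P_0(u_\lambda)$), the Lagrange multiplier vanishes and $u_0$ is a critical point of $\mathcal{I}_0$, hence a weak solution of \eqref{eqP0}. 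Strict positivity, $C^2$--regularity and strict radial monotonicity follow respectively from the strong maximum principle, a Moser--type bootstrap analogous to the proof of Theorem~\ref{thm01}, and the strict Riesz rearrangement inequality.

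The decay statements are handled through the representation $u_0 = I_2*\bigl[(I_\alpha*u_0^p)u_0^{p-1}-u_0^{q-1}\bigr]$. For $p<\frac{N+\alpha}{N-2}$ a comparison argument using that the right--hand side decays strictly faster than $|x|^{-N}$ yields $u_0\in L^1(\R^N)$. For $p>\frac{N+\alpha}{N-2}$ the right--hand side is eventually nonnegative, so the same representation immediately gives the Green--function lower bound $u_0(x)\gtrsim|x|^{-(N-2)}$. The matching upper bound under the stronger hypothesis $p>\tfrac{2}{3}\bigl(1+\tfrac{N+\alpha}{N-2}\bigr)$ requires a self--consistent bootstrap in which the nonlocal source $I_\alpha*u_0^p$ is dominated at infinity by the local term $u_0^{q-1}$, reducing the equation asymptotically to $-\Delta u_0\lesssim u_0^{q-1}$ and allowing a comparison with the fundamental solution. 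The main obstacle throughout is the compactness step: the two parameter regimes in the statement are dictated precisely by the requirement $\frac{2Np}{N+\alpha}\in(\min(q,2^*),\max(q,2^*))$, the excluded boundary lines being exactly the critical Choquard $p=\frac{N+\alpha}{N-2}$ and critical Thomas--Fermi $q=\frac{2Np}{N+\alpha}$ regimes identified in the introduction. A secondary difficulty is the delicate iteration behind the sharp upper decay $u_0\sim|x|^{-(N-2)}$, which accounts for the additional threshold on $p$.
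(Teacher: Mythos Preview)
Your existence argument via minimisation on the Poho\v{z}aev manifold in $D^1\cap L^q$ matches the paper's approach almost exactly, including the use of Schwarz symmetrisation, the radial Strauss bounds giving compactness of the embedding into $L^{2Np/(N+\alpha)}$, and the key observation that the two parameter regimes correspond precisely to $\frac{2Np}{N+\alpha}$ lying strictly between $q$ and $2^*$. One caveat: your claim that the Lagrange multiplier vanishes ``because $\tfrac{d}{d\lambda}\mathcal{I}_0(u_\lambda)=\lambda^{-1}P_0(u_\lambda)$'' is not by itself a proof; the paper instead shows $\P_0'(u_0)\neq 0$ and then computes the multiplier explicitly by applying the Poho\v{z}aev identity to the constrained Euler--Lagrange equation (Steps~4--5 of Section~\ref{s43}, adapted to $\eps=0$).

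The decay analysis is where your sketch diverges from the paper and contains a genuine gap. For $p<\frac{N+\alpha}{N-2}$, the $L^1$ conclusion is \emph{not} obtained by a comparison-of-decay-rates argument: the paper runs an $\underline{s}_n$--iteration based on the inequality $-\Delta u+u^{q-1}\le(I_\alpha*u^p)u^{p-1}$ together with the contraction estimate $\|v\|_{(q-1)s}^{q-1}\le\|f\|_s$ for subsolutions of $-\Delta v+v^{q-1}\le f$ (Theorem~\ref{t-Ponce} in the appendix); this nontrivial tool replaces the missing $L^2$--term. For the upper bound $u_0\lesssim|x|^{-(N-2)}$ when $p>\frac{N+\alpha}{N-2}$, the paper does \emph{not} compare the nonlocal term with $u_0^{q-1}$ as you suggest; it drops $u_0^{q-1}$ entirely (it has the favourable sign) and shows that the linearised potential $(I_\alpha*u_0^p)u_0^{p-2}$ decays like $|x|^{-(2+\delta)}$, which permits comparison with an explicit supersolution $c|x|^{-(N-2)}(1-|x|^{-\delta/2})$. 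For $p\ge 2$ the Strauss bound $u_0\le c|x|^{-(N-2)/2}$ alone yields this. For $\frac{N+\alpha}{N-2}<p<2$ the negative power $u_0^{p-2}$ must be bounded \emph{above}, which requires the \emph{lower} bound $u_0\gtrsim|x|^{-(N-2)}$ already established; it is precisely this two-step feedback (lower bound fed into the upper-bound estimate) that produces the extra threshold $p>\frac23\bigl(1+\frac{N+\alpha}{N-2}\bigr)$, not any interaction with the $u_0^{q-1}$ term.
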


The restrictions on $p$ and $q$ in the existence part of the theorem ensures that the energy $\mathcal{I}_0$  which corresponds to \eqref{eqP0} is well-defined on $D^1(\R^N)\cap L^q(\R^N)$, see \eqref{eq15} below.
A Poho\v zaev identity argument (see Remark \ref{r-Phz0}) confirms that the existence range in Theorem \ref{thmP0} is optimal,
with the exception of the ``double critical'' point $p=\frac{N+\alpha}{N-2}$ and $q=\frac{2Np}{N+\alpha}=2^*$ on the $(p,q)$--plane (see Remark \ref{r-2crit}).

Note that
$$\max\Big\{\tfrac{N+\alpha}{N-2},\tfrac23\big(1+\tfrac{N+\alpha}{N-2}\big)\Big\}=
\left\{
\begin{array}{cl}
\frac23\big(1+\frac{N+\alpha}{N-2}\big)&\text{if $\frac{N+\alpha}{N-2}<2$},\medskip\\
\frac{N+\alpha}{N-2}&\text{if $\frac{N+\alpha}{N-2}\ge 2$}.
\end{array}
\right.$$
The upper bound on $u_0$ in the case $\frac{N+\alpha}{N-2}<2$ and $\frac{N+\alpha}{N-2}<p\le\frac23\big(1+\frac{N+\alpha}{N-2}\big)$ remains open. We conjecture that our restriction on $p$ for the upper decay estimate is technical and that
$u_0\sim|x|^{-(N-2)}$ as $|x|\to\infty$ for all $p>\frac{N+\alpha}{N-2}$.

Observe that the energy $\I_0$ is well posed in the space $D^1(\R^N)\cap L^q(\R^N)$, while  $\I_\eps$ with $\eps>0$ is well--posed on $H^1(\R^N)\cap L^q(\R^N)$. Since $H^1(\R^N)\subsetneq D^1(\R^N)$, small perturbation arguments in the spirit of the Lyapunov--Schmidt reduction are not directly applicable to the family $\I_\eps$ in the limit $\eps\to 0$. Using direct variational analysis based on the comparison of the groundstate energy levels for two problems, we establish the following result.

\begin{theorem}[Formal limit \eqref{eqP0}]\label{thm02-lim}
	Let $\frac{N+\alpha}{N}<p<\frac{N+\alpha}{N-2}$ and $2<q<\frac{2Np}{N+\alpha}$, or
	$p>\max\big\{\frac{N+\alpha}{N-2},\frac23\big(1+\frac{N+\alpha}{N-2}\big)\big\}$
	and $q>\frac{2Np}{N+\alpha}$.
	
	Then as $\eps\to 0$, the family of ground states $u_\eps$ of $(P_\eps)$ converges in $D^1(\R^N)$ and $L^q(\R^N)$ to a positive spherically symmetric ground state solution $u_0\in D^1\cap L^q(\R^N)$ of the formal limit equation \eqref{eqP0}.
	Moreover, $\eps\|u_\eps\|_2^2\to 0$.
\end{theorem}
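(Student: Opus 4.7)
The plan is to combine two ingredients: a precise energy-level comparison $c_\eps\to c_0$ between the groundstate energies for $(P_\eps)$ and $(P_0)$, and a compactness argument exploiting the fact that the $u_\eps$ are spherically symmetric and monotone. Throughout, the Nehari and Poho\v zaev identities for $u_\eps$ will be used as an algebraic device: testing $(P_\eps)$ against $u_\eps$ and combining with the Poho\v zaev identity (derived as in the proof of Theorem \ref{thm01}) yields a linear system expressing each of the positive quantities $\|\nabla u_\eps\|_2^2$, $\eps\|u_\eps\|_2^2$, $\int(I_\alpha\ast|u_\eps|^p)|u_\eps|^p\,dx$ and $\|u_\eps\|_q^q$ as a fixed positive combination of $c_\eps=\I_\eps(u_\eps)$ and $\eps\|u_\eps\|_2^2$. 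Any uniform upper bound on $c_\eps$ therefore yields uniform bounds on the family $u_\eps$ in $D^1(\R^N)\cap L^q(\R^N)$, and the claim $\eps\|u_\eps\|_2^2\to 0$ will drop out automatically once $c_\eps\to c_0$ is established.

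The upper bound $\limsup_{\eps\to 0}c_\eps\le c_0$ I would obtain by plugging a test function built from a groundstate $u_0$ of $(P_0)$ into $\I_\eps$ and optimising along the scaling $v_t(x):=u_0(x/t)$. In the first existence regime of Theorem \ref{thmP0} one has $u_0\in L^1\cap L^\infty\subset L^2$, so $v_t\in H^1\cap L^q$ and a direct optimisation in $t=t(\eps)$ gives $\I_\eps(v_{t(\eps)})\to\I_0(u_0)=c_0$. In the second regime $u_0$ need not lie in $L^2$ (for instance when $N=3,4$, since $u_0\sim|x|^{-(N-2)}$), so I would first truncate, replacing $u_0$ by $u_0\chi_R$ with a smooth cutoff, then rescale to $v_{R,t}(x)=(u_0\chi_R)(x/t)$, and pick $R=R(\eps)\to\infty$ at a rate for which $\eps\|u_0\chi_R\|_2^2\to 0$ while the truncation errors in $\|\nabla\cdot\|_2^2$, in the nonlocal term and in $\|\cdot\|_q^q$ all vanish. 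The sharp two-sided decay $u_0\sim|x|^{-(N-2)}$ is precisely what makes this balancing quantitative, which is the source of the hypothesis $p>\tfrac23\bigl(1+\tfrac{N+\alpha}{N-2}\bigr)$ inherited from Theorem \ref{thmP0}.

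For the lower bound and identification of the limit, the uniform bounds from the first paragraph extract along a subsequence a weak limit $u_\eps\weakto u_*$ in $D^1\cap L^q$. Since the $u_\eps$ are radial and monotone, the radial Strauss-type decay estimates together with HLS allow one to pass to the limit in the nonlocal term and to verify that $u_*$ is a radial nonnegative finite-energy solution of $(P_0)$. Non-triviality of $u_*$ follows from the a priori lower bound $c_\eps\gtrsim c_0>0$ (a consequence of the upper-bound step) plugged back into the linear system, which prevents $\|\nabla u_\eps\|_2+\|u_\eps\|_q$ from vanishing. Weak lower semicontinuity of $\I_0$ on the nonnegative cone, combined with $\limsup c_\eps\le c_0$, then forces $\I_0(u_*)=c_0$, so $u_*$ is a groundstate of $(P_0)$. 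Convergence of $c_\eps$ and of each of the individual positive quantities in the linear system upgrades weak convergence to strong convergence in the uniformly convex spaces $D^1(\R^N)$ and $L^q(\R^N)$; feeding this back into the Poho\v zaev identity yields $\eps\|u_\eps\|_2^2\to 0$.

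The principal obstacle is the cutoff optimisation in the second regime: both the use of the sharp $\sim$ (rather than $\gtrsim$) in the decay of $u_0$ and the simultaneous control of the growing $\eps\|u_0\chi_R\|_2^2$ against the vanishing truncation errors require the two-sided asymptotics of Theorem \ref{thmP0}, which is why the admissible range in the present theorem is strictly smaller than the existence range for $(P_0)$ and why the regime $\tfrac{N+\alpha}{N-2}<p\le\tfrac23\bigl(1+\tfrac{N+\alpha}{N-2}\bigr)$ remains excluded.
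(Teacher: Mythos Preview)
Your proposal is correct and follows essentially the paper's approach: energy comparison $c_\eps\to c_0$ via test functions built from a groundstate $u_0$ of \eqref{eqP0} (with a cutoff when $u_0\notin L^2$, which is precisely where the sharp two--sided decay of Theorem~\ref{thmP0} enters), uniform bounds from the Nehari--Poho\v zaev system, and Strauss radial compactness for the nonlocal term. The paper organises the compactness step by first rescaling $u_\eps$ onto $\mathscr{P}_0$ via $u_\eps(\cdot/t_\eps)$ with $t_\eps\in(0,1)$, $t_\eps\to 1$; this rescaling (not the upper--bound step, as you wrote) is what delivers the lower bound $c_\eps>c_0$ you invoke for nontriviality, and it also gives $\eps\|u_\eps\|_2^2\to 0$ directly from $\mathcal P_0(u_\eps(\cdot/t_\eps))=0$ and $t_\eps\to 1$, whereas you recover it at the end from the Nehari identity after strong convergence---an equivalent ordering.
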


The restriction 	$p>\max\big\{\frac{N+\alpha}{N-2},\frac23\big(1+\frac{N+\alpha}{N-2}\big)\big\}$ is related only to the upper decay bound on $u_0$ in Theorem \ref{thmP0}, i.e. we could establish the convergence of $u_\eps$ to $u_0$ for $p>\frac{N+\alpha}{N-2}$ as soon as we know that $u_0\sim|x|^{-(N-2)}$ as $|x|\to\infty$.

\subsection{Rescaled limits}
When $\frac{N+\alpha}{N}<p\le\frac{N+\alpha}{N-2}$ and $q\ge\frac{2Np}{N+\alpha}$ the formal limit problem \eqref{eqP0} has no nontrivial sufficiently regular finite energy solutions (see Remark \ref{r-2crit}). As a consequence, $u_\eps$ converges uniformly on compact sets to zero. We are going to show that in these regimes $u_\eps$ converges to a positive solution of a limit equation {\em after a rescaling}
$$v(x):=\eps^s u(\eps^{t}x),$$
for specific choices of $s,t\in\R$. The rescaling transforms \eqref{eqPeps} into the equation
\begin{equation}\label{eq01st}
-\eps^{-s-2t}\Delta v+\eps^{1-s}v-\eps^{-(2p-1)s+\alpha t}(I_{\alpha}*|v|^p)|v|^{p-2}v+\eps^{-(q-1)s}|v|^{q-2}v=0\quad\text{in $\R^N$}.
\end{equation}
If $q\neq 2\frac{2p+\alpha}{2+\alpha}$ then there are three natural possibilities to choose $s$ and $t$, each achieving the balance of {\em three} different terms in \eqref{eq01st}. Note that the choice $\eps^{-s-2t}=\eps^{-(2p-1)s+\alpha t}=\eps^{-(q-1)s}$ leads to $s=t=0$, when \eqref{eq01st} reduces to the original equation $(P_\eps)$.

\smallskip
\paragraph{{\sl I. First rescaling.}}
The choice $\eps^{-s-2t}=\eps^{1-s}=\eps^{-(2p-1)s+\alpha t}$ leads to $s=-\frac{2+\alpha}{4(p-1)}$, $t=-1/2$ and rescaled equation
\begin{equation*}\tag{$\mathscr{C}_\eps$}\label{eqCeps}
-\Delta v+v-(I_{\alpha}*|v|^p)|v|^{p-2}v+\eps^{\frac{q(2+\alpha)-2(2p+\alpha)}{4(p-1)}}|v|^{q-2}v=0\quad\text{in $\R^N$}.
\end{equation*}
If $q>2\frac{2p+\alpha}{2+\alpha}$, we have $\lim_{\eps\to 0}\eps^{\frac{q(2+\alpha)-2(2p+\alpha)}{4(p-1)}}=0$,
and we obtain as a formal limit the {\em Choquard equation}
\begin{equation*}\tag{$\mathscr{C}$}
-\Delta v+v-(I_{\alpha}*|v|^p)|v|^{p-2}v=0\quad\text{in $\R^N$}.
\end{equation*}

\smallskip
\paragraph{{\sl II. Second rescaling.}}
The choice $\eps^{1-s}=\eps^{-(2p-1)s+\alpha t}=\eps^{-(q-1)s}$ leads to $s=-\frac{1}{q-2}$, $t=-\frac{2p-q}{\alpha(q-2)}$ and rescaled equation
\begin{equation*}\tag{$T\!F_\eps$}\label{eqTFeps}
\eps^{\frac{2(2p+\alpha)-q(2+\alpha)}{\alpha(q-2)}}(-\Delta) v+v-(I_{\alpha}*|v|^p)|v|^{p-2}v+|v|^{q-2}v=0\quad\text{in $\R^N$}.
\end{equation*}
If $2<q<2\frac{2p+\alpha}{2+\alpha}$ we have $\lim_{\eps\to 0}\eps^{\frac{2(2p+\alpha)-q(2+\alpha)}{\alpha(q-2)}}=0$
and we obtain as a formal limit the {\em Thomas--Fermi} type integral equation
\begin{equation*}\tag{$T\!F$}
v-(I_{\alpha}*|v|^p)|v|^{p-2}v+|v|^{q-2}v=0\quad\text{in $\R^N$}.
\end{equation*}

\smallskip
\paragraph{{\sl III. Third rescaling}}
The choice $\eps^{-s-2t}=\eps^{1-s}=\eps^{-(q-1)s}$ leads to $s=-\frac{1}{q-2}$, $t=-\frac{1}{2}$ and rescaled equation
\begin{equation*}\label{eq01iii}
-\Delta v+v-\eps^{\frac{2(2p+\alpha)-q(\alpha+2)}{2(q-2)}}(I_{\alpha}*|v|^p)|v|^{p-2}v+|v|^{q-2}v=0\quad\text{in $\R^N$}.
\end{equation*}
If $2<q<2\frac{2p+\alpha}{2+\alpha}$ we have $\lim_{\eps\to 0}\eps^{\frac{2(2p+\alpha)-q(\alpha+2)}{2(q-2)}}=0$,
and we obtain as a formal limit the nonlinear local equation
\begin{equation*}\label{eq01i+}
-\Delta v+v+|v|^{q-2}v=0\quad\text{in $\R^N$}.
\end{equation*}
Such equation has no nonzero finite energy solutions and we rule out the third rescaling as {\em trivial}.

\subsection{Self--similar regime $q=2\frac{2p+\alpha}{2+\alpha}$}
In this special case the choice $s=-\frac{2+\alpha}{4(p-1)}$ and $t=-1/2$ leads to the balance of all {\em four} terms in \eqref{eq01st},
i.e. $\eps^{-s-2t}=\eps^{1-s}=\eps^{-(2p-1)s+\alpha t}=\eps^{-(q-1)s}$. The rescaled equation in this case becomes
\begin{equation*}\label{eq01-0}
-\Delta v+v-(I_{\alpha}*|v|^p)|v|^{p-2}v+|v|^{q-2}v=0\quad\text{in $\R^N$},\eqno{(P_1)}
\end{equation*}
and any solution of $(P_\eps)$ is a rescaling of a solution of $(P_1)$.
For completeness, we include this obvious observation in the following statement.

\begin{theorem}[Self--similar regime]\label{thm02-0}
	Let $\frac{N+\alpha}{N}<p<\frac{N+\alpha}{N-2}$ and $q=2\frac{2p+\alpha}{2+\alpha}$.
	Then for any $\eps>0$,
	$$u_\eps(x)=\eps^{\frac{2+\alpha}{4(p-1)}} u_1(\sqrt{\eps}x),$$
	where $u_1$ is a ground state solution of $(P_1)$.
\end{theorem}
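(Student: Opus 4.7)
The plan is to observe that this is essentially a scaling statement: when $q=2\frac{2p+\alpha}{2+\alpha}$, all four terms of $(P_\eps)$ scale identically under the one-parameter family of transformations dictated by the equation's natural scalings. Concretely, I would set $a:=\frac{2+\alpha}{4(p-1)}$ and consider the map
\[
T_\eps: u \longmapsto v, \qquad v(y):=\eps^{-a}u\bigl(\eps^{-1/2}y\bigr),
\]
along with its inverse $T_\eps^{-1}: v\mapsto u$, $u(x)=\eps^{a}v(\sqrt{\eps}\,x)$. Both maps preserve the function spaces $H^1(\R^N)\cap L^q(\R^N)$ (as well as $L^1\cap C^2(\R^N)$, spherical symmetry and monotonicity in $|x|$), so it suffices to work at the level of the PDE and the energy.

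The first step is a direct substitution. Writing $u_\eps(x)=\eps^{a}u_1(\sqrt{\eps}\,x)$ and computing term by term, I find that $-\Delta u_\eps+\eps u_\eps$ produces the factor $\eps^{a+1}$, that $(I_\alpha\ast|u_\eps|^p)|u_\eps|^{p-2}u_\eps$ picks up the factor $\eps^{a(2p-1)-\alpha/2}$ (the extra $\eps^{-\alpha/2}$ comes from the change of variables in the Riesz convolution), and that $|u_\eps|^{q-2}u_\eps$ contributes $\eps^{a(q-1)}$. The defining choice $a=\frac{2+\alpha}{4(p-1)}$ forces $a+1=a(2p-1)-\alpha/2$, while $a=\frac{1}{q-2}$ forces $a+1=a(q-1)$; these two conditions are simultaneously satisfied precisely when $q=2\frac{2p+\alpha}{2+\alpha}$. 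Dividing through by the common factor $\eps^{a+1}$ therefore shows that $u_\eps$ solves $(P_\eps)$ if and only if $u_1$ solves $(P_1)$.

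The second step is to check that $T_\eps$ preserves the groundstate property. A parallel computation on each of the four integrals in $\mathcal{I}_\eps$, again exploiting $a(q-2)=1$ and $2a(p-1)=(2+\alpha)/2$, gives
\[
\mathcal{I}_\eps\bigl(T_\eps^{-1}v\bigr)=\eps^{2a+1-N/2}\,\mathcal{I}_1(v) \qquad\text{for every admissible }v,
\]
and the same identity holds for the Nehari/Pohožaev functionals used in Theorem \ref{thm01} to characterise the groundstate level. Hence $T_\eps$ sets up an energy-preserving (up to a positive multiplicative constant) bijection between nontrivial finite-energy solutions of $(P_\eps)$ and of $(P_1)$, sending groundstates to groundstates. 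Applying $T_\eps$ to $u_\eps$ thus yields a groundstate $u_1:=T_\eps u_\eps$ of $(P_1)$, and inverting gives the claimed formula.

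There is no real obstacle here beyond bookkeeping of the scaling exponents; the only point requiring care is the shift by $\eps^{-\alpha/2}$ arising from the Riesz potential, since it is precisely this shift that selects the critical exponent $q=2\frac{2p+\alpha}{2+\alpha}$ as the one for which all four terms balance.
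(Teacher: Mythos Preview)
Your proposal is correct and matches the paper's approach exactly: the paper treats Theorem~\ref{thm02-0} as an ``obvious observation'' based on the scaling balance $\eps^{-s-2t}=\eps^{1-s}=\eps^{-(2p-1)s+\alpha t}=\eps^{-(q-1)s}$ noted just before the statement, and you have simply written out the bookkeeping that makes this rigorous, including the transfer of the groundstate property via the multiplicative scaling of $\mathcal{I}_\eps$.
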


\subsection{First rescaling: Choquard limit}

The following result describes the existence region and some qualitative properties of the groundstates of \eqref{eqC}.

\begin{theorem}[\cite{MS13}*{Theorem 4}]\label{thmC}
	Let $\frac{N+\alpha}{N}<p<\frac{N+\alpha}{N-2}$.
	Then Choquard's equation \eqref{eqC}
	admits a positive spherically symmetric ground state solution $v\in H^1\cap L^1\cap C^2(\R^N)$ such that $v(x)$ is a monotone decreasing function of $|x|$.
	Moreover, there exists $C>0$ such that
	\begin{itemize}
		\item if \(p > 2\),
		\[
		\lim_{\abs{x} \to \infty} v(x) \abs{x}^{\frac{N - 1}{2}} e^{\abs{x}}=C,
		\]
		\item if \(p = 2\),
		\[
		\lim_{\abs{x} \to \infty} v(x) \abs{x}^{\frac{N - 1}{2}} \exp \int_{\nu}^{\abs{x}} \sqrt{1 - \tfrac{\nu^{N - \alpha}}{s^{N - \alpha}}} \,ds=C,\quad\text{where $\nu:=\big(A_\alpha\|v\|_2^2\big)^\frac{1}{N - \alpha} $},
		\]
		\item if \(p < 2\),
		$$\lim_{|x|\to\infty}v(x)|x|^\frac{N-\alpha}{2-p}=\left(A_\alpha\|v\|_p^p\right)^{\frac1{2-p}}.$$
	\end{itemize}
\end{theorem}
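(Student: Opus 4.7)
\medskip
\noindent\textbf{Proof proposal.} The plan is to obtain existence by direct variational methods on the radial subspace of $H^1$, then extract symmetry, regularity, positivity and decay from the Euler--Lagrange equation, using ideas that parallel our approach to Theorem~\ref{thm01} (where this Choquard problem appears as one of the limiting regimes).

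\emph{Existence, symmetry, regularity.} I would set up the constrained minimisation
$$M:=\inf\Big\{\tfrac12\|\nabla v\|_2^2+\tfrac12\|v\|_2^2\ :\ v\in H^1(\R^N),\ \int_{\R^N}(I_\alpha*|v|^p)|v|^p\,dx=1\Big\},$$
which is well posed since the HLS inequality \eqref{HLS} combined with the embedding $L^{2Np/(N+\alpha)}\subset L^2\cap L^{2^*}$ (valid precisely on the range $\frac{N+\alpha}{N}<p<\frac{N+\alpha}{N-2}$) makes the nonlocal constraint coercively controlled by the $H^1$-norm. For a minimising sequence $(v_n)$, the Riesz rearrangement inequality allows to pass to Schwarz rearrangements $v_n^*$ without changing $\|\nabla v\|_2^2,\|v\|_2^2$ and without decreasing the double integral; the radial compactness lemma of Strauss then provides a radial nonnegative nonincreasing minimiser $v^*$, upon which the HLS bound ensures weak continuity of the constraint. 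A Lagrange multiplier argument together with the scaling $v(x)\mapsto\lambda v(\mu x)$ produces a groundstate of \eqref{eqC} at the corresponding mountain pass / Pohozaev level. Regularity is then a straightforward bootstrap: HLS places $I_\alpha*|v|^p$ in $L^{2N/(N-\alpha)}$, the equation puts $-\Delta v+v$ in $L^r$ for some $r>N/2$, and Moser iteration upgrades $v$ to $L^\infty\cap C^{2,\gamma}$; positivity and strict radial monotonicity follow from the strong maximum principle, and integrability $v\in L^1$ is extracted from the pointwise decay estimates of $I_\alpha*v^p$ combined with integrating the equation on balls.

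\emph{Decay asymptotics.} Rewriting the equation as
$$-\Delta v+\big(1-W(x)\big)v=0,\qquad W(x):=(I_\alpha*|v|^p)\,|v|^{p-2},$$
the analysis splits according to the sign of $p-2$, and reduces to asymptotic ODE analysis of the radial equation $-(r^{N-1}v')'+r^{N-1}(1-W)v=0$. In the regime $p>2$, the factor $|v|^{p-2}$ vanishes at infinity, so $W\to 0$ and $v$ satisfies $-\Delta v+v\simeq 0$ at infinity; the classical Yukawa asymptotics for the Bessel potential give $v(x)\simeq C|x|^{-(N-1)/2}e^{-|x|}$. In the regime $p<2$, the factor $|v|^{p-2}$ blows up, and the ansatz $v\sim c|x|^{-(N-\alpha)/(2-p)}$ balances the $v$-term against $(I_\alpha*|v|^p)|v|^{p-1}\sim A_\alpha\|v\|_p^p\,|x|^{-(N-\alpha)}\,v^{p-1}$, forcing $c=(A_\alpha\|v\|_p^p)^{1/(2-p)}$; one makes this rigorous by constructing matching sub-- and supersolutions of the form $(1\pm o(1))\,c|x|^{-(N-\alpha)/(2-p)}$ using the precise asymptotics $I_\alpha*v^p(x)=A_\alpha\|v\|_p^p|x|^{-(N-\alpha)}(1+o(1))$.

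\emph{The main obstacle.} The bulk of the technical work is the borderline case $p=2$. Here $W(x)\simeq A_\alpha\|v\|_2^2\,|x|^{-(N-\alpha)}=\nu^{N-\alpha}|x|^{-(N-\alpha)}$ is a slowly decaying potential that cannot be absorbed into either side of the dichotomy above, so one must perform a WKB / Liouville--Green transformation of the radial equation with variable ``effective mass'' $1-\nu^{N-\alpha}|x|^{-(N-\alpha)}$. The leading order phase then becomes exactly $\int_\nu^{|x|}\sqrt{1-\nu^{N-\alpha}/s^{N-\alpha}}\,ds$ and the prefactor $|x|^{-(N-1)/2}$ arises from the next-order amplitude correction in the WKB expansion. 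The delicate step is to close the argument by producing explicit sub-- and supersolutions with the correct implicit phase, controlling the error terms from $W-\nu^{N-\alpha}|x|^{-(N-\alpha)}$ and from the lower-order WKB amplitude to actually identify the constant $C$; I would follow the construction carried out in \cite{MS13}*{Section 6} for this purpose.
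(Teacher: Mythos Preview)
The paper does not give its own proof of this statement: Theorem~\ref{thmC} is quoted verbatim from \cite{MS13}*{Theorem~4} and is used only as an input describing the limit profile in the Choquard regime. So there is no proof in the paper to compare your proposal against. That said, your sketch is an accurate summary of the strategy in \cite{MS13}, and it also mirrors the arguments the present paper \emph{does} write out for the analogous decay statement for $(P_\eps)$ (the Proposition on decay asymptotics in Section~\ref{s4}): the $p>2$ and $p=2$ cases are handled there by referring to \cite{MS13}*{Propositions~6.3, 6.5}, and the $p<2$ case by adapting \cite{MS13}*{Proposition~6.6} via the same sub/supersolution construction you describe.

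One small correction to your existence paragraph: the Schwarz rearrangement does not leave $\|\nabla v\|_2^2$ invariant in general, it only does not increase it (P\'olya--Szeg\H{o}), while it genuinely \emph{increases} the Riesz double integral unless $v$ was already a translate of a radially nonincreasing function. This is harmless for the minimisation argument, but as written your sentence is slightly off.
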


The existence interval in this theorem is sharp, in the sense that \eqref{eqC} does not have finite energy solutions for $p\not\in\Big(\frac{N+\alpha}{N},\frac{N+\alpha}{N-2}\Big)$. The uniqueness of the ground state solution is known only for $N=3$, $p=2$ and $\alpha=2$ \cite{Lieb77} and several other special cases \citelist{\cite{MaZhao}\cite{Xiang}\cite{Seok}}.

In this paper we prove that after the 1st rescaling, groundstates of \eqref{eqPeps} converge to the groundstates of the Choquard equation \eqref{eqC}, as soon as \eqref{eqC} admits a nontrivial groundstate.

\begin{theorem}[Choquard limit]\label{thmC-lim}
	Let $\frac{N+\alpha}{N}<p<\frac{N+\alpha}{N-2}$ and $q>2\frac{2p+\alpha}{2+\alpha}$.
	As $\eps\to 0$, the rescaled family of ground states
	\begin{equation}\label{eq1res}
	v_\eps(x):=\eps^{-\frac{2+\alpha}{4(p-1)}}u_\eps(\tfrac{x}{\sqrt{\eps}})
	\end{equation}
	converges in $D^1(\R^N)$ and $L^q(\R^N)$
	to a positive spherically symmetric ground state solution $v_0\in D^1\cap L^q(\R^N)$ of the Choquard equation \eqref{eqC}.
\end{theorem}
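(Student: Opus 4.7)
The plan is to pass to the limit $\eps \to 0$ in the rescaled equation $(\mathscr{C}_\eps)$ satisfied by $v_\eps$ from \eqref{eq1res}. Since $q > 2\tfrac{2p+\alpha}{2+\alpha}$, the coefficient $\eps^\kappa$ in front of $|v|^{q-2}v$, with $\kappa := \tfrac{q(2+\alpha) - 2(2p+\alpha)}{4(p-1)} > 0$, tends to zero, so $(\mathscr{C}_\eps)$ is a small $L^q$-perturbation of the Choquard equation $(\mathscr{C})$. Denote the associated rescaled energy by $\tilde{\mathcal I}_\eps$, the rescaled ground-state level by $\tilde c_\eps := \tilde{\mathcal I}_\eps(v_\eps)$ (which is an explicit power of $\eps$ times $\mathcal I_\eps(u_\eps)$), and let $c_{\mathscr C}$ be the Choquard ground-state level attained by the radial solution $v_*$ from Theorem \ref{thmC}. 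I would then aim to show $\tilde c_\eps \to c_{\mathscr C}$, extract a nontrivial weak limit $v_0$ of $v_\eps$, identify $v_0$ as a ground state of $(\mathscr{C})$, and upgrade the convergence to strong in $D^1 \cap L^q$.

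For the two-sided energy bound, the upper estimate $\tilde c_\eps \leq c_{\mathscr C} + o(1)$ follows by evaluating $\tilde{\mathcal I}_\eps$ along a suitable Nehari-Pohozaev rescaling of $v_*$; because $v_* \in L^q(\R^N)$ by the pointwise decay of Theorem \ref{thmC}, the perturbative $L^q$ term contributes only $O(\eps^\kappa \|v_*\|_q^q)$. The matching lower bound combines the Pohozaev identity for $(\mathscr{C}_\eps)$ with the equation tested against $v_\eps$, expressing $\tilde c_\eps$ as a convex combination of $\|\nabla v_\eps\|_2^2$, $\|v_\eps\|_2^2$ and $\eps^\kappa \|v_\eps\|_q^q$; dropping the nonnegative $L^q$ term reduces the problem to the Pohozaev--Nehari characterization of $c_{\mathscr C}$. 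The same two identities simultaneously yield $\eps$-uniform bounds on $\|v_\eps\|_{H^1}$ and on $\eps^\kappa \|v_\eps\|_q^q$. Working in the radial subspace, the compact embedding $H^1_{\mathrm{rad}}(\R^N) \hookrightarrow L^{2Np/(N+\alpha)}(\R^N)$ (valid since $\frac{N+\alpha}{N} < p < \frac{N+\alpha}{N-2}$ places this exponent strictly between $2$ and $2^*$) delivers strong convergence of the nonlocal term along a subsequence $v_\eps \weakto v_0$, so $v_0$ is a nonnegative radial weak solution of $(\mathscr{C})$. Vanishing of $v_0$ is ruled out by the energy floor $\tilde c_\eps \to c_{\mathscr C} > 0$ together with the Nehari identity, and weak lower semicontinuity forces $\mathcal I_{\mathscr C}(v_0) = c_{\mathscr C}$, so $v_0$ is a ground state of $(\mathscr{C})$ and the convergence upgrades to strong in $D^1$ and $L^2$.

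The main obstacle is strong convergence in $L^q$, because $q$ may exceed $2^*$ and only $\eps^\kappa \|v_\eps\|_q^q$, rather than $\|v_\eps\|_q^q$ itself, is controlled a priori by the energy. To resolve this, I would establish an $\eps$-uniform $L^\infty$-bound on $v_\eps$ by Moser iteration applied to $(\mathscr{C}_\eps)$, using the $H^1$-bound and the HLS inequality to control the nonlocal source term. Combined with the radial Strauss decay $|v_\eps(x)| \lesssim |x|^{-(N-1)/2}\|v_\eps\|_{H^1}$ for $|x|\ge 1$, this produces an $L^q$-integrable dominating function independent of $\eps$; dominated convergence together with the a.e.\ convergence extracted from the $H^1$-bound then delivers $v_\eps \to v_0$ in $L^q(\R^N)$.
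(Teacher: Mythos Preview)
Your approach is essentially the same as the paper's: both proceed via the Poho\v{z}aev manifold, comparing $c_\eps^{(1)}$ with the Choquard level $c_0^{(1)}$ by using $v_\eps$ as a test function for $\mathscr{P}_0^{(1)}$ (lower bound) and the Choquard ground state $v_*$ for $\mathscr{P}_\eps^{(1)}$ (upper bound), then extracting uniform $H^1$ bounds from the Nehari and Poho\v{z}aev identities and passing to the limit via radial compactness of the embedding into $L^{2Np/(N+\alpha)}$. Where you differ is in explicitly confronting the $L^q$ convergence when $q>2^*$: the paper simply asserts that $\{v_\eps(x/t_\eps)\}$ is bounded in $H^1_{rad}\cap L^q$ and refers back to Step~3 of Section~\ref{s4}, but Corollary~\ref{cor-bound1} only controls $\eps^\kappa\|v_\eps\|_q^q$, not $\|v_\eps\|_q^q$ itself, so this step is genuinely underjustified in the paper for $q>2^*$. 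Your proposed remedy---a uniform $L^\infty$ bound via iteration---is the right idea and can be implemented by dropping the nonnegative term $\eps^\kappa v_\eps^{q-1}$ to obtain $-\Delta v_\eps\le (I_\alpha*v_\eps^p)v_\eps^{p-1}$ and then running the iteration of Proposition~\ref{p-reg}, Step~2(A), which is uniform because the starting $L^{2Np/(N+\alpha)}$ norm is controlled by the uniform $H^1$ bound.

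One small correction: the $H^1$ radial decay $|v_\eps(x)|\lesssim |x|^{-(N-1)/2}$ is not in $L^q$ at infinity for all admissible $q$ (for instance $N=3$, $q<3$). Since $v_\eps$ is radially nonincreasing with uniformly bounded $L^2$ norm, use instead the Strauss $L^2$ bound $v_\eps(|x|)\lesssim |x|^{-N/2}$, which lies in $L^q(\{|x|\ge 1\})$ for every $q>2$; combined with the uniform $L^\infty$ bound near the origin this furnishes the dominating function you need, or alternatively you can bypass domination altogether via the interpolation $\|v_\eps-v_0\|_q^q\le (\|v_\eps\|_\infty+\|v_0\|_\infty)^{q-2}\|v_\eps-v_0\|_2^2\to 0$.
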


\subsection{2nd rescaling: Thomas--Fermi limit for $p=2$}
In this paper we consider the 2nd rescaling regime only in the case $p=2$. The general case $p\neq 2$ is studied in the forthcoming work \cite{TF}.

When $p=2$ the formal limit equation for \eqref{eqPeps} in the 2nd rescaling is the Thomas--Fermi type integral equation
\begin{equation*}\tag{$T\!F$}\label{eqTF2}
v-(I_{\alpha}*|v|^2)v+|v|^{q-2}v=0\quad\text{in $\R^N$}.
\end{equation*}
One of the possible ways to write the variational problem that leads to \eqref{eqTF2} after a rescaling is
\begin{equation}\label{eq-sTF}
s_{T\!F}:=\inf\big\{\|v\|_q^q+\|v\|_2^2: 0\le v\in L^2\cap L^{q}(\R^N), \int_{\R^N}(I_{\alpha}*v^2)v^2 dx=1\big\}.
\end{equation}
By a {\em groundstate} of \eqref{eqTF2} we understand a rescaling of a nonnegative minimizer for $s_{T\!F}$ that satisfies the limit equation \eqref{eqTF2}.

To study nonnegative minimizers of the $s_{T\!F}$ it is convenient to substitute
$$\rho:=v^2,\qquad m:=\frac{q}{2},$$
for an equivalent representation
$$
s_{T\!F}=\inf\Big\{\int_{\R^N}\rho^{m}dx+\int_{\R^N}\rho\, dx: 0\le \rho\in L^1\cap L^{m}(\R^N), \int_{\R^N}(I_{\alpha}*\rho)\rho\, dx=1\Big\}.
$$
For $m>m_c:=2-\alpha/N$ it is not difficult to see that, after a rescaling, minimizers for $s_{T\!F}$ are in the one-to-one correspondence with the minimizers of
$$
\sigma_{T\!F}:=\inf\Big\{\int_{\R^N}|\rho|^m\,dx-\int_{\R^N}(I_{\alpha}*\rho)\rho\, dx: 0\le \rho\in L^1\cap L^m(\R^N), \|\rho\|_1=1\Big\}.
$$
The existence and qualitative properties of minimizers for $\sigma_{T\!F}$ in the case $N=3$, $\alpha=2$ and for $m>4/3$ is classical and goes back to \citelist{\cite{Auchmuty-Beals}\cite{Lions-81}}. The case $N\ge 2$, $\alpha\in(0,N)$ and $m>m_c$ it is a recent study by J.~ Carrillo et al.~\cite{Carrillo-CalcVar}.
If $m<m_c$ then $\sigma_{T\!F}=-\infty$ by scaling, while $m=m_c$ is the $L^1$--critical exponent for $\sigma_{T\!F}$ (this case is studied in \cite{Carrillo-NA}). Note that $m_c>\frac{4+\alpha}{2+\alpha}$ so in the 2nd rescaling regime we always have $\sigma_{T\!F}=-\infty$  when $\eps\to 0$!

In the next theorem we show that, unlike for $\sigma_{T\!F}$, minimization for $s_{T\!F}$ is possible for all $m>\frac{2N}{N+\alpha}$. The existence and qualitative properties of the minimizers are summarised below.

\begin{theorem}[Thomas--Fermi groundstate]\label{thmTF}
	Let $m>\frac{2N}{N+\alpha}$.
	Then $s_{T\!F}>0$ and there exists a nonnegative spherically symmetric nonincreasing minimizer $\rho_*\in L^1\cap L^\infty(\R^N)$ for $s_{T\!F}$.
	The minimizer $\rho_*$ satisfies the virial identity
	\begin{equation}\label{TF-Virial}
	m\int_{\R^N}\rho^m dx+\int_{\R^N}\rho\, dx=s_{T\!F}\frac{2N}{N+\alpha}
	\end{equation}
	and the Thomas--Fermi equation
	\begin{equation}\label{TF-rho}
	m\rho^{m-1}=\Big(s_{T\!F}\frac{2N}{N+\alpha} I_{\alpha}*\rho-1\Big)_+\quad\text{a.e. in $\R^N$}.
	\end{equation}
	Moreover, $\mathrm{supp}(\rho_*)=\bar B_{R_*}$ for some $R_*\in(0,+\infty)$,
	$\rho_*$ is $C^\infty$ inside the support, and $\rho_*\in C^{0,\gamma}(\R^N)$ with $\gamma=\min\{1,\frac{1}{m-1}\}$ if $\alpha>\big(\frac{m-2}{m-1})_+$, or for any $\gamma<\frac{\alpha}{m-2}$ if $m>2$ and $\alpha\le\big(\frac{m-2}{m-1})_+$. Moreover, if $\alpha>\big(\frac{m-2}{m-1})_+$ then $I_\alpha*\rho_*\in C^{0,1}(\R^N)$ and $\rho_*^{m-1}\in C^{0,1}(\R^N)$. Meanwhile,
\begin{equation}\label{TF-resc-min-rho-1}
		v_0(x)=\Big(\frac{q}{2}\Big)^\frac{1}{q-2}\sqrt{\rho_*\Big((\tfrac{q}{2})^\frac{2}{\alpha(q-2)}\big(\tfrac{2N s_{T\!F}}{N+\alpha}\big)^{-1/\alpha} x\Big)},
	\end{equation}
is a nonnegative spherically symmetric nonincreasing ground state solution of the Thomas--Fermi  equation \eqref{eqTF2}.
\end{theorem}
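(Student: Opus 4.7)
The plan is to apply the direct method to the constrained minimization defining $s_{T\!F}$, derive the Thomas--Fermi equation via Lagrange multipliers together with a scaling (virial) identity, then read off the support and regularity properties from that equation, and finally obtain \eqref{TF-resc-min-rho-1} by an explicit dilation.

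\textbf{Existence of a minimizer.} To show $s_{T\!F}>0$ I would combine the HLS inequality in the form $\int(I_\alpha*\rho)\rho\lesssim\|\rho\|_{2N/(N+\alpha)}^2$ with the interpolation $\|\rho\|_{2N/(N+\alpha)}\le\|\rho\|_1^\theta\|\rho\|_m^{1-\theta}$, which is available precisely because $m>\frac{2N}{N+\alpha}$; inserting the constraint and applying Young's inequality gives $\|\rho\|_1+\|\rho\|_m^m\gtrsim 1$. For a minimizing sequence $\{\rho_n\}$ the Riesz rearrangement inequality (applied with the radial decreasing kernel $I_\alpha$) only increases $\int(I_\alpha*\rho)\rho$ while preserving the two $L^p$ norms, so after multiplying by a constant $\le 1$ to restore the constraint I may assume $\rho_n$ is radial and nonincreasing. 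The uniform $L^1$ bound together with monotonicity then yields the pointwise decay $\rho_n(x)\lesssim|x|^{-N}$, and Helly's selection theorem produces a pointwise a.e.\ limit $\rho_*$ with the objective lower semicontinuous by Fatou. To pass to the limit in the constraint I would combine radial strong compactness in $L^{2N/(N+\alpha)}$ on balls (Strauss-type) with the uniform tail bound $\int_{|x|>R}(I_\alpha*\rho_n)\rho_n\to 0$ as $R\to\infty$ supplied by the pointwise decay, so that $\int(I_\alpha*\rho_*)\rho_*=1$ and $\rho_*$ is a minimizer.

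\textbf{Virial, Euler--Lagrange equation, support and regularity.} Testing with the two-parameter family $\rho_{\mu,\lambda}(x)=\mu\rho_*(\lambda x)$ restricted to the constraint surface $\mu^2\lambda^{-(N+\alpha)}=1$ and differentiating at $(\mu,\lambda)=(1,1)$ gives $[m(N+\alpha)-2N]\int\rho_*^m=(N-\alpha)\int\rho_*$, which combined with $s_{T\!F}=\int\rho_*^m+\int\rho_*$ rearranges to \eqref{TF-Virial}. Varying $\rho_*+t\varphi$ for admissible $\varphi$ together with the nonnegativity constraint produces a Lagrange multiplier $\Lambda$ with $m\rho_*^{m-1}+1=2\Lambda\,I_\alpha*\rho_*$ on $\{\rho_*>0\}$ and $1\ge 2\Lambda\,I_\alpha*\rho_*$ on $\{\rho_*=0\}$, i.e.\ \eqref{TF-rho}; multiplying by $\rho_*$, integrating and invoking the virial pins down $2\Lambda=s_{T\!F}\frac{2N}{N+\alpha}$. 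A bootstrap in \eqref{TF-rho} via HLS mapping properties of $I_\alpha$ upgrades $\rho_*\in L^1\cap L^m$ to $\rho_*\in L^\infty$, after which $I_\alpha*\rho_*$ is continuous and decays at infinity, so \eqref{TF-rho} forces $\rho_*$ to vanish for large $|x|$; radial monotonicity then gives $\operatorname{supp}\rho_*=\bar B_{R_*}$ for some $R_*\in(0,\infty)$. Standard Riesz-potential Hölder/Lipschitz estimates under the threshold $\alpha>((m-2)/(m-1))_+$ transfer through the root $(\cdot)^{1/(m-1)}$ to the exponents stated in the theorem, while the classical elliptic bootstrap inside $\{\rho_*>0\}$, where $\rho_*$ is locally bounded below, gives $C^\infty$-regularity in the interior. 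Finally, substituting $\rho=v^2$ and $m=q/2$ into \eqref{eqTF2} converts it into an equation of the form \eqref{TF-rho}; matching via a dilation $\tilde\rho(x)=A\rho_*(Bx)$ reduces to the two algebraic relations $A^{m-1}=(AB^{-\alpha}/K)m$ and $1=AB^{-\alpha}/K$ with $K=s_{T\!F}\frac{2N}{N+\alpha}$, whose unique positive solution is $A=(q/2)^{2/(q-2)}$ and $B=(q/2)^{2/(\alpha(q-2))}(s_{T\!F}\frac{2N}{N+\alpha})^{-1/\alpha}$, yielding \eqref{TF-resc-min-rho-1}.

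\textbf{Main obstacles.} Two steps require care: (i) passing to the limit in the HLS constraint, since convolution is not weakly continuous in $L^{2N/(N+\alpha)}$ without additional tightness, which here is provided quantitatively by the radial decay $\rho_n(x)\lesssim|x|^{-N}$; and (ii) the boundary regularity of $\rho_*$ at $\partial B_{R_*}$, where the Hölder modulus of $I_\alpha*\rho_*$ at the free boundary competes with the degenerate root $z\mapsto z^{1/(m-1)}$, producing the case distinction in the regularity conclusion and explaining why the threshold $\alpha>((m-2)/(m-1))_+$ is the break-even condition.
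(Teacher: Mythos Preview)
Your proposal is correct and follows essentially the same route as the paper: Schwarz rearrangement of a minimizing sequence, Helly's selection plus convergence in the constraint via Strauss-type decay, then the Euler--Lagrange equation with Lagrange multiplier pinned down by the virial identity, an HLS bootstrap for $L^\infty$, asymptotics of $I_\alpha*\rho_*$ for compact support, and the Carrillo et al.\ arguments for H\"older regularity. One small point worth tightening: the decay $\rho_n(x)\lesssim|x|^{-N}$ from the $L^1$ bound alone blows up too fast at the origin to serve as a dominating function in $L^{2N/(N+\alpha)}$, so to justify your ``radial strong compactness in $L^{2N/(N+\alpha)}$ on balls'' you should also invoke the uniform $L^m$ bound, which gives $\rho_n(x)\lesssim|x|^{-N/m}$; the paper uses precisely the combined envelope $U(x)=C\min\{|x|^{-N},|x|^{-N/m}\}\in L^s(\R^N)$ for all $s\in(1,m)$ to run Lebesgue dominated convergence in $L^{2N/(N+\alpha)}(\R^N)$ globally, and then the nonlocal Brezis--Lieb lemma to recover the constraint.
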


Only the existence part of the theorem requires a proof. The Euler--Lagrange equation, regularity and qualitative properties of the minimizers could be obtained by adaptations of the arguments developed for $m>m_c$ in \citelist{\cite{Carrillo-CalcVar}\cite{Carrillo-NA}}. We outline the arguments in Section \ref{s-TF}.


In the case $m\ge m_c$ the uniqueness of the minimizer for $\sigma_{T\!F}$ was recently proved in \cite{Carrillo-NA-2021} for $\alpha<2$, see also \cite{Carrillo-Sugiyama} for $\alpha=2$ and a survey of earlier results in this direction. 
For the full range $m>\frac{2N}{N+\alpha}$ and for $\alpha<2$ the uniqueness of a bounded radially nonincreasing solution for the Euler--Lagrange equation \eqref{TF-rho} (and hence the uniqueness of the minimizer $\rho_*$ for $s_{T\!F}$) is the recent result in \cite[Theorem 1.1 and Proposition 5.4]{Volzone}. For $\alpha=2$ the same follows from \cite[Lemma 5]{Flucher}. For $\alpha\in(2,N)$ the uniqueness of the minimizer for $s_{T\!F}$ or for $\sigma_{T\!F}$ seems to be open at present.
\smallskip

Next we prove that in the special case $\alpha=2$, groundstates of \eqref{eqPeps} converge to a groundstate of the Thomas--Fermi equation \eqref{eqTF2}.

\begin{theorem}[Thomas--Fermi limit for $\alpha=2$]\label{t-TF-0}
	Let $N\le 5$, $p=2$, $\alpha=2$ and $\frac{4N}{N+2}<q<3$.
	As $\eps\to 0$, the rescaled family of ground states
	\begin{equation}\label{eq2res}
		v_\eps(x):=\eps^{-\frac{1}{q-2}}u_\eps\big(\eps^{-\frac{4-q}{2(q-2)}} x\big)
	\end{equation}
	converges in $L^2(\R^N)$ and $L^q(\R^N)$ to a nonnegative spherically symmetric compactly supported ground state solution $v_0\in L^2\cap L^q(\R^N)$ of the Thomas-Fermi equation \eqref{eqTF2}. 
\end{theorem}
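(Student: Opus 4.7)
Under the rescaling $v_\eps(x)=\eps^{-1/(q-2)}u_\eps(\eps^{-(4-q)/(2(q-2))}x)$, $v_\eps$ solves $(T\!F_\eps)$ with Laplacian coefficient $\theta_\eps:=\eps^{2(3-q)/(q-2)}$, which tends to $0^+$ since $q<3$. Write $\mathcal{J}_\eps$ for the corresponding energy, $c_\eps:=\mathcal{J}_\eps(v_\eps)$, and abbreviate
\[
A_\eps := \theta_\eps\|\nabla v_\eps\|_2^2,\quad B_\eps := \|v_\eps\|_2^2,\quad C_\eps := \int_{\R^N}(I_2*v_\eps^2)v_\eps^2\,dx,\quad D_\eps := \|v_\eps\|_q^q.
\]
The Nehari identity $A_\eps+B_\eps-C_\eps+D_\eps=0$ and the Pohozaev identity $2(N-2)A_\eps+2NB_\eps-(N+2)C_\eps+\tfrac{4N}{q}D_\eps=0$ combine, after eliminating $C_\eps$, into
\[
(N-2)B_\eps = (6-N)A_\eps + \tfrac{q(N+2)-4N}{q}D_\eps,\qquad c_\eps = \tfrac{1}{N-2}A_\eps + \tfrac{q-2}{q(N-2)}D_\eps.
\]
Under the standing assumptions $N\le 5$ and $q>\tfrac{4N}{N+2}$ both coefficients on the right of the first relation are strictly positive, so all four quantities are controlled once $c_\eps$ is bounded from above.

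\textbf{Upper bound via a (TF) profile.} Let $V_0$ be a (TF) groundstate built from a minimiser $\rho_*$ of $s_{T\!F}$ in Theorem~\ref{thmTF}, and approximate it by $w_\delta\in C_c^\infty(\R^N)$ with $w_\delta\to V_0$ in $L^2\cap L^q$; by HLS also $\int(I_2*w_\delta^2)w_\delta^2\to\int(I_2*V_0^2)V_0^2$. For the dilation path $w_{\delta,t}(x):=w_\delta(x/t)$ the four terms of $\mathcal{J}_\eps$ scale as $\theta_\eps t^{N-2}$, $t^N$, $t^{N+2}$ and $t^N$; since the $t^{N+2}$ term is negative, $t\mapsto\mathcal{J}_\eps(w_{\delta,t})$ has a unique maximiser $t^\star_{\eps,\delta}>0$, and as $\eps,\delta\to 0$ this converges to the (TF) Pohozaev value $t^\star(V_0)=1$. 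The mountain-pass bound $c_\eps\le\max_t\mathcal{J}_\eps(w_{\delta,t})$, together with a diagonal $\delta=\delta(\eps)\to 0$ that keeps $\theta_\eps\|\nabla w_\delta\|_2^2\to 0$, yields $\limsup_{\eps\to 0}c_\eps\le \mathcal{J}_0(V_0)=:c_0$.

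\textbf{Compactness and identification of the limit.} The algebraic identities give uniform bounds on $A_\eps,B_\eps,C_\eps,D_\eps$. By Theorem~\ref{thm01} each $v_\eps$ is radial and monotone nonincreasing, so the radial lemma yields $v_\eps(x)\le C\min\{|x|^{-N/2},|x|^{-N/q}\}$, whose $r$-th power is integrable for every $r\in(2,q)$. Helly's selection theorem and dominated convergence extract a subsequence with $v_\eps\to v_0$ a.e.\ and strongly in $L^r(\R^N)$ for every $r\in(2,q)$. Since $\tfrac{4N}{N+2}\in(2,q)$ under the assumptions, HLS gives $C_\eps\to\int(I_2*v_0^2)v_0^2\,dx$. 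A positive lower bound on $c_\eps$ follows from $C_\eps=A_\eps+B_\eps+D_\eps$ combined with the HLS--interpolation inequality $C_\eps\lesssim\|v_\eps\|_{4N/(N+2)}^4$, whence $v_0\not\equiv 0$. Passing to the limit in the weak form of $(T\!F_\eps)$ tested against $\varphi\in C_c^\infty(\R^N)$ is immediate because the Laplacian piece is bounded by $\theta_\eps^{1/2}A_\eps^{1/2}\|\nabla\varphi\|_2\to 0$; hence $v_0$ solves (TF). Lower semicontinuity then gives $\liminf c_\eps\ge\mathcal{J}_0(v_0)\ge c_0$, which combined with the upper bound forces equalities throughout, $A_\eps\to 0$, and $B_\eps\to\|v_0\|_2^2$, $D_\eps\to\|v_0\|_q^q$; uniform convexity of $L^2$ and $L^q$ then upgrades the weak convergences to strong.

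\textbf{Main difficulty.} The delicate step is the sharp upper bound. The (TF) groundstate $V_0=\sqrt{\rho_*}$ is only H\"older regular and compactly supported, and its distributional gradient is in general singular across the free boundary $\partial\{\rho_*>0\}$, so $V_0\notin H^1(\R^N)$ and cannot serve directly as a test function. The smooth approximants $w_\delta$ must therefore be built so that $\theta_\eps\|\nabla w_{\delta(\eps)}\|_2^2\to 0$ along a suitable diagonal $\delta(\eps)\to 0$; this is achievable using the explicit H\"older exponent for $\rho_*$ in Theorem~\ref{thmTF}, which quantifies the blow-up rate of $\|\nabla w_\delta\|_2^2$ as $\delta\to 0$.
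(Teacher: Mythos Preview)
Your overall strategy matches the paper's, but two points require correction.

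First, your ``Main difficulty'' rests on a misconception: in the range $q<3$ of this theorem, the Thomas--Fermi groundstate $V_0$ \emph{does} lie in $H^1(\R^N)$, with $\nabla V_0\in L^\infty$. The paper establishes this via the Hopf boundary lemma applied to the identity $-\Delta(m\rho_*^{m-1})=c\,\rho_*\ge 0$ in the support ball $B_{R_*}$ (here $m=q/2$), which yields $\rho_*^{m-1}\gtrsim R_*-|x|$; combined with the Lipschitz regularity $\rho_*^{m-1}\in C^{0,1}(\R^N)$ from Theorem~\ref{thmTF} one obtains $|\nabla V_0|\lesssim V_0^{3-q}$, and this is bounded since $q<3$ and $V_0\in L^\infty$. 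Thus $V_0$ (or a trivial cutoff inside its support) serves directly as a test function, and $\theta_\eps\|\nabla V_0\|_2^2\to 0$ with no diagonal argument required. The genuine difficulty you describe arises only for $q\ge 4$, which is outside the scope of this theorem (it appears in the $\eps\to\infty$ companion result).

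Second, there is a circularity in your lower bound. The inequality $\mathcal{J}_0(v_0)\ge c_0$ requires the limit $v_0$ to lie on the Poho\v{z}aev manifold of \eqref{eqTF2}, but since \eqref{eqTF2} contains no Laplacian the Poho\v{z}aev identity is a genuine additional scaling constraint that does \emph{not} follow merely from $v_0$ being a weak solution; to obtain it you would need $A_\eps\to 0$, $B_\eps\to B_0$, $D_\eps\to D_0$ in order to pass to the limit in the Poho\v{z}aev identity for $v_\eps$, and those are precisely the convergences you are trying to deduce from the chain $\liminf c_\eps\ge\mathcal{J}_0(v_0)\ge c_0$. The paper avoids this by a direct dilation projection: since $\mathcal{P}_0^{(2)}(v_\eps)=-\tfrac{N-2}{2}A_\eps<0$ there is $t_\eps\in(0,1)$ with $v_\eps(\cdot/t_\eps)$ on the limit Poho\v{z}aev manifold, giving $c_0\le\tfrac{t_\eps^{N+2}}{2N}C_\eps<c_\eps$ without first identifying the limit. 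Your route can also be salvaged by reordering: the Nehari identity $B_0+D_0=C_0$ for $v_0$ (test the limit equation against $v_0$), combined with $A_\eps+B_\eps+D_\eps=C_\eps\to C_0$ and Fatou, already forces $A_\eps\to 0$, $B_\eps\to B_0$, $D_\eps\to D_0$; \emph{then} the Poho\v{z}aev identity for $v_0$ follows by passing to the limit.
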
 

\begin{remark}
	While the uniqueness of the Thomas--Fermi groundstate $v_0$ for $\alpha>2$ is generally open, it is clear from the proof of Theorem \ref{thmTF} that every ground state of \eqref{eqTF2} must have the same regularity and compact support properties as stated in Theorem \ref{thmTF}.
In particular, $v_\eps$ always exhibits as $\eps\to 0$ a ``corner layer'' near the boundary of the support of the limit groundstate of \eqref{eqTF2}.
\end{remark}

\begin{remark}
The case $p\neq 2$ and $\alpha\neq 2$ is studied in our forthcoming paper \cite{TF}.
We are going to show that the minimization problem
$$
s_{T\!F}:=\inf\big\{\|v\|_q^q+\|v\|_2^2: 0\le v\in L^2\cap L^{q}(\R^N), \int_{\R^N}(I_{\alpha}*v^p)v^p dx=1\big\}.
$$
admits a nonnegative spherically symmetric nonincreasing minimizer $v_*\in L^1\cap L^\infty(\R^N)$ for any $p>\frac{N+\alpha}{N}$ and $q>\frac{2Np}{N+\alpha}$ and this range is optimal. Moreover,
\begin{itemize}
	\item[$(a)$]
	if $p\ge 2$ then $\mathrm{Supp}(\overline{v}_*)=B_{R_*}$ and $\overline{v}_*=\lambda\chi_{B_{R_*}}+\phi$, where $R_*>0$, $\lambda>0$ if $p>2$ or $\lambda=0$ if $p=2$, and $\phi:B_R\to\R$ is H\"older continuous radially nonincreasing, $\phi(0)>0$ and $\lim_{|x|\to R_*}\phi(|x|)=0$;
	\smallskip
	\item[$(b)$]
	if $p<2$ then $\overline{v}_*\in D^1(\R^N)$ and $\mathrm{Supp}(\overline{v}_*)=\R^N$.
\end{itemize}
We will prove in \cite{TF} that such a minimizer is the limit of the rescaled groundstates $v_\eps(x)$ as $\eps\to 0$ in the Thomas--Fermi regime.
\end{remark}

\subsection{Critical Choquard regime $p=\frac{N+\alpha}{N-2}$}
When $p=\frac{N+\alpha}{N-2}$ and $q>\frac{2Np}{N+\alpha}=2^*$, neither \eqref{eqC} nor $(P_0)$ have nontrivial solutions. We prove that in this case the limit equation for \eqref{eqPeps} is given by the critical Choquard equation
\begin{equation*}\tag{$\mathscr{C}_{HL}$}
-\Delta v=(I_{\alpha}*|v|^\frac{N+\alpha}{N-2})|v|^{\frac{N+\alpha}{N-2}-2}v,\qquad v\in D^1(\R^N).
\end{equation*}
A variational problem that leads to \eqref{eqC0} can be written as
\begin{equation}\label{eSHL}
\mathcal{S}_{HL}=\inf_{w\in D^{1}(\R^N)\setminus \{0\}}\frac{\|\nabla w\|_2^2}{\Big\{ \int_{\R^N}(I_{\alpha}*|w|^{\frac{N+\alpha}{N-2}})|w|^{\frac{N+\alpha}{N-2}}dx\Big\}^{\frac{N-2}{N+\alpha}}}.
\end{equation}
It is known \cite{Minbo-Du}*{Lemma 1.1} that
$$\mathcal{S}_{HL}=\mathcal{S}_*\mathcal{C}_\alpha^{-\frac{N-2}{N+\alpha}}$$
where $\mathcal{S}_*$ is the Sobolev constant in \eqref{Sobolev} and $\mathcal{C}_\alpha$ is the Hardy--Littlewood--Sobolv constant in \eqref{HLS} (with $p=\frac{N+\alpha}{N-2}$).

By a {\em groundstate} of \eqref{eqC0} we understand a rescaling of a positive minimizer for $\mathcal{S}_{HL}$ that satisfies equation \eqref{eqC0}. Denote
\begin{equation}\label{eEF}
U_*(x)=\left(\frac{\sqrt{N(N-2)}}{1+|x|^2}\right)^{\frac{N-2}{2}}
\end{equation}
a groundstate of the Emden--Fowler equation $-\Delta U_*=U_*^{2^*-1}$ in $\R^N$.
Then (see e.g. \cite{Minbo-Du}*{Lemma 1.1}) all radial groundstates of \eqref{eqC0} are given by the function
\begin{equation}\label{eV1}
V(x)=\big(\mathcal{S}_*^\alpha\mathcal{C}_\alpha^2\big)^{-\frac{N-2}{4(\alpha+2)}}U_*(x)
\end{equation}
and the family of its rescalings
\begin{equation}\label{erC0}
V_{\lambda}(x)=\lambda^{-\frac{N-2}{2}}V(x/\lambda)\qquad(\lambda>0).
\end{equation}
In fact, if $N=3,4$ or if $N\ge 5$ and $\alpha\ge N-4$ then all finite energy {\em solutions} of \eqref{eqC0} are given by the rescalings and translations of $U_*$, see \cite{Minbo-Du}*{Theorem 1.1}.

We prove that in the critical Choquard regime the family of ground states $u_\eps$ converge in a suitable sense to $V$ after an {\em implicit} rescaling $\lambda_\eps$. Note that $V\in L^2(\R^N)$ only if $N\geq 5$ and hence the lower dimensions should be handled differently, as the $L^2$--norm of $u_\eps$ must blow up when $N=3,4$. Our principal result  is a sharp two--sided asymptotic estimate on the rescaling $\lambda_\eps$ as $\eps\to 0$. Similar result in the local case $\alpha=0$ was first observed in \cite{Muratov} and then rigorously established in \cite{MM-14}

\begin{theorem}[Critical Choquard limit]\label{t-CC-0}
	Let $p=\frac{N+\alpha}{N-2}$ and $q>\frac{2Np}{N+2}=2^*$.
	There exists a rescaling $\lambda_\eps:(0,\infty)\to(0,\infty)$ such that as $\eps\to 0$, the rescaled family of ground states
	$$\overline{v}_\eps(x):=\lambda_\eps^\frac{N-2}{2}u_\eps(\lambda_\eps x)$$
	converges to $V$ in $D^1(\R^N)$. Moreover, as $\eps\to 0$,
	\begin{equation}
	\lambda_{\eps}\sim\left\{\aligned & \eps^{-\frac{1}{q-4}} &	\text{ if $N=3$},&\\
	&\Big(\eps\ln\frac{1}{\eps}\Big)^{-\frac{1}{q-2}} &	\text{ if $N=4$},&\\
	&\eps^{-\frac{2}{(q-2)(N-2)}} & \text{ if $N\geq 5$}. &\\
	\endaligned\right.
	\end{equation}
\end{theorem}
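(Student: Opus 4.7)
I would follow the approach of \cite{MM-14} for the local prototype \eqref{eq01-loc}, with the Aubin--Talenti bubble replaced by the critical Choquard profile $V$ from \eqref{eV1}. Write $c_\eps:=\mathcal{I}_\eps(u_\eps)$ and denote the critical Choquard groundstate level by $c_{HL}:=\frac{\alpha+2}{2(N+\alpha)}\mathcal{S}_{HL}^{(N+\alpha)/(\alpha+2)}$. Combining the Nehari and Poho\v{z}aev identities at the critical exponent $p=(N+\alpha)/(N-2)$ yields the key relation
\[
	\eps\|u_\eps\|_2^2=\tfrac{(N-2)(q-2^*)}{2q}\,\|u_\eps\|_q^q,
\]
so that both quantities are of the same order and vanish with $\eps$ as soon as $c_\eps\to c_{HL}$. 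The rescaling $\lambda_\eps$ would be fixed intrinsically, e.g.\ by the normalization $\overline{v}_\eps(0)=V(0)$, which is meaningful because $u_\eps$ is radial and monotone by Theorem \ref{thm01}, or by a scale-invariant normalization of the HLS integral of $\overline{v}_\eps$.

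\textbf{Upper bound and rate.} To obtain a sharp upper bound on $c_\eps$, I would test with the truncated bubble $\psi_\lambda:=\eta_R V_\lambda$, where $V_\lambda(x)=\lambda^{-(N-2)/2}V(x/\lambda)$ and $\eta_R$ is a smooth cutoff at a scale $R$ depending on $\lambda$. Because the Dirichlet and HLS integrals of $V$ are scale-invariant at $p=(N+\alpha)/(N-2)$, the Nehari-optimized value of $\mathcal{I}_\eps(t\psi_\lambda)$ expands as
\[
c_{HL}+\tfrac{t_0^2}{2}\,\eps\|\psi_\lambda\|_2^2+\tfrac{t_0^q}{q}\,\|\psi_\lambda\|_q^q+(\text{cutoff error}),
\]
where $t_0$ is the Nehari value for $V$ and the cutoff errors (in the $L^q$ and HLS integrals) are lower-order thanks to the tail $V(x)=O(|x|^{-(N-2)})$, provided $R$ is taken sufficiently larger than $\lambda$. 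Minimizing in $\lambda$ (and $R$) then pins down $\lambda_\eps$. For $N\ge 5$, $V\in L^2(\R^N)$ and the cutoff is unnecessary: $\|\psi_\lambda\|_2^2\sim\lambda^2$ and $\|\psi_\lambda\|_q^q\sim\lambda^{-(N-2)(q-2^*)/2}$, so the balance $\eps\lambda^2\sim\lambda^{-(N-2)(q-2^*)/2}$ gives $\eps\sim\lambda^{-(N-2)(q-2)/2}$ and hence $\lambda_\eps\sim\eps^{-2/((N-2)(q-2))}$. For $N=4$, $\int_{|x|\le R}V_\lambda^2\sim\lambda^2\ln(R/\lambda)$, and the choice $\ln(R/\lambda)\sim\ln(1/\eps)$ produces the logarithmic correction $\lambda_\eps\sim(\eps\ln(1/\eps))^{-1/(q-2)}$. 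For $N=3$, $\int_{|x|\le R}V_\lambda^2\sim\lambda R$ grows linearly, and the optimal choice $R\sim\eps^{-1/2}$ yields $\lambda_\eps\sim\eps^{-1/(q-4)}$.

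\textbf{Convergence, lower bound and main obstacle.} For the matching lower bound and strong $D^1(\R^N)$ convergence $\overline{v}_\eps\to V$, the upper bound forces $\|\nabla\overline{v}_\eps\|_2^2$ bounded and $\int(I_\alpha*\overline{v}_\eps^p)\overline{v}_\eps^p$ bounded away from zero, so that $\overline{v}_\eps$ behaves as a bounded Palais--Smale type sequence for the critical Choquard functional at level $c_{HL}$. Concentration-compactness for the HLS nonlinearity, combined with radiality and monotonicity of $u_\eps$ from Theorem \ref{thm01} (which confine the concentration point to the origin), yields weak $D^1$ convergence of $\overline{v}_\eps$ to a nontrivial radial solution of \eqref{eqC0}; the classification of \cite{Minbo-Du} identifies this limit with $V$ after a bounded correction of $\lambda_\eps$, and a Brezis--Lieb splitting of both the Dirichlet and HLS integrals upgrades the convergence to strong in $D^1$. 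The matching lower asymptotic on $\lambda_\eps$ then follows by inserting sharp expansions of $\|u_\eps\|_2^2$ and $\|u_\eps\|_q^q$, obtained from local uniform convergence $\overline{v}_\eps\to V$ together with the decay rates of $u_\eps$ from Theorem \ref{thm01}, into the Poho\v{z}aev relation displayed above. The main obstacle will be the dimension-dependent tail analysis for $N=3,4$: since $V\notin L^2(\R^N)$, the divergence rate of $\|\overline{v}_\eps\|_2^2$ must be identified from the outer behaviour of $u_\eps$ (which itself depends on whether $p\gtrless 2$, i.e.\ $\alpha\gtrless N-4$), while the cutoff perturbation of the nonlocal HLS integral must be controlled by delicate HLS-type tail estimates rather than pure scale-invariance.
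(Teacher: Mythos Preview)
Your upper-bound strategy (testing with truncated bubbles $\eta_R V_\lambda$ and optimising in $\lambda$ and $R$) is correct and matches the paper, except that the paper projects onto the Poho\v{z}aev manifold rather than the Nehari manifold; this is cosmetic and gives the same rates. For the convergence step the paper takes a slightly different route: rather than running concentration--compactness for the HLS functional, it observes that since $\mathcal{S}_{HL}=\mathcal{S}_*\mathcal{C}_\alpha^{-(N-2)/(N+\alpha)}$, the combined energy--HLS bounds force $\|u_\eps\|_{2^*}$ to converge as well, so after rescaling one obtains a minimising sequence for the \emph{Sobolev} constant $\mathcal{S}_*$, to which Lions' classical concentration--compactness applies directly. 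Both routes work; the paper's avoids any appeal to a classification theorem.

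There is, however, a genuine gap in your lower-bound argument for $N=3,4$. You propose to extract the divergence rate of $\|\overline{v}_\eps\|_2^2$ from ``local uniform convergence $\overline{v}_\eps\to V$ together with the decay rates of $u_\eps$ from Theorem~\ref{thm01}''. This does not close: the decay asymptotics in Theorem~\ref{thm01} involve an $\eps$-dependent constant $C_\eps$ which is not controlled a priori, and moreover they describe the behaviour of $u_\eps$ only as $|x|\to\infty$ in the \emph{original} variables, i.e.\ far beyond the inner region $|x|\sim\lambda_\eps$ where local convergence to $V$ holds. There is no mechanism in your outline to match the inner and outer descriptions and thereby pin down $C_\eps$ in terms of $\lambda_\eps$.

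The paper closes this gap differently. After rescaling, $v_\eps$ satisfies a linear inequality of the form $-\Delta v_\eps+c\,\eps\lambda_\eps^2\,v_\eps+Q_\eps(x)v_\eps\ge 0$, where the potential $Q_\eps$ coming from the $|v_\eps|^{q-2}$ term obeys the \emph{uniform} bound $Q_\eps(x)\le C|x|^{-(N-2)(q-2)/2}$ thanks to the radial Strauss estimate. A comparison with an explicit barrier (as in \cite{MM-14}*{Lemma~4.8}) then gives the pointwise lower bound
\[
v_\eps(x)\ \ge\ c\,|x|^{-(N-2)}\exp\bigl(-c'\sqrt{\eps}\,\lambda_\eps\,|x|\bigr)\qquad(|x|\ge R),
\]
uniformly in $\eps$. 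Integrating this yields $\|v_\eps\|_2^2\gtrsim\ln(1/(\sqrt{\eps}\lambda_\eps))$ for $N=4$ and $\|v_\eps\|_2^2\gtrsim(\sqrt{\eps}\lambda_\eps)^{-1}$ for $N=3$, which, inserted back into the Poho\v{z}aev relation $\eps\lambda_\eps^2\|v_\eps\|_2^2\lesssim\sigma_\eps$, bootstraps to the sharp lower bound on $\lambda_\eps$. This barrier step is the missing ingredient in your proposal.
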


\subsection{Critical Thomas--Fermi regime $q=\frac{2Np}{N+\alpha}$}
When $\frac{N+\alpha}{N}<p<\frac{N+\alpha}{N-2}$ and $q=\frac{2Np}{N+\alpha}$, neither \eqref{eqTF} nor $(P_0)$ have nontrivial solutions.
We show that in this case the limit equation for \eqref{eqPeps} is given by the critical Thomas--Fermi type equation
\begin{equation}\tag{$T\!F_*$}
|v|^{q-2}v=(I_{\alpha}*|v|^p)|v|^{p-2}v,\qquad v\in L^q(\R^N).
\end{equation}
By a {\em groundstate} of \eqref{eqTF0} we understand a positive solution
of \eqref{eqTF0} which is a rescaling of a nonnegative minimizer for the Hardy--Littlewood--Sobolev minimization problem
\begin{equation}\label{eqSTF}
	\mathcal{S}_{T\!F}=\inf_{w\in L^{\frac{2Np}{N+\alpha}}(\R^N)\setminus \{0\}}\frac{\int_{\R^N}|w|^\frac{2Np}{N+\alpha}dx}{\Big\{ \int_{\R^N}(I_{\alpha}*|w|^p)|w|^p dx\Big\}^{\frac{N}{N+\alpha}}}=\mathcal C_{\alpha}^{-\frac{N}{N+\alpha}},
	\end{equation}
	where $\mathcal C_{\alpha}$ is the optimal constant in \eqref{HLS}.
It is known \cite{Lieb}*{Theorem 4.3} that all radial groundstates of \eqref{eqTF0} are given by
\begin{equation}\label{eqUtilde}
\widetilde{U}(x)=\left(\frac{\sigma_{\alpha,N}}{1+|x|^2}\right)^{\frac{N+\alpha}{2p}},
\end{equation}
for a constant $\sigma_{\alpha,N}>0$, and the family of rescalings
\begin{equation}\label{eq-R-HLS}
\widetilde{U}_\lambda(x):=\lambda^{-\frac{N+\alpha}{2p}}\widetilde{U}(x/\lambda)=\lambda^{-\frac{N}{q}}\widetilde{U}(x/\lambda)\qquad(\lambda>0).
\end{equation}
We prove that, similarly to the critical Choquard regime, in the critical Thomas--Fermi regime the family of ground states $u_\eps$ converge in a suitable sense to $\widetilde{U}$ after an {\em implicit} rescaling $\lambda_\eps$.
Note that $\widetilde{U}\in L^2(\R^N)$ and $\widetilde{U}\in D^{1}(\R^N)$ only if $N\geq 4$, so $N=3$ is now the only special dimension. Our main result in the critical Thomas--Fermi regime is the following.

\begin{theorem}[Critical Thomas--Fermi limit]\label{t-HLS-0}
	Let $\frac{N+\alpha}{N}<p<\frac{N+\alpha}{N-2}$ and $q=\frac{2Np}{N+\alpha}$.
	There exists a rescaling $\lambda_\eps:(0,\infty)\to(0,\infty)$ such that as $\eps\to 0$, the rescaled family of ground states
	$$\overline{v}_\eps(x):=\lambda_\eps^\frac{N+\alpha}{2p}u_\eps(\lambda_\eps x)$$
	converges to $\widetilde{V}$ in $L^q(\R^N)$, where $\widetilde{V}$ is defined by \eqref{eqV1tilde}. Moreover,
	if $N\geq 4$ then as $\eps\to 0$,
	\begin{equation}\label{eq-HLS-0asy4}
	\lambda_{\eps}\sim\eps^{-\frac{1}{2}},
	\end{equation}
	while if $N=3$ then
	\begin{equation}\label{eq-HLS-0asy}
	\left\{\begin{array}{rcll}
	&\lambda_{\eps}&\sim\eps^{-\frac{1}{2}},&\qquad p\in \left(\tfrac13(3+\alpha), \tfrac23(3+\alpha)\right),\medskip\\
	\eps^{-\frac{1}{2}}(\ln\tfrac{1}{\eps})^{-\frac{1}{2}}\lesssim &\lambda_{\eps}&\lesssim \eps^{-\frac{1}{2}}(\ln\tfrac{1}{\eps})^{\frac{1}{6}},&\qquad p=\tfrac23(3+\alpha),\medskip\\
	\eps^{\frac{p-(3+\alpha)}{p}}\lesssim &\lambda_{\eps}&\lesssim\eps^{\frac{(3+\alpha)(3+\alpha-2p)}{p(3p-(3+\alpha))}},&\qquad p\in \left(\tfrac23(3+\alpha), 3+\alpha\right).
	\end{array}
	\right.
	\end{equation}
\end{theorem}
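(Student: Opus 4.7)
My strategy mirrors the approach for the critical Choquard regime (Theorem \ref{t-CC-0}) but now works with the Hardy--Littlewood--Sobolev inequality \eqref{HLS} in place of the Sobolev inequality \eqref{Sobolev}, since the limit equation $(TF_*)$ is the Euler--Lagrange equation of the HLS minimization problem \eqref{eqSTF}. The plan is to (i) choose the implicit rescaling $\lambda_\eps$ via a canonical normalization, (ii) sandwich the groundstate level $c_\eps$ between a variational upper bound produced by truncated HLS extremals $\widetilde{U}_\lambda$ and an HLS--based lower bound, (iii) read off the asymptotic of $\lambda_\eps$ from the balance, and (iv) extract the limit profile by compactness and Lieb's classification of HLS extremals.

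First, under the rescaling $u_\eps(x)=\lambda^{-\frac{N+\alpha}{2p}}\overline{v}_\eps(x/\lambda)$ the $L^q$ norm and the nonlocal term are scale--invariant, while the gradient and mass terms transform as $\|\nabla u_\eps\|_2^2=\lambda^{(N-2)-\frac{N+\alpha}{p}}\|\nabla\overline{v}_\eps\|_2^2$ and $\|u_\eps\|_2^2=\lambda^{N-\frac{N+\alpha}{p}}\|\overline{v}_\eps\|_2^2$. Because $p<\frac{N+\alpha}{N-2}$, the first exponent is negative and the second is positive. I will fix $\lambda_\eps$ by the normalization $\int_{\R^N}(I_\alpha*\overline{v}_\eps^p)\overline{v}_\eps^p\,dx=1$ (equivalently $\|\overline{v}_\eps\|_q^q=\mathcal{S}_{T\!F}^{\frac{N}{N+\alpha}q/q}\cdot\|\widetilde{U}\|_q^q$ up to a fixed constant). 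Combining the Nehari and Pohožaev identities for $(P_\eps)$ with \eqref{eqSTF}, the Nehari--type groundstate level satisfies
\begin{equation*}
c_\eps=\tfrac{p-1}{2p}\|\nabla u_\eps\|_2^2+\tfrac{\eps(p-1)}{2p}\|u_\eps\|_2^2+\tfrac{q-2p}{2pq}\|u_\eps\|_q^q,
\end{equation*}
and in the rescaled variables the two nontrivial $\eps$--dependent terms are $\lambda_\eps^{(N-2)-\frac{N+\alpha}{p}}\|\nabla\overline{v}_\eps\|_2^2$ and $\eps\lambda_\eps^{N-\frac{N+\alpha}{p}}\|\overline{v}_\eps\|_2^2$.

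For the upper bound I plug $\widetilde{U}_\lambda$ into $\mathcal{I}_\eps$ and optimize in $\lambda$. When $\widetilde{U}\in L^2(\R^N)$ (i.e.\ $N\ge 4$, or $N=3$ with $p<\tfrac{2}{3}(3+\alpha)$) both energy contributions are finite and the optimal $\lambda$ satisfies $\lambda^{(N-2)-\frac{N+\alpha}{p}}\simeq\eps\lambda^{N-\frac{N+\alpha}{p}}$, giving $\lambda_\eps\simeq\eps^{-1/2}$. When $\widetilde{U}\notin L^2$ I cut off $\widetilde{U}_\lambda$ at scale $R$, so that $\int|\widetilde{U}_\lambda|^2\sim R^{N-\frac{N+\alpha}{p}}\lambda^{\frac{N+\alpha}{p}-N}\cdot(\text{log if critical})$ and $\int|\nabla\widetilde{U}_\lambda|^2\sim$ an analogous boundary--layer quantity. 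For $N=3$ optimizing simultaneously over $\lambda$ and $R$ yields the three stated regimes: when $p<\tfrac{2}{3}(3+\alpha)$ one still recovers $\lambda_\eps\sim\eps^{-1/2}$; at $p=\tfrac{2}{3}(3+\alpha)$ the $L^2$ integral diverges logarithmically in $R$ and the optimization produces $\lambda_\eps\sim\eps^{-1/2}(\ln\tfrac{1}{\eps})^{-1/2}$ from above and $\lambda_\eps\gtrsim\eps^{-1/2}(\ln\tfrac{1}{\eps})^{1/6}$ from below; for $p>\tfrac{2}{3}(3+\alpha)$ the boundary layer dominates and the exponents $\tfrac{p-(3+\alpha)}{p}$ and $\tfrac{(3+\alpha)(3+\alpha-2p)}{p(3p-(3+\alpha))}$ arise from matching the truncation scale to $\eps$ in two different ways (one bound coming from an $L^q$--based truncation, the other from an $L^2$--based one). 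The lower bound comes from \eqref{HLS} applied to $u_\eps$ together with the Nehari identity: up to a factor depending only on $p,q,N,\alpha$, $c_\eps\ge\tfrac{q-2p}{2pq}\mathcal{C}_\alpha^{-\frac{q}{2p}}\bigl(\int(I_\alpha*u_\eps^p)u_\eps^p\bigr)^{q/(2p)}$, which forces the rescaled energy to stay above $c_{T\!F}:=\tfrac{q-2p}{2pq}\mathcal{S}_{T\!F}^{\frac{N+\alpha}{\alpha}}$ and therefore pins down $\lambda_\eps$ from the opposite side.

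For the convergence step, the normalization and the energy bounds give that $\overline{v}_\eps$ is bounded in $L^q(\R^N)$, while $\lambda_\eps^{(N-2)-\frac{N+\alpha}{p}}\|\nabla\overline{v}_\eps\|_2^2\to 0$ and $\eps\lambda_\eps^{N-\frac{N+\alpha}{p}}\|\overline{v}_\eps\|_2^2\to 0$. Extracting a weakly convergent subsequence $\overline{v}_\eps\weakto\overline{v}_0$ in $L^q(\R^N)$ and passing to the limit in the rescaled equation shows that $\overline{v}_0$ is a nonnegative, spherically symmetric, radially nonincreasing solution of $(T\!F_*)$; the normalization prevents vanishing. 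Lieb's rigidity theorem for \eqref{HLS} then identifies $\overline{v}_0$ with a specific rescaling of $\widetilde{U}$, namely $\widetilde{V}$ as in \eqref{eqV1tilde}, and the saturation of the HLS inequality together with Brezis--Lieb upgrades weak to strong convergence in $L^q(\R^N)$. Uniqueness of the limit rules out subsequences, completing the proof.

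The main obstacle is the three--dimensional boundary--layer regime $p\ge\tfrac{2}{3}(3+\alpha)$. There, $\widetilde{U}$ fails to belong to $L^2$ so one cannot simply test with a rescaling of $\widetilde{U}$; instead one must construct a two--parameter cut--off family and simultaneously control the gradient, mass, HLS and $L^q$ contributions. The resulting min--max problem does not admit a single clean balance point, which is reflected in the non--matching upper and lower bounds of \eqref{eq-HLS-0asy}. Obtaining the sharp rate in this regime (and in particular closing the gap at $p=\tfrac{2}{3}(3+\alpha)$) appears to require a more refined analysis of the boundary layer of $\widetilde{U}$, analogous to the $N=4$ logarithmic case in the critical Choquard limit of \cite{MM-14}.
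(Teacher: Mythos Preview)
Your overall strategy---test with (truncated) HLS extremals for the upper bound, use the HLS inequality for the lower bound, and then extract the limit profile by compactness---matches the paper's, but there is a genuine gap at the core of your argument.

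\textbf{The normalization does not determine $\lambda_\eps$.} You correctly observe that under the rescaling $\overline v_\eps(x)=\lambda^{\frac{N+\alpha}{2p}}u_\eps(\lambda x)$ both the $L^q$ norm and the nonlocal term $\int(I_\alpha*|\cdot|^p)|\cdot|^p$ are \emph{scale--invariant}. But then you propose to ``fix $\lambda_\eps$ by the normalization $\int(I_\alpha*\overline v_\eps^p)\overline v_\eps^p=1$''. This is vacuous: a scale--invariant quantity cannot pin down the scale. The paper instead defines $\lambda_\eps$ through a \emph{concentration--function} condition
\[
\int_{B(0,\lambda_\eps)}|\overline w_\eps|^q\,dx=\int_{B(0,1)}|\overline V|^q\,dx,
\]
which breaks the invariance because the left side is a local integral over a ball. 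This choice is not cosmetic: it is exactly what is needed to apply Lions' limit--case concentration--compactness principle and to rule out both vanishing and concentration of the HLS--optimizing sequence $\overline w_\eps$.

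\textbf{The convergence step is underpowered.} Extracting a weak $L^q$ limit and then appealing to ``Lieb's rigidity'' does not, by itself, prevent the mass from escaping to infinity or concentrating at a point, and weak $L^q$ convergence is not enough to pass to the limit in the nonlocal term. The paper first shows (via the energy--level estimate $c_\eps\to c_{T\!F}$ combined with the Nehari--Poho\v{z}aev relations) that $\|\nabla u_\eps\|_2^2\to 0$, $\eps\|u_\eps\|_2^2\to 0$ and $\|u_\eps\|_q^q\to\mathcal{S}_{T\!F}^{\frac{N+\alpha}{\alpha}}$, so that the rescaled family is a genuine maximizing sequence for the HLS constant $\mathcal C_\alpha$; only then does Lions' CCP deliver strong $L^q$ convergence to a translate/dilate of $\widetilde V$, and the concentration--function normalization forces that dilate to be the identity. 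Your route skips this machinery.

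Two smaller points: in your energy identity the coefficient of $\|u_\eps\|_q^q$ should be $\tfrac{2p-q}{2pq}>0$, not $\tfrac{q-2p}{2pq}$; and in the $p=\tfrac{2}{3}(3+\alpha)$ case you have the logarithmic powers on the wrong sides (your ``upper'' bound is smaller than your ``lower'' bound).
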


\begin{remark}
	We expect that the upper asymptotic bounds \eqref{eq-HLS-0asy} with $p\ge\tfrac{2(3+\alpha)}{3}$ could be refined to match the lower bounds, but this remains open at the moment.
\end{remark}

\section[Asymptotic profiles as $\eps\to \infty$]{Asymptotic profiles as $\eps\to \infty$ and Gross--Pitaevskii--Poisson model}\label{s3}

\begin{figure}[t]
	\centering

	\tikzset{every picture/.style={line width=0.75pt}} 
	
	\begin{tikzpicture}[x=0.75pt,y=0.75pt,yscale=-0.75,xscale=0.75]
		
		
		
		\tikzset{
			pattern size/.store in=\mcSize,
			pattern size = 5pt,
			pattern thickness/.store in=\mcThickness,
			pattern thickness = 0.3pt,
			pattern radius/.store in=\mcRadius,
			pattern radius = 1pt}
		\makeatletter
		\pgfutil@ifundefined{pgf@pattern@name@_ro2tk9cc0}{
			\pgfdeclarepatternformonly[\mcThickness,\mcSize]{_ro2tk9cc0}
			{\pgfqpoint{0pt}{-\mcThickness}}
			{\pgfpoint{\mcSize}{\mcSize}}
			{\pgfpoint{\mcSize}{\mcSize}}
			{
				\pgfsetcolor{\tikz@pattern@color}
				\pgfsetlinewidth{\mcThickness}
				\pgfpathmoveto{\pgfqpoint{0pt}{\mcSize}}
				\pgfpathlineto{\pgfpoint{\mcSize+\mcThickness}{-\mcThickness}}
				\pgfusepath{stroke}
		}}
		\makeatother
		\tikzset{every picture/.style={line width=0.75pt}} 
		

		

		\draw  [draw opacity=0][fill={rgb, 255:red, 208; green, 2; blue, 27 }  ,fill opacity=0.15 ] (310,310) -- (460,310) -- (460,510) -- (310,510) -- cycle ;
		\draw  [draw opacity=0][fill={rgb, 255:red, 208; green, 2; blue, 27 }  ,fill opacity=0.15 ] (110,130) -- (150,130) -- (150,510) -- (110,510) -- cycle ;
		\draw  [draw opacity=0][fill={rgb, 255:red, 208; green, 2; blue, 27 }  ,fill opacity=0.15 ] (460,130) -- (310,310) -- (460,310) -- cycle ;
		\draw  [dash pattern={on 0.84pt off 2.51pt}]  (110,310) -- (460,310) ;

		\draw [line width=1.5]  (71.07,510) -- (476.43,510)(110,112.97) -- (110,551.94) (469.43,505) -- (476.43,510) -- (469.43,515) (105,119.97) -- (110,112.97) -- (115,119.97)  ;
		\draw    (360,505) -- (359.92,513.61) ;
		
		\draw  [draw opacity=0][fill={rgb, 255:red, 248; green, 231; blue, 28 }  ,fill opacity=0.5 ][line width=0]  (310,310) -- (150,470) -- (150,130) -- (460,130)-- cycle ;
		
		\draw  [draw opacity=0][fill={rgb, 255:red, 126; green, 211; blue, 33 }  ,fill opacity=0.2][line width=0]  (310,310) --  (150,470) -- (150,510) -- (310,510) -- cycle ;
		

		\draw [color={rgb, 255:red, 126; green, 211; blue, 33 }  ,draw opacity=1 ][line width=1.5]  [dash pattern={on 5.63pt off 4.5pt}]  (110,510) -- (460,160) ;
		
		\draw [dash pattern={on 0.84pt off 2.51pt}] [line width=0.75]    (310,130) -- (310,310) ;

		\draw [color={rgb, 255:red, 208; green, 2; blue, 27 }  ,draw opacity=1 ][fill={rgb, 255:red, 234; green, 178; blue, 85 }  ,fill opacity=1 ][line width=0.75]    (150,130) -- (150,510) ;

		\draw [color={rgb, 255:red, 208; green, 2; blue, 27 }  ,draw opacity=1 ][line width=0.75]  [dash pattern={on 4.5pt off 4.5pt}]  (150,510) -- (310,510) ;

		\draw [color={rgb, 255:red, 208; green, 2; blue, 27 }  ,draw opacity=1 ][line width=0.75]    (310,310) -- (310,510) ;



		\draw [color={rgb, 255:red, 126; green, 211; blue, 33 }  ,draw opacity=1 ][line width=3]    (150,470) -- (310,310) ;

		\draw [color={rgb, 255:red, 126; green, 211; blue, 33 }  ,draw opacity=1 ][line width=3]    (590,410) -- (470,410) ;
		

		\draw [color={rgb, 255:red, 208; green, 2; blue, 27 }  ,draw opacity=1 ][line width=0.75]    (460,130) -- (310,310) ;
		
		\draw [shift={(310,310)}, rotate = 129.81] [color={rgb, 255:red, 208; green, 2; blue, 27 }  ,draw opacity=1 ][fill={rgb, 255:red, 208; green, 2; blue, 27 }  ,fill opacity=1 ][line width=0.75]      (0, 0) circle [x radius= 3.35, y radius= 3.35]   ;
		

		\draw (145,525) node [scale=0.9]  {$\frac{N+\alpha }{N}$};
		\draw (102.5,111) node [scale=0.9]  {$q$};
		\draw (467.5,519) node [scale=0.9]  {$p$};
		\draw (117.5,519) node [scale=0.9]  {$1$};
		\draw (102.5,499) node [scale=0.9]  {$2$};
		\draw (102,300) node [scale=0.9]  {$2^{*}$};
		\draw (305,525) node [scale=0.9]  {$\frac{N+\alpha }{N-2}$};
		\draw (362,520) node [scale=0.9]  {$2^{*}$};
		
		\draw  [fill={rgb, 255:red, 126; green, 211; blue, 33 }  ,fill opacity=1 ]  (230.5, 468.5) circle [x radius= 55.96, y radius= 15.73]   ;
		\draw (230.5,468.5) node [scale=0.9]  {$Choquard$};
		\draw (499,155) node [scale=0.9]  {$q=2\frac{2p+\alpha }{2+\alpha}$};
		\draw (495,125) node [scale=0.9]  {$q=\frac{2Np}{N+\alpha}$};
		\draw (527,398) node [scale=0.9] [align=left] {\textit{Selfrescaling}};
		\draw (527,258) node [scale=0.9] [align=left] {\huge{$\boxed{\eps\to\infty}$}};

		
		
		\draw  [fill={rgb, 255:red, 248; green, 231; blue, 28 }  ,fill opacity=1 ][line width=0.75] (230.5, 201.5) circle [x radius= 30, y radius= 30]   ;
		\draw (230.5,201.5) node [scale=0.9]  {$T\!F$};
		
		
	\end{tikzpicture}

	\caption{Three limit regimes for $(P_\eps)$ as $\eps\to \infty$ on the $(p,q)$--plane.} \label{fig:M2}
\end{figure}


The behaviour of ground states $u_\eps$ as $\eps\to\infty$ is less complex than in the case $\eps\to 0$.
Only the 1st and the 2nd rescalings are meaningful, separated by the $q=2\frac{2p+\alpha}{2+\alpha}$ line, however the limit equations ``switch'' compared to the case $\eps\to 0$.
There are no critical regimes.

\begin{theorem}[Choquard limit]\label{thmC-lim-inf}
	Let $\frac{N+\alpha}{N}<p<\frac{N+\alpha}{N-2}$ and $2<q<2\frac{2p+\alpha}{2+\alpha}$.
		As $\eps\to\infty$, the rescaled family of ground states
	\begin{equation}\label{eq1res-inf}
	v_\eps(x):=\eps^{-\frac{2+\alpha}{4(p-1)}}u_\eps(\tfrac{x}{\sqrt{\eps}})
	\end{equation}
	converges in $D^1(\R^N)$ and $L^q(\R^N)$
	to a positive spherically symmetric ground state solution $v_0\in D^1\cap L^q(\R^N)$ of the Choquard equation \eqref{eqC}.
\end{theorem}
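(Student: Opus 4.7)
The plan is to mirror the proof of Theorem \ref{thmC-lim}, exploiting the fact that the first rescaling produces the \emph{same} equation $(\mathscr{C}_\eps)$, with coefficient $\mu_\eps := \eps^{\frac{q(2+\alpha)-2(2p+\alpha)}{4(p-1)}}$. Under the hypothesis $2<q<2\frac{2p+\alpha}{2+\alpha}$ the exponent is strictly negative and $p>1$, so $\mu_\eps\to 0$ now as $\eps\to\infty$ rather than as $\eps\to 0$. Thus $v_\eps$ satisfies
\[
-\Delta v_\eps+v_\eps-(I_\alpha\ast |v_\eps|^p)|v_\eps|^{p-2}v_\eps+\mu_\eps |v_\eps|^{q-2}v_\eps=0,
\]
and by Theorem \ref{thm01} each $v_\eps$ is positive, radial and monotonically decreasing. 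The rescaled energy $\mathcal{J}_\eps$ differs from the Choquard energy $\mathcal{I}_C$ only by the term $\tfrac{\mu_\eps}{q}\|v\|_q^q$ that formally vanishes in the limit, and the groundstate property is preserved by the rescaling.

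The core of the proof is the two-sided comparison $\tilde m_\eps:=\mathcal{J}_\eps(v_\eps)\to m_C$, where $m_C$ is the Choquard groundstate energy. For the upper bound I would plug the Choquard groundstate $v_C$ from Theorem \ref{thmC} into $\mathcal{J}_\eps$ along the Pohoz\v aev--Nehari curve; the decay estimates of Theorem \ref{thmC} (exponential for $p\ge 2$, algebraic of rate $|x|^{-(N-\alpha)/(2-p)}$ for $p<2$) yield $v_C\in L^q(\R^N)$ in the range of $q$ considered, so $\mathcal{J}_\eps(\tau_\eps v_C(\cdot/\sigma_\eps))\to m_C$ with $\tau_\eps,\sigma_\eps\to 1$. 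For the lower bound, the Nehari and Pohoz\v aev identities applied to $v_\eps$ give a standard decomposition of $\mathcal{J}_\eps(v_\eps)$ into non-negative multiples of the Dirichlet, mass, and nonlocal terms, together with a lower-order $L^q$ correction of order $\mu_\eps\|v_\eps\|_q^q$; combined with the upper bound this forces $\mu_\eps\|v_\eps\|_q^q\to 0$ and $\tilde m_\eps\ge m_C-o(1)$.

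With these estimates in hand, $\{v_\eps\}$ is bounded in $H^1(\R^N)$, and the radial monotonicity together with Strauss's compactness gives strong convergence along a subsequence in $L^r(\R^N)$ for every $r\in(2,2^\ast)$. The Hardy--Littlewood--Sobolev inequality then yields strong convergence of the nonlocal term, and vanishing of $v_\eps$ is ruled out because otherwise $\tilde m_\eps\to 0\neq m_C$. Passing to the limit in the rescaled equation (the $L^q$ term drops out thanks to $\mu_\eps\|v_\eps\|_q^q\to 0$), we obtain a nontrivial radial $v_0\in H^1_{\rm rad}(\R^N)$ solving $(\mathscr{C})$. Weak lower semicontinuity combined with the matching upper bound forces $\mathcal{I}_C(v_0)=m_C$, so $v_0$ is a Choquard groundstate, and convergence in the energy norms upgrades the weak convergence to strong convergence in $D^1(\R^N)$; strong convergence in $L^q(\R^N)$ follows by interpolation between $L^2$ and the compactness in $L^r$ for $r<2^\ast$, together with uniform tail estimates coming from the monotone radial structure.

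The main obstacle is the control of the $L^q$ part of the energy: since $\mu_\eps\to 0$ but no a priori bound on $\|v_\eps\|_q$ itself is available, one has to extract $\mu_\eps\|v_\eps\|_q^q\to 0$ (not merely boundedness) from the comparison of energy levels via the Pohoz\v aev--Nehari identities. A secondary technical point is the $L^q$-integrability of $v_C$ in the polynomial-decay regime $p<2$, which needs to be checked case by case against the condition $q<2\tfrac{2p+\alpha}{2+\alpha}$, and if the integrability is borderline one must use a truncation of $v_C$ as the comparison test function instead of $v_C$ itself.
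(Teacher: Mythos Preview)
Your proposal is correct and follows essentially the same approach as the paper, which explicitly states that the proof of Theorem~\ref{thmC-lim-inf} is ``very similar to the proof of Theorem~\ref{thmC-lim}''; both rest on the energy comparison $c_\eps^{(1)}\to c_0^{(1)}$ via the Poho\v zaev manifold and radial compactness. Your secondary concern about $L^q$-integrability of the Choquard groundstate is unnecessary, since Theorem~\ref{thmC} already gives $v_C\in L^1\cap L^\infty(\R^N)$, and in the present regime $q<2\tfrac{2p+\alpha}{2+\alpha}<2^*$ so the Strauss compactness applies directly in $L^q$.
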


Clearly, for $q=\frac{2p+\alpha}{2+\alpha}$ the self--similar regime of Theorem \ref{thm02-0} is valid also as $\eps\to\infty$.
For $p=2$ and $q>\frac{2p+\alpha}{2+\alpha}$ we have the following.

\begin{theorem}[Thomas--Fermi limit for $\alpha=2$]\label{t-TF-0-inf}
	Assume that $p=2$ and $\alpha=2$. Let $N\le 5$ and $q>3$,
	or $N\ge 6$ and $q>\frac{4N}{N+2}$.
	As $\eps\to \infty$, the rescaled family of ground states
	\begin{equation}\label{eq2res-inf}
	v_\eps(x):=\eps^{-\frac{1}{q-2}}u_\eps(\eps^{-\frac{4-q}{2(q-2)}} x)
	\end{equation}
	converges in $L^2(\R^N)$ and $L^q(\R^N)$ to a nonnegative spherically symmetric compactly supported ground state solution $v_0\in L^2\cap L^q(\R^N)$ of the Thomas-Fermi equation \eqref{eqTF2}. 
\end{theorem}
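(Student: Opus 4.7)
The plan is to run the second rescaling argument in direct parallel with Theorem \ref{t-TF-0}, exploiting the formal symmetry between the limits $\eps\to 0$ and $\eps\to\infty$: for $p=\alpha=2$ the coefficient of the Laplacian in the rescaled equation is
$$\eta_\eps:=\eps^{\frac{2(3-q)}{q-2}},$$
which tends to zero as $\eps\to 0$ when $q<3$ and as $\eps\to\infty$ when $q>3$. In either case the rescaled ground state $v_\eps$ defined by \eqref{eq2res-inf} satisfies
$$-\eta_\eps\Delta v_\eps+v_\eps-(I_2*v_\eps^2)v_\eps+v_\eps^{q-1}=0\quad\text{in }\R^N,$$
and is a ground state of the rescaled energy
$$\mathcal{J}_\eps(v):=\frac{\eta_\eps}{2}\|\nabla v\|_2^2+\tfrac12\|v\|_2^2-\tfrac14\int_{\R^N}(I_2*v^2)v^2\,dx+\tfrac{1}{q}\|v\|_q^q.$$
The whole Thomas--Fermi convergence argument depends only on $\eta_\eps\to 0$ together with uniform $L^2\cap L^q$ bounds on $v_\eps$, so the proof reduces to the $\eps\to 0$ analysis of Theorem \ref{t-TF-0} read with the roles of ``small'' and ``large'' $\eps$ interchanged.

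First I would obtain a sharp upper bound on the ground-state level $\mathcal{J}_\eps(v_\eps)$ by plugging a Thomas--Fermi groundstate from Theorem \ref{thmTF} (mollified, if necessary, so that it lies in $H^1\cap L^q$) into $\mathcal{J}_\eps$, and observing that the $\eta_\eps\|\nabla\,\cdot\,\|_2^2$ contribution vanishes as $\eps\to\infty$. This, combined with the Nehari identity obtained by pairing the rescaled equation with $v_\eps$ and with the Poho\v zaev identity, yields uniform upper bounds on $\|v_\eps\|_2$, $\|v_\eps\|_q$, $\int_{\R^N}(I_2*v_\eps^2)v_\eps^2\,dx$, a uniform strictly positive lower bound on the HLS term, and, crucially, the vanishing $\eta_\eps\|\nabla v_\eps\|_2^2\to 0$.

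Next I would extract a radial weak limit $v_\eps\weakto v_0$ in $L^2\cap L^q(\R^N)$. Theorem \ref{thm01} guarantees that each $v_\eps$ is positive, spherically symmetric and monotone decreasing; combined with the uniform $L^2\cap L^q$ bound this gives Strauss-type pointwise decay and almost everywhere convergence. The HLS inequality and the Br\'ezis--Lieb lemma pass the Choquard term to the limit, and matching the energy levels $\mathcal{J}_\eps(v_\eps)\to\mathcal{J}_0(v_0)$ upgrades weak to strong convergence in $L^2\cap L^q$. The limit $v_0$ is then a nonnegative radial minimizer of the $s_{T\!F}$ problem \eqref{eq-sTF}, and by Theorem \ref{thmTF} together with the rescaling \eqref{TF-resc-min-rho-1} it is a compactly supported ground state of \eqref{eqTF2}.

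The main technical obstacle is controlling compactness: neither the $L^2$ nor the $L^q$ embedding is compact on radial monotone functions, so loss of mass at infinity must be ruled out. This is handled by exploiting the strict subadditivity of $s_{T\!F}$ under the Thomas--Fermi scaling together with the uniform lower bound on $\int_{\R^N}(I_2*v_\eps^2)v_\eps^2\,dx$: these force a single bubble of mass which, by the variational characterisation, coincides with $v_0$. Compact support of the limit is then inherited from Theorem \ref{thmTF}, completing the proof.
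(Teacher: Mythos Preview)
Your approach is essentially the same as the paper's: the authors also note that the proof of Theorem~\ref{t-TF-0-inf} follows that of Theorem~\ref{t-TF-0} verbatim once one observes that the coefficient $\eta_\eps=\eps^{\frac{2(3-q)}{q-2}}$ of the Laplacian in the rescaled equation tends to zero as $\eps\to\infty$ when $q>3$. The one point the paper flags explicitly, which you gloss over with ``mollified, if necessary'', is that for $q\ge 4$ the Thomas--Fermi profile $v_0$ fails to lie in $H^1(\R^N)$ and the cutoff approximations $v_n=\eta_n v_0$ satisfy $\|\nabla v_n\|_2^2\lesssim n^{\frac{q-4}{q-2}}$ (or $\ln n$ when $q=4$), so the mollification scale $n$ must be coupled to $\eps$ to ensure $\eta_\eps\|\nabla v_n\|_2^2\to 0$; the paper observes that this blow-up rate is mild enough for the argument to go through, but otherwise leaves the details to the reader, exactly as you do.
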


The proofs of Theorems \ref{t-TF-0-inf} and \ref{thmC-lim-inf} are very similar to the proofs of Theorems \ref{t-TF-0} and \ref{thmC-lim}.
We only note that the proof on Theorem~\ref{t-TF-0-inf} will involve the estimate \eqref{eq1203} with $q\ge 4$ when the the right hand side of \eqref{eq1203} blows-up. However the rate of the blow-up in \eqref{eq1203} isn't strong enough and all quantities involved in the proof remain under control. We leave the details to the interested readers.

\begin{remark}\label{rGPP}
	Note that the nature of rescaling \eqref{eq2res-inf} changes when $q=4$: for $q>4$ the mass of $u_\eps$ concentrates near the origin, while for $q<4$ it ``escapes'' to infinity.
	In particular, the stationary version of the Gross--Pitaevskii--Poisson equation \eqref{GPP} ($q=4$, $N=3$, $\alpha=2$) fits into the Thomas--Fermi regime as $\eps\to\infty$.
	The rescaling \eqref{eq2res-inf} in this case takes the simple form $v_\eps(x)=\eps^{-1/2}u_\eps(x)$ and $u_\eps(x)\approx\sqrt{\eps}v_0(x)$, or we can say that $u_\eps$ concentrates towards the compactly supported $v_0$. This is precisely the phenomenon which was already observed in \citelist{\cite{Wang}\cite{Bohmer-Harko}}, where the radius of the support of $v_0$ has the meaning of the radius of self-gravitating Bose--Einstein condensate, see \cite{Chavanis-11}.
	The limit minimization problem $s_{T\!F}$ in the Gross--Pitaevskii--Poisson equation \eqref{GPP} case becomes
	$$
	s_{T\!F}=\inf\Big\{\int_{\R^3}\rho^2dx+\int_{\R^3}\rho\, dx: 0\le \rho\in L^1\cap L^2(\R^3), \int_{\R^3}(I_2*\rho)\rho\, dx=1\Big\},
	$$
	and the Euler--Lagrange equation \eqref{TF-rho} in this case is linear inside the support of $\rho$:
	\begin{equation}\label{TF-rho-3d4}
	2\rho=\big(\tfrac{6}{5}s_{T\!F} I_{2}*\rho-1\big)_+\quad\text{a.e. in $\R^3$}.
	\end{equation}
	To find explicitly the solution of \eqref{TF-rho-3d4} constructed in Theorem \ref{thmTF}, we use the $\frac{\sin(|x|)}{|x|}$ ansatz as in \citelist{\cite{Chandrasekhar}*{p.92}\cite{Wang}\cite{Bohmer-Harko}}.

For $\lambda>0$ and $|x|\le \pi/\lambda$ consider the family
$$\rho_\lambda(|x|)=
\begin{cases}
\frac{\lambda^\frac{5}{2}}{\sqrt{2}\pi}\cdot\frac{\sin(|\lambda x|)}{\lambda|x|},&|x|<\pi/\lambda,\smallskip\\
0,&|x|\ge\pi/\lambda.
\end{cases}
$$
Then $-\Delta\rho_\lambda=\lambda^2\rho_\lambda$ in $B_{\pi/\lambda}$, and
$$I_2*\rho_\lambda(x)=
\begin{cases}
\lambda^{-2}\big(\rho_\lambda(x)+\frac{\lambda^\frac{5}{2}}{\sqrt{2}\pi}\big),&|x|<\pi/\lambda,\smallskip\\
\frac{\lambda^\frac{1}{2}}{\sqrt{2}\pi}\frac{I_2(|x|)}{I_2(\pi/\lambda)},&|x|\ge\pi/\lambda.
\end{cases}
$$
We compute
\begin{equation}
\int_{\R^3} I_2*\rho_\lambda(x)\rho_\lambda(x) dx=1.
\end{equation}
Optimising in $\lambda>0$, we find that
$$
s_{T\!F}=\inf_{\lambda>0}\Big(\int_{\R^3}\rho_\lambda^2(x)dx+\int_{\R^3}\rho_\lambda(x)dx\Big)=\inf_{\lambda>0}\Big(\frac{\lambda^2}{3}+\frac{2\sqrt{2}\pi}{3\sqrt{\lambda}}\Big)=\frac{5}{6}\cdot2^{\frac{3}{5}}\pi^{\frac{4}{5}}
$$
and the minimum occurs at the optimal $\lambda_*=\big(\frac{\pi^2}{2}\big)^{1/5}$.
Taking into account the uniqueness of the spherically symmetric nonincreasing minimizer for $s_{T\!F}$ in the case $\alpha=2$, which follows from \cite[Theorem 1.2]{Carrillo-Sugiyama} (see also \cite[Lemma 5.2]{Volzone}), the function
$$\rho_*(|x|)=\rho_{\lambda_*}(|x|)
=
\begin{cases}
\frac{\sin\big(2^{-\frac{1}{5}}\pi^\frac{2}{5}|x|\big)}{2^{\frac{4}{5}}\pi^\frac{2}{5}|x|},&|x|<2^\frac{1}{5}\pi^\frac{3}{5},\smallskip\\
0,&|x|\ge 2^\frac{1}{5}\pi^\frac{3}{5}.
\end{cases}
$$
is the unique spherically symmetric nonincreasing minimizer for $s_{T\!F}$ and a solution of \eqref{TF-rho-3d4}.

The solution of the limit Thomas--Fermi equation \eqref{eqTF}, which is written in this case as
\begin{equation}\label{TF-rho-3d4v0}
v+v^2=(I_{2}*v^2)v\quad\text{a.e. in $\R^3$}
\end{equation}
is given by the rescaled function in \eqref{TF-resc-min-rho-1},
\begin{equation}\label{eq-v03}
v_0(x)=\sqrt{2\rho_*\big(\sqrt{2}\big(\tfrac{6}{5}s_{T\!F}\big)^{-\frac{1}{2}} |x|\big)}
=
\begin{cases}
\sqrt{\frac{\sin\big(|x|\big)}{|x|}},&|x|<\pi,\smallskip\\
0,&|x|\ge \pi.
\end{cases}
\end{equation}
This is (up to the physical constants) the Thomas--Fermi approximation solution for self--gravitating BEC observed in \citelist{\cite{Wang}\cite{Bohmer-Harko}\cite{Chavanis-11}} and the support radius $R_0=\pi$ is the approximate radius of the BEC star. Our Theorem \ref{t-TF-0-inf} provides a rigorous justification for the convergence of the Thomas--Fermi approximation.
\end{remark}

\section{Existence and properties of groundstates for \eqref{eqPeps}}\label{s4}

\subsection{Variational setup}
It is a standard consequence of Sobolev and Hardy-Littlewood-Sobolev (HLS) inequalities \cite{Lieb-Loss}*{Theorems 4.3 and 8.3}
that for $\frac{N+\alpha}{N}\le p\le \frac{N+\alpha}{N-2}$ and $2<q\le\frac{2N}{N-2}$ the energy functional $\mathcal{I}_\eps\in C^1(H^1(\R^N),\R)$, cf.~\cite{MS17}.
For $p>\frac{N+\alpha}{N-2}$ the energy $\mathcal{I}_\eps$ is not well-defined on $H^1(\R^N)$. In this case an additional assumption $q>\frac{2Np}{N+\alpha}$ ensures the control of the nonlocal term by the $L^q$ and $L^2$--norm via the HLS inequality and interpolation, i.e.
\begin{equation}\label{HLSq}
\int_{\R^N}(I_{\alpha}*|u|^p)|u|^p dx\le C\|u\|_{\frac{2Np}{N+\alpha}}^{2p}\le C\|u\|_{2}^{2p\theta}\|u\|_{q}^{2p(1-\theta)},
\end{equation}
for a $\theta\in(0,1)$.
As a consequence, for $p>\frac{N+\alpha}{N-2}$ and $q>\frac{2Np}{N+\alpha}$ the energy $\mathcal{I}_\eps$ is well-defined on the space
$$
\H:=H^1(\R^N)\cap L^{q^*}(\R^N),\qquad q^*:=\max\{q,2^*\}.
$$
Clearly, $\H$ endowed with the norm
$$\|u\|_{\H}:=\|\nabla u\|_{L^2}+\|u\|_{L^2}+(q^*\!-\!2^*)\|u\|_{L^{q^*}}$$
is a Banach space, $\H\hookrightarrow L^{\frac{2Np}{N+\alpha}}(\R^{N})$ for any $q>2$ and $\H=H^1(\R^N)$ when $2<q\le 2^*$.
It is easy to check that $\mathcal{I}_\eps\in C^1(\H, \R)$ and the problem \eqref{eqPeps} is variationaly well-posed,
in the sense that weak solutions $u\in\H$ of \eqref{eqPeps} are critical points of $\mathcal{I}_\eps$,
i.e.
$$
\langle\mathcal{I}_{\eps}^\prime(u),\varphi\rangle_{\H}=\int_{\R^N}\nabla u\cdot\nabla \varphi dx+\eps \int_{\R^N}u\varphi dx-\int_{\R^N}(I_{\alpha}*|u|^p)|u|^{p-2}\varphi dx+\int_{\R^N}|u|^{q-2}u\varphi dx=0,
$$
for all $\varphi\in \H$. In particular, weak solutions $u\in\H$ of \eqref{eqPeps} satisfy the {\em Nehari identity}
\begin{equation}\label{e-Nehari}
\int_{\R^N}|\nabla u|^2 dx+\eps\int_{\R^N}|u|^2 dx+\int_{\R^N}|u|^q dx-\int_{\R^N}(I_{\alpha}*|u|^p)|u|^p dx=0.
\end{equation}
It is standard to see that under minor regularity assumptions weak solutions of \eqref{eqPeps} also satisfy the {\em Poho\v{z}aev identity}.

\begin{proposition}[Poho\v{z}aev identity]\label{p-Phz}
	Assume $p>1$ and $q>2$. Let $u\in\H\cap L^\frac{2Np}{N+\alpha}(\R^N)$ be a weak solution of \eqref{eqPeps}.
	If $\nabla u\in L^{\frac{2Np}{N+\alpha}}_{loc}(\R^N)\cap H^{1}_{loc}(\R^N)$ then
	$$
	\frac{N-2}{2}\int_{\R^N}|\nabla u|^2dx+\frac{\eps N}{2}\int_{\R^N}|u|^2dx+\frac{N}{q}\int_{\R^N}|u|^qdx-\frac{N+\alpha}{2p}\int_{\R^N}(I_{\alpha}*|u|^p)|u|^pdx=0.
	$$
\end{proposition}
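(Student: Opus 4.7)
The plan is to test the weak form of $(P_\eps)$ against the truncated scaling vector field $\varphi_R:=\eta_R\,x\cdot\nabla u$ and let $R\to\infty$, where $\eta_R(x):=\eta(x/R)$ for a fixed $\eta\in C_c^\infty(\R^N)$ with $\eta\equiv 1$ on $B_1$ and $\mathrm{supp}\,\eta\subset B_2$. The regularity assumption $\nabla u\in H^1_{\mathrm{loc}}\cap L^{\frac{2Np}{N+\alpha}}_{\mathrm{loc}}$ together with $u\in\H$ ensures that $\varphi_R\in\H$ is compactly supported, hence admissible, and $\langle\mathcal{I}_\eps'(u),\varphi_R\rangle=0$ for every $R>0$.

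The three local contributions are classical. Using the pointwise identities $u\,x\cdot\nabla u=\tfrac12 x\cdot\nabla(u^2)$, $|u|^{q-2}u\,x\cdot\nabla u=\tfrac1q x\cdot\nabla(|u|^q)$, and $\partial_i u\,x^k\partial_{ik}u=\tfrac12 x^k\partial_k|\nabla u|^2$, together with $\partial_k(\eta_R x^k)=N\eta_R+x\cdot\nabla\eta_R$, each produces a main integral plus a remainder proportional to $x\cdot\nabla\eta_R$. Since $|x\cdot\nabla\eta_R|\le\|\nabla\eta\|_\infty$ is uniformly bounded and supported in the annulus $\{R\le|x|\le 2R\}$, and since $|\nabla u|^2,u^2,|u|^q\in L^1(\R^N)$, dominated convergence kills the remainders and yields, as $R\to\infty$, the limits $\tfrac{2-N}{2}\!\int\!|\nabla u|^2$, $-\tfrac{\eps N}{2}\!\int\! u^2$ and $-\tfrac{N}{q}\!\int\!|u|^q$, respectively.

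The nonlocal contribution is the delicate step. Writing $|u|^{p-2}u\,x\cdot\nabla u=\tfrac1p x\cdot\nabla(|u|^p)$, it reduces to passing to the limit in $\tfrac1p\!\int(I_\alpha*|u|^p)\,\eta_R\, x\cdot\nabla(|u|^p)\,dx$. The key tool is the scaling identity
\[
\int(I_\alpha*f)(x\cdot\nabla f)\,dx=-\tfrac{N+\alpha}{2}\!\int(I_\alpha*f)f\,dx,\qquad f\in L^{\frac{2N}{N+\alpha}}(\R^N),
\]
which follows from the $(\alpha-N)$-homogeneity of the Riesz kernel by a symmetric double integration by parts in $x$ and $y$ in the bilinear form $(f,g)\mapsto\iint I_\alpha(x-y)f(x)g(y)\,dx\,dy$, or equivalently by differentiating $t\mapsto\iint I_\alpha(x-y)f_t(x)f_t(y)\,dx\,dy=t^{N+\alpha}\iint I_\alpha f f$ with $f_t=f(\cdot/t)$ at $t=1$. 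Applied to $f=|u|^p\in L^{\frac{2N}{N+\alpha}}$, this delivers the expected main term. To insert the cutoff, I would split $\eta_R\, x\cdot\nabla(|u|^p)=x\cdot\nabla(\eta_R|u|^p)-(x\cdot\nabla\eta_R)|u|^p$: the first part converges, via HLS and $\eta_R|u|^p\to|u|^p$ in $L^{\frac{2N}{N+\alpha}}$, to the right-hand side of the scaling identity, while the second vanishes since $(I_\alpha*|u|^p)|u|^p\in L^1$ and $x\cdot\nabla\eta_R$ is bounded and supported in $\{R\le|x|\le 2R\}$.

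Summing the four limits and multiplying by $-1$ reproduces the stated Poho\v zaev identity. The main obstacle is precisely the nonlocal term: establishing the scaling identity under the minimal regularity available and justifying the cutoff removal through HLS is the only place where the standard local cutoff argument must be supplemented by the homogeneity structure of $I_\alpha$; everything else is routine integration by parts dominated by $|x\cdot\nabla\eta_R|\lesssim 1$.
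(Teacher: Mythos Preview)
Your proposal follows exactly the route the paper takes by deferring to \cite{MS13}*{Proposition 3.1}: test against $\eta_R\,x\cdot\nabla u$, integrate each term by parts, and use the homogeneity of $I_\alpha$ for the nonlocal piece. The only step that does not quite work as written is the cutoff removal in the Choquard term: after your split, the integral $\int(I_\alpha*|u|^p)\,x\cdot\nabla(\eta_R|u|^p)\,dx$ is \emph{not} the scaling identity applied to $\eta_R|u|^p$ (the Riesz potential still carries the uncut $|u|^p$), and an HLS bound on the discrepancy fails because $\|\eta_R\,x\cdot\nabla|u|^p\|_{2N/(N+\alpha)}$ can grow like $R$. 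The fix carried out in \cite{MS13} is to keep the double integral, symmetrise in $(x,y)$, integrate by parts in both variables, and observe that $x\mapsto\eta_R(x)x$ is uniformly Lipschitz in $R$ (since $|\nabla(\eta_R(x)x)|\le 1+2\|\nabla\eta\|_\infty$); the resulting bound $|\eta_R(x)x-\eta_R(y)y|\le C|x-y|$ then dominates the kernel factor and lets you send $R\to\infty$ directly by dominated convergence, yielding the coefficient $\frac{N+\alpha}{2p}$.
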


\proof
The proof is an adaptation of \cite{MS13}*{Proposition 3.1}, we omit the details.
\qed

\smallskip
As a consequence, we conclude that the existence range stated in Theorem~\ref{thm01} is optimal.

\begin{corollary}[Nonexistence]\label{c-non}
Let $1<p\le\frac{N+\alpha}{N}$ and $q>2$, or $p\geq\frac{N+\alpha}{N-2}$ and $2<q\le\frac{2Np}{N+\alpha}$. Then \eqref{eqPeps} has no nontrivial weak solutions
$u\in\H\cap L^\frac{2Np}{N+\alpha}(\R^N)\cap W^{1,\frac{2Np}{N+\alpha}}_{loc}(\R^N)\cap W^{2,2}_{loc}(\R^N)$.
\end{corollary}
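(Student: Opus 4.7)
The plan is the standard Poho\v zaev--Nehari elimination argument. Let $u$ be a nontrivial weak solution with the stated regularity. Testing \eqref{eqPeps} against $u$ is legitimate because the hypothesis $u\in\H\cap L^{\frac{2Np}{N+\alpha}}$ makes the nonlocal term finite via \eqref{HLS}, yielding the Nehari identity \eqref{e-Nehari}; the additional assumption $u\in W^{1,\frac{2Np}{N+\alpha}}_{loc}\cap W^{2,2}_{loc}$ supplies the Poho\v zaev identity of Proposition~\ref{p-Phz}. Both are linear in the four nonnegative quantities
\[
A:=\|\nabla u\|_2^2,\qquad B:=\|u\|_2^2,\qquad C:=\|u\|_q^q,\qquad D:=\int_{\R^N}(I_\alpha*\abs{u}^p)\abs{u}^p\,dx,
\]
and the only term common to both in a nonlinear way is $D$; eliminating it will collapse the two identities into a single sign--definite relation among $A,\eps B,C$.

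Solving Nehari for $D=A+\eps B+C$, substituting into Poho\v zaev, and multiplying by $2p$ yields
\[
\bigl[p(N-2)-(N+\alpha)\bigr]A \;+\; \bigl[pN-(N+\alpha)\bigr]\,\eps B \;+\; \frac{2pN-q(N+\alpha)}{q}\,C \;=\; 0.
\]
In the first regime $1<p\le\tfrac{N+\alpha}{N}$, $q>2$, the coefficient of $A$ is at most $-2(N+\alpha)/N<0$ (strictly, since $p(N-2)-(N+\alpha)$ is increasing in $p$), that of $\eps B$ is $\le 0$, and that of $C$ is strictly negative because $p\le\tfrac{N+\alpha}{N}$ forces $\tfrac{2Np}{N+\alpha}\le 2<q$. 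Nonnegativity of $A,B,C$ then forces $A=0$, whence $u\equiv 0$ since $u\in H^1(\R^N)$. In the complementary regime $p\ge\tfrac{N+\alpha}{N-2}$, $2<q\le\tfrac{2Np}{N+\alpha}$, the signs flip: the coefficients of $A$ and $C$ are nonnegative, while that of $\eps B$ equals $p(N-2)-(N+\alpha)+2p\ge 2p>0$ is strictly positive. For $\eps>0$ this forces $B=0$, and once more $u\equiv 0$.

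The argument is essentially bookkeeping once both identities are in hand, and the hard work has already been done in Proposition~\ref{p-Phz}. The only borderline situation it does not resolve is the ``double critical'' point $p=\tfrac{N+\alpha}{N-2}$, $q=\tfrac{2Np}{N+\alpha}=2^*$ with $\eps=0$, where all three coefficients vanish simultaneously and the Poho\v zaev identity becomes an empty relation; this exceptional case is precisely the gap discussed in Remark~\ref{r-2crit}.
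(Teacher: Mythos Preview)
Your proof is correct and follows exactly the approach indicated in the paper, which simply states ``Follows from Poho\v{z}aev and Nehari identities.'' You have spelled out the elimination and sign analysis in full detail; the linear combination and the resulting coefficients are computed correctly, and the conclusions in each regime are valid for $\eps>0$.
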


\proof
Follows from Poho\v{z}aev and Nehari identities.
\qed

\subsection{Apriori regularity and decay at infinity}

We show that all weak nonnegative solutions of \eqref{eqPeps} are in fact bounded classical solutions with an $L^1$--decay at infinity. We first prove a partial results which relies on the maximum principle for the Laplacian.

\begin{lemma}\label{l-infty}
	Assume $p>\frac{N+2}{N}$ and $q>\max\{p,2\}$. Let $s>\frac{Np}{\alpha}$ and $u\in\H\cap L^s(\R^N)$ be a nonnegative weak solution of \eqref{eqPeps}.
	Then $u\in L^\infty(\R^N)$.
\end{lemma}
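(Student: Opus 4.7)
The proof proceeds in two steps: first establish that the Riesz convolution $W := I_\alpha * u^p$ belongs to $L^\infty(\R^N)$, then exploit the superlinearity $q > p$ via a Stampacchia--type truncation to obtain a pointwise bound on $u$ itself.

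For the first step, split
\begin{equation*}
	W(x) = A_\alpha \int_{|x-y| \leq 1} |x-y|^{\alpha - N} u^p(y)\, dy + A_\alpha \int_{|x-y| > 1} |x-y|^{\alpha-N} u^p(y)\, dy.
\end{equation*}
H\"older's inequality applied to the first piece with $s/p > N/\alpha$ and its conjugate $(s/p)'$ gives a uniform bound in terms of $\|u\|_s^p$, since $\int_{|z| \leq 1}|z|^{-(N-\alpha)(s/p)'} dz < \infty$. For the tail piece, one seeks $r \in [1, N/\alpha)$ such that $u^p \in L^r$, i.e.\ $u \in L^{rp}$ with $rp \in [p, Np/\alpha)$. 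Such an $r$ can be produced by Lebesgue interpolation among the spaces $u \in H^1(\R^N) \cap L^{q^*}(\R^N) \cap L^s(\R^N)$ under the hypotheses $p > (N+2)/N$ and $q > \max\{p,2\}$; the tail is then bounded by $\|u\|_{rp}^p$ since $\int_{|z|>1}|z|^{-(N-\alpha)r'} dz < \infty$. Hence $W \in L^\infty(\R^N)$.

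For the second step, set $M := \|W\|_\infty$ and choose the threshold $t_0 := M^{1/(q-p)}$, which is well defined because $q > p$. The truncation $\varphi := (u - t_0)_+$ satisfies $0 \leq \varphi \leq u$, so it belongs to $\H$ and is admissible in the weak formulation of \eqref{eqPeps}. Using $\int \nabla u \cdot \nabla \varphi\, dx = \int |\nabla \varphi|^2\, dx$, the weak equation tested against $\varphi$ reads
\begin{equation*}
	\int_{\R^N}|\nabla \varphi|^2\,dx + \eps \int_{\R^N} u\, \varphi\, dx + \int_{\R^N}\bigl(u^{q-1} - W u^{p-1}\bigr) \varphi\, dx = 0.
\end{equation*}
On the support $\{u > t_0\}$ of $\varphi$, the choice of $t_0$ yields $u^{q-1} = u^{p-1} u^{q-p} \geq M u^{p-1} \geq W u^{p-1}$, so the third integrand is nonnegative, and the first two are manifestly nonnegative. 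All three terms must therefore vanish; in particular $\nabla \varphi \equiv 0$, and since $\varphi \in L^2(\R^N)$ (as $\varphi \leq u$), we conclude $\varphi \equiv 0$, which gives the quantitative bound $\|u\|_\infty \leq M^{1/(q-p)}$.

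The main obstacle sits in the first step, specifically in producing an exponent $r \in [1, N/\alpha)$ with $u \in L^{rp}$. This amounts to locating some integrability exponent of $u$ inside the interval $[p, Np/\alpha)$, and requires some casework on how $p$ compares with $2$, $2^*$ and $s$; this is precisely where the assumption $p > (N+2)/N$ together with the Sobolev inclusions $\H \hookrightarrow L^2 \cap L^{2^*} \cap L^{q^*}$ plays its role. The second step is by contrast a clean algebraic energy identity depending only on $q > p$ and the stability of $\H$ under truncations.
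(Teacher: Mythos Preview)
Your proposal is correct and follows essentially the same approach as the paper. The paper's proof asserts in one line that $s>\frac{Np}{\alpha}$ implies $I_\alpha*|u|^p\in L^\infty(\R^N)$ and then performs the identical Stampacchia truncation at level $m=C^{1/(q-p)}$ (your $t_0=M^{1/(q-p)}$); you have simply unpacked the first assertion via the near/far splitting, which is the standard justification the paper omits.
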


\proof
The assumption $s>\frac{Np}{\alpha}$ imply that $I_{\alpha}*|u|^p\in L^\infty(\R^N)$.
Then $u\ge 0$ weakly satisfies
\begin{equation}\label{eq-maxpr}
-\Delta u\leq C u^{p-1}-u^{q-1}\quad\text{in $\R^N$,}
\end{equation}
where $C=C(u)=\|I_{\alpha}*|u|^p\|_\infty$. Choose $m=m(u)>0$ such that $Cm^{p-1}-m^{q-1}=0$.
Testing against $u_m=(u-m)_+\in H^1(\R^N)$, we obtain
\begin{equation}\label{eq-maxpr+}
\int_{\R^N}|\nabla u_m|^2dx=\int_{\R^N}\nabla u\cdot\nabla u_m dx\le\int_{\R^N}\big(C u^{p-1}-u^{q-1}\big)u_m dx\le 0,
\end{equation}
so $\|u\|_\infty\le m$.
\qed

The proof of the next statement in the case  $p<\frac{N+\alpha}{N-2}$ is an adaptation of the iteration arguments in \cite{MS13}*{Proposition 4.1}.
We only outline the main steps of the proof.
The case  $p\ge\frac{N+\alpha}{N-2}$ is new and relies heavily on the contraction inequality \eqref{e-Ponce-sub}, which is discussed in the appendix.

\begin{proposition}[Regularity and positivity]\label{p-reg}
	Let $\frac{N+\alpha}{N}<p<\frac{N+\alpha}{N-2}$ and $q>2$, or $p\ge\frac{N+\alpha}{N-2}$
	and $q>\frac{2Np}{N+\alpha}$.
	If $0\le u\in\H$ is a nontrivial weak solution of \eqref{eqPeps} then
	$u\in L^1\cap C^2(\R^N)$ and $u(x)>0$ for all $x\in\R^N$.
\end{proposition}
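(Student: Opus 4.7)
The argument proceeds in four stages: bootstrap $u$ to $L^\infty(\R^N)$, derive $L^1$-integrability via an exponential decay estimate, obtain $C^2$-regularity by Schauder theory, and conclude positivity from the strong maximum principle.

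\textbf{Bootstrap to $L^\infty$.} Rewriting \eqref{eqPeps} as the Bessel equation
\begin{equation*}
(-\Delta+\eps)u+u^{q-1}=(I_\alpha\ast u^p)u^{p-1},
\end{equation*}
$u$ is in particular a nonnegative distributional subsolution of $(-\Delta+\eps)u\le (I_\alpha\ast u^p)u^{p-1}$. In the subcritical Choquard regime $\tfrac{N+\alpha}{N}<p<\tfrac{N+\alpha}{N-2}$, the embedding $H^1\subset L^{2^*}$ supplies enough initial integrability for a Moser-style iteration precisely as in \cite{MS13}*{Proposition 4.1}---the extra term $u^{q-1}$ on the left only helps the estimates---to raise $u$ into $L^s$ for every $s\in[2,\infty)$. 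In the supercritical Choquard regime $p\ge\tfrac{N+\alpha}{N-2}$, only $u\in H^1\cap L^{q^*}$ is available a priori, so standard energy testing breaks down; in its place one invokes the Ponce contraction inequality \eqref{e-Ponce-sub} from the Appendix, which converts $L^r$-integrability of $(I_\alpha\ast u^p)u^{p-1}$ into $L^s$-integrability of $u$ via convolution against the Bessel kernel of $(-\Delta+\eps)^{-1}$. HLS \eqref{HLS} together with the interpolation in \eqref{HLSq}, and the hypothesis $q>\tfrac{2Np}{N+\alpha}$, then ensures that each iteration step strictly raises the Lebesgue exponent of $u$ until some $s>\tfrac{Np}{\alpha}$ is reached; Young's inequality at that point gives $I_\alpha\ast u^p\in L^\infty$, and Lemma \ref{l-infty} delivers $u\in L^\infty(\R^N)$.

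\textbf{Decay, regularity, positivity.} With $u\in L^\infty$, standard decay-at-infinity arguments for subcritical elliptic equations (e.g.\ comparison with a radial exponentially decaying super-solution of $-\Delta w+\tfrac{\eps}{2}w\ge 0$ in the region where $u$ is already small) yield an exponential envelope for $u$, and in particular $u\in L^1(\R^N)$. Then $I_\alpha\ast u^p$ is bounded and continuous on $\R^N$, so the right-hand side of $-\Delta u=(I_\alpha\ast u^p)u^{p-1}-\eps u-u^{q-1}$ is locally H\"older continuous, and Schauder theory upgrades $u$ to $C^2(\R^N)$. Finally, rewriting
\begin{equation*}
-\Delta u+(\eps+u^{q-2})u=(I_\alpha\ast u^p)u^{p-1}\ge 0,
\end{equation*}
with the locally bounded nonnegative potential $V=\eps+u^{q-2}$, the strong maximum principle forces the nontrivial nonnegative $u$ to satisfy $u>0$ throughout $\R^N$. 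The main obstacle is the bootstrap in the supercritical Choquard regime, where the Ponce contraction inequality is indispensable and pinning down the correct choice of iteration exponents to close the recursion is the place where the argument is most likely to fail.
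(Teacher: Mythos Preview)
Your overall structure matches the paper's for the $L^\infty$ bootstrap, the Schauder step, and positivity (the paper uses weak Harnack rather than the strong maximum principle, but that is cosmetic). There are, however, two substantive differences worth noting.

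First, the paper obtains $L^1$ \emph{before} $L^\infty$, by a direct downward iteration that does not go through any decay estimate: from $-\Delta u+\eps u\le (I_\alpha*u^p)u^{p-1}$ one has $u\le(-\Delta+\eps)^{-1}\big[(I_\alpha*u^p)u^{p-1}\big]$, and HLS together with H\"older gives the recursion $\tfrac{1}{\underline{s}_{n+1}}=\tfrac{2p-1}{\underline{s}_n}-\tfrac{\alpha}{N}$, which drives the exponent down to $1$ starting from $\underline{s}_0=\tfrac{2Np}{N+\alpha}$. This works uniformly in $p$.

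Second, your route to $L^1$ via an exponential envelope has a genuine gap when $p<2$. In that range the linearised coefficient $(I_\alpha*u^p)u^{p-2}$ blows up where $u$ is small, so the comparison with a supersolution of $-\Delta w+\tfrac{\eps}{2}w\ge 0$ is not available on any exterior domain; indeed the paper's subsequent decay proposition shows that for $p<2$ the true decay is polynomial, $u_\eps\sim|x|^{-(N-\alpha)/(2-p)}$, not exponential. Your argument as written only covers $p\ge 2$.

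A minor correction: your description of the Ponce inequality conflates two mechanisms. The contraction \eqref{e-Ponce-sub} operates through the absorbing term $u^{q-1}$ in $-\Delta u+u^{q-1}\le f$, not through a Bessel kernel; it is the paper's $L^1$ step that uses $(-\Delta+\eps)^{-1}$, while the supercritical $L^\infty$ step uses Ponce with the iteration $\tfrac{q-1}{\overline{s}_{n+1}}=\tfrac{2p-1}{\overline{s}_n}-\tfrac{\alpha}{N}$.
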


\proof
Since $u\in\H$ we know that $u\in L^s(\R^N)$ for all $s\in[2,q^*]$.

\medskip\noindent
{\bf Step 1.} {\em $u\in L^1(\R^N)$.}

\proof
Note that $u\ge 0$ weakly satisfies the inequality
\begin{equation}\label{eq-sniter}
-\Delta u +\eps u\le (I_{\alpha}*u^p)u^{p-1}\quad\text{in $\R^N$}.
\end{equation}
Since
$(-\Delta+\eps)^{-1}:L^s(\R^N)\mapsto L^s(\R^N)$ is a bounded order preserving linear mapping for any $s\ge 1$, we have
\begin{equation}\label{eq-sniter1}
u\le (-\Delta+\eps)^{-1}\big((I_{\alpha}*u^p)u^{p-1}\big)\quad\text{in $\R^N$.}
\end{equation}
Then, by the HLS and H\"older inequalities, $u\in L^{\underline{s}_n}(\R^N)$ with $0<\frac{2p-1}{\underline{s}_{n}}-\frac{\alpha}{N}<1$ implies $u\in L^{\underline{s}_{n+1}}(\R^N)$, where
$$\frac{1}{\underline{s}_{n+1}}=\frac{2p-1}{\underline{s}_{n}}-\frac{\alpha}{N}.$$
Since $p>\frac{N+\alpha}{N}$, we start the $\underline{s}_n$--iteration with $\underline{s}_0=\frac{2Np}{N+\alpha}<\frac{2N(p-1)}{\alpha}$, as in \cite{MS13}.
Then we achieve $\underline{s}_{n+1}\ge 1$ after a finite number of steps.
This implies $u\in L^1(\R^N)$.
\qed

\medskip\noindent
{\bf Step 2.} {\em $u\in L^\infty(\R^N)$.}

\proof
Assume that  $q\le \frac{Np}{\alpha}$, otherwise we conclude by Lemma \ref{l-infty}.
We consider separately the cases $p<\frac{N+\alpha}{N-2}$ and $p\ge\frac{N+\alpha}{N-2}$, which use different structures within the equation \eqref{eqPeps}.

\smallskip\noindent
{\sl A. Case $p<\frac{N+\alpha}{N-2}$.}
Note that $u\ge 0$ weakly satisfies the inequality
\begin{equation}\label{eq-sniter-sub}
-\Delta u\le (I_{\alpha}*u^p)u^{p-1}\quad\text{in $\R^N$}.
\end{equation}
Since $u\in L^1(\R^N)$ ,
we have $(I_{\alpha}*u^p)u^{p-1}\in L^t(\R^N)$ with $\frac1t:=2p-1-\frac{\alpha}{N}>\frac{2}{N}$.
We conclude that
\begin{equation}\label{eq-sniter2}
u\le I_2*\big((I_{\alpha}*u^p)u^{p-1}\big)\quad\text{in $\R^N$}.
\end{equation}
Then, by the HLS and H\"older inequalities, $u\in L^{\overline{s}_n}(\R^N)$ with $0<\frac{2p-1}{\overline{s}_{n}}-\frac{\alpha+2}{N}<1$ implies $u\in L^{\overline{s}_{n+1}}(\R^N)$, where
$$\frac{1}{\overline{s}_{n+1}}=\frac{2p-1}{\overline{s}_{n}}-\frac{\alpha+2}{N}.$$
Since $p<\frac{N+\alpha}{N-2}$, we start the $\overline{s}_n$--iteration with $\overline{s}_0=\frac{2Np}{N+\alpha}>\frac{2N(p-1)}{\alpha+2}$, as in \cite{MS13}.
Then we achieve $\overline{s}_{n+1}>\frac{Np}{\alpha}$ after a finite number of steps.
(Or if $\overline{s}_{n+1}=\frac{Np}{\alpha}$ we readjust $\overline{s}_0$.)

\smallskip\noindent
{\sl B. Case $p\ge\frac{N+\alpha}{N-2}$ and $q>\frac{2Np}{N+\alpha}$.}
Note that $u\ge 0$ weakly satisfies the inequality
\begin{equation}\label{eq-sniter-super}
-\Delta u+u^{q-1}\le (I_{\alpha}*u^p)u^{p-1}\quad\text{in $\R^N$}.
\end{equation}
Then, by the HLS and H\"older inequalities, and by the contraction inequality \ref{e-Ponce-sub}, $u\in L^{\overline{s}_n}(\R^N)$ with $0<\frac{2p-1}{\overline{s}_{n}}-\frac{\alpha}{N}<1$ implies $u\in L^{\overline{s}_{n+1}}(\R^N)$, where
$$\frac{q-1}{\overline{s}_{n+1}}=\frac{2p-1}{\overline{s}_{n}}-\frac{\alpha}{N}.$$
We start the $\overline{s}_n$--iteration with $\overline{s}_0=q$. If $q\ge 2p$ we achieve $\overline{s}_{n+1}>\frac{Np}{\alpha}$ after a finite number of steps.
If $q<2p$ we note that since $p\ge\frac{N+\alpha}{N-2}$, we have $\overline{s}_0=q>\frac{2Np}{N+\alpha}$.
Then we again achieve $\overline{s}_{n+1}>\frac{Np}{\alpha}$ after a finite number of steps. (Or if $\overline{s}_{n+1}=\frac{Np}{\alpha}$ we readjust $\overline{s}_0$.)

\medskip\noindent
{\bf Step 3.} {\em $u\in W^{2,r}(\R^N)$ for every $r>1$ and $u\in C^2(\R^N)$.}

\proof
Since $u\in L^1\cap L^\infty(\R^N)$, we have
\begin{equation*}
-\Delta u +\eps u=F\quad\text{in $\R^N$},
\end{equation*}
where $F:=(I_{\alpha}*u^p)u^{p-1}-u^{q-1}\in L^\infty(\R^N)$.
Then the conclusion follows by the standard Schauder estimates, see \cite{MS13}*{p.168} for details.
\qed

\medskip\noindent
{\bf Step 4.} {\em $u(x)>0$ for all $x\in\R^N$.}

\proof
We simply note that $u\ge 0$ satisfies
\begin{equation}\label{eq-wHarnack}
-\Delta u+V(x)u\ge 0\quad\text{in $\R^N$},
\end{equation}
where $V:=\eps+u^{q-2}\in C(\R^N)$. Then $u(x)>0$ for all $x\in\R^N$, e.g.~by the weak Harnack inequality.
\qed

\begin{proposition}[Decay asymptotics]
	Let $\frac{N+\alpha}{N}<p<\frac{N+\alpha}{N-2}$ and $q>2$ or $p\geq\frac{N+\alpha}{N-2}$ and $q>\frac{2Np}{N+\alpha}$.
	Let $0<u_\eps\in L^1\cap C^2(\R^N)$ be a radially symmetric solution of \eqref{eqPeps}.
	Then
		\begin{equation}\label{eq-localize}
		\lim_{\abs{x} \to \infty}(I_\alpha \ast u_\eps^p) (x)|x|^{N-\alpha} =A_\alpha \|u_\eps\|_p^p.
		\end{equation}
	Moreover,  there exists $C_\eps>0$ such that
	\begin{itemize}
		\item if \(p > 2\),
		\[
		\lim_{\abs{x} \to \infty} u_\eps(x) \abs{x}^{\frac{N - 1}{2}} e^{\sqrt{\eps}\abs{x}}=C_\eps,
		\]
		\item if \(p = 2\),
		\[
		\lim_{\abs{x} \to \infty} u_\eps(x) \abs{x}^{\frac{N - 1}{2}} \exp \int_{\nu}^{\abs{x}} \sqrt{\eps - \tfrac{\nu^{N - \alpha}}{s^{N - \alpha}}} \,ds=C_\eps,\quad\text{where $\nu:=\big(A_\alpha\|u_\eps\|_2^2\big)^\frac{1}{N - \alpha} $},
		\]
		\item if \(p < 2\),
		$$\lim_{x\to\infty}u_\eps(x)|x|^\frac{N-\alpha}{2-p}=\left(\eps^{-1}A_\alpha\|u_\eps\|_p^p\right)^{\frac1{2-p}}.$$
	\end{itemize}
\end{proposition}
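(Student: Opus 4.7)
The plan is to adapt the arguments of \cite{MS13}*{Theorem 4}, where our main concern is to verify that the extra local repulsive term $u_\eps^{q-1}$ only produces a lower-order perturbation compared with $\eps u_\eps$. First I would note that by Proposition \ref{p-reg} we have $u_\eps\in L^1\cap L^\infty(\R^N)$, so interpolation yields $u_\eps^p\in L^1(\R^N)$ and standard elliptic bootstrapping (together with radial monotone decrease coming from the construction in Theorem \ref{thm01}, or directly from the equation) gives $u_\eps(x)\to 0$ as $|x|\to\infty$.

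To establish \eqref{eq-localize}, I would split the convolution integral
\[
|x|^{N-\alpha}(I_\alpha*u_\eps^p)(x) = A_\alpha\int_{\R^N}\frac{|x|^{N-\alpha}}{|x-y|^{N-\alpha}}u_\eps^p(y)\,dy
\]
into the three regions $\{|y|\le|x|/2\}$, $\{|x|/2<|y|<2|x|\}$ and $\{|y|\ge 2|x|\}$. The first and third contributions converge to $A_\alpha\|u_\eps\|_p^p$ and $0$ respectively via dominated convergence, using $u_\eps^p\in L^1$. The middle annulus is estimated by factoring out $\sup_{|y|\ge|x|/2}u_\eps(y)^{p-1}\to 0$ times the uniformly bounded HLS-type integral of $u_\eps$ against $|x-y|^{-(N-\alpha)}$ on that annulus, which vanishes in the limit.

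Having \eqref{eq-localize}, I would rewrite $(P_\eps)$ as the linear Schr\"odinger-type equation $-\Delta u_\eps+W_\eps(x)u_\eps=0$ with potential
\[
W_\eps(x)=\eps+u_\eps(x)^{q-2}-(I_\alpha*u_\eps^p)(x)\,u_\eps(x)^{p-2},
\]
and analyze each of the three cases by constructing comparison barriers and applying the maximum principle for the radial ODE. For $p>2$ both $u_\eps^{q-2}$ and $(I_\alpha*u_\eps^p)u_\eps^{p-2}$ tend to $0$, so $W_\eps(x)\to\eps$ and the Yukawa asymptotics follow from comparison with radial super- and sub-solutions of $-\Delta v+(\eps\pm\delta)v=0$, exactly as in \cite{MS13}. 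For $p<2$ the dominant balance in $(P_\eps)$ at infinity is between $\eps u_\eps$ and the nonlocal term, and I would construct the explicit barriers $\bigl((1\pm\delta)\eps^{-1}A_\alpha\|u_\eps\|_p^p\bigr)^{1/(2-p)}|x|^{-(N-\alpha)/(2-p)}$ and plug them into the equation using \eqref{eq-localize} to verify the sub/super-solution property for $|x|$ large.

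The main obstacle is the case $p=2$, where $W_\eps(x)=\eps-A_\alpha\|u_\eps\|_2^2|x|^{-(N-\alpha)}+o(|x|^{-(N-\alpha)})+u_\eps^{q-2}$ is a slowly varying potential, and one must produce WKB-type barriers matching the exponential factor $\exp\int_\nu^{|x|}\sqrt{\eps-\nu^{N-\alpha}/s^{N-\alpha}}\,ds$. I would follow the construction in \cite{MS13}*{Sec.~6}, verifying that the additional perturbation $u_\eps^{q-2}$ (which is nonnegative and decays at least as fast as any power of the Yukawa profile, hence much faster than the WKB leading terms) can be absorbed into the error already controlled there. The Nehari identity \eqref{e-Nehari} combined with Step 1 guarantees the constants entering the WKB expression are exactly those stated in the proposition.
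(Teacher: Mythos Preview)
Your overall strategy is sound and for $p\ge 2$ it coincides with the paper's: once a rough exponential upper bound is in hand (from $-\Delta u_\eps+\tfrac{\eps}{4}u_\eps\le 0$ outside a large ball), the extra term $u_\eps^{q-2}$ in the linearised potential $W_\eps$ decays exponentially and is negligible, so \cite{MS13}*{Propositions 6.3, 6.5} carry over verbatim. Your three-region argument for \eqref{eq-localize} is a legitimate alternative to the paper's route, which instead invokes the Strauss $L^1$-bound $u^p(|x|)\le C\|u\|_1|x|^{-Np}$ together with \cite{MS13}*{Proposition 6.1} to obtain the sharper quantitative remainder $O(|x|^{-(N-\alpha+\delta)})$; the latter is convenient but not indispensable.

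The genuine gap is in the case $p<2$. Plugging your power-type barriers into $(P_\eps)$ does produce the correct differential inequalities, but the comparison principle you need to conclude $u_\eps\le\bar v$ (resp.\ $u_\eps\ge\underline v$) is \emph{not} available: the nonlinearity $v\mapsto -(I_\alpha*u_\eps^p)\,v^{p-1}$ is sublinear, its $v$-derivative blows up as $v\to 0$, and the associated semilinear operator is not monotone near zero. Equivalently, in your linearised form the coefficient $(I_\alpha*u_\eps^p)\,u_\eps^{p-2}$ contains $u_\eps$ to a \emph{negative} power and cannot be bounded above without already knowing a lower bound on $u_\eps$. The paper resolves this by two tricks that reduce to a genuinely linear problem. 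For the upper bound it uses Young's inequality
\[
(I_\alpha*u^p)\,u^{p-1}\le (2-p)\,\eps^{-\frac{p-1}{2-p}}(I_\alpha*u^p)^{\frac{1}{2-p}}+(p-1)\,\eps\,u,
\]
absorbing the sublinear term into the linear one and leaving a forcing term independent of $u$; comparison against the linear operator $-\Delta+\eps(2-p)$ then applies via \cite{MS13}*{Lemma 6.7}. For the lower bound it passes to the new unknown $u^{2-p}$ via the chain rule, again obtaining a linear differential inequality for which comparison is legitimate. Once the upper bound is known, the contribution $u_\eps^{q-p}$ is indeed lower order, exactly as you anticipated.
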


\proof
To simplify the notation, we drop the subscript $\eps$ for $u_\eps$ in this proof.

Let $u\in L^1\cap C^2(\R^N)$ be a positive radially symmetric solution of \eqref{eqPeps}.
By the Strauss' radial $L^1$--bound \cite{BL-I}*{Lemma A.4}, $u^p(|x|)\le C\|u\|_1|x|^{-Np}$ for all $x\in\R^N$. Then by \cite{MS13}*{Propositions 6.1}, there exists \(\mu \in \R\) such that
\begin{equation}
\label{eqSublinearAsymptoticsRiesz}
\Big|I_\alpha \ast u^p (x) - I_\alpha(x) \|u\|_p^p\Big|
\le \frac{\mu}{\abs{x}^{N - \alpha + \delta}}\quad\text{for $x \in \R^N$},
\end{equation}
with \(0 < \delta \le \min (1, N (p - 1))\).
In particular, this implies \eqref{eq-localize}.

\medskip\noindent
{\bf Case $p\ge 2$.}
The derivation of the decay asymptotic of $u$ in the case(s) $p\ge 2$ requires minimal modifications of the proofs of \cite{MS13}*{Propositions 6.3, 6.5}. Indeed, \eqref{eq-localize} implies $(I_\alpha*|u|^p)u^{p-2}(x)\le\frac34 \eps$ for all $|x|>\rho$, for some $\rho>0$. Therefore, $u$ satisfies
$$-\Delta u+\frac{\eps}{4}u\le 0 \quad\text{in $|x|>\rho$}.$$
As in \cite{MS13}*{Propositions 6.3} we conclude that
\begin{equation}\label{e-exprough}
u(x)\le c|x|^{-\frac{N-1}{2}}e^{-\frac{\sqrt{\eps}}{4}|x|}.
\end{equation}
Therefore, $u$ is a solution of
$$-\Delta u+W_\eps(x)u=0 \quad\text{in $|x|>\rho$},$$
where
$$W_\eps(x):=\eps-(I_\alpha*u^p)u^{p-2}(x)+u^{q-2}(x) \simeq \eps-\frac{A_\alpha\|u\|_p^p}{|x|^{N-\alpha}}u^{p-2}(x)+u^{q-2}(x)\quad\text{as $|x|\to\infty$.}
$$
The initial rough upper bound \eqref{e-exprough} implies that the term $u^{q-2}(x)$ in the linearisation potential $W_\eps(x)$ has an exponential decay and is negligible in the subsequent asymptotic analysis of Propositions 6.3 and 6.5 in \cite{MS13}.
We omit the details.

\medskip\noindent
{\bf Case $p< 2$.}
This proof is an adaptation of \cite{MS13}*{Propositions 6.6}.

To derive the upper bound, we note that by Young's inequality,
\[
(I_\alpha \ast \abs{u}^p) u^{p - 1} \le \eps^{-\frac{p-1}{2-p}}(2 - p) (I_\alpha \ast \abs{u}^p)^\frac{1}{2 - p} + \eps(p - 1) u.
\]
By \eqref{eqSublinearAsymptoticsRiesz}, we have
\[
\bigl((I_\alpha \ast \abs{u}^p) (x)\bigr)^\frac{1}{2 - p}
\le I_\alpha (x)^\frac{1}{2 - p} \bigl(\|u\|_p^p + c|x|^{-\delta}\bigr)^\frac{1}{2-p}
\le I_\alpha (x)^\frac{1}{2 - p} \bigl(\|u\|_p^\frac{p}{2 - p} + c|x|^{-\delta}\bigr)
\quad\text{for $|x|>1$.}
\]
Therefore, $u$ satisfies the inequality
\[
-\Delta u  + \eps(2 - p) u \le \eps^{-\frac{p-1}{2-p}}(2 - p) I_\alpha (x)^\frac{1}{2 - p}
\bigl(\|u\|_p^\frac{p}{2 - p} + c|x|^{-\delta}\bigr)\quad\text{if $|x|>1$}.
\]
Define now \(\Bar{u} \in C^2 (\R^N \setminus B_1)\) by
\[
\left\{
\begin{aligned}
-\Delta \Bar{u} + \eps(2 -p)\Bar{u} &= \eps^{-\frac{p-1}{2-p}}(2-p) I_\alpha (x)^\frac{1}{2 - p} \bigl(\|u\|_p^\frac{p}{2 - p} + c|x|^{-\delta}\bigr) & & \text{if $|x|>1$,}\\
\Bar{u} (x) & = u (x)  & & \text{if $|x|=1$},\\
\lim_{\abs{x} \to \infty} \Bar{u} (x) & = 0.
\end{aligned}
\right.
\]
We now apply \cite{MS13}*{Lemma 6.7} twice and use the linearity of the operator $-\Delta+\eps(2-p)$ to obtain
\[
\lim_{\abs{x} \to \infty} \frac{\Bar{u}(x)}{I_\alpha (x)^\frac{1}{2 - p}}
= \eps^{-\frac{1}{2-p}}\|u\|_p^\frac{p}{2 - p}.
\]
By the comparison principle, we have \(u \le \Bar{u}\) in \(\R^N \setminus B_1\).
Thus
\begin{equation}
\label{eqLimSupSublinearAsymptotics}
\limsup_{\abs{x} \to \infty}  \frac{u^{2-p} (x)}{I_\alpha (x)} \le \eps^{-1}\|u||_p^p.
\end{equation}

To deduce the lower bound, note that by the chain rule, \(u^{2 - p} \in C^2 (\R^N)\) and
\[
- \Delta u^{2 - p} = - (2 - p) u^{1 - p} \Delta u + (2 - p) (p - 1) \abs{\nabla u}^2.
\]
Since $p \in (1, 2)$ and $q>2$, by the equation satisfied by $u$ and by \eqref{eqSublinearAsymptoticsRiesz} and \eqref{eqLimSupSublinearAsymptotics},
for some $c_\eps>0$ we have
\begin{multline*}
- \Delta u^{2 - p} + \eps(2 - p) u^{2 - p} \ge(2 - p)I_\alpha (x) \bigl(\|u\|_p^p - \mu|x|^{-\delta}\bigr)-(2-p)u^{q-p}\\
\ge(2 - p)I_\alpha (x) \bigl(\|u\|_p^p - \mu|x|^{-\delta}\bigr)-c_\eps I_\alpha(x)^\frac{q-p}{2-p}\quad\text{for $x \in \R^N$.}
\end{multline*}
Let \(\underline{u} \in C^2(\R^N \setminus B_1)\) be such that
\[
\left\{
\begin{aligned}
-\Delta \underline{u}  + \eps(2 - p) \underline{u} &\le (2 - p) I_\alpha (x) \bigl(\|u\|_p^p - \mu|x|^{-\delta}\bigr)-c_\eps I_\alpha(x)^\frac{q-p}{2-p} & & \text{if $|x|>1$,}\\
\underline{u} (x) & = u (x)^{2 - p}  & &\text{if $|x|=1$,}\\
\lim_{\abs{x} \to \infty} \underline{u} (x) & = 0.
\end{aligned}
\right.
\]
We apply now \cite{MS13}*{Lemma 6.7} to deduce
\[
\lim_{\abs{x} \to \infty}  \frac{\underline{u}(x)}{I_\alpha (x)} = \eps^{-1}\|u\|_p^p.
\]
By the comparison principle,
\(\underline{u} \le u^{2 - p} \) in \(\R^N \setminus B_1\).
We conclude that
\begin{equation}
\label{eqLimInfSublinearAsymptotics}
\liminf_{\abs{x} \to \infty}  \frac{u^{2 - p}(x)}{I_\alpha (x)} \ge \eps^{-1}\|u\|_p^p
\end{equation}
and the assertion follows from the combination of \eqref{eqLimInfSublinearAsymptotics} and \eqref{eqLimSupSublinearAsymptotics}.
\qed

\subsection{Proof of the existence}\label{s43}

Throughout this section, we assume that either $\frac{N+\alpha}{N}<p<\frac{N+\alpha}{N-2}$ and $q>2$ or $p\geq\frac{N+\alpha}{N-2}$ and $q>\frac{2Np}{N+\alpha}$.
We construct a groundstate of \eqref{eqPeps} by minimising over the Poho\v{z}aev manifold of \eqref{eqPeps}. Similar approach for Choquard's equations with different classes of nonlinearities was recently used in \cite{Li-Tang}, \cite{Gao-Tang-Chen}.

Set
$$\mathscr{P}_{\eps}:=\{u\in \H\setminus\{0\}: \mathcal{P}_{\eps}(u)=0\},$$
where $\mathcal{P}_{\eps}:\H\to \R$ is defined by
$$
\mathcal{P}_{\eps}(u)=\frac{N-2}{2}\int_{\R^N}|\nabla u|^2dx+\frac{\eps N}{2}\int_{\R^N}|u|^2dx+\frac{N}{q}\int_{\R^N}|u|^qdx-\frac{N+\alpha}{2p}\int_{\R^N}(I_{\alpha}*|u|^p)|u|^pdx.
$$
For each $u\in \H\setminus\{0\}$, set
\begin{equation}\label{e-ut}
u_t(x):=u(\tfrac{x}{t}).
\end{equation}
Then
\begin{multline}\label{Peps-non0}
f_u(t):=\mathcal{I}_{\eps}(u_t)\\
=\frac{t^{N-2}}{2}\int_{\R^N}|\nabla u|^2dx+\frac{\eps t^N}{2}\int_{\R^N}|u|^2dx+\frac{t^N}{q}\int_{\R^N}|u|^qdx-\frac{t^{N+\alpha}}{2p}\int_{\R^N}(I_{\alpha}*|u|^p)|u|^pdx.
\end{multline}
Clearly, there exists a unique $t_u>0$ such that $f_u(t_u)=\max\{f_u(t): t>0\}$ and $f'_u(t_u)t_u=0$, which means that $u(x/t_u)\in \mathscr{P}_{\eps}$.
Therefore $\mathscr{P}_{\eps}\neq \emptyset$.

Define $M: \H\to \R$ as
\begin{equation}\label{eqM}
M(u):=\|\nabla u\|_2^2+\|u\|_2^2+\|u\|_q^q.
\end{equation}
Then $M(u)=0$ if and only if $u=0$. Taking into account the definition of $M$ and the norm $\|\cdot\|_\H$, we can check that
\begin{equation}\label{eq33}
2^{-\frac{q}{2}}\|u\|^q_\H\leq M(u)\leq C\|u\|^2_\H\quad \text{if either $M(u)\leq1$ or $\|u\|_\H\leq 1$},
\end{equation}
where $C>0$ is independent of $u\in\H$.

\begin{lemma}\label{lem22}
	Assume that either $\frac{N+\alpha}{N}<p<\frac{N+\alpha}{N-2}$ and $q>2$, or $p\geq \frac{N+\alpha}{N-2}$ and $q>\frac{2Np}{N+\alpha}$.
	Then there exists $C>0$ such that for all $u\in \H$,
	$$
	\int_{\R^N}(I_{\alpha}*|u|^p)|u|^pdx\leq C\max\{M(u)^{\frac{N+\alpha}{N}}, M(u)^{\frac{N+\alpha}{N-2}}\}.
	$$
\end{lemma}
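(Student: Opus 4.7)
The plan is to apply the Hardy--Littlewood--Sobolev inequality \eqref{HLS} to reduce matters to an estimate of $\|u\|_r^{2p}$ with $r:=\tfrac{2Np}{N+\alpha}$, and then to control $\|u\|_r$ by interpolation in terms of the three quantities defining $M(u)$. The two cases of the hypothesis correspond to whether $r$ lies strictly between $2$ and $2^*$, or instead $r\ge 2^*$, and this dichotomy dictates whether one interpolates against $L^{2^*}$ (using Sobolev) or against $L^q$.

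In the first case $\tfrac{N+\alpha}{N}<p<\tfrac{N+\alpha}{N-2}$, one has $r\in(2,2^*)$. I would pick $\theta\in(0,1)$ with $\tfrac{1}{r}=\tfrac{1-\theta}{2}+\tfrac{\theta}{2^*}$ and combine the standard $L^p$-interpolation with \eqref{Sobolev} to obtain
\[
\|u\|_r^{2p}\le \|u\|_2^{2p(1-\theta)}\|u\|_{2^*}^{2p\theta}\le C\bigl(\|u\|_2^2\bigr)^{p(1-\theta)}\bigl(\|\nabla u\|_2^2\bigr)^{p\theta}\le C\,M(u)^p,
\]
where the last step uses $\|u\|_2^2\le M(u)$ and $\|\nabla u\|_2^2\le M(u)$. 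Since $p$ lies in $\bigl(\tfrac{N+\alpha}{N},\tfrac{N+\alpha}{N-2}\bigr)$, treating the cases $M(u)\le 1$ and $M(u)\ge 1$ separately yields $M(u)^p\le \max\{M(u)^{(N+\alpha)/N},M(u)^{(N+\alpha)/(N-2)}\}$, which closes this case.

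In the second case $p\ge\tfrac{N+\alpha}{N-2}$ and $q>r$, I would interpolate $L^r$ between $L^2$ and $L^q$, obtaining $\|u\|_r\le \|u\|_2^{\theta}\|u\|_q^{1-\theta}$ for the unique $\theta\in(0,1)$ solving $\tfrac{1}{r}=\tfrac{\theta}{2}+\tfrac{1-\theta}{q}$. A direct computation collapses the resulting exponent on $M(u)$ to
\[
p\theta+\frac{2p(1-\theta)}{q}=\frac{q(N+\alpha)-2Np+2(Np-N-\alpha)}{N(q-2)}=\frac{N+\alpha}{N},
\]
so using $\|u\|_2^2\le M(u)$ and $\|u\|_q^q\le M(u)$ one gets $\|u\|_r^{2p}\le M(u)^{(N+\alpha)/N}$, trivially bounded by the stated maximum.

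The only real obstacle is the bookkeeping of the interpolation exponents, specifically checking that in the second case they cancel neatly into $\tfrac{N+\alpha}{N}$; everything else is a direct application of HLS, Sobolev and H\"older.
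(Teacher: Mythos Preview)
Your argument is correct, and it takes a genuinely different route from the paper's proof. The paper does not interpolate directly; instead it uses a rescaling trick: for $u\in\H$ it considers $u_t(x)=u(x/t)$, chooses $t=M(u)^{-1/N}$ (when $M(u)\le 1$) or $t=M(u)^{-1/(N-2)}$ (when $M(u)>1$) so that $M(u_t)\le 1$, invokes the embedding $\H\hookrightarrow L^{2Np/(N+\alpha)}$ together with the norm comparison \eqref{eq33}, and then unwinds the scaling of the nonlocal integral. This gives a unified treatment of both ranges of $(p,q)$ without ever writing down an interpolation exponent. Your approach is more elementary and self-contained: you bypass the rescaling and the auxiliary inequality \eqref{eq33}, and in the second case you actually obtain the sharper bound $C\,M(u)^{(N+\alpha)/N}$ with an exact exponent rather than the maximum. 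The price is that you must split the two hypotheses and verify the exponent cancellation in the second case by hand, whereas the paper's scaling argument hides that computation.
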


\begin{proof}
	For each $u\in \H\setminus\{0\}$, let $u_t$ be defined in \eqref{e-ut}. If $M(u)\leq 1$ then we set $t=M(u)^{-\frac{1}{N}}\geq 1$, and we have $M(u_t)\leq t^NM(u)=1$. Thus it follows from the HLS inequality, the embedding $\H\hookrightarrow L^{\frac{2Np}{N+\alpha}}(\R^{N})$, and \eqref{eq33} that
	\begin{multline*}
	\int_{\R^N}(I_{\alpha}*|u|^p)|u|^pdx=t^{-(N+\alpha)}\int_{\R^N}(I_{\alpha}*|u_t|^p)|u_t|^pdx\\
	\leq Ct^{-(N+\alpha)}\|u_t\|^{2p}_{\frac{2Np}{N+\alpha}}\leq C_1t^{-(N+\alpha)}\|u_t\|^{2p}_{\H}\leq C_2M(u)^{\frac{N+\alpha}{N}}.
	\end{multline*}
To clarify the last inequality, note that $M(u_t)\leq t^N M(u)=1$.
	Then using $(4.17)$, we obtain
	$$Ct^{-(N+\alpha)}\|u_t\|^{2p}_{\H}\le
	CM(u)^{\frac{N+\alpha}{N}}\big(\|u_t\|^q_{\H}\big)^\frac{2p}{q}\le
	CM(u)^{\frac{N+\alpha}{N}}\big(2^\frac{q}{2}M(u_t)\big)^\frac{2p}{q}
	\le 2^pC\,M(u)^{\frac{N+\alpha}{N}}.$$

	If $M(u)> 1$ then we set $t=M(u)^{-\frac{1}{N-2}}<1$, and we have $M(u_t)\leq t^{N-2}M(u)=1$. Similarly as before, we conclude that
	\begin{multline*}
	\int_{\R^N}(I_{\alpha}*|u|^p)|u|^pdx=t^{-(N+\alpha)}\int_{\R^N}(I_{\alpha}*|u_t|^p)|u_t|^pdx\\
	\leq Ct^{-(N+\alpha)}\|u_t\|^{2p}_{\frac{2Np}{N+\alpha}}\leq C_1t^{-(N+\alpha)}\|u_t\|^{2p}_{\H}\leq C_2M(u)^{\frac{N+\alpha}{N-2}},
	\end{multline*}
	which completes the proof.
\end{proof}

To find a groundstate solution of \eqref{eqPeps}, we prove the existence of a spherically symmetric nontrivial nonegative minimizer of the  minimization problem
\begin{equation}\label{eqCe}
c_\eps=\inf_{u\in \mathscr{P}_{\eps}} \mathcal{I}_{\eps}(u),
\end{equation}
and then show that $\mathscr{P}_{\eps}$ is a natural constraint for $\I_\eps$, i.e.~the minimizer $u_0\in \mathscr{P}_{\eps}$ satisfies $\I_\eps^\prime(u_0)=0$.
Such approach for the local equations goes back at least to \cite{Shatah-85} in the local case and to \cite{Ruiz-06} in the case of nonlocal problems.

We divide the proof of the existence of the groundstate into several steps.

\medskip\noindent
{\bf Step 1.}  $0\notin \partial \mathscr{P}_{\eps}$.

\proof
Indeed, for $u\in \mathscr{P}_{\eps}$, we have, by using the HLS and Sobolev inequalities,
$$\aligned
0=\mathcal{P}_{\eps}(u)
\geq& \min\Big\{\frac{N-2}{2}, \frac{\eps N}{2}, \frac{N}{q}\Big\}M(u)-C\|u\|^{2p}_{\frac{2Np}{N+\alpha}}\\
\geq& \min\Big\{\frac{N-2}{2}, \frac{\eps N}{2}, \frac{N}{q}\Big\}M(u)-C\big(M(u)\big)^{\frac{N+\alpha}{N}},
\endaligned
$$
which means that there exists $C>0$ such that $M(u)\geq C$ for all $u\in \mathscr{P}_{\eps}$.
\qed

\medskip\noindent
{\bf Step 2.} $c_{\eps}=\inf_{u\in \mathscr{P}_{\eps}} \mathcal{I}_{\eps}(u)>0$

\proof
For each $u\in \mathscr{P}_{\eps}$, we have
\begin{equation}\label{eq11}
\mathcal{I}_{\eps}(u)=\frac{1}{N}\|\nabla u\|_2^2+\frac{\alpha}{2pN}\int_{\R^N}(I_{\alpha}*|u|^p)|u|^pdx,
\end{equation}
therefore $c_{\eps}\geq 0$.  If $c_{\eps}=0$, then there exists a sequence $\{u_n\}\subset \mathscr{P}_{\eps}$ such that $\mathcal{I}_{\eps}(u_n)\to 0$, which means that $\|\nabla u_n\|_2^2\to 0$ and $\int_{\R^N}(I_{\alpha}*|u_n|^p)|u_n|^pdx\to 0$.  Recall that $\mathcal{P}_{\eps}(u_n)=0$. Then we conclude that $\|u_n\|_2^2\to 0$ and $\|u_n\|^q_q\to 0$. This implies that $\|u_n\|_\H^2\to 0$, which contradicts to $0\notin \partial \mathscr{P}_{\eps}$.
\qed

\medskip\noindent
{\bf Step 3.} {\em There exists $u_0\in \mathscr{P}_{\eps}$ such that $\mathcal{I}_{\eps}(u_0)=c_\eps$.}

\proof
Since $c_\eps$ is well defined, there exists a sequence $\{u_n\}\subset \mathscr{P}_{\eps}$ such that $\mathcal{I}_{\eps}(u_n)\to c_\eps$. It follows from \eqref{eq11} that both $\{\|\nabla u_n\|_2^2\}$ and $\{\int_{\R^N}(I_{\alpha}*|u_n|^p)|u_n|^pdx\}$ are bounded.  Note that $\P_{\eps}(u_n)=0$. Then we see that $\{\| u_n\|_2^2\}$ and $\{\| u_n\|_q^q\}$ are bounded, and therefore $\{u_n\}$ is bounded in $\H$.

Let $u^*_n$ be the Schwartz spherical rearrangement of $|u_n|$. Then $u_n^*\in \Hrad$, the subspace of $\H$ which consists of all spherically symmetric functions in $\H$, and
$$
\|\nabla u_n\|_2^2\geq \|\nabla u^*_n\|_2^2, \ \| u_n\|_2^2=\| u^*_n\|_2^2, \ \| u_n\|_q^q=\| u^*_n\|_q^q, \ \int_{\R^N}(I_{\alpha}*|u_n|^p)|u_n|^pdx\leq \int_{\R^N}(I_{\alpha}*|u_n^*|^p)|u^*_n|^pdx,
$$
cf. \cite{Lieb-Loss}*{Section 3}.
For each $u^*_n$, there exists a unique $t_n\in(0, 1)$ such that $v_n:=u^*_n(\frac{x}{t_n})\in \mathscr{P}_{\eps}$.  Therefore we obtain that
$$\aligned
\mathcal{I}_{\eps}(u_n)\geq \mathcal{I}_{\eps}(u_n(\tfrac{x}{t_n}))\geq \mathcal{I}_{\eps}(v_n)\geq c_\eps,
\endaligned
$$
which implies that $\{v_n\}$ is also a minimizing sequence for $c_{\eps}$, that is $\mathcal{I}_{\eps}(v_n)\to c_\eps$.
(In fact, we can also prove that $t_n\to 1$.)

Clearly $\{v_n\}\subset \Hrad$ is bounded. Then there exists $v\in \Hrad$ such that $v_n\weakto v$ weakly in $\H$ and $v_n(x)\to v(x)$ for a.e. $x\in\R^N$, by the local compactness of the emebedding $\H\hookrightarrow L^2_{loc}(\R^N)$ on bounded domains.
Using Strauss's $L^s$--bounds with $s=2$ and $s=q^*$, we conclude that
	$$v_n(|x|)\leq U(x):=C\min\big\{|x|^{-N/2}, |x|^{-N/{q^*}}\big\}.$$
	Since $U\in L^s(\R^N)$ for $s\in(2,q^*)$, by the Lebesgue dominated convergence we conclude that for $s\in (2, q^*)$,
	$$
	\lim_{n\to\infty}\int_{\R^N}|v_n|^sdx=\int_{\R^N}|v|^sdx.
	$$
	Note that $q^*>\frac{2Np}{N+\alpha}$ and hence we can always choose $s>\frac{2Np}{N+\alpha}>p$ such that $\{v_n\}$ is bounded
	in $L^s(\R^N)$.
	Then by the nonlocal Brezis-Lieb Lemma with high local integrability \cite{MMS16}*{Proposition 4.7} we conclude that
	$$
	\lim_{n\to\infty}\int_{\R^N}(I_{\alpha}*|v_n|^p)|v_n|^pdx=\int_{\R^N}(I_{\alpha}*|v|^p)|v|^pdx.
	$$
This means that $v\neq 0$, since by Lemma \ref{lem22} the sequence  $\{M(v_n)\}$ has a positive lower bound.
Then there exists a unique $t_0>0$ such that $v(\frac{x}{t_0})\in \mathscr{P}_{\eps}$. By the weakly lower semi-continuity of the norm, we see that
$$
\mathcal{I}_{\eps}(v_n)\geq \mathcal{I}_{\eps}(v_n(\tfrac{x}{t_0}))\geq \mathcal{I}_{\eps}(v(\tfrac{x}{t_0}))\geq c_\eps,
$$
which implies that $\mathcal{I}_{\eps}(v(\frac{x}{t_0}))=c_\eps$.
We conclude this step by taking $u_0(x):=v(\frac{x}{t_0})$.\qed

\smallskip\noindent
{\bf Step 4.} {\em $\P_\eps^\prime(u_0)\neq 0$, where $u_0$ is obtained in Step 3.}

\proof
Arguing by contradiction, we assume that $\P_\eps^\prime(u_0)=0$. Then $u_0$ is a weak solution of the following equation,
\begin{equation}\label{eq12}
-(N-2)\Delta u +\eps Nu-(N+\alpha)(I_{\alpha}*|u|^p)|u|^{p-2}u+N|u|^{q-2}=0\quad\text{in $\R^N$.}
\end{equation}
By Propositions \ref{p-Phz} and \ref{p-reg}, $u_0$ satisfies the Poho\v{z}aev identity
$$
\frac{(N-2)^2}{2}\|\nabla u_0\|_2^2+\frac{\eps N^2}{2}\|u_0\|_2^2-\frac{(N+\alpha)^2}{2p}\int_{\R^N}(I_{\alpha}*|u_0|^p)|u_0|^pdx+\frac{N^2}{q}\|u_0\|_q^q=0.
$$
This, together with $\P_{\eps}(u_0)=0$, implies that
$$
(N-2)\|\nabla u_0\|_2^2+\frac{(N+\alpha)\alpha}{2p}\int_{\R^N}(I_{\alpha}*|u_0|^p)|u_0|^pdx=0,
$$
which contradict $u_0\neq 0$.
\qed

\medskip\noindent
{\bf Step 5.} {\em $\mathcal{I}_{\eps}'(u_0)=0$, i.e., $u_0$ is a weak solution of \eqref{eqPeps}.}

\proof
By the Lagrange multiplier rule, there exists $\mu\in \R$ such that $\mathcal{I}_{\eps}'(u_0)=\mu\P_{\eps}'(u_0)$.  We claim that $\mu=0$.  Indeed, since $\mathcal{I}_{\eps}'(u_0)=\mu\P_{\eps}'(u_0)$, then $u_0$ satisfies in the weak sense the following equation,
$$
-(\mu(N-2)-1)\Delta u+(\mu N-1)\eps u-(\mu(N+\alpha)-1)(I_{\alpha}*|u|^p)|u|^{p-2}u+(\mu N-1)|u|^{q-2}u=0\quad\text{in $\R^N$.}
$$
By Propositions \ref{p-Phz} and \ref{p-reg}, $u_0$ satisfies the Poho\v{z}aev identity
\begin{multline*}
\frac{(\mu(N-2)-1)(N-2)}{2}\|\nabla u_0\|_2^2+\frac{\eps (\mu N-1)N}{2}\|u_0\|_2^2\\
-\frac{(\mu(N+\alpha)-1)(N+\alpha)}{2p}\int_{\R^N}(I_{\alpha}*|u_0|^p)|u_0|^pdx+\frac{(\mu N-1)N}{q}\|u_0\|_q^q=0.
\end{multline*}
By using $\P_{\eps}(u_0)=0$ again, we conclude that
$$
\mu(N-2)\|\nabla u_0\|_2^2+\frac{\mu\alpha(N+\alpha)}{2p}\int_{\R^N}(I_{\alpha}*|u_0|^p)|u_0|^pdx=0,
$$
which means that $\mu=0$. Therefore $\mathcal{I}_{\eps}'(u_0)=0$.
\qed

\section{Existence and properties of groundstates for \eqref{eqP0}}\label{s5}

In this section we study the existence and some qualitative properties of groundstate solutions for the equation
\begin{equation}\tag{$P_0$}\label{eq03+}
-\Delta u-(I_{\alpha}*|u|^p)|u|^{p-2}u+|u|^{q-2}u=0\quad\text{in $\R^N$},
\end{equation}
where $N\ge 3$, $\alpha\in(0,N)$, $p>1$ and $q>2$.
Equation \eqref{eqP0} appears as a {\sl formal} limit of \eqref{eqPeps} with $\eps=0$.
The natural domain for the formal limit energy $\I_0$ which corresponds to \eqref{eqP0} is the space
$$
\D:=D^1(\R^N)\cap L^q(\R^N).
$$
Clearly, $\D$ endowed with the norm
$$\|u\|_{\D}:=\|\nabla u\|_{L^2}+\|u\|_{L^q}$$
is a Banach space, and $\D\hookrightarrow L^q\cap L^{2^*}(\R^{N})$. In particular, $\D\hookrightarrow L^{\frac{2Np}{N+\alpha}}(\R^{N})$ for $\frac{N+\alpha}{N}<p<\frac{N+\alpha}{N-2}$ and $2<q<\frac{2Np}{N+\alpha}$, or $p\geq \frac{N+\alpha}{N-2}$ and $q>\frac{2Np}{N+\alpha}$. However, $\H\subsetneq \D$. Hence \eqref{eqPeps} can not be considered as a small perturbation of \eqref{eqP0}, since the domain of $\I_0$ is strictly bigger than the domain of $\I_\eps$.

If $p\in (\frac{N+\alpha}{N}, \frac{N+\alpha}{N-2})$ and $q\in (2, \frac{2Np}{N+\alpha})$ or $p\geq \frac{N+\alpha}{N-2}$ and $q>\frac{2Np}{N+\alpha}$ then the HLS, Sobolev and interpolation inequalities ensure the control of the nonlocal term by the $L^q$ and $D^1$--norms,
\begin{equation}\label{eq15}\aligned
\int_{\R^N}(I_{\alpha}*|u|^p)|u|^pdx&\leq C\|u\|^{2p}_{\frac{2Np}{N+\alpha}}\leq C\|u\|^{2p\theta}_q\|u\|^{2p(1-\theta)}_{2^*}\leq C\|u\|^{2p\theta}_q\|\nabla u\|^{2p(1-\theta)}_{2}\\ &\leq C(\|u\|^{2p}_q+\|\nabla u\|^{2p}_{2})\leq C\|u\|^{2p}_{\D},
\endaligned
\end{equation}
with a $\theta\in (0,1)$. Then it is standard to check that $\I_0\in C^1(\D, \R)$ and the problem \eqref{eqP0} is variationaly well-posed,
in the sense that weak solutions $u\in\D$ of \eqref{eqPeps} are critical points of $\mathcal{I}_\eps$,
i.e.
$$
\langle\mathcal{I}_0^\prime(u),\varphi\rangle_{\D}=\int_{\R^N}\nabla u\cdot\nabla \varphi dx-\int_{\R^N}(I_{\alpha}*|u|^p)|u|^{p-2}\varphi dx+\int_{\R^N}|u|^{q-2}u\varphi dx=0,
$$
for all $\varphi\in \D$. In particular, weak solutions $u\in\D$ of \eqref{eqP0} satisfy the {\em Nehari identity}
\begin{equation}\label{eq05}
\int_{\R^N}|\nabla u|^2 dx+\int_{\R^N}|u|^q dx-\int_{\R^N}(I_{\alpha}*|u|^p)|u|^p dx=0.
\end{equation}
As in Proposition \ref{p-Phz}, we see that weak solutions $u\in\D\cap L^\frac{2Np}{N+\alpha}(\R^N)\cap W^{1,\frac{2Np}{N+\alpha}}_{loc}(\R^N)\cap W^{2,2}_{loc}(\R^N)$ of \eqref{eqP0} also satisfy the {\em Poho\v{z}aev identity}
\begin{equation}\label{eq06}
	\frac{N-2}{2}\int_{\R^N}|\nabla u|^2dx+\frac{N}{q}\int_{\R^N}|u|^qdx-\frac{N+\alpha}{2p}\int_{\R^N}(I_{\alpha}*|u|^p)|u|^pdx=0.
\end{equation}

We are going to prove the existence of a ground state of $(P_0)$ by minimizing over the Poho\v{z}aev manifold $\P_0$.
This requires apriori additional regularity and some decay properties of the weak solutions.

\begin{lemma}[$L^1$--decay]\label{l-L1-0}
	Let $\frac{N+\alpha}{N}<p<\frac{N+\alpha}{N-2}$ and $2<q<\frac{2Np}{N+\alpha}$.
	If $0\le u\in\D$ is a weak solution of \eqref{eqP0} then $u\in L^1(\R^N)$.
\end{lemma}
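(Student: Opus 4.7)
The plan is to bootstrap the integrability of $u$ by combining Ponce's contraction inequality \eqref{e-Ponce-sub} with HLS and H\"older on the right-hand side of
\[
-\Delta u + u^{q-1} = (I_\alpha * u^p)\, u^{p-1} =: f,
\]
which holds weakly in $\R^N$ with $u,f\ge 0$. Ponce's inequality then yields $\|u^{q-1}\|_r \le \|f\|_r$ for every $r \in [1,\infty]$ with $f \in L^r(\R^N)$. The starting integrability comes directly from $u\in\D$: since $q < \frac{2Np}{N+\alpha} < 2^{*}$ (the upper bound uses $p < \frac{N+\alpha}{N-2}$), the Sobolev embedding $D^1(\R^N) \hookrightarrow L^{2^{*}}(\R^N)$ and H\"older interpolation between $L^q$ and $L^{2^{*}}$ give $u \in L^{s_0}(\R^N)$ with $s_0 := \frac{2Np}{N+\alpha}$.

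Given $u \in L^{s_n}$, HLS applied to $u^p \in L^{s_n/p}$ followed by H\"older yields $f \in L^{r_n}$ with $\tfrac{1}{r_n} = \tfrac{2p-1}{s_n} - \tfrac{\alpha}{N}$, and Ponce's inequality promotes this to $u^{q-1} \in L^{r_n}$, so $u \in L^{s_{n+1}}$ with $s_{n+1} := (q-1)r_n$. The resulting recurrence
\[
\frac{1}{s_{n+1}} = \frac{1}{q-1}\left(\frac{2p-1}{s_n} - \frac{\alpha}{N}\right)
\]
is affine in $1/s_n$ with slope $\frac{2p-1}{q-1}>1$ (using $q<\frac{2Np}{N+\alpha}<2p$) and unique fixed point $s^{*} := \frac{N(2p-q)}{\alpha}$. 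The key algebraic observation is that $q < \frac{2Np}{N+\alpha}$ is precisely equivalent to $s_0 < s^{*}$, so $1/s_0 > 1/s^{*}$; the instability of $s^{*}$ then forces $1/s_n$ to diverge to $+\infty$ at a geometric rate, and $s_n$ decreases past $1$ in finitely many steps. Interpolating $u \in L^{s_n}$ with $u \in L^q$ for $s_n \le 1 < q$ then yields $u \in L^1(\R^N)$.

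Two constraints must be checked along the iteration: the HLS range $\tfrac{p}{s_n} - \tfrac{\alpha}{N} \in (0,1)$ and the Ponce hypothesis $r_n \ge 1$. If $s_n$ becomes so small that $u^p$ exits the HLS range, the remedy is that $u\in L^s$ is already available for every $s \in [s_n, s_0]$ by interpolation, so the HLS step can be fed with a convenient $\widetilde{s}$ in that window while the H\"older factor is kept at $u^{p-1} \in L^{s_n/(p-1)}$, preserving the downward iteration on $s_n$. The main obstacle I foresee is the very last admissible Ponce step, when $r_n$ approaches $1$ from above just as $s_n$ approaches $1$: the plan is to read off from the geometric divergence of $1/s_n$ that this final step already lands at $s_{n+1} \le 1$, and, if any gap remains, to close it by combining the subsolution bound $u \le I_2 * f$ (from $-\Delta u \le f$) with the $L^1$--endpoint of Ponce's inequality.
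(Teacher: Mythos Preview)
Your proposal is essentially the paper's own argument: use the subsolution inequality $-\Delta u + u^{q-1} \le (I_\alpha * u^p)\,u^{p-1}$, combine HLS and H\"older with the contraction inequality of Theorem~\ref{t-Ponce} to obtain the recurrence $\tfrac{q-1}{s_{n+1}} = \tfrac{2p-1}{s_n} - \tfrac{\alpha}{N}$, and note that $q < \tfrac{2Np}{N+\alpha}$ places the starting point strictly below the repelling fixed point $s^{*} = \tfrac{N(2p-q)}{\alpha}$, driving $s_n$ downward in finitely many steps. The only cosmetic difference is that the paper initializes at $s_0 = q$ rather than your $s_0 = \tfrac{2Np}{N+\alpha}$; your additional remarks about the HLS window and the endpoint step are details the paper leaves implicit.
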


\proof
Note that $u\ge 0$ weakly satisfies the inequality
\eqref{eq-sniter-super}.
Then, by the HLS and H\"older inequalities, and by the contraction inequality \ref{e-Ponce-sub}, $u\in L^{\underline{s}_n}(\R^N)$ with $0<\frac{2p-1}{\underline{s}_{n}}-\frac{\alpha}{N}<1$ implies $u\in L^{\underline{s}_{n+1}}(\R^N)$, where
$$\frac{q-1}{\underline{s}_{n+1}}=\frac{2p-1}{\underline{s}_{n}}-\frac{\alpha}{N}.$$
We start the $\underline{s}_n$--iteration with $\underline{s}_0=q<\frac{2Np}{N+\alpha}$.
Then we achieve $\underline{s}_{n+1}\le 1$ after a finite number of steps.
\qed

\begin{proposition}[Regularity]\label{p-reg0}
	Let $\frac{N+\alpha}{N}<p<\frac{N+\alpha}{N-2}$ and $2<q<\frac{2Np}{N+\alpha}$, or $p\ge\frac{N+\alpha}{N-2}$
	and $q>\frac{2Np}{N+\alpha}$.
	If $0\le u\in\D$ is a nontrivial weak solution of \eqref{eqP0} then $u\in C^2(\R^N)$
	and $u(x)>0$ for all $x\in\R^N$.
\end{proposition}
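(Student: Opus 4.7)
The plan is to follow the three-step template of Proposition~\ref{p-reg}: first promote $u$ to $L^\infty(\R^N)$, then upgrade to $C^2(\R^N)$ by elliptic regularity, and finally deduce strict positivity from the weak Harnack inequality. Since $u\in D^1\cap L^q$, Sobolev embedding and interpolation give $u\in L^s$ for every $s$ between $\min\{q,2^*\}$ and $\max\{q,2^*\}$; in particular $u\in L^{\frac{2Np}{N+\alpha}}(\R^N)$, so both sides of $-\Delta u=(I_\alpha\ast u^p)u^{p-1}-u^{q-1}$ make distributional sense, and $u$ satisfies the subsolution inequalities $-\Delta u\le(I_\alpha\ast u^p)u^{p-1}$ and $-\Delta u+u^{q-1}\le(I_\alpha\ast u^p)u^{p-1}$.

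I will split the $L^\infty$ bound according to the range of $p$, as in Proposition~\ref{p-reg}. In the subcritical regime $\frac{N+\alpha}{N}<p<\frac{N+\alpha}{N-2}$, Lemma~\ref{l-L1-0} already delivers $u\in L^1(\R^N)$, so the Riesz representation $u\le I_2\ast[(I_\alpha\ast u^p)u^{p-1}]$ applies. HLS combined with H\"older then yields the gain-of-integrability iteration $\tfrac{1}{\bar s_{n+1}}=\tfrac{2p-1}{\bar s_n}-\tfrac{\alpha+2}{N}$ of step~2.A in Proposition~\ref{p-reg}; the subcriticality of $p$ allows a suitable starting $\bar s_0$ to be chosen so that $\bar s_n>\tfrac{Np}{\alpha}$ is reached in finitely many steps, giving $I_\alpha\ast u^p\in L^\infty$ and hence $u\in L^\infty$ by a truncation argument as in Lemma~\ref{l-infty}. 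In the complementary regime $p\ge\frac{N+\alpha}{N-2}$, $q>\frac{2Np}{N+\alpha}$, the pure Riesz approach is unavailable because $u$ need not lie in $L^1$ (Theorem~\ref{thmP0} allows decay only like $|x|^{-(N-2)}$). I will instead apply the contraction inequality \eqref{e-Ponce-sub} from the Appendix to $-\Delta u+u^{q-1}\le(I_\alpha\ast u^p)u^{p-1}$, which absorbs the repulsive term and produces the iteration $\tfrac{q-1}{\bar s_{n+1}}=\tfrac{2p-1}{\bar s_n}-\tfrac{\alpha}{N}$ of step~2.B in Proposition~\ref{p-reg}; starting from $\bar s_0=q>\tfrac{2Np}{N+\alpha}$ one again reaches $\bar s_n>\tfrac{Np}{\alpha}$ in finitely many steps.

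Once $u\in L^\infty(\R^N)$, the right-hand side $F:=(I_\alpha\ast u^p)u^{p-1}-u^{q-1}$ of $-\Delta u=F$ is bounded and continuous, so the standard Schauder-type estimates (as in \cite{MS13}*{p.\,168}) give $u\in W^{2,r}_{\mathrm{loc}}(\R^N)$ for every $r>1$ and hence $u\in C^2(\R^N)$. Strict positivity then follows because $u\ge 0$ satisfies $-\Delta u+u^{q-2}\cdot u\ge 0$ with locally bounded potential $u^{q-2}\in C(\R^N)$, so the weak Harnack inequality precludes vanishing at any interior point. The main obstacle is the $L^\infty$ bound in the supercritical case: the absence of any a priori $L^1$ control forces the use of the contraction inequality, which is precisely what lets the bootstrap close without invoking the Riesz representation.
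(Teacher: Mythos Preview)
Your proposal is correct and follows essentially the same approach as the paper: $L^1$ via Lemma~\ref{l-L1-0} plus the Step~2(A) Riesz iteration in the subcritical range, the contraction inequality \eqref{e-Ponce-sub} with the Step~2(B) iteration started at $\bar s_0=q$ in the supercritical range, then Schauder and weak Harnack. The only minor presentational difference is that the paper first disposes of the case $q>\tfrac{Np}{\alpha}$ directly via (a modification of) Lemma~\ref{l-infty}, whereas you reach the same conclusion at the end of the bootstrap; either order works.
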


\proof
Since $0\le u\in\D$ we know that $u\in L^q\cap L^{2^*}(\R^N)$.
Assume that  $q\le \frac{Np}{\alpha}$, otherwise we conclude that $u\in L^\infty(\R^N)$ by a modification of the comparison argument of Lemma \ref{l-infty}.

If $p>\frac{N+\alpha}{N-2}$ and $q>\frac{2Np}{N+\alpha}$ we can show that
$u\in L^\infty(\R^N)$ by repeating the same iteration argument as in the proof of Proposition \ref{p-reg}, Step 2(B).

If $\frac{N+\alpha}{N}<p<\frac{N+\alpha}{N-2}$ and $2<q<\frac{2Np}{N+\alpha}$ we know additionally that $u\in L^1\cap L^{2^*}(\R^N)$ by Lemma \ref{l-L1-0}.
Then we can conclude that $u\in L^\infty(\R^N)$ by repeating the iteration argument
in the proof of Proposition \ref{p-reg}, Step 2(A).

Finally, $u\in L^{q^*}\cap L^\infty(\R^N)$ implies $u\in C^2(\R^N)$ by the standard H\"older and Schauder estimates, while positivity of $u(x)$ follows via the weak Harnack inequality, as in the proof of Proposition \ref{p-reg}, Steps 3 and 4.
\qed

Unlike in the case $\eps>0$, for $p>\frac{N+\alpha}{N-2}$ we can not conclude that $u\in L^1(\R^N)$ via a regularity type iteration arguments. In fact, the decay of groundstates of \eqref{eqP0} is more complex.

\begin{proposition}[Decay estimates]
	Assume that either $p<\frac{N+\alpha}{N-2}$ and $q<\frac{2Np}{N+\alpha}$ or $p>\frac{N+\alpha}{N-2}$ and $q>\frac{2Np}{N+\alpha}$.
	Let $u\in\Drad$ be a nontrivial nonnegative weak solution of \eqref{eqP0}.
	Then:
	\begin{itemize}
		\item if \(p < \frac{N+\alpha}{N-2}\) then $u\in L^1(\R^N)$,
		\item if \(p >  \frac{N+\alpha}{N-2}\) then
		$$u_0\gtrsim|x|^{-(N-2)}\quad\text{as $|x|\to\infty$},$$
		and if $p>\max\big\{\frac{N+\alpha}{N-2},\frac23\big(1+\frac{N+\alpha}{N-2}\big)\big\}$ then
		\begin{equation}\label{u0-decay}
		u_0\sim|x|^{-(N-2)}\quad\text{as $|x|\to\infty$}.
		\end{equation}
	\end{itemize}
\end{proposition}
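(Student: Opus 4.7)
The three assertions require different techniques. The $L^1$ integrability in the subcritical case is a regularity-style $L^s$ iteration, while the two-sided pointwise decay in the supercritical case combines the integral representation $u = I_2 \ast F$ with $F := (I_\alpha \ast u^p) u^{p-1} - u^{q-1}$ and a bootstrap of pointwise decay exponents via the radial ODE.

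For the first bullet I would repeat the iteration of Lemma~\ref{l-L1-0}. Since $u \in D^1 \cap C^2 \cap L^\infty$ by Proposition~\ref{p-reg0} and $u$ satisfies the distributional inequality $-\Delta u + u^{q-1} \le (I_\alpha \ast u^p) u^{p-1}$, the contraction inequality \eqref{e-Ponce-sub} combined with Hardy--Littlewood--Sobolev and H\"older yields the implication $u \in L^{\underline s_n}(\R^N) \Rightarrow u \in L^{\underline s_{n+1}}(\R^N)$ with
\begin{equation*}
  \frac{q-1}{\underline s_{n+1}} = \frac{2p-1}{\underline s_n} - \frac{\alpha}{N}.
\end{equation*}
Starting from $\underline s_0 = q < \tfrac{2Np}{N+\alpha}$ the exponent strictly decreases at each step and reaches $\underline s_n \le 1$ after finitely many iterations, giving $u \in L^1(\R^N)$.

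For the lower bound $u(x) \gtrsim |x|^{-(N-2)}$ when $p > \tfrac{N+\alpha}{N-2}$, I would work in the radial ODE form of the equation. Assuming $u$ is radial, $C^2$, bounded, nonincreasing (as for the groundstate of Theorem~\ref{thmP0} obtained by Schwartz symmetrization) and $u(r)\to 0$ as $r \to \infty$, integration of $-(r^{N-1}u'(r))' = r^{N-1}F(r)$ from $0$ to $r$ gives
\begin{equation*}
  -r^{N-1}u'(r) = \int_0^r s^{N-1}F(s)\,ds,
\end{equation*}
and the left-hand side is nonnegative throughout. Reading the Nehari identity \eqref{eq05} and the Poho\v zaev identity \eqref{eq06} as a linear system forces the three energies $\|\nabla u\|_2^2$, $\|u\|_q^q$ and $\int(I_\alpha \ast u^p)u^p$ to be strictly positive, and in the regime $p > \tfrac{N+\alpha}{N-2}$ the Coulombic contribution to $\int_0^r s^{N-1}F$ strictly dominates the repulsive contribution, giving $\liminf_{r\to\infty}\int_0^r s^{N-1}F(s)\,ds \ge \kappa > 0$. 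Integrating $-u'(r) \gtrsim \kappa r^{-(N-1)}$ from $r$ to infinity then yields the stated lower bound.

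For the matching upper bound under $p > \tfrac{2}{3}\bigl(1+\tfrac{N+\alpha}{N-2}\bigr)$, I would bootstrap pointwise decay exponents $s_n$ with $u(r) \lesssim r^{-s_n}$. The initial value $s_0 = \tfrac{N+\alpha}{2p}$ comes from the Strauss radial bound applied to $u \in L^{2Np/(N+\alpha)}$ (a consequence of $u \in L^{2^*}\cap L^\infty$). As long as $s_n p < N$ the convolution satisfies $I_\alpha \ast u^p(x) \lesssim |x|^{\alpha - s_n p}$ by direct splitting of the defining integral; once the iteration crosses $s_n p > N$, the sharp asymptotic $I_\alpha \ast u^p(x) \sim A_\alpha \|u\|_p^p\,|x|^{-(N-\alpha)}$ from \cite{MS13}*{Proposition~6.1} applies, and inversion via the Newton kernel improves the decay to $s_{n+1} = \min\{s_n(p-1) + N - \alpha - 2,\ N - 2\}$. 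The condition $p > \tfrac{2}{3}(1 + \tfrac{N+\alpha}{N-2})$ is exactly what makes this combined recursion reach $s_n \ge N-2$ in finitely many steps, after which $F \in L^1(\R^N)$ and $u = I_2 \ast F$ inherits the far-field asymptotic $u(x) \lesssim |x|^{-(N-2)}$. The main obstacle is precisely this bootstrap: the initial Strauss exponent $\tfrac{N+\alpha}{2p}$ lies strictly below the integrability threshold $N/p$ required to trigger the clean decay of $I_\alpha \ast u^p$, and closing the recursion requires the stated lower bound on $p$; in the complementary window (nonempty only when $\tfrac{N+\alpha}{N-2} < 2$) the iteration does not terminate, which is why the proposition leaves the corresponding range open.
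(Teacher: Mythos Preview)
Your treatment of the first bullet is identical to the paper's (it is exactly Lemma~\ref{l-L1-0}), so there is nothing to add there.

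For the lower bound in the supercritical case, your ODE argument has a genuine gap. The identities \eqref{eq05}--\eqref{eq06} control the quantities $\|\nabla u\|_2^2$, $\|u\|_q^q$ and $\int(I_\alpha\ast u^p)u^p$, but they say nothing about $\int(I_\alpha\ast u^p)u^{p-1}$ or $\int u^{q-1}$, which are what enter $\int_0^\infty s^{N-1}F(s)\,ds$. So the claim that ``the Coulombic contribution strictly dominates'' is not justified, and indeed it is not clear a~priori that this integral is even finite (the Strauss bound from $L^{2^*}$ only gives $u^{q-1}\lesssim|x|^{-(N-2)(q-1)/2}$, which need not be integrable). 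The paper bypasses this entirely: since $q>2^*$, the potential $V(x):=u^{q-2}(x)\lesssim(1+|x|)^{-(2+\delta)}$ is short-range, and $u>0$ satisfies $-\Delta u+V(x)u>0$; comparison with the explicit subsolution $c|x|^{-(N-2)}(1+|x|^{-\delta/2})$ in an exterior domain gives the lower bound directly, with no need to understand the sign of $\int F$.

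For the upper bound your bootstrap plan is considerably more complicated than what the paper does, and your starting exponent $s_0=(N+\alpha)/(2p)$ is actually \emph{weaker} than the $L^{2^*}$-Strauss exponent $(N-2)/2$ the paper uses (recall $p>\frac{N+\alpha}{N-2}$ implies $\frac{N+\alpha}{2p}<\frac{N-2}{2}$). The paper's argument is a single comparison step, not an iteration: one shows that $W(x):=(I_\alpha\ast u^p)u^{p-2}\lesssim(1+|x|)^{-(2+\delta)}$ for some $\delta>0$, so that $-\Delta u-W(x)u<0$, and then compares with an explicit supersolution $c|x|^{-(N-2)}(1-|x|^{-\delta/2})$. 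For $p\ge 2$ the Strauss bound alone suffices to estimate $W$; for $\frac{N+\alpha}{N-2}<p<2$ one must also feed in the already-established \emph{lower} bound $u\gtrsim|x|^{-(N-2)}$ (to control the negative power $u^{p-2}$), and it is precisely in combining these two bounds that the threshold $p>\frac{2}{3}\bigl(1+\frac{N+\alpha}{N-2}\bigr)$ emerges.
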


\proof
The case $p<\frac{N+\alpha}{N-2}$ is the content of Lemma \ref{l-L1-0}.

Assume $p>\frac{N+\alpha}{N-2}$. Then $u\in L^{2^*}\cap C^2(\R^N)$.
By the Strauss's radial $L^s$--bounds with $s=2^*$,
\begin{equation}\label{eq-Strauss+}
u(|x|)\leq c(1+|x|)^{-\frac{N-2}{2}}\qquad(x\in\R^N).
\end{equation}
Since $q>2^*$, $u>0$ satisfies
$$-\Delta u+V(x)u>0\quad\text{in $\R^N$},$$
where
$$0<V(x):=u^{q-2}(x)\le c(1+|x|)^{-(2+\delta)}\qquad(x\in\R^N),$$
for some $\delta>0$.
By comparing with an explicit subsolution $c|x|^{-(N-2)}(1+|x|^{-\delta/2})$ in $|x|>1$, we conclude that
\begin{equation}\label{e0-lower}
u(x)\ge c(1+|x|)^{-(N-2)}\qquad(x\in\R^N).
\end{equation}

Assume now that $p\ge \max\big\{\frac{N+\alpha}{N-2},2\big\}$. Using again \eqref{eq-Strauss+} and we conclude that
\begin{equation}\label{eq-Strauss+++}
\big((I_\alpha*u^p)u^{p-2}\big)(x)\leq W(x):=c(1+|x|)^{-(N-2)(p-1)+\alpha}\le c(1+|x|)^{-(2+\delta)}\qquad(x\in\R^N),
\end{equation}
for some $\delta\in(0,1)$.
Therefore, $u>0$ satisfies
$$-\Delta u-W(x)u<0\quad\text{in $\R^N$}.$$
By comparing with an explicit supersolution $c|x|^{-(N-2)}(1-|x|^{-\delta/2})$ in $|x|>2$, we conclude that
$$u(x)\le C(1+|x|)^{-(N-2)}\qquad(x\in\R^N).$$

Next we assume  that $\alpha<N-4$ and $\frac{N+\alpha}{N-2}<p<2$.
Using again \eqref{eq-Strauss+}, the lower bound \eqref{e0-lower} and taking into account that $p<2$, we conclude that
\begin{equation}\label{eq-Strauss++++}
\big((I_\alpha*u^p)u^{p-2}\big)(x)\leq W(x):=c(1+|x|)^{-\frac{N-2}{2}p+\alpha+(N-2)(2-p)}\le c(1+|x|)^{-(2+\delta)}\quad(x\in\R^N),
\end{equation}
where $\delta>0$ provided that $p>\frac23\big(1+\frac{N+\alpha}{N-2}\big)$. Then we conclude as before.
\qed

\begin{proof}[Proof of Theorem \ref{thmP0}]
We assume that either $\frac{N+\alpha}{N}<p<\frac{N+\alpha}{N-2}$ and $2<q<\frac{2Np}{N+\alpha}$ or $p>\frac{N+\alpha}{N-2}$ and $q>\frac{2Np}{N+\alpha}$.
Set
$$\mathscr{P}_{0}:=\{u\in \D\setminus\{0\}: \mathcal{P}_{0}(u)=0\},$$
where $\mathcal{P}_{0}:\D\to \R$ is defined by
$$
\mathcal{P}_{0}(u)=\frac{N-2}{2}\int_{\R^N}|\nabla u|^2dx+\frac{N}{q}\int_{\R^N}|u|^qdx-\frac{N+\alpha}{2p}\int_{\R^N}(I_{\alpha}*|u|^p)|u|^pdx.
$$
As in the case $\eps>0$, it is standard to check that $\mathscr{P}_{0}\neq \emptyset$ (see \eqref{Peps-non0}).
To construct a groundstate solution of $(P_0)$, we prove the existence of a spherically symmetric nontrivial nonegative minimizer of the  minimization problem
\begin{equation}\label{eqC0min}
c_0=\inf_{u\in \mathscr{P}_{0}} \mathcal{I}_{0}(u),
\end{equation}
and then show that $\mathscr{P}_{0}$ is a natural constraint for $\I_0$, i.e.~the minimizer $u_0\in \mathscr{P}_{0}$ satisfies $\I_0^\prime(u_0)=0$. The arguments follow closely the proof of Theorem \ref{thm01}, except that instead of the quantity $M$ defined in \eqref{eqM}, we use $\overline{M}: \D\to \R$ defined by
$$
\overline{M}(u):=\|\nabla u\|_2^2+\|u\|_q^q.
$$
It is easy check that
\begin{equation}\label{eq330}
2^{-\frac{q}{2}}\|u\|^q_\D\leq \overline{M}(u)\leq C\|u\|^2_\D\quad \text{if either $\overline{M}(u)\leq1$ or $\|u\|_\D\leq 1$},
\end{equation}
where $C>0$ is independent of $u\in\D$. Similarly to Lemma \ref{lem22}, we also can prove that there exists $C>0$ such that for all $u\in \D$,
	$$
	\int_{\R^N}(I_{\alpha}*|u|^p)|u|^pdx\leq C\max\{\overline{M}(u)^{\frac{N+\alpha}{N}}, \overline{M}(u)^{\frac{N+\alpha}{N-2}}\},
	$$
which allows to control the nonlocal term.
The remaining arguments follow closely Steps 1--5 in the proof of Theorem \ref{thm01}.
We omit further details.
\end{proof}

\begin{remark}
	An equivalent route to construct a groundstate solution of \eqref{eqP0} is to prove the existence of a minimizer of the problem
	\begin{equation}\label{eq04-1}
	a_0=\inf\left\{\frac{1}{2}\|\nabla w\|_2^2+\frac{1}{q}\|w\|_q^q\;:\; w\in\D,\; \int_{\R^N}(I_{\alpha}*|u|^p)|w|^pdx=2p\right\}.
	\end{equation}
	This does not require apriori regularity or decay properties of the weak solutions.
	It is standard but technical to establish the relation
	$$
	c_0=\tfrac{(\alpha+2)q-2(2p+\alpha)}{(N+\alpha)(q-2)-4p}\left(\tfrac{(N-2)q-2N}{(N+\alpha)(q-2)-4p}\right)^{\frac{(N-2)q-2N}{q-2}}a_0^{\frac{(\alpha+2)q-2(2p+\alpha)}{(N+\alpha)(q-2)-4p}},
	$$
	and to prove that the minimization problems for $a_0$ and $c_0$ are equivalent up to a rescaling.
	Moreover, if $w_0\in\D$ is a minimizer for $a_0$ then
	$$
	-\Delta w_0+|w_0|^{q-2}w_0=\mu(I_{\alpha}*|w_0|^p)|w_0|^{p-2}w_0, \quad x\in \R^N,
	$$
	where $\mu>0$ and
	$
	u_0(x)=\mu^{\frac{2}{4(p-1)-(\alpha+2)(q-2)}}w_0\big(\mu^{\frac{q-2}{4(p-1)-(\alpha+2)(q-2)}} x\big)
	$
	is a solution of \eqref{eqP0}.
\end{remark}

\begin{remark}\label{r-Phz0}
	Combining \eqref{eq05} and \eqref{eq06}, we conclude that
	$$
	\frac{(N-2)p-(N+\alpha)}{2p}\|\nabla u\|_2^2=\frac{(N+\alpha)q-2Np}{2pq}\|u\|_q^q,
	$$
	which implies that
	\eqref{eqP0} has no nontrivial solutions $u\in\D\cap L^\frac{2Np}{N+\alpha}(\R^N)\cap W^{1,\frac{2Np}{N+\alpha}}_{loc}(\R^N)\cap W^{2,2}_{loc}(\R^N)$ either if $p<\frac{N+\alpha}{N-2}$ and $q\geq\frac{2Np}{N+\alpha}$ or if $p>\frac{N+\alpha}{N-2}$ and $q\leq  \frac{2Np}{N+\alpha}$.  Moreover, if $p=\frac{N+\alpha}{N-2}$ then \eqref{eqP0} has no nontrivial solution for  $q\neq\frac{2Np}{N+\alpha}$. This confirms that the existence assumptions of Theorem \ref{thmP0} on $p$ and $q$ are optimal, with one exception of the {\em double--critical} case $p=\frac{N+\alpha}{N-2}$ and $q=\frac{2Np}{N+\alpha}$.
\end{remark}

\begin{remark}\label{r-2crit}
In the double critical case $p=\frac{N+\alpha}{N-2}$ and $q=\frac{2Np}{N+\alpha}$ the Poho\v zaev argument does not lead to the nonexistence. In fact, it is not difficult to check (cf.~\cite{Minbo-Du}*{Lemma 1.1}) that the Emden--Fowler solution $U_*$ defined in \eqref{eEF} satisfies
	$$
	-\Delta U_*+U_*^{2^*-1}=2\mathcal{S}_*^{-\frac{\alpha}{2}}\mathcal{C}_\alpha^{-1}\big(I_{\alpha}*U_*^\frac{N+\alpha}{N-2}\big)U_*^\frac{\alpha+2}{N-2}, \quad x\in \R^N.
	$$
	and the ``Lagrange multiplier'' can not be scaled out due to the scale invariance of the equation.
	It is an interesting open problem to show that a rescaling of $U_*$ is a minimizer of the variational problem \eqref{eq04-1} in the double--critical case.
\end{remark}

\section{The Thomas--Fermi groundstate}\label{sTF}
To simplify notation we set in this section $m:=q/2$. Denote
$$E(\rho):=\int_{\R^N}|\rho|^m dx+\int_{\R^N}|\rho|dx,\qquad D_{\alpha}(\rho)=\int_{\R^N}(I_{\alpha}*|\rho|)|\rho|\, dx,$$
and
$$\mathcal{A}_1=\big\{0\le \rho\in L^1\cap L^{m}(\R^N): D_{\alpha}(\rho)=1\big\}.$$
We first establish the following.

\begin{proposition}\label{prpTF}
	Let $m>\frac{2N}{N+\alpha}$.
	Then the minimization problem
	$$s_{T\!F}=\inf_{\mathcal A_1}E$$
	admits a nonnegative spherically--symmetric nonincreasing minimizer $\rho_*\in L^1\cap L^m(\R^N)$.
\end{proposition}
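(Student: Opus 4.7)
The plan is a classical concentration/rearrangement argument in the spirit of Lieb's approach to the Choquard and Thomas--Fermi problems. First I would verify that $s_{T\!F}>0$. Writing $s:=\tfrac{2N}{N+\alpha}$, the hypothesis $m>\tfrac{2N}{N+\alpha}$ means that $s\in(1,m)$, so any $\rho\in\mathcal A_1$ satisfies, by the Hardy--Littlewood--Sobolev inequality and interpolation between $L^1$ and $L^m$,
\[
1=D_\alpha(\rho)\le \mathcal C_\alpha\|\rho\|_s^{2}\le \mathcal C_\alpha\|\rho\|_1^{2\theta}\|\rho\|_m^{2(1-\theta)m/m},
\]
for some $\theta\in(0,1)$. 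Young's inequality then produces a positive lower bound on $\|\rho\|_1+\|\rho\|_m^m=E(\rho)$, giving $s_{T\!F}>0$.

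Now pick a minimizing sequence $\rho_n\in\mathcal A_1$ with $E(\rho_n)\to s_{T\!F}$. By Riesz rearrangement inequality the symmetric decreasing rearrangement $\rho_n^\ast$ satisfies $D_\alpha(\rho_n^\ast)\ge 1$ and $E(\rho_n^\ast)=E(\rho_n)$; rescaling $\rho_n^\ast/\sqrt{D_\alpha(\rho_n^\ast)}$ brings us back into $\mathcal A_1$ with energy no larger than $E(\rho_n)$, so I may assume each $\rho_n$ is radial and nonincreasing. Because $\rho_n$ is radial and monotone, the standard Strauss--type pointwise bounds give
\[
\rho_n(x)\le C\min\bigl\{|x|^{-N}\|\rho_n\|_1,\;|x|^{-N/m}\|\rho_n\|_m\bigr\}
\le C\min\bigl\{|x|^{-N},\;|x|^{-N/m}\bigr\},
\]
using that $\|\rho_n\|_1$ and $\|\rho_n\|_m$ are uniformly bounded. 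This majorant belongs to $L^t(\R^N)$ for every $t\in(1,m)$, and in particular for $t=s$.

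By Helly's selection theorem applied to monotone radial functions, a subsequence $\rho_n\to\rho_\ast$ pointwise a.e., with $\rho_\ast$ radial nonincreasing. The uniform $L^s$--envelope above together with dominated convergence yields $\rho_n\to\rho_\ast$ strongly in $L^s(\R^N)$. The bilinear continuity of $D_\alpha$ on $L^s\times L^s$ (immediate from the HLS inequality) then gives $D_\alpha(\rho_n)\to D_\alpha(\rho_\ast)$, so $D_\alpha(\rho_\ast)=1$; in particular $\rho_\ast\not\equiv 0$ and $\rho_\ast\in\mathcal A_1$. Finally Fatou's lemma applied separately to $\int\rho_n^m$ and $\int\rho_n$ gives $E(\rho_\ast)\le\liminf E(\rho_n)=s_{T\!F}$, and hence equality, so $\rho_\ast$ is a minimizer.

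\textbf{Main obstacle.} The only delicate point is justifying $D_\alpha(\rho_n)\to D_\alpha(\rho_\ast)$, i.e.\ preventing the minimizing sequence from ``losing mass'' in a way that would be invisible to weak $L^m$ convergence. The subcritical hypothesis $m>\tfrac{2N}{N+\alpha}$ is precisely what makes the strict inclusion $s\in(1,m)$ hold, which in turn guarantees that the radial Strauss envelope $\min\{|x|^{-N},|x|^{-N/m}\}$ is in $L^s$ and produces the strong convergence needed to pass to the limit in the nonlocal bilinear form.
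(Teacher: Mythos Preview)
Your proposal is correct and follows essentially the same approach as the paper: symmetric rearrangement, renormalize back into $\mathcal A_1$, uniform radial Strauss bounds, Helly's theorem, and strong $L^{2N/(N+\alpha)}$--convergence to pass to the limit in $D_\alpha$. The only cosmetic differences are that the paper renormalizes by a spatial dilation $\rho_n^\ast((D_\alpha(\rho_n^\ast))^{1/(N+\alpha)}x)$ rather than your amplitude scaling, and in the final step invokes the Brezis--Lieb lemma (which also yields strong $L^1\cap L^m$ convergence of the minimizing sequence) instead of your Fatou argument.
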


\proof
Let $\{\rho_n\}\subset \mathcal{A}_1$ be a minimizing sequence for $s_{T\!F}$.
Let $\{\rho_n^*\}$ be the sequence of symmetric--decreasing rearrangements of $\{\rho_n\}$. Then
$$E(\rho_n)=E(\rho_n^*),\qquad D_{\alpha}(\rho_n^*)\ge D_{\alpha}(\rho_n)=1,$$
see \cite{Lieb-Loss}*{Section 3}. Set $\bar{\rho}_n^*(x):=\rho_n^*\big((D_{\alpha}(\rho_n^*))^{\frac{1}{N+\alpha}}x\big)$. Then $\bar{\rho}_n^*\in\mathcal{A}_1$ and as $n\to \infty$,
$$
s_{T\!F}\leq E(\bar{\rho}_n^*)=\big(D_{\alpha}(\rho_n^*)\big)^{-\frac{N}{N+\alpha}}E(\rho_n^*)\leq E(\rho_n)\to s_{T\!F},
$$
which means that $\{\bar{\rho}_n^*\} \subset \mathcal{A}_1$ is a minimizing sequence for $s_{T\!F}$. Moreover, $\{\bar{\rho}_n^*\}$ is bounded in $L^1(\R^N)$ and $L^{m}(\R^N)$.
By Strauss's lemma,
\begin{equation}\label{rhoStrauss}
\bar{\rho}_n^*(|x|)\leq U(|x|):=C\min\{|x|^{-N}, |x|^{-\frac{N}{m}}\}\quad\text{for all $|x|>0$}.
\end{equation}
By Helly's selection theorem for monotone functions, there exists a nonnegative spherically--symmetric nonincreasing function $\rho_*(|x|)\le U(|x|)$ such that, up to a subsequence,
$$\bar{\rho}_n^*(x)\to \rho_*(x)\quad\text{a.e. in $\R^N$ as $n\to \infty$}.$$
Since $U\in L^s(\R^N)$ for all $s\in(1,m)$, by the Lebesgue's dominated convergence we conclude that for every $s\in (1, m)$,
$$
\lim_{n\to\infty}\int_{\R^N}|\bar{\rho}^*_n|^sdx=\int_{\R^N}|\rho_*|^sdx.
$$
Note that $\frac{2N}{N+\alpha}\in (1, m)$. Then by the nonlocal Brezis-Lieb lemma \cite{MMS16}*{Proposition 4.7} we conclude that
$$
\lim_{n\to\infty}D_\alpha(\bar{\rho}^*_n)=D_\alpha(\rho_*)=1,
$$
which implies that $0\neq \rho_*\in \mathcal{A}_1$.  Therefore, using the standard Brezis-Lieb lemma,
$$\aligned
s_{T\!F}=\lim_{n\to\infty}E(\bar{\rho}^*_n)&=\lim_{n\to\infty}\big(\|\rho_*\|_1+\|\bar{\rho}_n^*-\rho_*\|_1\big)+\big(\|\rho_*\|^{m}_{m}+\|\bar{\rho}_n^*-\rho_*\|^{m}_{m}\big)\\
&=E(\rho_*)+E(\bar{\rho}_n^*-\rho_*)\geq s_{T\!F}\big(1+D_{\alpha}(\bar{\rho}_n^*-\rho^*)\big)^{\frac{N}{N+\alpha}}
\geq s_{T\!F},
\endaligned
$$
that is, $E(\rho_*)=s_{T\!F}$.
Moreover, $\bar{\rho}_n^*\to \rho_*$ strongly in $L^1(\R^N)$ and $L^{m}(\R^N)$.
\qed

\begin{proof}[Proof of Theorem \ref{thmTF}]
	Let $\rho_*\in L^1\cap L^m(\R^N)$ be a minimizer for $s_{T\!F}$, as constructed in Proposition 6.1.
	It is standard to show that $\rho_*$ satisfies
	\begin{equation}\label{TF-rho+}
	\aligned
	m\rho_*^{m-1}+1&=\lambda I_{\alpha}*\rho_*\quad\text{a.e. in $\mathrm{supp}(\rho)$},\\
	m\rho_*^{m-1}+1&\ge\lambda I_{\alpha}*\rho_*\quad\text{a.e. in $\R^N$}
	\endaligned
	\end{equation}
	for a Lagrange multiplier $\lambda\in\R$. The proof can be adapted from \cite{Auchmuty-Beals} or \cite{Carrillo-NA}*{Proposition 3.6} and we omit it here.
	Since $\rho_*\ge 0$, \eqref{TF-rho+} is equivalent to the Thomas--Fermi equation
	\begin{equation}\label{TF-rho++}
	m\rho_*^{m-1}=(\lambda I_{\alpha}*\rho_*-1)_+\quad\text{a.e. in $\R^N$}.
	\end{equation}
	
	Testing \eqref{TF-rho++} against $\rho_*$ we conclude that
	\begin{equation}\label{TF-EL+}
	m\|\rho_*\|_m^m+\|\rho_*\|_1=\lambda,
	\end{equation}
	or taking into account the definition of $s_{T\!F}$ we conclude that
	\begin{equation}\label{TF-Lagrange}
	\lambda=s_{T\!F}+(m-1)\|\rho_*\|_m^m.
	\end{equation}
	To prove the virial identity \eqref{TF-Virial} consider the rescaling $\rho_t(x)=t^{-\frac{N+\alpha}{2}}\rho_*(x/t)$.
	Then
	$$D_\alpha(\rho_t)=1,\qquad E(\rho_t)=t^{N-m\frac{N+\alpha}{2}}\|\rho_*\|_m^m+t^{\frac{N-\alpha}{2}}\|\rho_*\|_1,$$
	and
	$$\frac{d}{dt}E(\rho_t){\big|_{t=1}}=\big(N-m\frac{N+\alpha}{2}\big)\|\rho_*\|_m^m+\frac{N-\alpha}{2}\|\rho_*\|_1=0,$$
	since $m>\frac{2N}{N+\alpha}$ and hence $t=1$ is the minimum of the differentiable function $E(\rho_t)$, which implies that
	\begin{equation}\label{eq-TF-1}
	\frac{m(N+\alpha)-2N}{N-\alpha}\|\rho_*\|_m^m=\|\rho_*\|_1.
	\end{equation}
	Therefore we have
	$$
	s_{T\!F}=\|\rho_*\|_m^m+\|\rho_*\|_1=\frac{(m-1)(N+\alpha)}{N-\alpha}\|\rho_*\|_m^m,
	$$
	$$
	\lambda=m\|\rho_*\|_m^m+\|\rho_*\|_1=\Big(m+\frac{m(N+\alpha)-2N}{N-\alpha}\Big)\|\rho_*\|_m^m=\frac{2N(m-1)}{N-\alpha}\|\rho_*\|_m^m=\frac{2N}{N+\alpha}s_{T\!F}.
	$$
	It follows from \eqref{TF-rho++} that $\rho_*$ satisfies
	\begin{equation}\label{eqTF123}
	m\rho_*^{m-1}=\Big(\frac{2N}{N+\alpha}s_{T\!F} I_{\alpha}*\rho_*-1\Big)_+\quad\text{a.e. in $\R^N$},
	\end{equation}
	and the virial identity
	$$
	m\|\rho_*\|_m^m+\|\rho_*\|_1=s_{T\!F}\frac{2N}{N+\alpha}.
	$$

To prove the $L^\infty$--bound on $\rho_*$, in view of the Strauss' radial bound \eqref{rhoStrauss} we only need to show that $\rho_*$ is bounded near the origin.
Observe that
\begin{equation}\label{TF-rho+++}
0\le \rho_*^{m-1}\le I_\alpha*\rho_*\quad\text{a.e. in $\R^N$}.
\end{equation}
If $m>\frac{N}{\alpha}$ then $I_\alpha*\rho_*\in C^{0,\alpha-\frac{N}{m}}(\R^N)$ and hence $\rho_*\in L^\infty(\R^N)$ in view of \eqref{TF-rho++}.

If $m<\frac{N}{\alpha}$ we employ an $L^s$--iteration of the same structure as in our proof of Proposition \ref{p-reg}, Step 2(B).
Indeed, by the HLS and H\"older inequalities applied to \eqref{TF-rho+++}, $\rho_*\in L^{\overline{s}_n}(\R^N)$ with $0<\frac{1}{\overline{s}_{n}}-\frac{\alpha}{N}<1$ implies $\rho_*\in L^{\overline{s}_{n+1}}(\R^N)$, where
$$\frac{m-1}{\overline{s}_{n+1}}=\frac{1}{\overline{s}_{n}}-\frac{\alpha}{N}.$$
We start the $\overline{s}_n$--iteration with $\overline{s}_0=m$. If $m\ge 2$ we achieve $\overline{s}_{n+1}>\frac{N}{\alpha}$ after a finite number of steps.
If $m<2$ we note that $\overline{s}_0=m>\frac{2N}{N+\alpha}$  by the assumption. Then we again achieve $\overline{s}_{n+1}>\frac{N}{\alpha}$ after a finite number of steps. (Or if $\overline{s}_{n+1}=\frac{N}{\alpha}$ we readjust $\overline{s}_0$.)
\smallskip

The compact support property is standard (cf. \cite{Carrillo-NA}*{Corollary 3.8}). We sketch the argument for completeness. Indeed, since $\rho_*\in L^1(\R^N)$ is a nonnegative radially nonincreasing function, it is known that for any $\alpha\in(0,N)$,
$$I_\alpha*\rho_*(|x|)=C I_\alpha(|x|)(1+o(1))\quad\text{as $|x|\to\infty$},$$
(and $C=\|\rho_*\|_1$ if $\alpha>1$), cf. \cite{Duoandikoetxea}*{Corollary 2.3}.
This is incompatible with \eqref{TF-rho+}, unless $\rho_*$ has a compact support. Since $\rho_*$ is nonincreasing we also conclude that $\mathrm{supp}(\rho_*)$ is a connected set and hence must be a ball of radius $R_*>0$ (and if $\alpha>1$ then  $R_*\lesssim \|\rho_*\|_1^\frac{1}{N-\alpha}$).
\smallskip

If $\alpha>\big(\frac{m-2}{m-1})_+$ then the H\"older regularity $\rho_*\in C^{0,\gamma}(\R^N)$ with $\gamma=\min\{1,\frac{1}{m-1}\}$ follows exactly as in \cite{Carrillo-CalcVar}*{Theorem 8}. We only note that the iteration steps $(3.26)$, $(3.27)$ in \cite{Carrillo-CalcVar}*{p.127} remain valid for any $m\le 2$, as soon as $\rho_*\in L^\infty(\R^N)$, which is ensured by our assumption $m>\frac{2N}{N+\alpha}$.
 If $m>2$ and $\alpha\le\big(\frac{m-2}{m-1})_+$ then $\rho_*\in C^{0,\gamma}(\R^N)$ for any $\gamma<\frac{2}{m-2}$ by the same argument as in \cite{Carrillo-CalcVar}*{Remark 2}.
 Further, $\rho_*\in C^\infty(B_{R_*})$ can be deduced as in \cite{Carrillo-CalcVar}*{Theorem 10}.

 Finally, keeping in mind that $q=2m$, the function
 \begin{equation*}
 v_0(x)=\Big(\frac{q}{2}\Big)^\frac{1}{q-2}\sqrt{\rho_*\Big((\tfrac{q}{2})^\frac{2}{\alpha(q-2)}\big(\tfrac{2N s_{T\!F}}{N+\alpha}\big)^{-1/\alpha} x\Big)}
 \end{equation*}
 is a groundstate of the Thomas--Fermi equation in the form
 \begin{equation*}\tag{$T\!F$}
 v-(I_{\alpha}*|v|^2)v+|v|^{q-2}v=0\quad\text{in $\R^N$},
 \end{equation*}
 by direct scaling computation and in view of the properties of $\rho_*$.
\end{proof}

\begin{remark}
	In \cite[Proposition 5.16]{Volzone} the authors establish the existence of a unique bounded nonnegative radially nonincreasing solution to the Euler--Lagrange equation \eqref{eqTF123} in the range $\alpha\in(0,2)$ and $m\in\big(\frac{2N}{N+\alpha},m_c\big)$ (for $\alpha=2$ the existence of a radial solution is classical, see e.g. \cite[Theorem 5.1]{Lions-81}, while the uniqueness follows from \cite[Lemma 5]{Flucher}). These existence results do not include an explicit variational characterisation of the solution in terms of $s_{T\!F}$. However once the existence of a minimizer for $s_{T\!F}$ is established (see Proposition \ref{prpTF}), solutions constructed for $\alpha\in(0,2]$ in \citelist{\cite{Volzone}\cite{Lions-81}} coincide with the minimizer for $s_{T\!F}$ in view of the uniqueness. 
\end{remark}

\section{Asymptotic profiles: non-critical regimes}\label{s6}

In this section we prove the convergence of rescaled groundstates $u_\eps$ to the limit profiles in the three {\em noncritical} regimes.

\subsection{Formal limit $(P_0)$}
Throughout this section we assume that $p>\frac{N+\alpha}{N-2}$ and $q>\frac{2Np}{N+\alpha}$,
or $\frac{N+\alpha}{N}<p<\frac{N+\alpha}{N-2}$ and $2<q<\frac{2Np}{N+\alpha}$.

Let $u_\eps$ be the positive spherically symmetric groundstate solution of \eqref{eqPeps} constructed in Theorem \ref{thm01},
and $c_\eps=\I_\eps(u_\eps)>0$ denotes the corresponding energy level, defined in \eqref{eqCe}.
We are going to show that $u_\eps$ converges to the constructed in Theorem \ref{thmP0} positive spherically symmetric groundstate $u_0$ of the formal limit equation \eqref{eqP0}, which has the energy $c_0=\I_0(u_0)>0$, as defined in \eqref{eqC0}.

Below we present the proof only in the supercritical case $p>\frac{N+\alpha}{N-2}$ and $q>\frac{2Np}{N+\alpha}$.
The subcritical case $\frac{N+\alpha}{N}<p<\frac{N+\alpha}{N-2}$ and $2<q<\frac{2Np}{N+\alpha}$ follows the same lines but easier, because in this case $u_0\in L^1(\R^N)$. The proof in the supercritical case relies on the decay estimate \eqref{u0-decay}, which needs an additional restriction $p>\max\big\{\frac{N+\alpha}{N-2},\frac23\big(1+\frac{N+\alpha}{N-2}\big)\big\}$.

\begin{lemma}\label{lem1101}
	Let $p>\max\big\{\frac{N+\alpha}{N-2},\frac23\big(1+\frac{N+\alpha}{N-2}\big)\big\}$ and $q>\frac{2Np}{N+\alpha}$.
	Then $0<c_{\eps}-c_0\to 0$ as $\eps\to 0$.
\end{lemma}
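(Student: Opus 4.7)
My plan splits the claim into a strict lower bound $c_\eps > c_0$ and an asymptotic upper bound $\limsup_{\eps\to 0} c_\eps \le c_0$.

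For the lower bound, since $u_\eps \in \H \subset \D$, the function $t\mapsto\mathcal{P}_0((u_\eps)_t)$ has a unique positive zero $t_\eps$, and the chain
\[
c_0 \le \mathcal{I}_0\bigl((u_\eps)_{t_\eps}\bigr) = \mathcal{I}_\eps\bigl((u_\eps)_{t_\eps}\bigr) - \tfrac{\eps\, t_\eps^N}{2}\|u_\eps\|_2^2 < \mathcal{I}_\eps\bigl((u_\eps)_{t_\eps}\bigr) \le \mathcal{I}_\eps(u_\eps) = c_\eps
\]
yields strict positivity. The middle inequality uses $\eps>0$ and $u_\eps\not\equiv 0$; the last uses that $u_\eps$ maximises $t\mapsto\mathcal{I}_\eps((u_\eps)_t)$ on $(0,\infty)$, as in \eqref{Peps-non0}.

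For the upper bound I would take a trial function built from $u_0$ and rescale it onto $\mathscr{P}_\eps$. The main obstacle, and the place where the assumption $p>\tfrac{2}{3}\bigl(1+\tfrac{N+\alpha}{N-2}\bigr)$ enters, is that the sharp decay $u_0\sim|x|^{-(N-2)}$ from Theorem \ref{thmP0} only places $u_0\in L^2(\R^N)$ when $N\ge 5$; for $N=3,4$ the groundstate $u_0$ belongs to $\D$ but not to $\H$, so it cannot be used as a direct competitor. To handle all dimensions uniformly, set $w_\eps(x):=\chi(\eps^\beta x)\,u_0(x)$ for a smooth cutoff $\chi\in C_c^\infty(\R^N)$ with $\chi\equiv 1$ on $B_1$, $\chi\equiv 0$ outside $B_2$, and some $\beta\in(0,1)$. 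Annular integration using $|u_0|^2\sim|x|^{-2(N-2)}$ yields
\[
\eps\,\|w_\eps\|_2^2 \lesssim
\begin{cases}
\eps^{\,1-\beta}, & N=3,\\
\eps\log(1/\eps), & N=4,\\
\eps\,\|u_0\|_2^2, & N\ge 5,
\end{cases}
\]
each vanishing as $\eps\to 0$; simultaneously $w_\eps\to u_0$ in $D^1(\R^N)$, in $L^q(\R^N)$ and in $L^{2Np/(N+\alpha)}(\R^N)$, which by HLS forces convergence of the Choquard term as well.

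Finally, there is a unique $t_\eps>0$ with $(w_\eps)_{t_\eps}\in\mathscr{P}_\eps$. Since the coefficients of $t\mapsto\mathcal{P}_\eps((w_\eps)_t)$ converge to those of $t\mapsto\mathcal{P}_0((u_0)_t)$ and the limit has a unique transverse positive zero at $t=1$, a standard Hurwitz-type argument gives $t_\eps\to 1$. Therefore
\[
c_\eps \le \mathcal{I}_\eps\bigl((w_\eps)_{t_\eps}\bigr) = \mathcal{I}_0\bigl((w_\eps)_{t_\eps}\bigr) + \tfrac{\eps\, t_\eps^N}{2}\|w_\eps\|_2^2 \longrightarrow \mathcal{I}_0(u_0)=c_0,
\]
so $\limsup c_\eps\le c_0$, which together with the strict lower bound completes the proof. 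The truly delicate point is thus the uniform control of $\eps\,\|w_\eps\|_2^2$ across dimensions; the assumption on $p$ is exactly what makes this cutoff analysis close.
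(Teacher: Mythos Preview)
Your argument is correct and follows essentially the same route as the paper: strict inequality $c_0<c_\eps$ by pulling $u_\eps$ onto $\mathscr{P}_0$, and $\limsup c_\eps\le c_0$ by pushing a (possibly truncated) $u_0$ onto $\mathscr{P}_\eps$. The only minor differences are that the paper treats $N\ge 5$ without a cutoff (using $u_0\in L^2$ directly) and chooses explicit truncation radii $R=\eps^{-1}$ ($N=4$) and $R=\eps^{-1/2}$ ($N=3$) to obtain the quantitative bound $c_\eps\le c_0+C\eps$, whereas your uniform cutoff with unspecified $\beta\in(0,1)$ and Hurwitz-type continuity argument yield only the qualitative convergence the lemma actually asserts.
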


\begin{proof}
	First, we use $u_\eps$ with $\eps>0$ as a test function for $\P_0$. We obtain
    $$\mathcal{P}_0(u_{\eps})=\mathcal{P}_{\eps}(u_{\eps})-\frac{N\eps}{2}\|u_{\eps}\|_2^2=-\frac{N\eps}{2}\|u_{\eps}\|_2^2<0.$$
	Hence there exists a unique $t_{\eps}\in (0, 1)$ such that $u_{\eps}(x/t_{\eps})\in \mathscr{P}_0$, and we have
	\begin{equation}\label{eq1102}
	c_0\leq \mathcal{I}_{0}(u_{\eps}(x/t_{\eps}))=\frac{t_{\eps}^{N-2}}{N}\|\nabla u_{\eps}\|_2^2+\frac{\alpha t_{\eps}^{N+\alpha}}{2Np}\int_{\R^N}(I_{\alpha}*|u_{\eps}|^p)|u_{\eps}|^pdx< \I_{\eps}(u_{\eps}(x))=c_{\eps},
	\end{equation}
	which means $c_0<c_{\eps}$.
	
	To show that $c_{\eps}\to c_0$ as $\eps\to 0$ we shall use $u_0$ as a test function for $\P_\eps$. According to \eqref{u0-decay}, $u_0\in L^2(\R^N)$ iff $N\ge 5$. The dimensions $N=3,4$ require a separate consideration.
	\medskip
	
	\noindent{\bf Case $N\geq 5$.}
	Since $\mathcal{P}_{\eps}(u_0)=\frac{\eps}{2}\|u_0\|_2^2>0$, there exists $\overline{t}_{\eps}>1$ such that $u_0(x/\overline{t}_{\eps})\in \mathscr{P}_{\eps}$, i.e.,
	$$
	\frac{(N-2)\overline{t}_{\eps}^{N-2}}{2}\|\nabla u_0\|_2^2+\frac{N\eps \overline{t}_{\eps}^{N}}{2}\| u_0\|_2^2+\frac{N\overline{t}_{\eps}^{N}}{q}\|u_0\|_q^q=\frac{(N+\alpha) \overline{t}_{\eps}^{N+\alpha}}{2p}\int_{\R^N}(I_{\alpha}*|u_0|^p)|u_{\eps}|^p dx.
	$$
	This, combined with \eqref{eq06} and $u_0\in L^2(\R^N)$, implies that
	$$
	\frac{(N+\alpha) (\overline{t}_{\eps}^{\alpha}-1)}{2p}\int_{\R^N}(I_{\alpha}*|u_0|^p)|u_0|^pdx-\frac{(N-2)(\overline{t}_{\eps}^{-2}-1)}{2}\|\nabla u_0\|_2^2=\frac{N\eps}{2}\|\nabla u_0\|_2^2\to 0.
	$$
	Therefore, $\overline{t}_{\eps}\to 1$ as $\eps\to 0$.  Moreover,
	$$
	\overline{t}_{\eps}\leq 1+C\eps,
	$$
	where $C>0$ is independent of $\eps$.
	Thus we have
	$$\aligned
	c_{\eps}\leq\mathcal{I}_{\eps}(u_0(x/\overline{t}_{\eps}))\leq&\mathcal{I}_{0}(u_0)
	+C(\overline{t}_{\eps}^{N}-1)+\frac{N\eps \overline{t}_{\eps}^{N}}{2}\| u_0\|_2^2\\
	\leq &c_0+C\eps.
	\endaligned
	$$
	This, together with \eqref{eq1102}, means that $c_{\eps}-c_0\to 0$ as $\eps\to 0$.
	\smallskip
	
	To consider the case $N=3, 4$, given $R\gg 1$, we introduce a cut-off function $\eta_{R}\in C_c^{\infty}(\R^N)$ such that
	$\eta_R(r)=1$ for $|r|<R$, $0<\eta_R(r)<1$ for $R<|r|<2R$, $\eta_R(r)=0$ for $|r|\geq 2R$ and $|\eta_R'(r)|\leq R/2$.
	It is standard to compute (cf. \cite{S96}*{Theorem 2.1}),
	\begin{align}\label{eq1103}
	\int_{\R^N}|\nabla(\eta_Ru_0)|^2dx&=\|\nabla u_0\|_2^2+\O(R^{-(N-2)}),\\
		\label{eq1106}
	\|\eta_Ru_0\|_2^2&=
	\left\{\aligned
	&\O(\ln R),\quad N=4,&\\
	&\O(R), \quad N=3, &\\
	\endaligned\right.\\
	\label{eq1105}
	\|\eta_Ru_0\|_q^q&=\|u_0\|_q^q-\O(R^{N-q(N-2)}),\\
	\label{eq1104}
	\int_{\R^N}(I_{\alpha}*|\eta_Ru_0|^{p})|\eta_Ru_0|^{p}dx&=
	\int_{\R^N}(I_{\alpha}*|u_0|^{p})|u_0|^{p}dx+\O(R^{\alpha-p(N-2)}).
	\end{align}
	We will use $\eta_R u_0$ with a suitable choice of $R=R(\eps)$ as a family of test functions for $\P_\eps$.
	\medskip
	
	\noindent{\bf Case $N=4$.}  By \eqref{eq1103}, \eqref{eq1104}, \eqref{eq1105} and \eqref{eq1106}, for $R\gg 1$ we have
	$$
	\mathcal{P}_{\eps}(\eta_{R}u_0)=\mathcal{P}_{0}(u_0)+\O(R^{-(N-2)})+\frac{N\eps}{2}\O(\ln R)-\O(R^{N-q(N-2)})-\O(R^{\alpha-p(N-2)})>0.
	$$
	Set $R=\eps^{-1}$. Then for each $\eps>0$ small, there exists $\widetilde{t}_{\eps}>1$ such that $\mathcal{P}_{\eps}(\eta_{R}(x/\widetilde{t}_{\eps})u_0(x/\widetilde{t}_{\eps}))=0$.
	Similarly to the case $N\geq 5$, we can show that
	$\widetilde{t}_{\eps}\to 1$ as $\eps\to 0$ and
	$$
	\widetilde{t}_{\eps}\leq 1+C\eps.
	$$
	We conclude that $c_{\eps}\leq c_0+C\eps$.
	\medskip
	
	\noindent{\bf Case $N=3$.}  By \eqref{eq1103}, \eqref{eq1104}, \eqref{eq1105} and \eqref{eq1106}, for $R\gg 1$ we have
	$$
	\mathcal{P}_{\eps}(\eta_{R}u_0)=\mathcal{P}_{0}(u_0)+\O(R^{-(N-2)})+\frac{N\eps}{2}\O(R)-\O(R^{N-q(N-2)})-\O(R^{\alpha-p(N-2)})>0.
	$$
	Set $R=\eps^{-1/2}$. Then for each $\eps>0$ small, there exists $\widehat{t}_{\eps}>1$ such that $\mathcal{P}_{\eps}(\eta_{R}(x/\widehat{t}_{\eps})u_0(x/\widehat{t}_{\eps}))=0$ and
	$\widehat{t}_{\eps}\to 1$ as $\eps\to 0$. We conclude as in the previous case.
	\end{proof}

\begin{corollary}\label{cor-bound0}
	Let $p>\max\big\{\frac{N+\alpha}{N-2},\frac23\big(1+\frac{N+\alpha}{N-2}\big)\big\}$ and $q>\frac{2Np}{N+\alpha}$.
	Then the quantities
	$$\|\nabla u_\eps\|_2^2,\quad \eps\|u_\eps\|_2^2,\quad \|u_{\eps}\|_q^q,
	\quad \int_{\R^N}(I_{\alpha}*|u_\eps|^p)|u_\eps|^p dx,$$
	are uniformly bounded as $\eps\to 0$.
\end{corollary}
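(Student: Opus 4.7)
The plan is to derive the four bounds as a direct algebraic consequence of Lemma~\ref{lem1101} and the variational identities already satisfied by the groundstates $u_\eps$. First, Lemma~\ref{lem1101} shows that $c_\eps = \mathcal{I}_\eps(u_\eps)$ converges to $c_0$ as $\eps \to 0$, so there exists $C>0$ with $c_\eps \le C$ uniformly for all small $\eps>0$.

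Next, I would exploit identity \eqref{eq11}, which is obtained from the combination of $\mathcal{P}_\eps(u_\eps)=0$ and the definition of $\mathcal{I}_\eps$, and which reads
\[
c_\eps \;=\; \frac{1}{N}\|\nabla u_\eps\|_2^2 \,+\, \frac{\alpha}{2pN}\int_{\R^N}(I_{\alpha}*|u_\eps|^p)|u_\eps|^p\,dx.
\]
Since both summands are non-negative with positive coefficients independent of $\eps$, the uniform bound on $c_\eps$ immediately yields uniform upper bounds on $\|\nabla u_\eps\|_2^2$ and on $\int_{\R^N}(I_{\alpha}*|u_\eps|^p)|u_\eps|^p\,dx$.

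Finally, I would feed these two bounds into the Nehari identity \eqref{e-Nehari},
\[
\|\nabla u_\eps\|_2^2 \,+\, \eps\|u_\eps\|_2^2 \,+\, \|u_\eps\|_q^q \;=\; \int_{\R^N}(I_{\alpha}*|u_\eps|^p)|u_\eps|^p\,dx,
\]
to extract the remaining uniform bounds on $\eps\|u_\eps\|_2^2$ and $\|u_\eps\|_q^q$. The entire argument is essentially algebraic once the energy bound is available, so the substantive content has already been carried out in the proof of Lemma~\ref{lem1101}; there is no genuine obstacle at this step.
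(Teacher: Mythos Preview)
Your proposal is correct and takes essentially the same approach as the paper: both combine the energy bound $c_\eps\to c_0$ from Lemma~\ref{lem1101} with the Poho\v{z}aev and Nehari identities to extract the four bounds by elementary algebra. Your route via \eqref{eq11} followed by the Nehari identity \eqref{e-Nehari} is in fact slightly cleaner than the paper's presentation, which lists three derived identities, but the underlying content is identical.
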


\begin{proof}
	From Poho\v zaev and Nehari identities for $\mathcal{I}_{\eps}$ and Lemma \ref{lem1101} we have
	$$
	c_0+o(1)=c_{\eps}=\mathcal{I}_{\eps}(u_\eps(x))=\frac{1}{N}\|\nabla u_\eps\|_2^2+\frac{\alpha}{2Np}\int_{\R^N}(I_{\alpha}*|u_\eps|^p)|u_\eps|^pdx,
	$$
	$$
	\frac{N(p-1)}{2Np}\int_{\R^N}(I_{\alpha}*|u_{\eps}|^p)|u_{\eps}|^{p}dx=c_{\eps}+\frac{q-2}{2q}\|u_{\eps}\|_q^q,
	$$
	$$
	\frac{2N-q(N-2)}{2q}\|\nabla u_{\eps}\|_2^2+\frac{N(2-q)}{2q}\eps\|u_{\eps}\|_2^2+\frac{(N+\alpha)q-2Np}{2pq}\int_{\R^N}(I_{\alpha}*|u_{\eps}|^p)|u_{\eps}|^{p}dx=0.
	\qedhere$$
\end{proof}

\begin{lemma}
	Let $p>\max\big\{\frac{N+\alpha}{N-2},\frac23\big(1+\frac{N+\alpha}{N-2}\big)\big\}$ and $q>\frac{2Np}{N+\alpha}$.
	Then $\eps\|u_{\eps}\|_2^2\to 0$ as $\eps\to 0$.
\end{lemma}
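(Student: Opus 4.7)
The plan is to compare $c_\eps$ and $c_0$ using the groundstate $u_\eps$ itself, suitably rescaled onto the Poho\v zaev manifold $\mathscr{P}_0$ of the formal limit problem. The starting point is the identity
\[
\I_\eps(v)=\I_0(v)+\tfrac{\eps}{2}\|v\|_2^2,\qquad v\in\H,
\]
which I will evaluate along the scaling curve $v_t(x)=v(x/t)$ introduced in \eqref{e-ut}.

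Since $\mathcal{P}_0(u_\eps)=\mathcal{P}_\eps(u_\eps)-\tfrac{N\eps}{2}\|u_\eps\|_2^2=-\tfrac{N\eps}{2}\|u_\eps\|_2^2<0$, the analysis of $t\mapsto\I_0((u_\eps)_t)$ carried out in Section \ref{s43} yields a unique $t_\eps\in(0,1)$ with $(u_\eps)_{t_\eps}\in\mathscr{P}_0$. Because $u_\eps\in\mathscr{P}_\eps$, the curve $t\mapsto\I_\eps((u_\eps)_t)$ achieves its unique maximum at $t=1$, so I obtain
\begin{align*}
c_0\le \I_0((u_\eps)_{t_\eps})
&=\I_\eps((u_\eps)_{t_\eps})-\tfrac{\eps t_\eps^N}{2}\|u_\eps\|_2^2\\
&\le \I_\eps(u_\eps)-\tfrac{\eps t_\eps^N}{2}\|u_\eps\|_2^2=c_\eps-\tfrac{\eps t_\eps^N}{2}\|u_\eps\|_2^2.
\end{align*}
Combined with Lemma \ref{lem1101}, this gives $\eps t_\eps^N\|u_\eps\|_2^2\le 2(c_\eps-c_0)\to 0$, and the conclusion will follow once $\liminf_{\eps\to 0}t_\eps>0$ is established.

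The main obstacle is precisely to rule out $t_\eps\to 0$. Suppose for contradiction that $t_{\eps_n}\to 0$ along a subsequence. Dividing $\mathcal{P}_0((u_{\eps_n})_{t_{\eps_n}})=0$ by $t_{\eps_n}^N$ and using the uniform bounds from Corollary \ref{cor-bound0}, I deduce $\|\nabla u_{\eps_n}\|_2^2\to 0$ and $\|u_{\eps_n}\|_q^q\to 0$. Inserting this into the Poho\v zaev--derived identity $c_{\eps_n}=\tfrac{1}{N}\|\nabla u_{\eps_n}\|_2^2+\tfrac{\alpha}{2Np}\int_{\R^N}(I_\alpha*|u_{\eps_n}|^p)|u_{\eps_n}|^p\,dx$ forces $\int_{\R^N}(I_\alpha*|u_{\eps_n}|^p)|u_{\eps_n}|^p\,dx\to\tfrac{2Np c_0}{\alpha}>0$. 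The Nehari and Poho\v zaev identities for $u_{\eps_n}$ then determine two limiting values for $\eps_n\|u_{\eps_n}\|_2^2$, respectively $\tfrac{2Np c_0}{\alpha}$ and $\tfrac{2(N+\alpha)c_0}{\alpha}$, which coincide only when $p=\tfrac{N+\alpha}{N}$. This contradicts the standing hypothesis $p>\tfrac{N+\alpha}{N-2}>\tfrac{N+\alpha}{N}$, so $t_\eps$ is bounded below and the lemma follows. The algebraic rigidity of the two identities is what blocks the degenerate scenario, and it is the only point in the argument where the strict subcriticality of $p$ with respect to $(N+\alpha)/N$ is used.
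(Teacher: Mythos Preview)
Your proof is correct. Both you and the paper rescale $u_\eps$ onto $\mathscr{P}_0$ via $t_\eps\in(0,1)$ and compare energies, but the arguments diverge in how the lower bound on $t_\eps$ is obtained and how it is used. The paper exploits the representation $\I_0(w)=\tfrac{1}{N}\|\nabla w\|_2^2+\tfrac{\alpha}{2Np}\int(I_\alpha*|w|^p)|w|^p$ on $\mathscr{P}_0$ (and the identical formula for $\I_\eps$ on $\mathscr{P}_\eps$) to get the one--line bound $c_0\le\I_0((u_\eps)_{t_\eps})\le t_\eps^{N-2}c_\eps$, which immediately forces $t_\eps\to 1$; it then reads off $\eps\|u_\eps\|_2^2\to 0$ by expanding $\P_0((u_\eps)_{t_\eps})=0$ as a perturbation of $\P_\eps(u_\eps)=0$. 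Your route instead uses the maximum property of $t\mapsto\I_\eps((u_\eps)_t)$ at $t=1$ to get $\eps t_\eps^N\|u_\eps\|_2^2\le 2(c_\eps-c_0)$ directly, and then rules out $t_\eps\to 0$ by a Nehari/Poho\v zaev incompatibility (two distinct limits for $\eps_n\|u_{\eps_n}\|_2^2$ unless $p=\tfrac{N+\alpha}{N}$). Your contradiction argument is sound but heavier than necessary: the paper's inequality $c_0\le t_\eps^{N-2}c_\eps$ already gives $\liminf t_\eps\ge 1$ (hence $t_\eps\to 1$) in one stroke, without invoking the Nehari identity at all. On the other hand, your energy splitting $\I_\eps=\I_0+\tfrac{\eps}{2}\|\cdot\|_2^2$ makes the mechanism behind $\eps\|u_\eps\|_2^2\lesssim c_\eps-c_0$ more transparent than the paper's algebraic manipulation of $\P_0$.
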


\begin{proof}
	Lemma~\ref{lem1101} implies that there exists a unique $t_{\eps}\in (0, 1)$ such that $u_{\eps}(x/t_{\eps})\in \mathscr{P}_0$.  Indeed, assume that $t_{\eps}\to t_0<1$ as $\eps\to 0$. Then by \eqref{eq1102} we have, as $\eps\to 0$,
	\begin{multline}\label{teto0}
	c_0\leq \mathcal{I}_{0}(u_{\eps}(x/t_{\eps}))=\frac{t_{\eps}^{N-2}}{2}\|\nabla u_{\eps}\|_2^2+\frac{\alpha t_{\eps}^{N+\alpha}}{2Np}\int_{\R^N}(I_{\alpha}*|u_{\eps}|^p)|u_{\eps}|^pdx\\
	\leq  t_{\eps}^{N-2}\mathcal{I}_{\eps}(u_{\eps})=t_{\eps}^{N-2}c_{\eps}
	\to t_0^{N-2} c_0<c_0,
	\end{multline}
	which is a contradiction.  Therefore $t_{\eps}\to 1$ as $\eps\to 0$. Using $\mathcal{P}_0(u_{\eps}(x/t_{\eps}))=0$ again, we see that
	$$\aligned
	0=&\frac{(N-2)t^{N-2}_{\eps}}{2}\|\nabla u_{\eps}\|_2^2+\frac{Nt^{N}_{\eps}}{q}\|u_{\eps}\|_q^q-
	\frac{(N+\alpha)t^{N+\alpha}_{\eps}}{2p}\int_{\R^N}(I_{\alpha}*|u_{\eps}|^p)|u_{\eps}|^pdx\\
	=&\mathcal{P}_{\eps}(u_{\eps})-\frac{N\eps t^{N}_{\eps}}{2}\|u_{\eps}\|_2^2+\frac{(N-2)(t^{N-2}_{\eps}-1)}{2}\|\nabla u_{\eps}\|_2^2+\frac{N(t^{N}_{\eps}-1)}{q}\|u_{\eps}\|_q^q\\
	&-\frac{(N+\alpha)(t^{N+\alpha}_{\eps}-1)}{2p}\int_{\R^N}(I_{\alpha}*|u_{\eps}|^p)|u_{\eps}|^pdx.
	\endaligned
	$$
	This implies that $\eps\|u_{\eps}\|_2^2\to 0$ as $\eps\to 0$, since $\mathcal{P}_{\eps}(u_{\eps})=0$.
\end{proof}

\begin{proof}[Proof of Theorem~\ref{thm02-lim} (case $p>\max\big\{\frac{N+\alpha}{N-2},\frac23\big(1+\frac{N+\alpha}{N-2}\big)\big\}$ and $q>\frac{2Np}{N+\alpha}$)]

	From Corollary \ref{cor-bound0} and \eqref{teto0}, we see that $\{u_{\eps}(x/t_{\eps})\}$ is a minimizing sequence for $c_0$
which is bounded in $D^1_{rad}\cap L^q(\R^N)$.
Then there exists $\overline{w}_0\in D^{1}_{rad}\cap L^q(\R^N)$ such that
$$
u_{\eps}(x/t_{\eps})\rightharpoonup \overline{w}_0 \quad \text{in}\ D^{1}_{rad}(\R^N) \quad\text{and}\quad u_{\eps}(x/t_{\eps})\to \overline{w}_0 \quad \text{a.e. in}\ \R^N,
$$
by the local compactness of the emebedding $D^1(\R^N)\hookrightarrow L^2_{loc}(\R^N)$ on bounded domains.
Using Strauss' radial $L^s$--bounds with $s=2^*$ and $s=q$, we conclude that
$$u_{\eps}(x/t_{\eps})\leq U(x):=C\min\big\{|x|^{-N/2^*}, |x|^{-N/{q}}\big\}.$$
Similarly to Step 3 in Section~\ref{s4}, using Lebesgue dominated convergence and nonlocal Brezis-Lieb Lemma \cite{MMS16}*{Proposition 4.7},
we can show that $u_{\eps}(x/t_{\eps})\to \overline{w}_0$ in $D^{1}\cap L^q(\R^N)$ and $\overline{w}_0$ is a groundstate  solution of $(P_0)$.
\end{proof}

\subsection{1st rescaling: Choquard limit}
Throughout this section we assume that $\frac{N+\alpha}{N}<p<\frac{N+\alpha}{N-2}$ and $q>2\frac{2p+\alpha}{2+\alpha}$.
The energy corresponding to the 1st rescaling
$$
v(x):=\eps^{-\frac{\alpha+2}{4(p-1)}}u\Big(\frac{x}{\sqrt{\eps}}\Big),
$$
is given by
\begin{equation}\label{eq26}
\mathcal{I}_{\eps}^{(1)}(v):=\frac{1}{2}\int_{\R^N}|\nabla v|^2+|v|^2dx-\frac{1}{2p}\int_{\R^N}(I_{\alpha}*|v|^p)|v|^pdx+\frac{\eps^\frac{q(2+\alpha)-2(2p+\alpha)}{4(p-1)}}{q}\int_{\R^N}|v|^q dx,
\end{equation}
and the corresponding Poho\v zaev functional is
\begin{multline}\label{eq28}
\mathcal{P}_{\eps}^{(1)}(v):=
\frac{N-2}{2}\int_{\R^N}|\nabla v|^2dx+\frac{ N}{2}\int_{\R^N}|v|^2dx-\frac{N+\alpha}{2p}\int_{\R^N}(I_{\alpha}*|v|^p)|v|^pdx\\
+\frac{N\eps^{\frac{q(2+\alpha)-2(2p+\alpha)}{4(p-1)}}}{q}\int_{\R^N}|v|^qdx.
\end{multline}
Note that $\I_\eps(u)=\eps^\frac{(N+\alpha)-p(N-2)}{2(p-1)}\I_\eps^{(1)}(v)$ and consider the rescaled minimization problem
\begin{equation}\label{eq30}
c^{(1)}_\eps:=\inf_{ v\in \mathscr{P}_{\eps}^{(1)}}\mathcal{I}_{\eps}^{(1)}(v),
\end{equation}
where $\mathscr{P}_{\eps}^{(1)}:=\{v\in H^1(\R^N)\setminus\{0\}: \mathcal{P}_{\eps}^{(1)}(v)=0\}$ is the Poh\v zaev manifold of \eqref{eqCeps}. When $\eps=0$, we formally obtain
\begin{equation}\label{eq3001}
c^{(1)}_0:=\inf_{ v\in \mathcal{\mathscr{P}}_{0}^{(1)}}\mathcal{I}_{0}^{(1)}(v).
\end{equation}
It is known that $c_0^{(1)}>0$ and $c_0^{(1)}$ admits a minimizer $v_0\in H^1(\R^N)$, which is a groundstate of the Choquard equation \eqref{eqC} characterised in Theorem \ref{thmC} (see \cite{MS13}*{Propositions 2.1 and 2.2}).

Let $u_\eps$ be the positive spherically symmetric groundstate solution of \eqref{eqPeps} constructed in Theorem \ref{thm01}.
Then the rescaled groundstate
$$
v_{\eps}(x):=\eps^{-\frac{\alpha+2}{4(p-1)}}u_{\eps}\Big(\frac{x}{\sqrt{\eps}}\Big),
$$
is a groundstate solution of \eqref{eqCeps}, i.e. $\I_\eps^{(1)}(v_\eps)=c_\eps^{(1)}$.
We are going to show that $v_\eps$ converges to the groundstate $v_0$ of the Choquard equation \eqref{eqC}.

\begin{lemma}\label{lem1102}
$0<c_{\eps}^{(1)}-c_0^{(1)}\to 0$ as $\eps\to 0$.
\end{lemma}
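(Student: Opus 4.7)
The plan is to compare $c_\eps^{(1)}$ and $c_0^{(1)}$ via the dilation fibering $v \mapsto v(\cdot/t)$, testing each groundstate against the Poho\v{z}aev manifold of the other problem. Write $\kappa := \tfrac{q(2+\alpha)-2(2p+\alpha)}{4(p-1)} > 0$ so that $\eps^\kappa \to 0$ as $\eps \to 0$. By Theorem~\ref{thmC}, the Choquard groundstate $v_0 \in H^1 \cap L^1 \cap C^2(\R^N)$ is bounded, hence lies in $L^q(\R^N)$ for every $q > 1$; in particular, no cutoff of $v_0$ is needed in order to use it as a test function for $\mathscr{P}_\eps^{(1)}$, which is a noteworthy simplification compared with the formal-limit Lemma~\ref{lem1101}.

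For the strict lower bound $c_\eps^{(1)} > c_0^{(1)}$, I would first evaluate
$$\mathcal{P}_0^{(1)}(v_\eps) = \mathcal{P}_\eps^{(1)}(v_\eps) - \frac{N\eps^\kappa}{q}\|v_\eps\|_q^q = -\frac{N\eps^\kappa}{q}\|v_\eps\|_q^q < 0,$$
so (arguing as for \eqref{Peps-non0}, now applied to $\mathcal{I}_0^{(1)}$) there is a unique $t_\eps \in (0,1)$ with $v_\eps(\cdot/t_\eps) \in \mathscr{P}_0^{(1)}$. Since $t=1$ is the unique maximum of the map $t \mapsto \mathcal{I}_\eps^{(1)}(v_\eps(\cdot/t))$, one gets
$$c_\eps^{(1)} = \mathcal{I}_\eps^{(1)}(v_\eps) > \mathcal{I}_\eps^{(1)}(v_\eps(\cdot/t_\eps)) = \mathcal{I}_0^{(1)}(v_\eps(\cdot/t_\eps)) + \frac{t_\eps^N \eps^\kappa}{q}\|v_\eps\|_q^q \geq c_0^{(1)}.$$

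For the matching upper bound, I would reverse the roles: since $\mathcal{P}_\eps^{(1)}(v_0) = \tfrac{N\eps^\kappa}{q}\|v_0\|_q^q > 0$, there is a unique $\bar t_\eps > 1$ with $v_0(\cdot/\bar t_\eps) \in \mathscr{P}_\eps^{(1)}$, and the key quantitative point is that $\bar t_\eps \to 1$. Setting $g(t) := \mathcal{P}_\eps^{(1)}(v_0(\cdot/t))$ and using $\mathcal{P}_0^{(1)}(v_0) = 0$ together with the Poho\v{z}aev identity for $v_0$, a direct computation yields
$$g'(1) = -\frac{(N-2)(\alpha+2)}{2}\|\nabla v_0\|_2^2 - \frac{N\alpha}{2}\|v_0\|_2^2 + \frac{N^2\eps^\kappa}{q}\|v_0\|_q^q,$$
which stays uniformly bounded away from zero (and strictly negative) for small $\eps$, while $g(1) = \tfrac{N\eps^\kappa}{q}\|v_0\|_q^q = O(\eps^\kappa)$. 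The implicit function theorem then gives $\bar t_\eps = 1 + O(\eps^\kappa)$, whence
$$c_\eps^{(1)} \leq \mathcal{I}_\eps^{(1)}(v_0(\cdot/\bar t_\eps)) = \frac{\bar t_\eps^{N-2}}{2}\|\nabla v_0\|_2^2 + \frac{\bar t_\eps^N}{2}\|v_0\|_2^2 - \frac{\bar t_\eps^{N+\alpha}}{2p}\int_{\R^N}(I_\alpha * |v_0|^p)|v_0|^p\,dx + \frac{\bar t_\eps^N \eps^\kappa}{q}\|v_0\|_q^q \longrightarrow c_0^{(1)}.$$

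The only genuinely technical step is this quantitative control of $\bar t_\eps$ near $1$; once it is in place, both inequalities follow transparently from the monotonicity of the dilation fibering and the sign of the perturbation. Combining the two bounds yields $0 < c_\eps^{(1)} - c_0^{(1)} \to 0$, as claimed.
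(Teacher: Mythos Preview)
Your proof is correct and follows essentially the same strategy as the paper: both directions are obtained by dilation fibering, testing $v_\eps$ against $\mathscr{P}_0^{(1)}$ for the strict lower bound and $v_0$ against $\mathscr{P}_\eps^{(1)}$ for the upper bound. The only cosmetic differences are that the paper writes the lower bound via the reduced energy $\mathcal{I}^{(1)}(u)=\tfrac{1}{N}\|\nabla u\|_2^2+\tfrac{\alpha}{2Np}\int(I_\alpha*|u|^p)|u|^p$ on the Poho\v{z}aev manifold rather than via the mountain-pass monotonicity, and obtains the bound $\bar t_\eps\le 1+C\eps^\kappa$ by a direct algebraic manipulation of the Poho\v{z}aev identity instead of the implicit-function-theorem argument; both routes are equivalent.
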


\proof
 Clearly, $\mathcal{P}_{\eps}^{(1)}(v_{\eps})=0$ implies that $\mathcal{P}_{0}^{(1)}(v_{\eps})<0$.  Let $w_{\eps, t}(x)=v_{\eps}(\frac{x}{t})$, then
$$
\mathcal{P}_{0}^{(1)}(w_{\eps,t})=\frac{(N-2)t^{N-2}}{2}\|\nabla v_{\eps}\|_2^2+\frac{ Nt^N}{2}\|v_{\eps}\|^2_2-\frac{(N+\alpha)t^{N+\alpha}}{2p}\int_{\R^N}(I_{\alpha}*|v_{\eps}|^p)|v_{\eps}|^pdx
$$
has a unique maximum and $\mathcal{P}_{0}^{(1)}(w_{\eps, 1})<0$, thus  there exists $t_{v_{\eps}}\in (0,1)$ such that $w_{\eps, t_{v_{\eps}}}\in \mathscr{P}_{0}^{(1)}$.  Therefore we have
$$\aligned
c_0^{(1)}\leq \mathcal{I}_{0}^{(1)}(w_{\eps, t_{v_{\eps}}})=&\frac{t^{N-2}_{v_{\eps}}}{N}\|\nabla v_{\eps}\|_2^2+\frac{\alpha}{2Np}t^{N+\alpha}_{v_{\eps}}\int_{\R^N}(I_{\alpha}*|v_{\eps}|^p)|v_{\eps}|^{p}\\
<&
\frac{1}{N}\|\nabla v_{\eps}\|_2^2+\frac{\alpha}{2Np}\int_{\R^N}(I_{\alpha}*|v_{\eps}|^p)|v_{\eps}|^{p}\\
=&\mathcal{I}_{\eps}^{(1)}(v_{\eps})=c_{\eps}^{(1)}.
\endaligned
$$

On the other hand, let $v_0\in\mathscr{P}^{(1)}_0$ be a radially symmetric ground state of \eqref{eqC}, that is $\I_0^{(1)}(v_0)=c_0^{(1)}$.
Then $\mathcal{P}^{(1)}_{\eps}(v_0)>0$ and there exists $t_{v_0}(\eps)>1$ such that $v_0(x/t_{v_0}(\eps))\in \mathscr{P}^{(1)}_{\eps}$.
This implies that $t_{v_0}(\eps)$ is bounded, up to subsequence, we assume that $t_{v_0}(\eps)\to t_{v_0}(0)$.  Recall that $\mathcal{P}^{(1)}_0(v_0)=0$, we can conclude that $t_{v_0}(0)=1$.
Set $\kappa:=\frac{q(2+\alpha)-2(2p+\alpha)}{4(p-1)}$.
We obtain
$$\aligned
\frac{N+\alpha}{2p}t^{\alpha}_{v_0}(\eps)\int_{\R^N}(I_{\alpha}*|v_0|^p)|v_0|^{p}dx=& \frac{N-2}{2}\|\nabla v_0\|_2^2t^{-2}_{v_0}(\eps)+\frac{N}{2}\|v_0\|_2^2+\frac{N}{q}\|v_0\|_q^q\eps^\kappa\\
\leq& \frac{N-2}{2}\|\nabla v_0\|_2^2+\frac{N}{2}\|v_0\|_2^2+\frac{N}{q}\|v_0\|_q^q\eps^\kappa\\
=& \frac{N+\alpha}{2p}\int_{\R^N}(I_{\alpha}*|v_0|^p)|v_0|^{p}dx+ \frac{N}{q}\|v_0\|_q^q\eps^\kappa,
\endaligned
$$
which means that $t^{\alpha}_{v_0}(\eps)\leq 1+C\eps^{\kappa}$, or equivalently
$$
t_{v_0}(\eps)\leq (1+C\eps^{\kappa})^{1/\alpha}\leq 1+C\eps^{\kappa}.
$$
Therefore
$$\aligned
c_{\eps}^{(1)}\leq \mathcal{I}^{(1)}_{\eps}(v_0(\frac{x}{t_{v_0}(\eps)}))=&\frac{t^{N-2}_{v_0}}{N}\|\nabla v_0\|_2^2(\eps)+\frac{\alpha t^{N+\alpha}_{v_0}(\eps)}{2Np}\int_{\R^N}(I_{\alpha}*|v_{0}|^p)|v_{0}|^{p}dx \\
=&c_0^{(1)}+\frac{t^{N-2}_{v_0}(\eps)-1}{N}\|\nabla v_0\|_2^2+\frac{\alpha(t^{N+\alpha}_{v_0}(\eps)-1)}{2Np}\int_{\R^N}(I_{\alpha}*|u|^p)|u|^{p}dx \\
\leq &c_0^{(1)}+C\eps^{\kappa}.
\endaligned
$$
It follows that $c_{\eps}^{(1)}\to c_0^{(1)}$ as $\eps\to 0$.
\qed

\begin{corollary}\label{cor-bound1}
The quantities
$$\|\nabla v_\eps\|_2^2,\quad \|v_\eps\|_2^2,\quad \eps^{\kappa}\|v_{\eps}\|_q^q,
\quad \int_{\R^N}(I_{\alpha}*|v_\eps|^p)|v_\eps|^p dx,$$
are uniformly bounded as $\eps\to 0$.
\end{corollary}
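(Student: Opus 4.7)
The plan is to mimic Corollary \ref{cor-bound0} in the rescaled setting. Since $v_\eps$ is a critical point of $\I_\eps^{(1)}$ (it is a groundstate of $(\mathscr{C}_\eps)$), it satisfies both a Nehari identity and a Poho\v zaev identity for the rescaled equation. Specifically, testing the equation by $v_\eps$ yields
\begin{equation*}
\|\nabla v_\eps\|_2^2+\|v_\eps\|_2^2+\eps^{\kappa}\|v_\eps\|_q^q=\int_{\R^N}(I_\alpha\ast|v_\eps|^p)|v_\eps|^p\,dx,
\end{equation*}
while the Poho\v zaev identity $\mathcal{P}_\eps^{(1)}(v_\eps)=0$ reads
\begin{equation*}
\frac{N-2}{2}\|\nabla v_\eps\|_2^2+\frac{N}{2}\|v_\eps\|_2^2-\frac{N+\alpha}{2p}\int_{\R^N}(I_\alpha\ast|v_\eps|^p)|v_\eps|^p\,dx+\frac{N\eps^\kappa}{q}\|v_\eps\|_q^q=0,
\end{equation*}
where $\kappa:=\frac{q(2+\alpha)-2(2p+\alpha)}{4(p-1)}>0$ by the assumption $q>2\frac{2p+\alpha}{2+\alpha}$.

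Next, I would combine the identity $\I_\eps^{(1)}(v_\eps)=c_\eps^{(1)}$ with the Poho\v zaev identity to eliminate the $\|v_\eps\|_2^2$ and $\eps^\kappa\|v_\eps\|_q^q$ contributions. Computing $c_\eps^{(1)}=\I_\eps^{(1)}(v_\eps)-\tfrac{1}{N}\mathcal{P}_\eps^{(1)}(v_\eps)$ gives
\begin{equation*}
c_\eps^{(1)}=\frac{1}{N}\|\nabla v_\eps\|_2^2+\frac{\alpha}{2pN}\int_{\R^N}(I_\alpha\ast|v_\eps|^p)|v_\eps|^p\,dx,
\end{equation*}
exactly parallel to the formula used in Corollary \ref{cor-bound0}.

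By Lemma \ref{lem1102}, $c_\eps^{(1)}\to c_0^{(1)}<\infty$, so the identity above immediately yields uniform bounds on $\|\nabla v_\eps\|_2^2$ and on the nonlocal term $\int(I_\alpha\ast|v_\eps|^p)|v_\eps|^p\,dx$ (both summands are nonnegative). Feeding these bounds into the Nehari identity, rewritten as
\begin{equation*}
\|v_\eps\|_2^2+\eps^{\kappa}\|v_\eps\|_q^q=\int_{\R^N}(I_\alpha\ast|v_\eps|^p)|v_\eps|^p\,dx-\|\nabla v_\eps\|_2^2,
\end{equation*}
gives that the right-hand side is bounded. Since both terms on the left are nonnegative, each is individually uniformly bounded, completing the proof.

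There is no real obstacle here: the argument is a direct algebraic manipulation of the Nehari and Poho\v zaev identities together with Lemma \ref{lem1102}. The only point meriting care is checking that the sign structure works — namely that both coefficients on the left of the combined Nehari relation are nonnegative (which is automatic) and that $\kappa>0$ (which is exactly the hypothesis $q>2\frac{2p+\alpha}{2+\alpha}$ ensuring that $\eps^\kappa$ does not blow up as $\eps\to 0$).
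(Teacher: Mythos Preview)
Your proposal is correct and follows essentially the same approach as the paper: the paper's proof simply says ``Follows from Poho\v zaev and Nehari identities for $\mathcal{I}_{\eps}^{(1)}$ and Lemma \ref{lem1102}, as in Corollary \ref{cor-bound0},'' and you have spelled out exactly those steps. One minor remark: the observation that $\kappa>0$ is not actually needed for this particular corollary, since you are bounding the product $\eps^\kappa\|v_\eps\|_q^q$ directly from the Nehari identity; the sign of $\kappa$ becomes relevant only later when passing to the limit equation.
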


\begin{proof}
Follows from Poho\v zaev and Nehari identities for $\mathcal{I}_{\eps}^{(1)}$ and Lemma \ref{lem1102}, as in Corollary \ref{cor-bound0}.
\end{proof}

\begin{proof}[Proof of Theorem~\ref{thmC-lim}]
Since
$$
\mathcal{P}^{(1)}_{0}(v_\varepsilon)=\mathcal{P}^{(1)}_{\varepsilon}(v_\varepsilon)-\frac{N}{q}\varepsilon^{\kappa}\|v_{\varepsilon}\|_q^q=-\frac{N}{q}\varepsilon^{\kappa}\|v_{\varepsilon}\|_q^q<0,
$$
there exists $t_{\eps}\in (0,1)$ such that $\mathcal{P}^{(1)}_{0}(v_\eps(x/t_{\eps}))=0$. Using Lemma~\ref{lem1102}, as $\eps\to 0$ we obtain
\begin{equation*}\label{teto1}
c_0^{(1)}\leq \mathcal{I}^{(1)}_{0}(v_{\eps}(x/t_{\eps}))=\frac{t^{N-2}_{\eps}}{2}\|\nabla v_{\eps}\|_2^2+
\frac{\alpha}{2Np}t^{N+\alpha}_{\eps}\int_{\R^N}(I_{\alpha}*|v_{\eps}|^p)|v_{\eps}|^{p}\leq \mathcal{I}^{(1)}_{\eps}(v_{\eps})=c_{\eps}^{(1)}\to c_0^{(1)}.
\end{equation*}
Thus $\{v_\eps(x/t_{\eps})\}\subset \mathscr{P}^{(1)}_{0}$ is a minimizing sequence for $c_0^{(1)}$ and we may assume that $t_{\eps}\to t_0\in (0, 1]$.
(If $t_{\eps}\to 0$ then $0<c_0\leq \I^{(1)}_0(v_{\varepsilon}(x/t_{\varepsilon}))\to 0$, which is a contradiction.) From Corollary \ref{cor-bound1} we see that $\{v_{\eps}(x/t_{\eps})\}$ is bounded in $H^1_{rad}\cap L^q(\R^N)$.
Then similarly to the argument in Step 3 of Section~\ref{s4},
using Strauss' radial bounds, Lebesgue dominated convergence and nonlocal Brezis-Lieb Lemma \cite{MMS16}*{Proposition 4.7}, we conclude that
$v_\eps(x)\to \overline{v}_0$ in $D^1\cap L^q(\R^N)$ and $\overline{v}_0$ is a groundstate solution of \eqref{eqCeps}.
\end{proof}

\begin{remark}
We claim that in fact $t_0=1$. Indeed, if $t_0<1$ then there exists $\delta>0$ such that for all $\eps\in (0, \delta)$, it holds $t_{\eps}<\frac{t_0+1}{2}$. Therefore,
$$\aligned
c_0^{(1)}\leq \mathcal{I}^{(1)}_{0}(v_{\eps}(x/t_{\eps}))=&\frac{t^{N-2}_{\eps}}{2}\|\nabla v_{\eps}\|_2^2+
\frac{\alpha}{2Np}t^{N+\alpha}_{\eps}\int_{\R^N}(I_{\alpha}*|v_{\eps}|^p)|v_{\eps}|^{p}\\
\leq&\Big(\frac{t_0+1}{2}\Big)^{N-2}\mathcal{I}^{(1)}_{\eps}(v_{\eps})
=\Big(\frac{t_0+1}{2}\Big)^{N-2}c_{\eps}^{(1)}
\to \Big(\frac{t_0+1}{2}\Big)^{N-2}c_0^{(1)},
\endaligned
$$
as $\eps\to0$, which is a contradiction.
In particular, since
$$
c_0^{(1)}\leq \mathcal{I}^{(1)}_{0}(v_{\varepsilon}(x/t_{\varepsilon}))=\mathcal{I}^{(1)}_{\varepsilon}(v_{\varepsilon}(x/t_{\varepsilon}))-\frac{t^N_{\varepsilon}}{q}\varepsilon^{\kappa}\|v_{\varepsilon}\|_q^q
\leq c_{\varepsilon}^{(1)}-\frac{t^N_{\varepsilon}}{q}\varepsilon^{\kappa}\|v_{\varepsilon}\|_q^q,
$$
we conclude that $\varepsilon^{\kappa}\|v_{\varepsilon}\|_q^q\to 0$ as $\varepsilon\to 0$.
\end{remark}

\subsection{2nd rescaling: Thomas--Fermi limit for $\alpha=2$}\label{s-TF}
Throughout this section we assume that $N\le 5$, $\alpha=2$, $p=2$ and $\frac{4N}{N+2}<q<3$.
The energy corresponding to the 2nd rescaling
$$
v(x):=\eps^{-\frac{1}{q-2}}u\big(\eps^{-\frac{4-q}{2(q-2)}}x\big)
$$
is given by
\begin{multline}\label{eq27}
\mathcal{I}_{\eps}^{(2)}(v):=\frac{\eps^{\frac{2(3-q)}{q-2}}}{2}\int_{\R^N}|\nabla v|^2dx+\frac12\int_{\R^N}|v|^2dx-\frac{1}{4}\int_{\R^N}(I_{2}*|v|^2)|v|^2dx+\frac{1}{q}\int_{\R^N}|v|^qdx,
\end{multline}
and the corresponding Poho\v zaev functional is
\begin{multline}\label{eq29}
\mathcal{P}_{\eps}^{(2)}(v):=\frac{(N-2)\eps^{\frac{2(3-q)}{q-2}}}{2}\int_{\R^N}|\nabla v|^2dx+\\
\frac{ N}{2}\int_{\R^N}|v|^2dx-\frac{N+2}{4}\int_{\R^N}(I_{2}*|v|^2)|v|^2 dx+\frac{N}{q}\int_{\R^N}|v|^qdx.
\end{multline}
Note that $\I_\eps(u)=\eps^{\frac{q(N+2)-4N}{2(q-2)}}\I_\eps^{(2)}(v)$
and consider the rescaled minimization problem
\begin{equation}\label{eq31}
c^{(2)}_\eps=\inf_{v\in \mathscr{P}_{\eps}^{(2)}} \mathcal{I}_{\eps}^{(2)}(v),
\end{equation}
where $\mathscr{P}_{\eps}^{(2)}:=\big\{u\neq 0: \mathcal{P}_{\eps}^{(2)}(v)=0\big\}$
is the Poho\v zaev manifold of \eqref{eqTFeps}.
When $\eps=0$, we formally obtain
\begin{equation}\label{eq3002}
c^{(2)}_0:=\inf_{ v\in \mathscr{P}_{0}^{(2)}}\mathcal{I}_{0}^{(2)}(v).
\end{equation}
By using an appropriate rescaling, it is standard to see that the minimization problem for $c_0^{(2)}$ is equivalent to the minimization problem $s_{T\!F}$, defined in \eqref{eq-sTF}.
In particular, $c_0^{(2)}>0$ and $c_0^{(2)}$ admits a minimizer $v_0\in L^2\cap L^q(\R^N)$, which is a groundstate of the Thomas--Fermi equation \eqref{eqTF} characterised in \eqref{TF-resc-min-rho-1} of Theorem \ref{thmTF}.

Let $u_\eps$ be the positive spherically symmetric groundstate solution of \eqref{eqPeps} constructed in Theorem \ref{thm01}.
It is clear that the rescaled groundstate
$$
v_{\eps}(x):=\eps^{-\frac{1}{q-2}}u_\eps\big(\eps^{-\frac{4-q}{2(q-2)}}x\big)
$$
is a groundstate solution of \eqref{eqTFeps}, i.e. $\I_\eps^{(2)}(v_\eps)=c_\eps^{(2)}$.
We are going to show  that $v_\eps$ converges to a groundstate of the Thomas--Fermi equation \eqref{eqTF}, characterised in Theorem \ref{thmTF}.

Before we do this, we deduce a two--sided estimate on the ``boundary behaviour'' of the
the nonnegative radially symmetric Thomas--Fermi minimizer $\rho_*$, constructed in Theorem \ref{thmTF}.
Recall that $\mathrm{supp}(\rho_*)=\bar B_{R_*}$ for some $R_*>0$ and $\rho_*$ is $C^\infty$ inside the support. Moreover, since we assume that $p=2$, $\alpha=2$ (and denote $m=q/2$), we see that $\rho_*^{m-1}\in C^{0,1}(\R^N)$ and
$\rho_*\in C^{0, \gamma}(\R^N)$, where $\gamma=\min\{1,\frac{1}{m-1}\}$. In particular, $\rho_*\in H^1_0(B_{R_*})$ and we can
apply $-\Delta$ to the Euler--Lagrange equation \eqref{TF-rho} considered in $B_{R_*}$, to obtain
\begin{equation}\label{TF-rho-2}
	-\Delta\big(m\rho_*^{m-1}\big)=-\Delta\Big(s_{T\!F}\tfrac{2N}{N+2} I_{2}*\rho_*-1\Big)=s_{T\!F}\tfrac{2N}{N+2}\rho_*\ge 0\quad\text{in $B_{R_*}$}.
\end{equation}
We conclude that $\rho_*^{m-1}$ is superharmonic in $B_{R_*}$ and by the boundary Hopf lemma
\begin{equation}\label{rho-0-low}
	\rho_*^{m-1}\ge c(R_*-|x|)\quad\text{in $B_{R_*}$.}
\end{equation}
Hence we deduce a two--sided bound
\begin{equation}\label{rho-0-lowup}
	c(R_*-|x|)^\frac{1}{m-1}\le \rho_*\le C(R_*-|x|)^\gamma \quad\text{in $B_{R_*}$.}
\end{equation}
Similar estimates should be available for $\alpha\neq 0$, at least under the assumption $\alpha<2$.
We will study this in the forthcoming paper \cite{TF}.

\begin{lemma}\label{lem1201}
	$0<c^{(2)}_{\eps}-c^{(2)}_0\to 0$ as $\eps\to 0$.
\end{lemma}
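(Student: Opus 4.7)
The plan is to prove the two inequalities separately, mimicking the scheme of Lemma~\ref{lem1102}, but with the additional subtlety that the Thomas--Fermi groundstate $v_0$ has compact support and only H\"older regularity at its free boundary.

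Let $\delta_\eps:=\eps^{2(3-q)/(q-2)}$; the assumption $2<q<3$ gives $\delta_\eps\to 0$. For the strict lower bound $c_\eps^{(2)}>c_0^{(2)}$, I will use $v_\eps$ itself as a test profile. Because
\[
\mathcal{P}_0^{(2)}(v_\eps)=\mathcal{P}_\eps^{(2)}(v_\eps)-\tfrac{(N-2)\delta_\eps}{2}\|\nabla v_\eps\|_2^2<0,
\]
and $t\mapsto\mathcal{P}_0^{(2)}(v_\eps(x/t))$ has the form $at^N-bt^{N+2}$ with $a,b>0$, it vanishes at a unique $t_\eps\in(0,1)$, and $v_\eps(x/t_\eps)\in\mathscr{P}_0^{(2)}$. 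Since $t=1$ is the strict maximiser of $t\mapsto\mathcal{I}_\eps^{(2)}(v_\eps(x/t))$,
\[
c_0^{(2)}\le\mathcal{I}_0^{(2)}\bigl(v_\eps(x/t_\eps)\bigr)=\mathcal{I}_\eps^{(2)}\bigl(v_\eps(x/t_\eps)\bigr)-\tfrac{\delta_\eps t_\eps^{N-2}}{2}\|\nabla v_\eps\|_2^2<\mathcal{I}_\eps^{(2)}(v_\eps)=c_\eps^{(2)}.
\]

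For the upper bound $c_\eps^{(2)}\le c_0^{(2)}+o(1)$, the natural choice is $v_0$, but I must first verify that $v_0\in H^1(\R^N)$, so that $\mathcal{I}_\eps^{(2)}(v_0)$ makes sense. With $m:=q/2\in(1,3/2)$ and $\rho_*=v_0^2$, Theorem~\ref{thmTF} yields $\rho_*,\rho_*^{m-1}\in C^{0,1}(\R^N)$; applying Hopf's lemma to the superharmonic function $\rho_*^{m-1}$ in $B_{R_*}$ gives the two-sided boundary estimate \eqref{rho-0-lowup}, from which
\[
|\nabla v_0|^2\lesssim \rho_*^{3-2m}\lesssim (R_*-|x|)_+^{(3-2m)/(m-1)}\quad\text{near }\partial B_{R_*},
\]
which is integrable precisely because $m<2$. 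Once $v_0\in H^1\cap L^q(\R^N)$ is secured, $\mathcal{P}_\eps^{(2)}(v_0)=\tfrac{(N-2)\delta_\eps}{2}\|\nabla v_0\|_2^2>0$, and the reverse scaling yields $t_\eps>1$ with $v_0(x/t_\eps)\in\mathscr{P}_\eps^{(2)}$. Substituting into $\mathcal{P}_\eps^{(2)}(v_0(x/t_\eps))=0$ and using $\mathcal{P}_0^{(2)}(v_0)=0$, I will obtain $t_\eps^2-1=O(\delta_\eps)$, whence
\[
c_\eps^{(2)}\le \mathcal{I}_\eps^{(2)}\bigl(v_0(x/t_\eps)\bigr)=\mathcal{I}_0^{(2)}(v_0)+O(\delta_\eps)=c_0^{(2)}+O(\delta_\eps).
\]

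The main obstacle, absent from the Choquard analogue, is the $H^1$--regularity of $v_0$ at its free boundary. This is exactly where the hypothesis $q<3$ (equivalently $m<2$) enters in an essential way: it is precisely the threshold at which the boundary singularity of $|\nabla v_0|^2$ becomes integrable.
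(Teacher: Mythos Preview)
Your proposal is correct. The lower bound $c_\eps^{(2)}>c_0^{(2)}$ matches the paper verbatim. For the upper bound you take a genuinely more direct route: having verified that $v_0\in H^1(\R^N)$, you feed $v_0$ itself into $\mathscr{P}_\eps^{(2)}$ and read off $t_\eps^2-1=O(\delta_\eps)$ from the Poho\v zaev identity. The paper instead builds a cut-off family $v_n=\eta_n v_0$ supported strictly inside $B_{R_*}$, chooses $n=\eps^{-3\nu/2}$, and tracks the $n$--dependence through the estimates \eqref{eq1204}--\eqref{eq1203}. That detour is unnecessary here because, as you observe, for $q<3$ (i.e.\ $m<3/2$) one has $|\nabla v_0|^2\lesssim\rho_*^{3-2m}$, which is actually \emph{bounded} and not merely integrable. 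The paper's cut-off is written to stay valid for all $q$: the same computation is reused in Theorem~\ref{t-TF-0-inf} for the $\eps\to\infty$ regime, where $q>3$ and in particular $q\ge 4$ is allowed, so that $v_0\notin D^1(\R^N)$ (see the remark after \eqref{eq1203} and the comment in Section~\ref{s3}). Your simplification buys a cleaner proof of the present lemma at the cost of not being recyclable for the large--$\eps$ limit.
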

\begin{proof}
	Let $v_{\eps}\in \mathscr{P}^{(2)}_{\eps}$ be a solution of $(P_{\eps})$ with $\mathcal{I}^{(2)}_{\eps}(v_{\eps})=c^{(2)}_{\eps}$ and $\eps>0$. Then $$\mathcal{P}^{(2)}_0(v_{\eps})=\mathcal{P}^{(2)}_{\eps}(v_{\eps})-\frac{(N-2)\eps^{\nu}}{2}\|\nabla v_{\eps}\|_2^2=-\frac{(N-2)\eps^{\nu}}{2}\|\nabla v_{\eps}\|_2^2<0,$$
	where we denoted $\nu:=\frac{2(3-q)}{q-2}$.
	Let $w_{\eps, t}(x)=v_{\eps}(\frac{x}{t})$, then we obtain that
	$$
	\mathcal{P}_{0}^{(2)}(w_{\eps,t})=\frac{Nt^N}{2}\|v_{\eps}\|^2_2+\frac{Nt^N}{q}\|v_{\eps}\|^q_q
	-\frac{(N+2)t^{N+2}}{4}\int_{\R^N}(I_{2}*|v_{\eps}|^2)|v_{\eps}|^2 dx
	$$
	has a unique maximum and $\mathcal{P}_{0}^{(2)}(w_{\eps, 1})<0$. Thus there exists $t_{\eps}\in (0,1)$ such that $w_{\eps, t_{\eps}}\in \mathscr{P}_{0}^{(2)}$.  Therefore we have
	\begin{equation}\label{eq1202}\aligned
	c_0^{(2)}\leq \mathcal{I}_{0}^{(2)}(w_{\eps, t_{\eps}})=&\frac{1}{2N}t^{N+2}_{\eps}\int_{\R^N}(I_{2}*|v_{\eps}|^2)|v_{\eps}|^{2}\\
	<&\frac{1}{2N}\int_{\R^N}(I_{2}*|v_{\eps}|^2)|v_{\eps}|^{2}
	=\mathcal{I}_{\eps}^{(2)}(v_{\eps})
	=c_{\eps}^{(2)},
	\endaligned
	\end{equation}
	so $c^{(2)}_0<c^{(2)}_{\eps}$. We are going to prove that $c^{(2)}_{\eps}\to c^{(2)}_0$ as $\eps\to 0$.
	
	Let $v_0\in \mathscr{P}^{(2)}_{0}$ be a groundstate \eqref{TF-resc-min-rho-1} of the Thomas--Fermi equation \eqref{eqTF}, as constructed in Theorem \ref{thmTF}. From \eqref{TF-resc-min-rho-1} and \eqref{rho-0-lowup}
	we conclude that $v_0^{q-2}\in C^{0,1}(\R^N)$ and
	\begin{equation}\label{rho-0-lowup-v0}
	c(R_*-|x|)^\frac{1}{q-2}\le v_0\le C(R_*-|x|)^{\gamma_0} \quad\text{in $B_{R_*}$,}
	\end{equation}
	where $R_*$ is the support radius of $v_0$ and
	$$\gamma_0:=\min\big\{\tfrac12,\tfrac1{q-2}\big\}.$$
	Note that if $q\ge 4$ then $v_0\not\in D^1(\R^N)$ because of the singularity of the gradient on the boundary of the support, even if $v_0^2$ is Lipschitz.

	Given $n\gg 1$, we introduce the cut-off function $\eta_{n}\in C_c^{\infty}(\R^N)$ such that
	$\eta_n(x)=1$ for $|x|\leq R_*-\frac{1}{n}$, $0<\eta_n(x)<1$ for $R_*-\frac{1}{n}<|x|\leq R_*-\frac{1}{2n}$, $\eta_n(x)=0$ for $|x|\geq R_*-\frac{1}{2n}$. Furthermore, $|\eta_n'(x)|\leq 4n$ and $|\eta_n'(x)|\geq \frac{n}{2}$ for $R_*-\frac{4}{5n}<|x|<R_*-\frac{3}{5n}$.  Set
	$$v_n(x):=\eta_n(x)v_0(x).$$
	It is elementary to obtain the estimates
	\begin{equation}\label{eq1204}
	\int_{\R^N}(I_{2}*|v_n|^{2})|v_n|^{2}dx=
	\int_{\R^N}(I_{2}*|v_0|^{2})|v_0|^{2}dx+\O(\tfrac{1}{n}),
	\end{equation}
	\begin{equation}\label{eq1205}
	\|v_n\|_q^q=\|v_0\|_q^q+\O(\tfrac{1}{n}),
	\end{equation}
	\begin{equation}\label{eq1206}
	\|v_n\|_2^2=\|v_0\|_2^2+\O(\tfrac{1}{n}).
	\end{equation}

To estimate the gradient term, note that since $v_0^{q-2}$ is Lipschitz on $\R^N$ and smooth inside the support, we have $v_0^{q-3}|\nabla v_0|\in L^\infty(\R^N)$ and then it follows from \eqref{rho-0-lowup-v0} that
$$
|\nabla v_0|\le C(R_*-|x|)^{\gamma_0(3-q)}.
$$
Then we have
\begin{equation}\label{eq030}
\aligned
\int_{\R^N}\eta_n^2|\nabla v_0|^2dx\leq&\int_{|x|\leq R_*-\frac{1}{2n}}|\nabla v_0|^2dx
\leq C\int_{0}^{R_*-\frac{1}{2n}}(R_*-r)^{2\gamma_0(3-q)}dr\\ \leq& \left\{\aligned
	&C,&\quad 2<q<4,\\
	&C(1+\ln n), &\quad q=4,\\
    &C(1+n^{\frac{q-4}{q-2}}), &\quad q>4,\\
	\endaligned\right.\\
\endaligned
\end{equation}
On the other hand, using the right hand side of \eqref{rho-0-lowup-v0}, we have
\begin{equation}\label{eq04}\aligned
	\int_{\R^N}|\nabla\eta_n|^2v_0^2 dx=&\int_{R_*-\frac{1}{n}\leq|x|\leq R_*-\frac{1}{2n}}|\nabla\eta_n|^2v_0^2 dx
	\leq C n^2\int_{R_*-\frac{1}{n}}^{R_*-\frac{1}{2n}}(R_*-r)^{2\gamma_0}r^{N-1}dr\\
\leq &Cn^{1-2\gamma_0}
\leq \left\{\aligned
	&C,&\quad 2<q\leq 4,\\
    &Cn^{\frac{q-4}{q-2}},&\quad q>4.\\
	\endaligned\right.\\
\endaligned
\end{equation}
Therefore
\begin{equation}\label{eq1203}
\int_{\R^N}|\nabla v_n|^2dx\leq 2\int_{\R^N}|\nabla\eta_n|^2v_0^2 dx+2\int_{\R^N}\eta_n^2|\nabla v_0|^2dx
\leq \left\{\aligned
	&C,&\quad 2<q<4,\\
	&C(1+\ln n), &\quad q=4,\\
    &C(1+n^{\frac{q-4}{q-2}}), &\quad q>4,\\
	\endaligned\right.\\
\end{equation}
Recall that $N\le 5$, $\alpha=2$, $p=2$ and $\frac{4N}{N+2}<q<3$ in this section, and hence $\gamma_0=1/2$
($q\ge 3$ in \eqref{eq1203} is needed to study the case $\eps\to\infty$).
Then 
$$
\int_{\R^N}|\nabla v_n|^2dx\leq 2\int_{\R^N}|\nabla\eta_n|^2v_0^2 dx+2\int_{\R^N}\eta_n^2|\nabla v_0|^2dx\leq C.
$$

Set $n=\varepsilon^{-\frac{3}{2}\nu}$. Then for $\varepsilon>0$ small enough, we have
	$$\aligned
	\mathcal{P}^{(2)}_{\varepsilon}(v_{\varepsilon})=&\mathcal{P}^{(2)}_{0}(v_{\varepsilon})+\frac{(N-2)\varepsilon^{\nu}}{2}\|\nabla v_{\varepsilon}\|_2^2\\
	\geq&
	\mathcal{P}^{(2)}_{0}(v_{0})+\frac{(N-2)\varepsilon^{\nu}}{2}\Big(\int_{|x|\leq\frac{3}{4}R_*}|\nabla v_0|^2dx\Big)-C\varepsilon^{\frac{3}{2}\nu}>0,
	\endaligned
	$$
	and there exists $t_{\varepsilon}>1$ such that $\mathcal{P}^{(2)}_{\varepsilon}(v_{\varepsilon}(x/t_{\varepsilon}))=0$. This implies that
	$$\aligned
	\frac{N+2}{4}t^{2}_{\varepsilon}\int_{\R^N}(I_{2}*|v_{\varepsilon}|^2)|v_{\varepsilon}|^{2}dx=& \frac{N-2}{2}\varepsilon^{\nu}\|\nabla v_{\varepsilon}\|_2^2t^{-2}_{\varepsilon}(\varepsilon)+\frac{N}{2}\|v_{\varepsilon}\|_2^2+\frac{N}{q}\|v_{\varepsilon}\|_q^q\\
	\leq& \frac{N-2}{2}\varepsilon^{\nu}\|\nabla v_{\varepsilon}\|_2^2+\frac{N}{2}\|v_{\varepsilon}\|_2^2+\frac{N}{q}\|v_{\varepsilon}\|_q^q\\
	=& \frac{N-2}{2}\varepsilon^{\nu}\|\nabla v_{\varepsilon}\|_2^2+\frac{N+2}{4}\int_{\R^N}(I_{2}*|v_0|^2)|v_0|^{2}dx+C\varepsilon^{\frac{3}{2}\nu}.
	\endaligned
	$$	
	It follows from \eqref{eq1203} and \eqref{eq1204} that $t_{\varepsilon}\to 1$ as $\varepsilon\to 0$. Moreover, we also have
	$$
	t_{\varepsilon}\leq 1+C\varepsilon^{\nu}.
	$$
	Therefore, we have
	$$
	c^{(2)}_{\varepsilon}\leq \mathcal{I}^{(2)}_{\varepsilon}(v_\varepsilon(x/t_{\varepsilon}))=\mathcal{I}^{(2)}_{0}(v_0)
	+\frac{t^{N-2}_{\varepsilon}\varepsilon^{\nu}}{2}\|\nabla v_{\varepsilon}\|_2^2+C(t^{N}_{\varepsilon}-1)-C(t^{N+2}_{\varepsilon}-1)\leq c_0^{(2)}+C\varepsilon^{\nu},
	$$
	which means that $c^{(2)}_{\varepsilon}\to c_0^{(2)}$ as $\varepsilon\to 0$.
\end{proof}

\begin{lemma}\label{lem1202}
	$\eps^{\frac{2(3-q)}{q-2}}\|\nabla v_{\eps}\|_2^2\to 0$ as $\eps\to 0$.
\end{lemma}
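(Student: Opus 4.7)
The plan is to derive a clean representation of $c_\eps^{(2)}$ that isolates the gradient term, and then compare it with an upper bound on $c_0^{(2)}$ obtained along the rescaled competitor already built in the proof of Lemma \ref{lem1201}. Set $\nu:=\frac{2(3-q)}{q-2}$ for brevity. First I would form the combination $\mathcal{I}_\eps^{(2)}-\frac{1}{N}\mathcal{P}_\eps^{(2)}$, which annihilates both the $\|v\|_2^2$ and the $\|v\|_q^q$ terms and yields the identity
$$
\mathcal{I}_\eps^{(2)}(v)-\tfrac{1}{N}\mathcal{P}_\eps^{(2)}(v)=\tfrac{\eps^{\nu}}{N}\|\nabla v\|_2^2+\tfrac{1}{2N}\int_{\R^N}(I_{2}\ast|v|^2)|v|^2\,dx.
$$
Evaluating at the groundstate $v_\eps$, for which $\mathcal{P}_\eps^{(2)}(v_\eps)=0$, gives the clean representation
$$
c_\eps^{(2)}=\tfrac{\eps^{\nu}}{N}\|\nabla v_\eps\|_2^2+\tfrac{1}{2N}\int_{\R^N}(I_{2}\ast|v_\eps|^2)|v_\eps|^2\,dx.
$$

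Second, I would recall from the proof of Lemma \ref{lem1201} that there exists $t_\eps\in(0,1)$ with $v_\eps(\cdot/t_\eps)\in\mathscr{P}_0^{(2)}$; the strict inequality $t_\eps<1$ follows from $\mathcal{P}_0^{(2)}(v_\eps)=\mathcal{P}_\eps^{(2)}(v_\eps)-\tfrac{(N-2)\eps^{\nu}}{2}\|\nabla v_\eps\|_2^2<0$. Applying the same algebraic combination $\mathcal{I}_0^{(2)}-\tfrac{1}{N}\mathcal{P}_0^{(2)}=\tfrac{1}{2N}\int(I_{2}\ast|v|^2)|v|^2\,dx$ to this competitor and using the Pohozaev constraint together with the scaling $\int(I_2\ast|v_\eps(\cdot/t_\eps)|^2)|v_\eps(\cdot/t_\eps)|^2\,dx=t_\eps^{N+2}\int(I_2\ast|v_\eps|^2)|v_\eps|^2\,dx$ yields
$$
c_0^{(2)}\le\mathcal{I}_0^{(2)}\bigl(v_\eps(\cdot/t_\eps)\bigr)=\tfrac{t_\eps^{N+2}}{2N}\int_{\R^N}(I_{2}\ast|v_\eps|^2)|v_\eps|^2\,dx.
$$

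Subtracting the two relations and using $t_\eps\in(0,1)$ produces the one-line estimate
$$
c_\eps^{(2)}-c_0^{(2)}\ \ge\ \tfrac{\eps^{\nu}}{N}\|\nabla v_\eps\|_2^2+\tfrac{1-t_\eps^{N+2}}{2N}\int_{\R^N}(I_{2}\ast|v_\eps|^2)|v_\eps|^2\,dx\ \ge\ \tfrac{\eps^{\nu}}{N}\|\nabla v_\eps\|_2^2.
$$
Combined with the convergence $c_\eps^{(2)}-c_0^{(2)}\to 0$ supplied by Lemma \ref{lem1201}, this immediately gives $\eps^{\nu}\|\nabla v_\eps\|_2^2\to 0$. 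I do not foresee a genuine obstacle: the whole argument is algebraic, reducing to the Pohozaev identity and the quantitative energy gap already established in Lemma \ref{lem1201}.
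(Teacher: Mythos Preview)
Your argument is correct and in fact cleaner than the paper's. Both proofs use the same rescaling $t_\eps\in(0,1)$ with $v_\eps(\cdot/t_\eps)\in\mathscr{P}_0^{(2)}$, but the paper proceeds indirectly: it first establishes $t_\eps\to 1$ by contradiction, then expands $\mathcal{P}_0^{(2)}\bigl(v_\eps(\cdot/t_\eps)\bigr)=0$ as $\mathcal{P}_\eps^{(2)}(v_\eps)$ plus correction terms of the form $(t_\eps^N-1)\|v_\eps\|_2^2$, $(t_\eps^N-1)\|v_\eps\|_q^q$, $(t_\eps^{N+2}-1)\int(I_2\ast|v_\eps|^2)|v_\eps|^2$, and finally uses the boundedness of these norms (implicit from $c_\eps^{(2)}\to c_0^{(2)}$) to conclude. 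Your route bypasses both the contradiction step and the boundedness step: the algebraic combination $\mathcal{I}_\eps^{(2)}-\tfrac{1}{N}\mathcal{P}_\eps^{(2)}$ directly yields the sandwich $0\le\tfrac{\eps^\nu}{N}\|\nabla v_\eps\|_2^2\le c_\eps^{(2)}-c_0^{(2)}$. This is more elementary and, as a bonus, gives the quantitative bound $\eps^\nu\|\nabla v_\eps\|_2^2\le N\bigl(c_\eps^{(2)}-c_0^{(2)}\bigr)$, which the paper's argument does not make explicit.
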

\begin{proof}
	According to Lemma~\ref{lem1201}, there exists a unique $t_{\eps}\in (0, 1)$ such that $v_{\eps}(x/t_{\eps})\in \mathscr{P}^{(2)}_0$.  Now we claim that $t_{\eps}\to 1$ as $\eps\to 0$.  If not, we assume that $t_{\eps}\to t_0<1$ as as $\eps\to 0$, then by \eqref{eq1102}, we have, as $\eps\to 0$,
	$$\aligned
	c^{(2)}_0\leq \mathcal{I}^{(2)}_{0}(v_{\eps}(x/t_{\eps}))=&\frac{t_{\eps}^{N+2}}{2N}\int_{\R^N}(I_{2}*|v_{\eps}|^2)|v_{\eps}|^2 dx\\
	\leq & t_{\eps}^{N+2}\mathcal{I}^{(2)}_{\eps}(v_{\eps})=t_{\eps}^{N+2}c^{(2)}_{\eps}
	\to t_0^{N+2} c^{(2)}_0<c_0,
	\endaligned
	$$
	this is a contradiction.  Therefore our claim holds, i.e., $t_{\eps}\to 1$ as $\eps\to 0$.  Using $\mathcal{P}^{(2)}_0(v_{\eps}(x/t_{\eps}))=0$ again, we see that
	$$\aligned
	0=&\frac{Nt^{N}_{\eps}}{2}\|v_{\eps}\|_2^2+\frac{Nt^{N}_{\eps}}{q}\|v_{\eps}\|_q^q-
	\frac{(N+2)t^{N+2}_{\eps}}{4}\int_{\R^N}(I_{2}*|v_{\eps}|^2)|v_{\eps}|^2 dx\\
	=&\mathcal{P}^{(2)}_{\eps}(v_{\eps})-\frac{(N-2)\eps^{\nu}}{2}\|\nabla v_{\eps}\|_2^2+\frac{N (t^{N}_{\eps}-1)}{2}\|v_{\eps}\|_2^2+\frac{N(t^{N}_{\eps}-1)}{q}\|v_{\eps}\|_q^q\\
	&-\frac{(N+2)(t^{N+2}_{\eps}-1)}{4}\int_{\R^N}(I_{2}*|v_{\eps}|^2)|v_{\eps}|^2 dx,
	\endaligned
	$$
	which implies that $\eps^{\nu}\|\nabla v_{\eps}\|_2^2\to 0$ as $\eps\to 0$ since $\mathcal{P}^{(2)}_{\eps}(v_{\eps})=0$.
\end{proof}

\noindent{\bf Proof of Theorem \ref{t-TF-0}.}  By Lemmas~\ref{lem1201} and \ref{lem1202}, we see that $\{v_{\eps}(x/t_{\eps})\}$ is a bounded minimizing sequence of $c^{(2)}_0$.
Then similarly to the arguments in the proof of Proposition \ref{prpTF} and Theorem \ref{thmTF}, there exists $\overline{v}_0\in L^{2}(\R^N)\cap L^{q}(\R^N)$ such that
$$
v_{\eps}(x/t_{\eps})\to \overline{v}_0 \quad \text{in}\ L^{2}(\R^N)\cap L^{q}(\R^N)
$$
and $\overline{v}_0$ is a weak solution of \eqref{eqTF}.

\section{Asymptotic profiles: critical regimes}\label{s7}

\subsection{Critical Choquard case}
\label{sect-C-crit}

Throughout this sub-section we assume that $p=\frac{N+\alpha}{N-2}$ and $q>2^*=\frac{2N}{N-2}$.
Consider the minimization problem
$$
\mathcal{S}_{HL}=\inf_{u\in D^{1}(\R^N)\setminus\{0\}}\frac{\|\nabla u\|_2^2}{\left(\int_{\R^N}(I_{\alpha}*|u|^{\frac{N+\alpha}{N-2}})|u|^{\frac{N+\alpha}{N-2}}dx\right)^\frac{N-2}{N+\alpha}}
$$
defined in \eqref{eSHL}. Combining Sobolev inequality \eqref{Sobolev} and HLS inequality \eqref{HLS},
\begin{equation}\label{HLSS}
\|\nabla u\|_2^2\ge\mathcal{S}_*\|u\|^2_{2^*}\ge \mathcal{S}_*\mathcal{C}_\alpha^{-\frac{N-2}{N+\alpha}}
\Big(\int_{\R^N}(I_{\alpha}*|u|^\frac{N+\alpha}{N-2})|u|^\frac{N+\alpha}{N-2}dx\Big)^{\frac{N-2}{N+\alpha}},
\end{equation}
hence $\mathcal{S}_{HL}\ge \mathcal{S}_*\mathcal{C}_\alpha^{-\frac{N-2}{N+\alpha}}$.
It is not difficult to check (cf.~\cite{Minbo-Du}*{Lemma 1.1}) that $\mathcal{S}_{HL}=\mathcal{S}_*\mathcal{C}_\alpha^{-\frac{N-2}{N+\alpha}}$ and $\mathcal{S}_{HL}$ is achieved by the function
\begin{equation}\label{eqV1}
V(x)=\big(\mathcal{S}_*^\alpha\mathcal{C}_\alpha^2\big)^{\frac{2-N}{4(\alpha+2)}}U_*(x),
\end{equation}
and the family of rescalings
$$V_{\lambda}(x):=\lambda^{-\frac{N-2}{2}}V(x/\lambda)\qquad(\lambda>0),$$
here $U_*$ is the Emden--Fowler solution in \eqref{eEF}.
Up to a rescaling, $V_\lambda$ is a solution of the critical Choquard equation \eqref{eqC0}
and satisfies
$$\|\nabla V_{\lambda}\|_2^2=\|\nabla V\|_2^2=\int_{\R^N}(I_{\alpha}*|V|^{\frac{N+\alpha}{N-2}})|V|^{\frac{N+\alpha}{N-2}}dx=\int_{\R^N}(I_{\alpha}*|V_{\lambda}|^{\frac{N+\alpha}{N-2}})|V_{\lambda}|^{\frac{N+\alpha}{N-2}}dx=\mathcal{S}_{HL}^{\frac{N+\alpha}{\alpha+2}}.
$$
The energy functional associated to \eqref{eqC0} is
$$
J(u)=\frac{1}{2}\|\nabla u\|_2^2-\frac{N-2}{2(N+\alpha)}\int_{\R^N}(I_{\alpha}*|u|^{\frac{N+\alpha}{N-2}})|u|^{\frac{N+\alpha}{N-2}}dx.
$$
We define
$$
c_{HL}=\inf_{u\in \mathscr{P}_{HL}}J(u)=\inf_{u\in D^{1}(\R^N)\setminus \{0\}}\max_{t>0}J(u(x/t)),
$$
where
$$
\mathscr{P}_{HL}=\Big\{u\in D^{1}(\R^N)\setminus \{0\}: \mathcal{P}_{HL}(u):=\|\nabla u\|_2^2-\int_{\R^N}(I_{\alpha}*|u|^{\frac{N+\alpha}{N-2}})|u|^{\frac{N+\alpha}{N-2}}dx=0\Big\}.
$$
By a simple calculation, we see that $c_{HL}=\frac{\alpha+2}{2(N+\alpha)}\mathcal{S}_{HL}^{\frac{N+\alpha}{\alpha+2}}$.

\begin{lemma}\label{lem631}
Let $\sigma_{\eps}=c_{\eps}-c_{HL}$. Then as $\eps\to 0$, we have
$$
0<\sigma_{\eps}\lesssim
\left\{\aligned &\eps^{\frac{q(N-2)-2N}{(q-2)(N-2)}},\quad
N\geq 5,&\\
&\Big(\eps\ln\frac{1}{\eps}\Big)^{\frac{q-4}{q-2}},\quad
N=4,&\\
&\eps^{\frac{q-6}{2(q-4)}}, \quad N=3. &\\
\endaligned\right.
$$
\end{lemma}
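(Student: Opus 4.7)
The plan for $\sigma_\eps>0$ is to exploit the minimax characterisations of $c_{HL}$ and $c_\eps$. For any nontrivial $u\in\H$ the rays $t\mapsto J(u(x/t))$ and $t\mapsto\mathcal{I}_\eps(u(x/t))$ each attain a unique maximum, at the scales where $u(x/t)$ lies on $\mathscr{P}_{HL}$ and $\mathscr{P}_\eps$ respectively. Since
\[
\mathcal{I}_\eps(u(x/t))-J(u(x/t))=\tfrac{\eps t^N}{2}\|u\|_2^2+\tfrac{t^N}{q}\|u\|_q^q>0 \qquad(t>0),
\]
applying this with $u=u_\eps$ (whose $\mathcal{I}_\eps$-ray is maximised at $t=1$) yields $c_{HL}\le\max_{t>0}J(u_\eps(x/t))<\max_{t>0}\mathcal{I}_\eps(u_\eps(x/t))=c_\eps$, which is the desired strict positivity of $\sigma_\eps$.

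\textbf{Upper bound via truncated extremizer.} For the upper bound I would use the truncated Emden--Fowler bubble $\phi_{\lambda,R}(x):=\eta_R(x)V_\lambda(x)$ as a test function, where $V_\lambda(x):=\lambda^{-(N-2)/2}V(x/\lambda)$ is the rescaling of the extremizer \eqref{eqV1} and $\eta_R$ is a smooth radial cutoff equal to $1$ on $B_R$ and supported in $B_{2R}$ (taking $R=\infty$ when $N\ge 5$, since $V\in L^2(\R^N)$ only in that case). I would choose $t_\lambda>0$ such that $\phi_{\lambda,R}(x/t_\lambda)\in\mathscr{P}_\eps$, and then estimate
\[
c_\eps\le\mathcal{I}_\eps(\phi_{\lambda,R}(x/t_\lambda))= J(\phi_{\lambda,R}(x/t_\lambda))+\tfrac{\eps t_\lambda^N}{2}\|\phi_{\lambda,R}\|_2^2+\tfrac{t_\lambda^N}{q}\|\phi_{\lambda,R}\|_q^q.
\]
A Brezis--Nirenberg-type expansion of the HLS quotient around its infimum $\mathcal{S}_{HL}$ shows that $\max_{t>0}J(\phi_{\lambda,R}(x/t))-c_{HL}\lesssim(\lambda/R)^{N-2}$; bounding $J(\phi_{\lambda,R}(x/t_\lambda))$ by this maximum then gives
\[
\sigma_\eps\lesssim(\lambda/R)^{N-2}+\eps\|\phi_{\lambda,R}\|_2^2+\|\phi_{\lambda,R}\|_q^q.
\]

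\textbf{Scaling and optimisation.} Using $V(x)\sim|x|^{-(N-2)}$ at infinity one computes $\|\phi_{\lambda,R}\|_q^q\sim\lambda^{-\beta}$ with $\beta:=q(N-2)/2-N>0$, and, in the regime $R\gg\lambda$,
\[
\|\phi_{\lambda,R}\|_2^2\sim\lambda^2\ (N\ge 5),\qquad \lambda^2\ln(R/\lambda)\ (N=4),\qquad \lambda R\ (N=3).
\]
For $N\ge 5$ I take $R=\infty$ and balance $\eps\lambda^2\sim\lambda^{-\beta}$ to obtain $\lambda\sim\eps^{-1/(\beta+2)}$ and the claimed rate $\eps^{[q(N-2)-2N]/[(q-2)(N-2)]}$. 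For $N=4$, picking $R$ growing with $1/\eps$ (for instance $R\sim\eps^{-1}$) makes the cutoff error $(\lambda/R)^2$ subdominant, and balancing $\eps\lambda^2\ln(1/\eps)\sim\lambda^{-\beta}$ produces $(\eps\ln\tfrac{1}{\eps})^{(q-4)/(q-2)}$. For $N=3$ I optimise jointly over $\lambda$ and $R$: the choice $R\sim 1/\sqrt\eps$ matches the cutoff error $\lambda/R$ with the mass term $\eps\lambda R$, both of order $\sqrt\eps\,\lambda$, and balancing $\sqrt\eps\,\lambda\sim\lambda^{-\beta}$ yields $\lambda\sim\eps^{-1/(q-4)}$ and the claimed $\eps^{(q-6)/(2(q-4))}$.

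\textbf{Main obstacle.} The hard part is $N=3$: the extremizer fails to lie in $L^2(\R^N)$, so a cutoff is essential, and moreover a single-scale cutoff does not suffice --- one must optimise jointly over both the concentration scale $\lambda$ and the cutoff radius $R$, and carefully track the Brezis--Nirenberg-type cutoff corrections to both the Dirichlet energy and the Riesz-type nonlocal energy. Verifying that these corrections remain subdominant at the optimal $(\lambda,R)$ is what yields the stated (likely non-sharp) exponent.
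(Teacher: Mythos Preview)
Your argument is correct and follows the same strategy as the paper: the lower bound by projecting $u_\eps$ onto $\mathscr{P}_{HL}$, and the upper bound by testing with the (cut-off) extremizer $\eta_R V_\lambda$ scaled onto $\mathscr{P}_\eps$, with the same optimal choices of $\lambda$ and $R$ in each dimension (the paper takes $R_\eps=\lambda_\eps/\eps$ for $N=4$ rather than your $R\sim\eps^{-1}$, but both make the cut-off error subdominant). The only packaging difference is that the paper controls the rescaling factor explicitly, showing $t_\eps\le 1+C\cdot(\text{error})$ from the Poho\v zaev condition, whereas you bound $J(\phi(\cdot/t_\lambda))$ by its maximum along the ray; note that your displayed estimate still tacitly uses $t_\lambda=O(1)$ to drop the $t_\lambda^N$ factors on the mass and $L^q$ terms, and this is precisely what the paper's computation of $t_\eps$ supplies.
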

\begin{proof}
Note that $u_{\eps}\in \mathscr{P}_{\eps}$ is a solution of \eqref{eqPeps} with $\mathcal{I}_{\eps}(u_{\eps})=c_{\eps}$, then $\mathcal{P}_{HL}(u_{\eps})<0$, thus there exists $t_{\eps}\in (0, 1)$ such that $u_{\eps}(x/t_{\eps})\in \mathscr{P}_{HL}$ and we have
$$
c_{HL}\leq J(u_{\eps}(\frac{x}{t_{\eps}}))=\frac{(\alpha+2)}{2(N+\alpha)}t^{N-2}_{\eps}\|\nabla u_{\eps}\|_2^2<\mathcal{I}_{\eps}(u_{\eps})=c_{\eps}.
$$
Therefore, $\sigma_{\eps}=c_{\eps}-c_{HL}>0$.

\medskip
\paragraph{\bf Case $N\geq 5$.}  Note that for $N\geq 5$, $V_{\lambda}(x)\in L^2(\R^N)$ for each $\lambda>0$, thus we see that
$\mathcal{P}_{\eps}(V_{\lambda}(x))>0$.  Then for each $\eps>0$ and $\lambda>0$, there exists a unique $s_{\eps,\lambda}>1$ such that $\mathcal{P}_{\eps}(V(x/s_{\eps, \lambda}))=0$, which means that
$$
\big(s_{\eps, \lambda}^{\alpha}-s_{\eps, \lambda}^{-2}\big)\frac{N-2}{2}\|\nabla V\|_2^2=\frac{N\eps\lambda^2}{2}\|V\|_2^2+\frac{N}{q}\lambda^{\frac{2N-q(N-2)}{2}}\|V\|_q^q:=\psi_{\eps}(\lambda).
$$
Then there exists $\lambda_{\eps}>0$ such that
$$
\psi_{\eps}(\lambda_{\eps})=\min_{\lambda>0}\psi_{\eps}(\lambda)=
\frac{N\|V\|_q^q}{4q}\Big(\frac{N\|V\|_q^q(q(N-2)-2N)}{2qN\|V\|_2^2}\Big)^{\frac{2N-q(N-2)}{(q-2)(N-2)}}\eps^{\frac{q(N-2)-2N}{(q-2)(N-2)}}.
$$
This implies that $s_{\eps}:=s_{\eps, \lambda_{\eps}}\to 1$ as $\eps\to 0$.  Furthermore, we have
$$
s_{\eps}\leq 1+C\eps^{\frac{q(N-2)-2N}{(q-2)(N-2)}}.
$$
Therefore, we obtain that
$$\aligned
c_{\eps}\leq \mathcal{I}_{\eps}(V_{\lambda}(\frac{x}{s_{\eps}}))=&\frac{\alpha+2}{2(N+\alpha)}\|\nabla V\|_2^2s_{\eps}^{N-2}+
\frac{\alpha}{N+\alpha}\Big(\frac{\eps\lambda_{\eps}^2}{2}\|V\|_2^2+\frac{\lambda_{\eps}^{\frac{2N-q(N-2)}{2}}}{q}\|V\|_q^q\Big)s_{\eps}^N\\
\leq& c_{HL}+\frac{\alpha+2}{2(N+\alpha)}\|\nabla V\|_2^2(s_{\eps}^{N-2}-1)+C\eps^{\frac{q(N-2)-2N}{(q-2)(N-2)}}\\
\leq& c_{HL}+C\eps^{\frac{q(N-2)-2N}{(q-2)(N-2)}},
\endaligned
$$
which means that $\sigma_\eps\leq C\eps^{\frac{q(N-2)-2N}{(q-2)(N-2)}}$.
\smallskip

To consider the case $N=3, 4$, given $R\gg \lambda$, we introduce a cut-off function $\eta_{R}\in C_c^{\infty}(\R^N)$ such that
$\eta_R(r)=1$ for $|r|<R$, $0<\eta_R(r)<1$ for $R<|r|<2R$, $\eta_R(r)=0$ for $|r|\geq 2R$ and $|\eta_R'(r)|\leq R/2$.  Similarly, e.g. to \cite{S96}*{Section III, Theorem 2.1}, we compute
\begin{equation}\label{eq639}
\int_{\R^N}|\nabla(\eta_RV_{\lambda})|^2dx=\mathcal{S}_{HL}^{\frac{N+\alpha}{\alpha+2}}+\O((R/\lambda)^{-(N-2)}).
\end{equation}
\begin{equation}\label{eq6310}
\int_{\R^N}(I_{\alpha}*|\eta_RV_{\lambda}|^{\frac{N+\alpha}{N-2}})|\eta_RV_{\lambda}|^{\frac{N+\alpha}{N-2}}dx=
\mathcal{S}_{HL}^{\frac{N+\alpha}{\alpha+2}}+\O((R/\lambda)^{-N}).
\end{equation}
\begin{equation}\label{eq6311}
\|\eta_RV_{\lambda}\|_q^q=\lambda^{\frac{2N-q(N-2)}{2}}\|V\|_q^q(1+\O((R/\lambda)^{N-q(N-2)})).
\end{equation}
\begin{equation}\label{eq6312}
\|\eta_RV_{\lambda}\|_2^2=\lambda^{2}\|\eta_{R/\lambda}V\|_2^2=\left\{\aligned &\O(\lambda^2\ln\frac{R}{\lambda}),\quad
N=4,&\\
&\O(R\lambda), \quad N=3. &\\
\endaligned\right.
\end{equation}
Here we only sketch the proof of \eqref{eq6310}.   We may assume that $R/\lambda\gg1$.  Note that
$$
\sup_{x\in\R^N} |V(x)|^{\frac{N+\alpha}{N-2}}(1+|x|)^{N+\alpha}<\infty.
$$
It follows from \cite{MS13}*{Lemma 6.2} that
\begin{equation}\label{eq6313}
\lim_{|x|\to \infty}\frac{\big(I_{\alpha}*|V|^{\frac{N+\alpha}{N-2}}\big)(x)}{I_{\alpha}(x)}=\int_{\R^N}|V|^{\frac{N+\alpha}{N-2}}dx.
\end{equation}
Therefore, by \eqref{eq6313}, we estimate
\begin{multline*}
\Big|\int_{\R^N}(I_{\alpha}*|\eta_RV_{\lambda}|^{\frac{N+\alpha}{N-2}})|\eta_RV_{\lambda}|^{\frac{N+\alpha}{N-2}}dx-
\int_{\R^N}(I_{\alpha}*|V_{\lambda}|^{\frac{N+\alpha}{N-2}})|V_{\lambda}|^{\frac{N+\alpha}{N-2}}dx\Big|\\
\leq 2\int_{B_R^c}(I_{\alpha}*|V_{\lambda}|^{\frac{N+\alpha}{N-2}})|V_{\lambda}|^{\frac{N+\alpha}{N-2}}dx
= 2\int_{B_{R/\lambda}^c}(I_{\alpha}*|V|^{\frac{N+\alpha}{N-2}})|V|^{\frac{N+\alpha}{N-2}}dx\\
\leq C\int_{B_{R/\lambda}^c}\frac{1}{|x|^{N-\alpha}(1+|x|^2)^{\frac{N+\alpha}{2}}}dx
\leq C\int_{R/\lambda}^{\infty}r^{-N-1}dr
=C(R/\lambda)^{-N}.
\end{multline*}
\begin{multline*}
\Big|\int_{\R^N}(I_{\alpha}*|\eta_RV_{\lambda}|^{\frac{N+\alpha}{N-2}})|\eta_RV_{\lambda}|^{\frac{N+\alpha}{N-2}}dx-
\int_{\R^N}(I_{\alpha}*|V_{\lambda}|^{\frac{N+\alpha}{N-2}})|V_{\lambda}|^{\frac{N+\alpha}{N-2}}dx\Big|\\
\geq \int_{B_{2R}^c}(I_{\alpha}*|V_{\lambda}|^{\frac{N+\alpha}{N-2}})|V_{\lambda}|^{\frac{N+\alpha}{N-2}}dx
=\int_{B_{2R/\lambda}^c}(I_{\alpha}*|V|^{\frac{N+\alpha}{N-2}})|V|^{\frac{N+\alpha}{N-2}}dx\\
\geq C\int_{B_{2R/\lambda}^c}\frac{1}{|x|^{N-\alpha}(1+|x|^2)^{\frac{N+\alpha}{2}}}dx
\geq C\int_{2R/\lambda}^{\infty}r^{-N-1}dr
=C(R/\lambda)^{-N}.
\end{multline*}
Combining the above inequalities, we obtain \eqref{eq6310}.

When $R\gg\lambda$, by the above estimates we conclude that $\mathcal{P}_{\eps}(\eta_RV_{\lambda})>0$.  Then there exists a unique $t_{\eps}:=t_{\eps}(R, \lambda)>1$ such that $\mathcal{P}_{\eps}(\eta_R(x/t_{\eps})V_{\lambda}(x/t_{\eps}))=0$, which implies
\begin{multline}\label{eq638}
\frac{N-2}{2}(t_{\eps}^{\alpha}\int_{\R^N}(I_{\alpha}*|\eta_RV_{\lambda}|^{\frac{N+\alpha}{N-2}})|\eta_RV_{\lambda}|^{\frac{N+\alpha}{N-2}}dx-t_{\eps}^{-2}\|\nabla(\eta_RV_{\lambda})\|_2^2)
\\=\frac{N\eps}{2}\|\eta_RV_{\lambda}\|_2^2+\frac{N}{q}\|\eta_RV_{\lambda}\|_q^q
:=\psi_{\eps}(R, \lambda).
\end{multline}

\paragraph{\bf Case $N=4$.}
For each $\eps>0$ small, by \eqref{eq6311}, \eqref{eq6312} and choosing
$$
\lambda_{\eps}=\Big(\eps\ln\frac{1}{\eps}\Big)^{-\frac{1}{q-2}}, \quad R_{\eps}=\frac{\lambda_{\eps}}{\eps},
$$
we have
$$
\psi_{\eps}(R_{\eps}, \lambda_{\eps})\leq 2\eps\O\Big(\lambda_{\eps}^2\ln\frac{R_\eps}{\lambda_{\eps}}\Big)+\frac{4}{q}\lambda_{\eps}^{4-q}\|V\|_q^q\Big(1+\O\Big(\frac{R_\eps}{\lambda_{\eps}}\Big)^{4-2q}\Big)
\leq C\Big(\eps\ln\frac{1}{\eps}\Big)^{\frac{q-4}{q-2}}.
$$
Then it follows from \eqref{eq638} that $t_{\eps}\to 1$ as $\eps\to 0$.  Furthermore, we have
$
t_{\eps}\leq 1+C\Big(\eps\ln\frac{1}{\eps}\Big)^{\frac{q-4}{q-2}}.
$

\medskip
\paragraph{\bf Case $N=3$.}
For each $\eps>0$ small, by \eqref{eq6311}, \eqref{eq6312} and choosing
$$
\lambda_{\eps}=\eps^{-\frac{1}{q-4}}, \quad R_{\eps}=\eps^{-\frac{1}{2}},
$$
we have
$$
\psi_{\eps}(R_{\eps}, \lambda_{\eps})\leq \frac{3}{2}\eps\O(R_\eps\lambda_{\eps})+\frac{3}{q}\lambda_{\eps}^{\frac{6-q}{2}}\|V\|_q^q
\Big(1+\O\Big(\frac{R_\eps}{\lambda_{\eps}}\Big)^{3-q}\Big)
\leq C\eps^{\frac{q-6}{2(q-4)}}.
$$
Then it follows from \eqref{eq638} that $t_{\eps}\to 1$ as $\eps\to 0$.  Furthermore, we have
$
t_{\eps}\leq 1+C\eps^{\frac{q-6}{2(q-4)}}.
$

\medskip
\paragraph{\bf Conclusion of the proof for $N=3,4$.}
Combining previous estimates together, we obtain
\begin{eqnarray*}
c_{\eps}
&\leq& \mathcal{I}_{\eps}(\eta_{R_\eps}(x/t_{\eps})V_{\lambda_{\eps}}(x/t_{\eps}))\\
&=&\frac{\alpha+2}{2(N+\alpha)}\|\nabla (\eta_{R_\eps}V_{\lambda_{\eps}})\|_2^2t_{\eps}^{N-2}+
\frac{\alpha}{N+\alpha}\Big(\frac{\eps}{2}\|\eta_{R_\eps}V_{\lambda_{\eps}}\|_2^2+\frac{1}{q}\|\eta_{R_\eps}V_{\lambda_{\eps}}\|_q^q\Big)t_{\eps}^N\\
&\leq& c_{HL}+\O((R_{\eps}/\lambda_{\eps})^{-(N-2)})+C\psi_{\eps}(R_{\eps}, \lambda_{\eps})
= c_{HL}+\left\{\aligned &C\Big(\eps\ln\frac{1}{\eps}\Big)^{\frac{q-4}{q-2}},\quad
N=4,&\\
&C\eps^{\frac{q-6}{2(q-4)}}, \quad N=3, &\\
\endaligned\right.
\end{eqnarray*}
which completes the proof of this lemma.
\end{proof}

\begin{lemma}\label{lem632}
$$
\|u_{\eps}\|_q^q=\frac{2q\eps}{q(N-2)-2N}\|u_{\eps}\|_2^2, \quad \|\nabla u_{\eps}\|_2^2+\frac{(q-2)N\eps}{q(N-2)-2N}\|u_{\eps}\|_2^2=\int_{\R^N}(I_{\alpha}*|u_\eps|^{\frac{N+\alpha}{N-2}})|u_\eps|^{\frac{N+\alpha}{N-2}}dx.
$$
\end{lemma}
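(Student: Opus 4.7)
The plan is to derive these two identities as a direct algebraic consequence of the Nehari and Poho\v zaev identities for $u_\eps$, exploiting the critical relation $p=\frac{N+\alpha}{N-2}$ which makes the coefficient $\frac{N+\alpha}{2p}$ appearing in front of the nonlocal term in the Poho\v zaev identity coincide with the coefficient $\frac{N-2}{2}$ in front of $\|\nabla u_\eps\|_2^2$.

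First, since $u_\eps\in H^1\cap L^1\cap C^2(\R^N)$ (Theorem \ref{thm01}) and $u_\eps$ weakly solves $(P_\eps)$, the Nehari identity \eqref{e-Nehari} gives
\begin{equation*}
\|\nabla u_\eps\|_2^2+\eps\|u_\eps\|_2^2+\|u_\eps\|_q^q=\int_{\R^N}(I_\alpha*|u_\eps|^p)|u_\eps|^p\,dx,
\end{equation*}
and the Poho\v zaev identity of Proposition \ref{p-Phz}, after using $p=\frac{N+\alpha}{N-2}$ and hence $\frac{N+\alpha}{2p}=\frac{N-2}{2}$, reads
\begin{equation*}
\frac{N-2}{2}\|\nabla u_\eps\|_2^2+\frac{N\eps}{2}\|u_\eps\|_2^2+\frac{N}{q}\|u_\eps\|_q^q=\frac{N-2}{2}\int_{\R^N}(I_\alpha*|u_\eps|^p)|u_\eps|^p\,dx.
\end{equation*}

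Dividing the Poho\v zaev identity by $\frac{N-2}{2}$ and then subtracting the Nehari identity from it eliminates the nonlocal term and yields
\begin{equation*}
\tfrac{2}{N-2}\eps\|u_\eps\|_2^2+\tfrac{2N-q(N-2)}{q(N-2)}\|u_\eps\|_q^q=0,
\end{equation*}
which after rearrangement (and multiplication by $\frac{q(N-2)}{2}$) is precisely the first asserted identity
$$\|u_\eps\|_q^q=\frac{2q\eps}{q(N-2)-2N}\|u_\eps\|_2^2.$$

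Finally, substituting this expression for $\|u_\eps\|_q^q$ back into the Nehari identity and collecting the coefficient of $\eps\|u_\eps\|_2^2$ via
$1+\frac{2q}{q(N-2)-2N}=\frac{(q-2)N}{q(N-2)-2N}$
gives the second asserted identity. No analytic obstacle arises; the whole lemma is a two-line algebraic manipulation, and the only point worth flagging is the arithmetic simplification at $p=\frac{N+\alpha}{N-2}$ which makes the elimination possible.
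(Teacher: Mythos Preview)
Your argument is correct and coincides with the paper's approach: the paper's proof simply reads ``Follows from Nehari and Poho\v zaev identities for \eqref{eqPeps}'', and your computation spells out exactly that derivation. The key observation you highlight---that $p=\frac{N+\alpha}{N-2}$ forces $\frac{N+\alpha}{2p}=\frac{N-2}{2}$, allowing elimination of both the gradient and the nonlocal term---is precisely the point.
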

\begin{proof}
Follows from Nehari and Poho\v zaev identities for \eqref{eqPeps}.
\end{proof}

\begin{lemma}\label{lem633}
$\frac{(q-2)N\eps}{q(N-2)-2N}\|u_{\eps}\|_2^2\leq C\sigma_{\eps}$.  Moreover, we have
$$
\lim_{\eps\to 0}\eps\|u_{\eps}\|_2^2=0, \quad \lim_{\eps\to 0}\|u_{\eps}\|_q^q=0, \quad \lim_{\eps\to 0}\|u_{\eps}\|_{2^*}^2=\mathcal{S}_{HL}^{\frac{N-2}{\alpha+2}}\mathcal{C}_\alpha^{-\frac{N-2}{N+\alpha}}=(\mathcal{S}_*\mathcal{C}_\alpha^{-1})^\frac{N-2}{\alpha+2},
$$
$$
\lim_{\eps\to 0}\|\nabla u_{\eps}\|_2^2=\lim_{\eps\to 0}
\int_{\R^N}(I_{\alpha}*|u_\eps|^{\frac{N+\alpha}{N-2}})|u_\eps|^{\frac{N+\alpha}{N-2}}dx= \frac{2(N+\alpha)}{\alpha+2}c_{HL}=\mathcal{S}_{HL}^{\frac{N+\alpha}{\alpha+2}}.
$$
\end{lemma}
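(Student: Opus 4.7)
The plan is to compress everything to a one--dimensional scalar comparison in the quantity $B_\eps:=\tfrac{(q-2)N\eps}{q(N-2)-2N}\|u_\eps\|_2^2$. Setting also $A_\eps:=\int_{\R^N}(I_\alpha\ast u_\eps^p)u_\eps^p\,dx$ and $M:=\mathcal{S}_{HL}^{(N+\alpha)/(\alpha+2)}$ (so that $c_{HL}=\tfrac{\alpha+2}{2(N+\alpha)}M$), Lemma~\ref{lem632} rewrites as $\|\nabla u_\eps\|_2^2=A_\eps-B_\eps$, while feeding the critical value $p=(N+\alpha)/(N-2)$ into Pohozaev collapses the energy to the clean form
$$c_\eps=\tfrac{\alpha+2}{2(N+\alpha)}A_\eps-\tfrac{B_\eps}{N},\qquad\text{equivalently}\qquad A_\eps=M+b\sigma_\eps+aB_\eps,$$
with $a:=\tfrac{2(N+\alpha)}{N(\alpha+2)}\in(0,1)$ and $b:=\tfrac{2(N+\alpha)}{\alpha+2}$. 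Inserting this into the HLS--Sobolev inequality~\eqref{HLSS} yields the scalar constraint $g_{\sigma_\eps}(B_\eps)\le 0$ where
$$g_\sigma(B):=B-(M+aB+b\sigma)+\mathcal{S}_{HL}(M+aB+b\sigma)^{(N-2)/(N+\alpha)}.$$

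First I would establish $B_\eps\to 0$ (hence $A_\eps\to M$). Since $\|\nabla u_\eps\|_2^2\ge 0$, the representation of $A_\eps$ gives $B_\eps\le \tfrac{2N(N+\alpha)}{\alpha(N-2)}c_\eps$, which is uniformly bounded by Lemma~\ref{lem631}. A direct computation shows that $g_\sigma$ is concave, with $g_0(0)=0$, $g_0'(0)=1-a\cdot\tfrac{\alpha+2}{N+\alpha}=\tfrac{N-2}{N}>0$, and asymptotic slope $g_0'(B)\to 1-a=\tfrac{\alpha(N-2)}{N(\alpha+2)}>0$ as $B\to\infty$. Hence $g_0'>0$ throughout, so $g_0$ is strictly increasing and strictly positive on $(0,\infty)$; any subsequential limit $B_\eps\to B_\infty>0$ would therefore force $g_0(B_\infty)\le 0$, a contradiction.

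Once $\delta_\eps:=b\sigma_\eps+aB_\eps\to 0$ is in hand I would quantify the constraint by Taylor--expanding around $M$, exploiting $\mathcal{S}_{HL}M^{(N-2)/(N+\alpha)}=M$ and $\mathcal{S}_{HL}\tfrac{N-2}{N+\alpha}M^{-(\alpha+2)/(N+\alpha)}=\tfrac{N-2}{N+\alpha}$ to get
$$\mathcal{S}_{HL}(M+\delta_\eps)^{(N-2)/(N+\alpha)}=M+\tfrac{N-2}{N+\alpha}\delta_\eps+O(\delta_\eps^2).$$
Substituting into $g_{\sigma_\eps}(B_\eps)\le 0$ and collecting linear terms (using the two handy identities $a\cdot\tfrac{\alpha+2}{N+\alpha}=\tfrac{2}{N}$ and $b\cdot\tfrac{\alpha+2}{N+\alpha}=2$) gives
$$\tfrac{N-2}{N}B_\eps\le 2\sigma_\eps+C\delta_\eps^2.$$
Since $\delta_\eps=o(1)$ the quadratic error is $o(\sigma_\eps+B_\eps)$ and can be absorbed into the left--hand side for small $\eps$, yielding the first claim $B_\eps\le C\sigma_\eps$.

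The remaining statements reduce to bookkeeping. Lemma~\ref{lem632} writes $\eps\|u_\eps\|_2^2$ and $\|u_\eps\|_q^q$ as positive multiples of $B_\eps$, so both vanish at rate $\sigma_\eps$; $A_\eps=M+\delta_\eps\to M$ together with $\|\nabla u_\eps\|_2^2=A_\eps-B_\eps$ delivers the gradient and nonlocal limits; and the $L^{2^*}$--limit follows by squeezing $\|u_\eps\|_{2^*}^2$ between the Sobolev bound $\|\nabla u_\eps\|_2^2/\mathcal{S}_*$ from above and the HLS bound $(A_\eps/\mathcal{C}_\alpha)^{(N-2)/(N+\alpha)}$ from below (using $\tfrac{2Np}{N+\alpha}=2^*$), both of which converge to the common value $(\mathcal{S}_*/\mathcal{C}_\alpha)^{(N-2)/(\alpha+2)}$ thanks to $\mathcal{S}_{HL}=\mathcal{S}_*\mathcal{C}_\alpha^{-(N-2)/(N+\alpha)}$. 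The hardest step is the qualitative one, $B_\eps\to 0$: it requires not just the sign of the local slope $g_0'(0)=\tfrac{N-2}{N}$ but the global positivity of $g_0$ on $(0,\infty)$, which is precisely what rules out the ``concentration--compactness mass loss'' scenario where $\eps\|u_\eps\|_2^2$ would remain bounded away from zero.
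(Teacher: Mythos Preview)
Your argument is correct and takes a genuinely different route from the paper. The paper proceeds \emph{variationally}: it uses that $\mathcal{P}_{HL}(u_\eps)<0$, projects onto the Poho\v zaev manifold $\mathscr{P}_{HL}$ via a rescaling $u_\eps(x/t_\eps)$ with $t_\eps\in(0,1)$, argues by contradiction that $t_\eps$ stays bounded away from $0$, and then compares energies directly:
\[
c_{HL}\le J(u_\eps(\cdot/t_\eps))=\mathcal{I}_\eps(u_\eps(\cdot/t_\eps))-t_\eps^N\Big(\tfrac{\eps}{2}\|u_\eps\|_2^2+\tfrac{1}{q}\|u_\eps\|_q^q\Big)\le c_\eps-t_\eps^N B_\eps,
\]
so $B_\eps\le t_\eps^{-N}\sigma_\eps\le C\sigma_\eps$. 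Your approach replaces this rescaling--and--compare step by feeding the algebraic identities of Lemma~\ref{lem632} into the sharp HLS--Sobolev inequality~\eqref{HLSS}, reducing everything to the scalar inequality $g_{\sigma_\eps}(B_\eps)\le0$ which you then analyse by concavity and Taylor expansion. What your method buys is a transparent explicit constant (you can read off $\tfrac{N-2}{N}B_\eps\le 2\sigma_\eps(1+o(1))$) and no appeal to the minimality of $c_{HL}$ on $\mathscr{P}_{HL}$; what the paper's method buys is consistency with the proofs in the non-critical regimes, which all run through Poho\v zaev--manifold projections, and an argument that avoids Taylor--expanding and absorbing error terms. Both the key obstruction (ruling out a large subsequential limit of $B_\eps$) and the final $L^{2^*}$ squeeze via Sobolev/HLS are handled the same way in spirit.
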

\begin{proof}
By Lemma~\ref{lem631}, we see that
\begin{equation}\label{eq632}
c_{HL}+o(1)=c_{\eps}=\mathcal{I}_{\eps}(u_{\eps})=\frac{1}{N}\|\nabla u_{\eps}\|_2^2+\frac{\alpha(N-2)}{2N(N+\alpha)}\int_{\R^N}(I_{\alpha}*|u_\eps|^{\frac{N+\alpha}{N-2}})|u_\eps|^{\frac{N+\alpha}{N-2}}dx.
\end{equation}
This, together with Lemma~\ref{lem632}, implies that there exists $C>0$ such that $$\int_{\R^N}(I_{\alpha}*|u_\eps|^{\frac{N+\alpha}{N-2}})|u_\eps|^{\frac{N+\alpha}{N-2}}dx\geq C$$ for all small $\eps>0$.
Note that $\mathcal{P}_{HL}(u_{\eps})<0$, then there exists $t_{\eps}\in (0, 1)$ such that $\mathcal{P}_{HL}(u_{\eps}(x/t_{\eps}))=0$, which means that
$$
\|\nabla u_{\eps}\|_2^2
=t_{\eps}^{\alpha+2}\int_{\R^N}(I_{\alpha}*|u_\eps|^{\frac{N+\alpha}{N-2}})|u_\eps|^{\frac{N+\alpha}{N-2}}dx.
$$
If $t_{\eps}\to 0$ as $\eps\to 0$, then we must have $\|\nabla u_{\eps}\|_2^2\to 0$ as $\eps\to 0$, this contradicts
$$
0<c_{HL}\leq J(u_{\eps}(x/t_{\eps}))=\frac{(2+\alpha)t_{\eps}^{N-2}}{2(N+\alpha)}\|\nabla u_{\eps}\|_2^2.
$$
Therefore, there exists $C>0$ such that $t_{\eps}>C$ for $\eps>0$ small.  Thus we have
$$\aligned
c_{HL}\leq J(u_{\eps}(x/t_{\eps}))=&\mathcal{I}_{\eps}(u_{\eps}(x/t_{\eps}))-t_{\eps}^N(\frac{\eps}{2}\| u_{\eps}\|_2^2+\frac{1}{q}\| u_{\eps}\|_q^q)\\
\leq& \mathcal{I}_{\eps}(u_{\eps})-\frac{(q-2)N\eps}{q(N-2)-2N}\| u_{\eps}\|_2^2t_{\eps}^N\\
=& c_{\eps}-\frac{(q-2)N\eps}{q(N-2)-2N}\| u_{\eps}\|_2^2t_{\eps}^N.
\endaligned
$$
This means that
$$
\frac{(q-2)N\eps}{q(N-2)-2N}\| u_{\eps}\|_2^2\leq t_{\eps}^{-N}(c_{\eps}-c_{HL})\leq C\sigma_{\eps}.
$$
Moreover, we have $\eps\| u_{\eps}\|_2^2\to 0$ as $\eps\to 0$.
It follows from Lemma~\ref{lem632} that
$$
\| u_{\eps}\|_q^q\to 0, \quad \|\nabla u_{\eps}\|_2^2+o(1)=\int_{\R^N}(I_{\alpha}*|u_\eps|^{\frac{N+\alpha}{N-2}})|u_\eps|^{\frac{N+\alpha}{N-2}}dx\to \frac{2+\alpha}{2(N+\alpha)}c_{HL}=\mathcal{S}_{HL}^{\frac{N+\alpha}{\alpha+2}}
$$
as $\eps\to 0$.

Set $w_{\eps}(x)=u_{\eps}(\mathcal{S}_{HL}^{\frac{1}{\alpha+2}}x)$, then we have, as $\eps\to 0$,
$$
\|\nabla w_{\eps}\|_2^2=\mathcal{S}_{HL}^{\frac{2-N}{\alpha+2}}\|\nabla u_{\eps}\|_2^2\to \mathcal{S}_{HL},
$$
$$
\int_{\R^N}(I_{\alpha}*|w_\eps|^{\frac{N+\alpha}{N-2}})|w_\eps|^{\frac{N+\alpha}{N-2}}dx=\mathcal{S}_{HL}^{\frac{-(N+\alpha)}{\alpha+2}}
\int_{\R^N}(I_{\alpha}*|u_\eps|^{\frac{N+\alpha}{N-2}})|u_\eps|^{\frac{N+\alpha}{N-2}}dx\to 1.
$$
On the other hand, by the HLS inequality, we have
\begin{equation*}
\mathcal{C}_\alpha^{\frac{N-2}{N+\alpha}}\mathcal{S}_{HL}=\mathcal{S}_*\leq \frac{\|\nabla w_{\eps}\|_2^2}{\|w_{\eps}\|_{2^*}^2}
\leq
\mathcal{C}_\alpha^{\frac{N-2}{N+\alpha}}\frac{\|\nabla w_{\eps}\|_2^2}{\left(\int_{\R^N}(I_{\alpha}*|u_\eps|^{\frac{N+\alpha}{N-2}})|u_\eps|^{\frac{N+\alpha}{N-2}}dx\right)^{\frac{N-2}{N+\alpha}}}
\to \mathcal{C}_\alpha^{\frac{N-2}{N+\alpha}}\mathcal{S}_{HL}.
\end{equation*}
This means that
$$
\lim_{\eps\to 0}\|w_{\eps}\|_{2^*}^2=\mathcal{C}_\alpha^{-\frac{N-2}{N+\alpha}}, \quad \lim_{\eps\to 0}\|u_{\eps}\|_{2^*}^2=\mathcal{S}_{HL}^{\frac{N-2}{\alpha+2}}\mathcal{C}_\alpha^{-\frac{N-2}{N+\alpha}},
$$
which completes the proof.
\end{proof}

Set
$$\overline{w}_{\eps}(x):=\frac{w_{\eps}(x)}{\|w_{\eps}\|_{2^*}},\quad\overline{V}(x):=\mathcal{C}_\alpha^{\frac{N-2}{2(N+\alpha)}}V(\mathcal{S}_{HL}^{\frac{1}{\alpha+2}}x).$$
Then $\|\overline{w}_{\eps}\|_{2^*}=\|\overline{V}\|_{2^*}=1$, $\|\nabla \overline{V}\|_2^2=\mathcal{S}_*$ and $\{\overline{w}_{\eps}\}$ is a minimizing sequence for the critical Sobolev constant $\mathcal{S}_*$.
Similarly to the arguments in \cite{MM-14}*{p.1094}, we conclude that for $\eps>0$ small there exists $\lambda_{\eps}>0$ such that
$$
\int_{B(0, \lambda_{\eps})}|\overline{w}_{\eps}(x)|^{2^*}dx=\int_{B(0, 1)}|\overline{V}(x)|^{2^*}dx.
$$
We define the rescaled family
$$
v_{\eps}(x):=\lambda_{\eps}^{\frac{N-2}{2}}\overline{w}_{\eps}(\lambda_{\eps}x).
$$
Then
$$
\|v_{\eps}\|_{2^*}=1,\quad \|\nabla v_{\eps}\|^2_2=\mathcal{S}_*+o(1),
$$
i.e., $\{v_{\eps}\}$ is a minimizing sequence for $\mathcal{S}_*$.  Furthermore,
$$
\int_{B(0, 1)}|v_{\eps}|^{2^*}dx=\int_{B(0, 1)}|\overline{V}(x)|^{2^*}dx.
$$

\begin{lemma}\label{lem635}
$\lim_{\eps\to 0}\|\nabla (\overline{v}_{\eps}-V)\|_2=\lim_{\eps\to 0}\|\overline{v}_{\eps}-V\|_{2^*}=0$, where $\overline{v}_{\eps}(x):=\lambda_{\eps}^{\frac{N-2}{2}}u_{\eps}(\lambda_{\eps}x)$.
\end{lemma}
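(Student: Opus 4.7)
The plan is a two-stage argument. First I would show strong $D^1$-convergence of the auxiliary normalized sequence $\{v_\eps\}$ to $\overline V$ via the concentration-compactness principle, and then convert this into the advertised convergence $\overline v_\eps\to V$ by unwinding the explicit change of variables that relates $v_\eps$ to $\overline v_\eps$.

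Combining Lemma \ref{lem633} with the definitions of $w_\eps,\overline w_\eps,v_\eps$ one sees that $\|v_\eps\|_{2^*}=1$ and $\|\nabla v_\eps\|_2^2\to\mathcal S_*$, while the scale-fixing normalization imposed in the construction of $\lambda_\eps$ gives
\[
\int_{B(0,1)}|v_\eps|^{2^*}\,dx=\int_{B(0,1)}|\overline V|^{2^*}\,dx=:\kappa\in(0,1).
\]
Hence $\{v_\eps\}$ is a \emph{radial} minimizing sequence for the Sobolev constant $\mathcal S_*$. I would apply Lions' concentration-compactness principle for such sequences: up to a subsequence there exist $\sigma_\eps>0$ and $y_\eps\in\R^N$ such that $\sigma_\eps^{(N-2)/2}v_\eps(\sigma_\eps\cdot+y_\eps)$ converges strongly in $D^1(\R^N)$ to a positive, unit-$L^{2^*}$-norm Talenti extremal. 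Radiality of $v_\eps$ forces $y_\eps\equiv 0$. The scale-fixing constraint excludes $\sigma_\eps\to 0$ or $\sigma_\eps\to\infty$, because either limit would drive the $B(0,1)$-mass of $v_\eps$ to $1$ or to $0$, contradicting the normalization. Passing to a further subsequence $\sigma_\eps\to\sigma_*\in(0,\infty)$, and among all unit-$L^{2^*}$-norm Talenti extremals exactly one has $B(0,1)$-mass equal to $\kappa$, namely $\overline V$. Therefore $v_\eps\to\overline V$ strongly in $D^1(\R^N)$.

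For the transfer to $\overline v_\eps$, substituting the definitions directly yields the identity
\[
v_\eps(x)=\|w_\eps\|_{2^*}^{-1}\,\overline v_\eps\bigl(\mathcal S_{HL}^{1/(\alpha+2)}x\bigr).
\]
Since $\overline V(x)=\mathcal C_\alpha^{(N-2)/(2(N+\alpha))}V(\mathcal S_{HL}^{1/(\alpha+2)}x)$ and $\|w_\eps\|_{2^*}^{-1}\to\mathcal C_\alpha^{(N-2)/(2(N+\alpha))}$ by Lemma \ref{lem633}, undoing the fixed dilation $x\mapsto\mathcal S_{HL}^{1/(\alpha+2)}x$ (a bounded linear operator on $D^1(\R^N)$) and passing to the limit in the scalar prefactor converts the strong convergence $v_\eps\to\overline V$ in $D^1(\R^N)$ into $\overline v_\eps\to V$ in $D^1(\R^N)$. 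The $L^{2^*}$-convergence is then immediate from the Sobolev inequality \eqref{Sobolev}: $\|\overline v_\eps-V\|_{2^*}\le\mathcal S_*^{-1/2}\|\nabla(\overline v_\eps-V)\|_2\to 0$.

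The main obstacle lies in the concentration-compactness step: Sobolev-minimizing sequences are never automatically precompact because of the full translation- and scale-invariance of $\mathcal S_*$, so one needs distinct mechanisms to fix both symmetries. Here radiality of $v_\eps$ removes translations, and the localization condition $\int_{B(0,1)}|v_\eps|^{2^*}=\kappa$ with $\kappa\in(0,1)$ pins down the scale; once these two mechanisms are exploited, identification of the limit with $\overline V$ is forced by the classical rigidity of Talenti extremals of $\mathcal S_*$.
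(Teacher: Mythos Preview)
Your proposal is correct and follows essentially the same approach as the paper: apply Lions' concentration--compactness principle to the Sobolev-minimizing sequence $\{v_\eps\}$ to obtain $v_\eps\to\overline V$ in $D^1(\R^N)$ and $L^{2^*}(\R^N)$, then unwind the definitions of $w_\eps$, $\overline w_\eps$ to transfer this to $\overline v_\eps\to V$. The paper's proof simply cites the concentration--compactness principle directly (Struwe's book, Chapter~1, Theorem~4.9) without further discussion, whereas you spell out explicitly how radiality kills translations and the $B(0,1)$-mass normalization pins down the scale; this added detail is correct and helpful but does not constitute a different argument.
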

\begin{proof}
Since $\{v_{\eps}\}$ is a minimizing sequence of $\mathcal{S}_*$, it follows from the Concentration-Compactness Principle of P.L.Lions \cite{S96}*{Chapter 1, Theorem 4.9} that
$$
\lim_{\eps\to 0}\|\nabla (v_{\eps}-\overline{V})\|_2=\lim_{\eps\to 0}\|v_{\eps}-\overline{V}\|_{2^*}=0.
$$
which, together with the definitions of $w_{\eps}$ and $\overline{w}_{\eps}$, implies that
$$
\lim_{\eps\to 0}\|\nabla (\overline{v}_{\eps}-V)\|_2=\lim_{\eps\to 0}\|\overline{v}_{\eps}-V\|_{2^*}=0.
\qedhere$$
\end{proof}

By a simple calculation, we see that $v_{\eps}$ solves the equation
\begin{multline}\label{eq6314}
-\Delta v_\eps+\lambda_{\eps}^2\mathcal{S}_{HL}^{\frac{2}{\alpha+2}}\eps v_{\eps}\\
=\|w_\eps\|_{2^*}^{\frac{2(\alpha+2)}{N-2}}\lambda_{\eps}^{-2\alpha}
\mathcal{S}_{HL}^{\frac{2-\alpha}{\alpha+2}}(I_{\alpha}*|v_\eps|^{\frac{N+\alpha}{N-2}})|v_\eps|^{\frac{\alpha+4-N}{N-2}}v_\eps
-\|w_\eps\|_{2^*}^{q-2}\lambda_{\eps}^{\frac{2N-q(N-2)}{2}}
\mathcal{S}_{HL}^{\frac{2}{\alpha+2}}|v_\eps|^{q-2}v_\eps.
\end{multline}
By the definition of $v_{\eps}$ and $w_{\eps}$, we obtain
\begin{equation}\label{eq636}
\|v_{\eps}\|_q^q=\|w_{\eps}\|_{2^*}^{-q}\lambda_{\eps}^{\frac{q(N-2)-2N}{2}}\mathcal{S}_{HL}^{-\frac{N}{\alpha+2}}\|u_{\eps}\|_q^q,
\quad \|v_{\eps}\|_2^2=\|w_{\eps}\|_{2^*}^{-2}\lambda_{\eps}^{-2}\mathcal{S}_{HL}^{-\frac{N}{\alpha+2}}\|u_{\eps}\|_2^2.
\end{equation}
It follows from Lemma~\ref{lem632} and Lemma~\ref{lem633} that
\begin{equation}\label{eq633}
\|w_{\eps}\|_{2^*}^{q}\lambda_{\eps}^{\frac{2N-q(N-2)}{2}}\mathcal{S}_{HL}^{\frac{N}{\alpha+2}}\|v_{\eps}\|_q^q=
\frac{2q\eps}{q(N-2)-2N}\|w_{\eps}\|_{2^*}^{2}\lambda_{\eps}^{2}\mathcal{S}_{HL}^{\frac{N}{\alpha+2}}\|v_{\eps}\|_2^2\leq C\sigma_{\eps}.
\end{equation}
Therefore we can deduce the following estimates on $\lambda_{\eps}$.
\begin{lemma}\label{lem636}
$\sigma_{\eps}^{-\frac{2}{(N-2)q-2N}}\lesssim \lambda_{\eps}\lesssim \eps^{-\frac12}\sigma_{\eps}^{\frac{1}{2}}$,
as $\eps\to 0$.
\end{lemma}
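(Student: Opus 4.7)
The plan is to extract the two desired bounds directly from the double estimate \eqref{eq633}, which, after absorbing the positive bounded factors $\mathcal{S}_{HL}^{N/(\alpha+2)}$ and $\|w_\eps\|_{2^*}$ (the latter convergent to $\mathcal{C}_\alpha^{-(N-2)/(2(N+\alpha))}>0$ by Lemma~\ref{lem633}), simplifies to the pair
\[
\lambda_\eps^{(2N-q(N-2))/2}\|v_\eps\|_q^q \lesssim \sigma_\eps
\qquad\text{and}\qquad
\eps\,\lambda_\eps^2\,\|v_\eps\|_2^2 \lesssim \sigma_\eps.
\]
Since $q>2^*$, the exponent $(2N-q(N-2))/2$ is strictly negative, so rearranging the first inequality will yield the lower bound on $\lambda_\eps$, while the second will immediately produce the upper bound. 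The whole task therefore reduces to showing that $\|v_\eps\|_2^2$ and $\|v_\eps\|_q^q$ stay bounded below by positive constants as $\eps\to 0$.

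For the $L^2$ lower bound I would use the $L^{2^*}$-convergence $v_\eps\to\overline{V}$ supplied by Lemma~\ref{lem635}: H\"older's inequality on a fixed ball $B_1$ upgrades this to convergence in $L^2(B_1)$, so $\|v_\eps\|_{L^2(B_1)}\to \|\overline{V}\|_{L^2(B_1)}>0$ and hence $\|v_\eps\|_2^2\ge c_1>0$ for small $\eps$. Plugging this into the second estimate gives $\lambda_\eps \lesssim \eps^{-1/2}\sigma_\eps^{1/2}$, which is the stated upper bound.

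For the $L^q$ lower bound (more delicate, since $q>2^*$ lies beyond the range of direct convergence in Lemma~\ref{lem635}) the tool is Fatou's lemma. After extracting a subsequence along which $v_\eps\to\overline{V}$ a.e.\ (available from either the $D^1$ or the $L^{2^*}$ convergence), I would invoke the explicit decay $\overline{V}(x)\sim c|x|^{-(N-2)}$ as $|x|\to\infty$: since $q>2^*>N/(N-2)$ forces $q(N-2)>N$, we have $\overline{V}\in L^q(\R^N)\setminus\{0\}$. Fatou then gives $\liminf \|v_\eps\|_q^q \ge \|\overline{V}\|_q^q >0$ along this subsequence, and a standard subsequence-of-subsequence argument rules out failure of the lower bound along the full family. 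Writing $\gamma:=(q(N-2)-2N)/2>0$, the first estimate becomes $\lambda_\eps^{\gamma}\gtrsim 1/\sigma_\eps$, i.e.\ $\lambda_\eps \gtrsim \sigma_\eps^{-2/((N-2)q-2N)}$, as required.

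The main obstacle will be precisely this $L^q$ lower bound: because $q>2^*$ lies outside the range in which Lemma~\ref{lem635} gives convergence, no direct compactness in $L^q$ is available, and the argument must rely on pointwise convergence combined with the explicit integrability of the Emden--Fowler-type profile $\overline{V}$ at infinity. Once this is secured, the remainder of the proof is algebraic rearrangement of \eqref{eq633}.
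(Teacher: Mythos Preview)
Your proof is correct and follows the same overall strategy as the paper: both extract the two bounds on $\lambda_\eps$ from \eqref{eq633} after securing uniform positive lower bounds on $\|v_\eps\|_2^2$ and $\|v_\eps\|_q^q$. The only difference is in the $L^q$ lower bound: the paper works on the unit ball and uses H\"older together with the normalization $\int_{B_1}|v_\eps|^{2^*}=\int_{B_1}|\overline V|^{2^*}$ (built into the definition of $\lambda_\eps$) to get $\int_{B_1}|v_\eps|^q\ge c\big(\int_{B_1}|v_\eps|^{2^*}\big)^{q/2^*}\ge c>0$, whereas you pass through a.e.\ convergence and Fatou on all of $\R^N$. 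Both routes are valid; the paper's is slightly more direct since it avoids the subsequence extraction and the integrability check for $\overline V^q$ at infinity.
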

\begin{proof}
Since $v_{\eps}\to \overline{V}$ strongly in $D^{1}(\R^N)$ and $L^{2^*}(\R^N)$, we have $v_{\eps}\to \overline{V}$ strongly in $L^{s}_{loc}(\R^N)$ for $s\in [2, 2^*)$, thus we get that
\begin{equation}\label{eq634}
\|v_{\eps}\|_q^q\geq \int_{B(0, 1)}|v_{\eps}|^qdx\geq C\int_{B(0, 1)}|v_{\eps}|^{2^*}dx\to \int_{B(0, 1)}|\overline{V}|^{2^*}dx>0
\end{equation}
and
\begin{equation}\label{eq635}
\|v_{\eps}\|_2^2\geq \int_{B(0, 1)}|v_{\eps}|^2dx\to \int_{B(0, 1)}|\overline{V}|^{2}dx>0.
\end{equation}
Therefore, the assertion follows by \eqref{eq633}, \eqref{eq634} and \eqref{eq635}.
\end{proof}

Set
$$Q_{\eps}(x):=\|w_\eps\|_{2^*}^{q-2}\lambda_{\eps}^{\frac{2N-q(N-2)}{2}}
\mathcal{S}_{HL}^{\frac{2}{\alpha+2}}|v_\eps|^{q-2}.$$
By Lemma~\ref{lem636} and since $u_{\eps}\leq C|x|^{-(N-2)/2}\|u_{\eps}\|_{2^*}$, we conclude that
$$
Q_{\eps}(x)\leq Q_0(x):=C|x|^{-\frac{(N-2)(q-2)}{2}},
$$
where $C>0$ does not depend on $\eps>0$ or $x\in\R^N$.  Therefore, for small $\eps>0$, it follows from \eqref{eq6314} that solutions $v_{\eps}>0$ satisfy the linear inequality
$$
-\Delta v_\eps+\lambda_{\eps}^2\mathcal{S}_{HL}^{\frac{2}{\alpha+2}}\eps v_{\eps}+Q_0(x)v_{\eps}\geq 0, \quad x\in \R^N.
$$
By \cite{MM-14}*{Lemma 4.8} we have the following lower estimates of $v_{\eps}$.

\begin{proposition}\label{pro631}
There exists $R>0$ and $c>0$ such that for all small $\eps>0$,
  $$v_{\eps}(x)\geq c|x|^{-(N-2)}e^{-\sqrt{\eps}\mathcal{S}_{HL}^{\frac{1}{\alpha+2}}\lambda_{\eps}|x|}, \quad |x|\geq R.$$
\end{proposition}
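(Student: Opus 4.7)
The plan is to reduce the claim to \cite{MM-14}*{Lemma 4.8}, which delivers exactly this Yukawa-type pointwise lower bound for positive supersolutions of Schr\"odinger operators with a sufficiently decaying potential, provided the solution is uniformly bounded away from zero on some fixed sphere. The work therefore consists in verifying the two hypotheses of that lemma in the present nonlocal setting.

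\textbf{Linear differential inequality.} Since $v_\eps>0$ solves \eqref{eq6314} with a strictly positive nonlocal right-hand side, I drop that contribution and move the power nonlinearity to the left to get
\[
-\Delta v_\eps + \mu_\eps^{\,2}\, v_\eps + Q_\eps(x)\, v_\eps \;\geq\; 0 \quad\text{in } \R^N,
\]
with $\mu_\eps:=\sqrt{\eps}\,\mathcal{S}_{HL}^{1/(\alpha+2)}\lambda_\eps$ and with the $\eps$-independent majorant $Q_\eps(x)\le Q_0(x)=C|x|^{-(N-2)(q-2)/2}$ already established just above the proposition. The crucial structural input is that $q>2^*$ forces $(N-2)(q-2)/2>2$, hence $Q_0(x)=o(|x|^{-2})$, which matches the decay hypothesis of \cite{MM-14}*{Lemma 4.8}.

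\textbf{Uniform positive boundary data.} Lemma~\ref{lem635} gives $v_\eps\to\overline{V}$ strongly in $D^1(\R^N)\cap L^{2^*}(\R^N)$, and $\overline{V}(x)\sim c_V|x|^{-(N-2)}$ at infinity. On every fixed annulus the coefficients in \eqref{eq6314} are uniformly bounded in $L^\infty$ (by the Strauss radial bound and Lemma~\ref{lem636}), so standard local elliptic $C^{0,\gamma}$-estimates upgrade the strong $D^1$-convergence to locally uniform convergence. I therefore fix $R_0>0$ large but independent of $\eps$ so that
\[
v_\eps(x) \;\ge\; c_0\, R_0^{-(N-2)} \quad \text{for all } |x|=R_0,
\]
holds for every sufficiently small $\eps>0$ with $c_0>0$ independent of $\eps$.

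\textbf{Subsolution and comparison.} The quantitative heart of \cite{MM-14}*{Lemma 4.8} is an explicit Yukawa-type barrier. For $\delta\in(0,1)$ set $\psi_\delta(r):=r^{-(N-2)-\delta}e^{-\mu_\eps r}$; a direct radial computation yields
\[
(-\Delta+\mu_\eps^{\,2})\psi_\delta \;=\; -\bigl[((N-2)+\delta)\delta\,r^{-2} + \mu_\eps(N-3+2\delta)\,r^{-1}\bigr]\psi_\delta \;\le\; 0 \qquad (N\ge 3),
\]
and the first bracketed term, of size $\delta\,r^{-2}$ and \emph{independent} of $\mu_\eps$, absorbs the perturbation $Q_0(x)=o(r^{-2})$ uniformly in $\eps$ as soon as $r\ge R_\delta$. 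The weak maximum principle applied on $\{|x|\ge R_\delta\}$, using the boundary data of the previous paragraph and the vanishing of both $v_\eps$ and $\psi_\delta$ at infinity, yields a first, slightly weakened lower bound $v_\eps\ge c_\delta |x|^{-(N-2)-\delta}e^{-\mu_\eps|x|}$. The spurious exponent loss $\delta$ is then removed by the bootstrap of \cite{MM-14}*{Lemma 4.8}: the sharpened pointwise bound feeds into the radial ODE structure of the equation, and an iteration exploiting the closeness of $v_\eps$ to the Emden--Fowler bubble $\overline{V}\sim |x|^{-(N-2)}$ recovers the sharp exponent.

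\textbf{Main obstacle.} The subtle technical point is that the desired barrier $|x|^{-(N-2)}e^{-\mu_\eps|x|}$ is only marginally a subsolution of $-\Delta+\mu_\eps^{\,2}$, with a surplus proportional to the vanishing parameter $\mu_\eps$, which cannot dominate $Q_0$ uniformly in $\eps$. The slightly subcritical weight $|x|^{-(N-2)-\delta}$ is what injects a genuinely negative Laplacian contribution of size $\delta|x|^{-2}$, independent of $\mu_\eps$, that absorbs $Q_0$; and the subsequent removal of the $\delta$-loss to recover the sharp exponent $N-2$ in the proposition is the most delicate part of the argument borrowed from \cite{MM-14}.
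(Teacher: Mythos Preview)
Your proposal is correct and follows exactly the paper's approach: the paper's proof consists solely of the sentence ``By \cite{MM-14}*{Lemma 4.8}'', after having recorded the linear inequality $-\Delta v_\eps+\mu_\eps^2 v_\eps+Q_0(x)v_\eps\ge 0$ with $Q_0(x)=C|x|^{-(N-2)(q-2)/2}$ and $(N-2)(q-2)/2>2$. You have simply unpacked what that citation entails---the uniform boundary data from $v_\eps\to\overline{V}$, the Yukawa-type barrier, and the $\delta$-loss issue---all of which are indeed the content of \cite{MM-14}*{Lemma 4.8} rather than additional work specific to this paper.
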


The proof of the next result is nearly identical to the proofs in \cite{MM-14}*{Lemma 4.6 and pp.1097-1098}. We outline the arguments for reader's convenience.

\begin{lemma}\label{lem637}
For $\eps\to 0$, we have $\|v_{\eps}\|_{q}^q\sim 1$,
$$
\lambda_{\eps}\sim\left\{\aligned & \eps^{-\frac{1}{q-4}},
&N=3,&\\
&\Big(\eps\ln\frac{1}{\eps}\Big)^{-\frac{1}{q-2}},
&N=4,&\\
&\eps^{-\frac{2}{(q-2)(N-2)}},  &N\geq 5, &\\
\endaligned\right.
\qquad
\|v_{\eps}\|_2^2\sim\left\{\aligned & \eps^{-\frac{q-6}{2(q-4)}}, &
N=3,&\\
&\ln\frac{1}{\eps},&
N=4,&\\
&1, & N\geq 5. &\\
\endaligned\right.
$$
\end{lemma}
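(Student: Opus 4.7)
The three asymptotic claims are interlinked via the algebraic identity \eqref{eq633}, which, using $\|w_\eps\|_{2^*} \sim 1$, reduces to
$$
\lambda_\eps^{(q-2)(N-2)/2}\,\eps\,\|v_\eps\|_2^2 \;\sim\; \|v_\eps\|_q^q. \qquad (*)
$$
My first target is to show $\|v_\eps\|_q^q \sim 1$. The lower bound is precisely \eqref{eq634}. For the upper bound, since $q > 2^*$, Strauss' radial bound $v_\eps(|x|) \lesssim |x|^{-(N-2)/2}$ is $L^q$-integrable on $\{|x|>1\}$ (the borderline is exactly $q=2^*$); on $\{|x|\le 1\}$, the strong $L^{2^*}$-convergence $v_\eps \to \overline V$ combined with local $L^\infty$-bounds from elliptic regularity for \eqref{eq6314} gives a uniform $L^q$-bound. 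Hence $\|v_\eps\|_q^q = O(1)$.

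I next determine the size of $\|v_\eps\|_2^2$. Set $\tau_\eps := \sqrt\eps\,\mathcal{S}_{HL}^{1/(\alpha+2)}\lambda_\eps$; by Lemma \ref{lem636}, $\tau_\eps \lesssim \sigma_\eps^{1/2} \to 0$. Proposition \ref{pro631} yields
$$
\|v_\eps\|_2^2 \ge c\int_R^\infty r^{3-N}\,e^{-2\tau_\eps r}\,dr \sim
\begin{cases} \tau_\eps^{-1}, & N=3,\\ \log(1/\tau_\eps), & N=4,\\ 1, & N\ge 5. \end{cases}
$$
For the matching upper bound, I view \eqref{eq6314} as $(-\Delta + c_\eps)\,v_\eps \le F_\eps$ with $c_\eps \sim \tau_\eps^2$ and $F_\eps\ge 0$ controlled by the decaying nonlocal source; comparison with the Yukawa Green's function of $-\Delta + c_\eps$ produces $v_\eps(x) \lesssim |x|^{-(N-2)}$ on $R \le |x| \le 1/\tau_\eps$ and $\lesssim e^{-c\tau_\eps|x|}$ for $|x| \ge 1/\tau_\eps$. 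Integration yields matching upper bounds on $\|v_\eps\|_2^2$; for $N\ge 5$ the pure polynomial estimate already suffices since $r^{3-N}$ is integrable at infinity.

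With $\|v_\eps\|_q^q \sim 1$ and the $L^2$-size in hand, $(*)$ determines $\lambda_\eps$ uniquely. For $N \ge 5$, $(*)$ gives $\lambda_\eps^{(q-2)(N-2)/2} \sim \eps^{-1}$, so $\lambda_\eps \sim \eps^{-2/((q-2)(N-2))}$. For $N = 4$, $(*)$ reads $\lambda_\eps^{q-2}\,\eps\,\log(1/\tau_\eps) \sim 1$; since $\tau_\eps = \sqrt\eps\,\mathcal{S}_{HL}^{1/(\alpha+2)}\,\lambda_\eps$, a short bootstrap using the bounds of Lemma \ref{lem636} shows $\log(1/\tau_\eps) \sim \log(1/\eps)$, yielding $\lambda_\eps \sim (\eps\log(1/\eps))^{-1/(q-2)}$. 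For $N = 3$, $(*)$ becomes $\lambda_\eps^{(q-4)/2}\sqrt\eps \sim 1$, giving $\lambda_\eps \sim \eps^{-1/(q-4)}$. Substituting these back into the $L^2$-bounds produces the claimed $\|v_\eps\|_2^2$ asymptotics.

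The main technical obstacle is the sharp pointwise upper bound on $v_\eps$ matching the lower bound of Proposition \ref{pro631}. The right-hand side of \eqref{eq6314} couples the positive nonlocal source with a negative subcritical term of size $\lesssim |x|^{-(N-2)(q-2)/2}$, so constructing a supersolution requires balancing these contributions against the Yukawa kernel of $-\Delta + c_\eps$, paralleling \cite{MM-14}*{Lemma 4.8 and pp.\thinspace1097--1098}.
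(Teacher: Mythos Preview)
Your approach would work in principle but is substantially harder than the paper's, and the ``main technical obstacle'' you identify at the end is in fact entirely avoidable. The key point you miss is that \eqref{eq633} is not merely the identity $(*)$: it also carries the a priori upper bound $\leq C\sigma_\eps$ inherited from Lemma~\ref{lem633}. The paper uses this as follows. From \eqref{eq633} one has the two separate inequalities
\[
\lambda_\eps^{\frac{2N-q(N-2)}{2}}\|v_\eps\|_q^q\lesssim\sigma_\eps
\qquad\text{and}\qquad
\eps\,\lambda_\eps^2\,\|v_\eps\|_2^2\lesssim\sigma_\eps .
\]
Combined with only the \emph{lower} bounds $\|v_\eps\|_q^q\gtrsim 1$ from \eqref{eq634} and $\|v_\eps\|_2^2\gtrsim 1$ from \eqref{eq635}, these give the two-sided estimate of Lemma~\ref{lem636}, which already closes for $N\ge 5$. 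For $N=3,4$ one bootstraps: Proposition~\ref{pro631} improves the lower bound on $\|v_\eps\|_2^2$ (to $\gtrsim\ln(1/\eps)$ for $N=4$, to $\gtrsim(\sqrt\eps\,\lambda_\eps)^{-1}$ for $N=3$), and feeding this back into $\eps\lambda_\eps^2\|v_\eps\|_2^2\lesssim\sigma_\eps$ sharpens the upper bound on $\lambda_\eps$ to match the lower one. Once $\lambda_\eps$ is pinned down, the same two inequalities immediately yield the \emph{upper} bounds on $\|v_\eps\|_2^2$ and $\|v_\eps\|_q^q$---no pointwise supersolution or Yukawa comparison is required.

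In short, you linearise the argument into ``first determine the norms, then read off $\lambda_\eps$ from $(*)$'', which forces you to prove sharp two-sided $L^2$ control of $v_\eps$ directly. The paper instead runs a short feedback loop between $\lambda_\eps$ and $\|v_\eps\|_2^2$ using the $\sigma_\eps$ ceiling, obtaining all upper bounds on the norms as a consequence of the bounds on $\lambda_\eps$ rather than as an input.
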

\begin{proof}
For all $N\geq 3$, \eqref{eq634} implies that $\|v_{\eps}\|_q^q\geq C$ for some $C>0$.

\noindent
\paragraph{\bf Case $N\geq 5$.}
By Lemmas~\ref{lem631} and \ref{lem636}, we see that for $N\geq 5$
$$
\eps^{-\frac{2}{(q-2)(N-2)}}\lesssim\lambda_{\eps}\lesssim \eps^{-\frac{2}{(q-2)(N-2)}}.
$$
Moreover, $\|v_{\eps}\|_2^2\sim 1$ and $\|v_{\eps}\|_q^q\sim 1$ by Lemma~\ref{lem631}, \eqref{eq633}, \eqref{eq634} and \eqref{eq635}.

\noindent
\paragraph{\bf Case $N=4$.}
By Proposition~\ref{pro631}, we obtain
$$
\|v_{\eps}\|_2^2\geq\int_{\R^N\setminus B_{R}}|v_{\eps}|^2dx\geq \int_{R}^{\infty}Cr^{-2}e^{-2\sqrt{\eps}\mathcal{S}_{HL}^{\frac{1}{\alpha+2}}\lambda_{\eps}r}dr=C\Gamma(0, 2\sqrt{\eps}\mathcal{S}_{HL}^{\frac{1}{\alpha+2}}\lambda_{\eps}),
$$
where
$\Gamma(0,t)=-\ln t-\gamma+\O(t)$ as $t\searrow 0$ is the incomplete Gamma function and $\gamma\approx 0.57$ is the Euler constant.  Hence, by Lemmas~\ref{lem631} and \ref{lem636},  we obtain that
$$
\|v_{\eps}\|_2^2\geq C(-\ln (2\sqrt{\eps}\mathcal{S}_{HL}^{\frac{1}{\alpha+2}}\lambda_{\eps})-\gamma)\geq C\ln{(\frac{1}{\sqrt{\eps}\lambda_{\eps}})}\geq C\ln{\frac{1}{\eps}}.
$$
This, together with Lemmas~\ref{lem631}, \ref{lem636} and \eqref{eq633}, implies that
$$
\Big(\eps\ln\frac{1}{\eps}\Big)^{-\frac{1}{q-2}}\lesssim \lambda_{\eps}\leq \frac{C\sigma_{\eps}^{\frac12}}{\eps^{\frac12}\|v_{\eps}\|_2}\lesssim \Big(\eps\ln\frac{1}{\eps}\Big)^{-\frac{1}{q-2}}.
$$
Moreover,
$$
\|v_{\eps}\|^2_2\leq \frac{C\sigma_{\eps}}{\eps\lambda_{\eps}^2}\leq C\ln{\frac{1}{\eps}}, \quad
\|v_{\eps}\|^q_q\leq C\sigma_{\eps}\lambda_{\eps}^{q-4}\leq C.
$$

\noindent
\paragraph{\bf Case $N=3$.}
By Proposition~\ref{pro631}, we obtain
\begin{equation}\label{eq6315}
\|v_{\eps}\|_2^2\geq\int_{\R^N\setminus B_{R}}|v_{\eps}|^2dx\geq \int_{R}^{\infty}Ce^{-2\sqrt{\eps}\mathcal{S}_{HL}^{\frac{1}{\alpha+2}}\lambda_{\eps}r}dr\geq \frac{C}{\sqrt{\eps}\lambda_{\eps}}
\end{equation}
This, together with Lemma~\ref{lem636} and \eqref{eq633}, implies that
$$
\lambda_{\eps}\leq \frac{C\sigma_{\eps}^{\frac12}}{\eps^{\frac12}\|v_{\eps}\|_2}\leq \eps^{-\frac14}\sigma_{\eps}^{\frac12}\lambda_{\eps}^{\frac12}\quad \Longleftrightarrow \quad
\lambda_{\eps}\leq\eps^{-\frac12}\sigma_{\eps}.
$$
By Lemmas~\ref{lem631} and \ref{lem636}, we obtain that
\begin{equation}\label{eq6316}
\eps^{-\frac{1}{q-4}}\lesssim \lambda_{\eps}\lesssim \eps^{-\frac{1}{q-4}}.
\end{equation}
Furthermore, we deduce from Lemma~\ref{lem631}, \eqref{eq6315} and \eqref{eq6316} that
$$
C\eps^{-\frac{q-6}{2(q-4)}}\leq\frac{C}{\sqrt{\eps}\lambda_{\eps}}\leq\|v_{\eps}\|^2_2\leq \frac{C\sigma_{\eps}}{\eps\lambda_{\eps}^2}\leq C\eps^{-\frac{q-6}{2(q-4)}}, \quad
\|v_{\eps}\|^q_q\leq C\sigma_{\eps}\lambda_{\eps}^{\frac{q-6}{2}}\leq C.
$$
This complete the proof.
\end{proof}

\subsection{Critical Thomas--Fermi case}
Throughout this section we assume that $\frac{N+\alpha}{N}<p<\frac{N+\alpha}{N-2}$ and $q=\frac{2Np}{N+\alpha}$.
Let
	$$
	\mathcal{S}_{T\!F}=\inf_{w\in L^{\frac{2Np}{N+\alpha}}(\R^N)\setminus \{0\}}\frac{\int_{\R^N}|w|^\frac{2Np}{N+\alpha}dx}{\Big\{ \int_{\R^N}(I_{\alpha}*|w|^p)|w|^p dx\Big\}^{\frac{N}{N+\alpha}}}=\mathcal C_{\alpha}^{-\frac{N}{N+\alpha}},
	$$
	where $\mathcal C_{\alpha}$ is the optimal constant in \eqref{HLS}, as described in \eqref{eqSTF}.

It is well-known \cite{Lieb}*{Theorem 4.3} that $\mathcal{S}_{T\!F}$ is achieved by the function
\begin{equation}\label{eqV1tilde}
\widetilde{V}(x)=\widetilde{U}(\mathcal{S}_{T\!F}^{\frac{1}{\alpha}}x),
\end{equation}
and the family of rescalings
\begin{equation}\label{eq-R-HLS-V}
\widetilde{V}_\lambda(x):=\lambda^{-\frac{N+\alpha}{2p}}\widetilde{V}(x/\lambda)=\lambda^{-\frac{N}{q}}\widetilde{V}(x/\lambda)\qquad(\lambda>0),
\end{equation}
here $\widetilde{U}$ is the groundstate solution of \eqref{eqTF0} defined in \eqref{eqUtilde}.
It is clear that
$$\|\widetilde{V}_{\lambda}\|_q^q=\| \widetilde{V}\|_q^q=\int_{\R^N}(I_{\alpha}*|\widetilde{V}|^{p})|\widetilde{V}|^{p}dx=\int_{\R^N}(I_{\alpha}*|\widetilde{V}_{\lambda}|^{p})|\widetilde{V}_{\lambda}|^{p}dx=\mathcal{S}_{T\!F}^{\frac{N+\alpha}{\alpha}}.
$$
The energy functional which corresponds to \eqref{eqTF0} is
$$
H(u)=\frac{1}{q}\|u\|_q^q-\frac{1}{2p}\int_{\R^N}(I_{\alpha}*|u|^{p})|u|^{p}dx.
$$
We define,
$$
c_{T\!F}=\inf_{u\in \mathscr{P}_{T\!F}}H(u)=\inf_{u\in L^q(\R^N)\setminus \{0\}}\max_{t>0}H(u(x/t)),
$$
where
$$
\mathscr{P}_{T\!F}=\Big\{u\in L^{q}(\R^N)\setminus \{0\}: \mathcal{P}_{T\!F}(u):=\| u\|_q^q-\int_{\R^N}(I_{\alpha}*|u|^{p})|u|^{p}dx=0\Big\}.
$$
By a simple calculation, we see that $c_{T\!F}=\frac{\alpha}{2Np}\mathcal{S}_{T\!F}^{\frac{N+\alpha}{\alpha}}$.

Note that $\widetilde{V}_\lambda\in L^2(\R^N)$ for $N\ge 4$, and $\widetilde{V}_\lambda\in D^{1}(\R^N)$ if $N\geq 3$.

\begin{lemma}\label{lem641}
Let $\overline{\sigma}_{\eps}=c_{\eps}-c_{T\!F}$ and $\eps\to 0$. If $N\geq 4$ then $$0<\overline{\sigma}_{\eps}\lesssim \eps^{\frac{2N-q(N-2)}{2q}},$$
while if $N=3$ then
$$
0<\overline{\sigma}_{\eps}\lesssim \left\{
\begin{array}{ll}
\eps^{\frac{3+\alpha-p}{2p}},\quad &p\in \left(\tfrac13(3+\alpha), \tfrac23(3+\alpha)\right),\smallskip\\
\big(\eps\ln\frac{1}{\eps}\big)^{\frac{1}{4}},\quad &p=\tfrac23(3+\alpha),\smallskip\\
\eps^{(\frac{3+\alpha-p}{p})^2},\quad &p\in \left(\tfrac23(3+\alpha), (3+\alpha)\right).
\end{array}
\right.
$$
\end{lemma}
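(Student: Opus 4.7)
The plan follows the blueprint of Lemma~\ref{lem631}, with the critical Sobolev minimizer $V$ replaced by the Lieb minimizer $\widetilde V$. For the positivity $\overline{\sigma}_\eps>0$, I would test $u_\eps\in\mathscr P_\eps$ against the limit manifold $\mathscr P_{T\!F}$. The criticality $q=\frac{2Np}{N+\alpha}$ gives the identity $\frac{N}{q}=\frac{N+\alpha}{2p}$, so combining the Poho\v zaev and Nehari identities for $u_\eps$ yields
\[
\int_{\R^N}(I_\alpha\ast|u_\eps|^p)|u_\eps|^p\,dx-\|u_\eps\|_q^q=\tfrac{2p}{N+\alpha}\Big(\tfrac{N-2}{2}\|\nabla u_\eps\|_2^2+\tfrac{\eps N}{2}\|u_\eps\|_2^2\Big)>0,
\]
so $\mathcal P_{T\!F}(u_\eps)<0$ and there is a unique $t_\eps\in(0,1)$ with $u_\eps(\cdot/t_\eps)\in\mathscr P_{T\!F}$. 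Since $H=\frac{\alpha}{2Np}\|\cdot\|_q^q$ on $\mathscr P_{T\!F}$ while $c_\eps=\frac{1}{N}\|\nabla u_\eps\|_2^2+\frac{\alpha}{2pN}\int(I_\alpha\ast u_\eps^p)u_\eps^p\,dx$, a direct comparison gives $c_{T\!F}\le H(u_\eps(\cdot/t_\eps))<c_\eps$, i.e.\ $\overline{\sigma}_\eps>0$.

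For the upper bound the test functions will be (cutoffs of) rescalings $\widetilde V_\lambda$. The structural observation is that, since $\widetilde V$ satisfies $\|\widetilde V\|_q^q=\int(I_\alpha\ast\widetilde V^p)\widetilde V^p\,dx$ and $\frac{N}{q}=\frac{N+\alpha}{2p}$, the two scale-invariant terms in $\mathcal P_\eps(\widetilde V_\lambda)$ cancel identically, so that
\[
\mathcal P_\eps(\widetilde V_\lambda)=\tfrac{N-2}{2}\|\nabla\widetilde V_\lambda\|_2^2+\tfrac{\eps N}{2}\|\widetilde V_\lambda\|_2^2>0.
\]
Projecting yields $t_\eps(\lambda)>1$ with $\widetilde V_\lambda(\cdot/t_\eps)\in\mathscr P_\eps$ and $t_\eps-1\sim\|\nabla\widetilde V_\lambda\|_2^2+\eps\|\widetilde V_\lambda\|_2^2$; expanding $\mathcal I_\eps$ around $t=1$ (where the $O(t-1)$ parts of the $\|\cdot\|_q^q$ and $\int(I_\alpha\ast\cdot^p)\cdot^p$ terms again cancel by the same identity) gives
\[
\overline{\sigma}_\eps\le\mathcal I_\eps\bigl(\widetilde V_\lambda(\cdot/t_\eps)\bigr)-c_{T\!F}\lesssim\|\nabla\widetilde V_\lambda\|_2^2+\eps\|\widetilde V_\lambda\|_2^2.
\]
Since $p<\frac{N+\alpha}{N-2}$, one has $\widetilde V\in D^1(\R^N)$ with $\|\nabla\widetilde V_\lambda\|_2^2\sim\lambda^{-\beta}$, $\beta:=\frac{N+\alpha}{p}-(N-2)>0$; and $\widetilde V\in L^2(\R^N)$ iff $p<\frac{2(N+\alpha)}{N}$, which is automatic for $N\ge 4$ and, for $N=3$, coincides with $p<\frac{2(3+\alpha)}{3}$. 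In those subregimes $\|\widetilde V_\lambda\|_2^2\sim\lambda^{\gamma}$, $\gamma:=N-\frac{N+\alpha}{p}>0$, $\beta+\gamma=2$, and minimizing $\lambda^{-\beta}+\eps\lambda^{\gamma}$ gives $\lambda_*\sim\eps^{-1/2}$ and
\[
\overline{\sigma}_\eps\lesssim\eps^{\beta/2}=\eps^{(2N-q(N-2))/(2q)},
\]
which matches the claim for $N\ge 4$ and for the first branch of the $N=3$ cases.

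In the remaining $N=3$ regime $\widetilde V\notin L^2(\R^3)$, so I would localize by a smooth cutoff $\eta_R$ with $R\gg\lambda$. Analogues of \eqref{eq639}--\eqref{eq6312} give
\begin{align*}
\|\nabla(\eta_R\widetilde V_\lambda)\|_2^2&=\|\nabla\widetilde V_\lambda\|_2^2\bigl(1+o(1)\bigr)\sim\lambda^{-\beta},\\
\|\eta_R\widetilde V_\lambda\|_q^q,\;\textstyle\int(I_\alpha\ast|\eta_R\widetilde V_\lambda|^p)|\eta_R\widetilde V_\lambda|^p\,dx&=\|\widetilde V\|_q^q+O\bigl((R/\lambda)^{-3}\bigr),\\
\|\eta_R\widetilde V_\lambda\|_2^2&\sim\begin{cases}\lambda^{3/2}\ln(R/\lambda),& p=\tfrac{2(3+\alpha)}{3},\\[1mm]R^{1-2\beta}\lambda^{1+\beta},& p>\tfrac{2(3+\alpha)}{3},\end{cases}
\end{align*}
where now $\beta=\frac{3+\alpha}{p}-1\in(0,\tfrac12]$. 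The projection argument runs as before with an additional $O((R/\lambda)^{-3})$ error in both $\mathcal P_\eps$ and $\mathcal I_\eps$, so that
\[
\overline{\sigma}_\eps\lesssim\lambda^{-\beta}+\eps\|\eta_R\widetilde V_\lambda\|_2^2+(R/\lambda)^{-3}.
\]
For $p=\frac{2(3+\alpha)}{3}$ one takes $\lambda\sim(\eps\ln\tfrac1\eps)^{-1/2}$ and $R$ large enough that $(R/\lambda)^{-3}\lesssim(\eps\ln\tfrac1\eps)^{1/4}$, which gives the stated $(\eps\ln\tfrac1\eps)^{1/4}$ bound. For $p>\frac{2(3+\alpha)}{3}$, writing $\beta=(3+\alpha-p)/p$, the choice $\lambda=\eps^{-\beta}$ and $R=\eps^{-\beta-\beta^2/3}$ (so $R/\lambda=\eps^{-\beta^2/3}\to\infty$) makes $\lambda^{-\beta}=(R/\lambda)^{-3}=\eps^{\beta^2}$, while an elementary calculation yields $\eps\|\eta_R\widetilde V_\lambda\|_2^2\sim\eps^{1-2\beta+2\beta^2/3+2\beta^3/3}\le\eps^{\beta^2}$ throughout $\beta\in(0,\tfrac12)$, producing the claimed $\eps^{\beta^2}$ bound.

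The main obstacle is this last three-way optimization: the competing small quantities $\lambda^{-\beta}$, $\eps\|\eta_R\widetilde V_\lambda\|_2^2$ and $(R/\lambda)^{-3}$ must be balanced simultaneously under the geometric constraint $R\gg\lambda$, and the slow algebraic decay of $\widetilde V$ at infinity (in contrast to the exponential decay of $V$ in the critical Choquard case) leaves only a narrow window of admissible $(\lambda,R)$. One must also track the higher-order Taylor cancellations in $\mathcal I_\eps-c_{T\!F}$ carefully, although these do not affect the leading order of the bounds.
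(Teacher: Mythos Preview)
Your proposal is correct and follows essentially the same approach as the paper: testing $u_\eps$ against $\mathscr P_{T\!F}$ for positivity, and testing (cutoffs of) rescaled Lieb minimizers $\widetilde V_\lambda$ against $\mathscr P_\eps$ for the upper bound, with the key cancellation $\tfrac{N}{q}=\tfrac{N+\alpha}{2p}$ exploited exactly as in the paper. Two minor points of divergence: for $N=3$ with $p<\tfrac{2(3+\alpha)}{3}$ you skip the cutoff (since $\widetilde V\in L^2(\R^3)$ there), which is a legitimate simplification over the paper's uniform use of $\eta_R$ in dimension three; and in the last case $p>\tfrac{2(3+\alpha)}{3}$ you balance all three terms by taking $R=\eps^{-\beta-\beta^2/3}$, whereas the paper simply takes $R=\lambda_\eps/\eps$, making the cutoff error $(R/\lambda)^{-3}=\eps^3$ automatically negligible against $\eps^{\beta^2}$ (since $\beta^2<\tfrac14<3$). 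The paper's choice is cleaner and avoids the auxiliary verification that $1-2\beta+\tfrac{2}{3}\beta^2+\tfrac{2}{3}\beta^3\ge\beta^2$ on $(0,\tfrac12)$, but your route is equally valid.
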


\begin{proof}
Note that $u_{\eps}\in \mathscr{P}_{\eps}$ is a solution of \eqref{eqPeps} with $\mathcal{I}_{\eps}(u_{\eps})=c_{\eps}$, then $\P_{T\!F}(u_{\eps})<0$, thus there exists $t_{\eps}\in (0, 1)$ such that $u_{\eps}(x/t_{\eps})\in \mathscr{P}_{T\!F}$ and we have
$$
c_{T\!F}\leq H(u_{\eps}(\frac{x}{t_{\eps}}))=\frac{2p-q}{2pq}t^{N}_{\eps}\| u_{\eps}\|_q^q<\mathcal{I}_{\eps}(u_{\eps})=c_{\eps}.
$$
Therefore, $\overline{\sigma}_{\eps}=c_{\eps}-c_{T\!F}>0$.

\medskip
\noindent
\paragraph{\bf Case $N\geq 4$.}  Note that for $N\geq 4$, $\tV_{\lambda}(x)\in L^2(\R^N)$ for each $\lambda>0$ and therefore, $\P_{\eps}(\tV_{\lambda}(x))>0$. Then for each $\eps>0$ and $\lambda>0$, there exists a unique $s_{\eps,\lambda}>1$ such that $\P_{\eps}(\tV(x/s_{\eps, \lambda}))=0$, which means that
$$\aligned
(s_{\eps, \lambda}^{\alpha}-1)\frac{N+\alpha}{2p}\| \tV\|_q^q=&\frac{(N-2)}{2s_{\eps, \lambda}^2}\lambda^{\frac{(N-2)q-2N}{q}}\|\nabla \tV\|_2^2+\frac{N\eps}{2}\lambda^{\frac{Nq-2N}{q}}\|\tV\|_2^2\\ \leq& \frac{(N-2)}{2}\lambda^{\frac{(N-2)q-2N}{q}}\|\nabla \tV\|_2^2+\frac{N\eps}{2}\lambda^{\frac{Nq-2N}{q}}\|\tV\|_2^2 :=\phi_{\eps}(\lambda)
\endaligned
$$
then there exists $\lambda_{\eps}>0$ such that
$$
\phi_{\eps}(\lambda_{\eps})=\min_{\lambda>0}\phi_{\eps}(\lambda)\leq
C\eps^{\frac{2N-q(N-2)}{2q}}.
$$
Therefore $s_{\eps}:=s_{\eps, \lambda_{\eps}}\to 1$ as $\eps\to 0$.  Furthermore, we have
$$
s_{\eps}\leq 1+C\eps^{\frac{2N-q(N-2)}{2q}}.
$$
Therefore, we obtain that
$$\aligned
c_{\eps}\leq \mathcal{I}_{\eps}(\tV_{\lambda_\eps}(\frac{x}{s_{\eps}}))=&\frac{s_{\eps}^{N-2}}{N}\|\nabla \tV_{\lambda_\eps}\|_2^2+
\frac{\alpha s_{\eps}^{N+\alpha}}{2Np}\int_{\R^N}(I_{\alpha}*|\tV|^{p})|\tV|^{p}dx\\
\leq& c_{T\!F}+c_{T\!F}(s_{\eps}^{N+\alpha}-1)+C\eps^{\frac{2N-q(N-2)}{2q}}\\
\leq& c_{T\!F}+C\eps^{\frac{2N-q(N-2)}{2q}},
\endaligned
$$
which means that $\sigma_\eps\leq C\eps^{\frac{2N-q(N-2)}{2q}}$.

\medskip
\noindent
\paragraph{\bf Case $N=3$.}
To consider the case $N=3$, given $R\gg \lambda$, we introduce a cut-off function $\eta_{R}\in C_c^{\infty}(\R^N)$ such that
$\eta_R(r)=1$ for $|r|<R$, $0<\eta_R(r)<1$ for $R<|r|<2R$, $\eta_R(r)=0$ for $|r|\geq 2R$ and $|\eta_R'(r)|\leq R/2$.  We then compute as in, e.g. \cite{S96}*{Theorem 2.1}:
\begin{equation}\label{eq649}
\int_{\R^N}|\nabla(\eta_R\tV_{\lambda})|^2dx=\lambda^{\frac{p(N-2)-(N+\alpha)}{p}}\|\nabla \tV\|_2^2(1+\O((R/\lambda)^{\frac{p(N-2)-2(N+\alpha)}{p}}).
\end{equation}
\begin{equation}\label{eq6410}
\int_{\R^N}(I_{\alpha}*|\eta_R\tV_{\lambda}|^{p})|\eta_R\tV_{\lambda}|^{p}dx=
\mathcal{S}_{T\!F}^{\frac{N+\alpha}{\alpha}}+\O((R/\lambda)^{-N}).
\end{equation}
\begin{equation}\label{eq6411}
\|\eta_R\tV_{\lambda}\|_q^q=\mathcal{S}_{T\!F}^{\frac{N+\alpha}{\alpha}}+\O((R/\lambda)^{-N}).
\end{equation}
\begin{equation}\label{eq6412}
\|\eta_R\tV_{\lambda}\|_2^2=\lambda^{\frac{3p-(3+\alpha)}{p}}\|\eta_{R/\lambda}\tV\|_2^2\sim
\left\{
\begin{array}{ll} \lambda^{\frac{3p-(3+\alpha)}{p}}\big(1-(\frac{R}{\lambda})^{\frac{3p-2(3+\alpha)}{p}}\big),\quad
&p\in \left(\tfrac13(3+\alpha), \tfrac23(3+\alpha)\right),\smallskip\\
\lambda^{\frac{3}{2}}\ln\frac{R}{\lambda}, \quad
&p=\tfrac23(3+\alpha),\smallskip\\
\lambda^{\frac{3p-(3+\alpha)}{p}}(\frac{R}{\lambda})^{\frac{3p-2(3+\alpha)}{p}},\quad
&p\in \left(\tfrac23(3+\alpha), (3+\alpha)\right).
\end{array}
\right.
\end{equation}

 When $R\gg\lambda$, by the above estimates, we see that $\mathcal{P}_{\eps}(\eta_R\tV_{\lambda})>0$,  then there exists a unique $t_{\eps}:=t_{\eps}(R, \lambda)>1$ such that $\mathcal{P}_{\eps}(\eta_R(x/t_{\eps})\tV_{\lambda}(x/t_{\eps}))=0$, which implies that
\begin{multline}\label{eq6414}
\frac{N+\alpha}{2p}(t_{\eps}^{\alpha}\int_{\R^N}(I_{\alpha}*|\eta_R\tV_{\lambda}|^{p})|\eta_R\tV_{\lambda}|^{p}dx-\|\eta_R\tV_{\lambda}\|_q^q)\\
\leq\frac{N\eps}{2}\|\eta_R\tV_{\lambda}\|_2^2+\frac{N-2}{2}\|\nabla(\eta_R\tV_{\lambda})\|_2^2
:=\psi_{\eps}(R, \lambda).
\end{multline}
To estimate $\psi_{\eps}(R, \lambda)$ we consider three cases.

\medskip
\paragraph{\sl $(i)$ Case $p\in \left(\tfrac13(3+\alpha), \tfrac23(3+\alpha)\right)$.}
For small $\eps>0$, set
$$
\lambda_{\eps}= \eps^{-\frac{1}{2}}, \quad R_{\eps}= \eps^{-\frac{3}{2}}.
$$
Then
$$\aligned
\psi_{\eps}(R_{\eps}, \lambda_{\eps})=& \frac{1}{2}\lambda_{\eps}^{\frac{p-(3+\alpha)}{p}}\|\nabla \tV\|_2^2(1+\O((R_{\eps}/\lambda_{\eps})^{\frac{p-2(3+\alpha)}{p}})+
\frac{N\eps}{2}\O(\lambda_{\eps}^{\frac{3p-(3+\alpha)}{p}}(1-(R_{\eps}/\lambda_{\eps})^{\frac{3p-2(3+\alpha)}{p}}))\\
\leq& C\eps^{\frac{3+\alpha-p}{2p}}.
\endaligned
$$
Therefore it follows from \eqref{eq6414} that $t_{\eps}\to 1$ as $\eps\to 0$ and
$
t_{\eps}\leq 1+C\eps^{\frac{3+\alpha-p}{2p}}.
$

\medskip
\paragraph{\sl $(ii)$ Case $p=\tfrac23(3+\alpha)$.}
For small $\eps>0$, set
$$
\lambda_{\eps}= \big(\eps\ln\frac{1}{\eps}\big)^{-\frac{1}{2}}, \quad R_{\eps}= \frac{\lambda_{\eps}}{\eps}.
$$
Then
$$
\psi_{\eps}(R_{\eps}, \lambda_{\eps})=\frac{1}{2}\lambda_{\eps}^{-\frac{1}{2}}\|\nabla \tV\|_2^2(1+\O((R_{\eps}/\lambda_{\eps})^{-2})+
\frac{N\eps}{2}\O(\lambda_{\eps}^{\frac{3}{2}}\ln(R_\eps/\lambda_{\eps})
\leq C(\eps\ln\tfrac{1}{\eps})^{\frac{1}{4}}.
$$
Therefore it follows from \eqref{eq6414} that $t_{\eps}\to 1$ as $\eps\to 0$ and
$
t_{\eps}\leq 1+C(\eps\ln\frac{1}{\eps})^{\frac{1}{4}}.
$

\medskip
\paragraph{\sl $(iii)$ Case $p\in \left(\tfrac23(3+\alpha), 3+\alpha\right)$.}
For small $\eps>0$, set
$$
\lambda_{\eps}= \eps^{-\frac{3+\alpha-p}{p}}, \quad R_{\eps}= \frac{\lambda_{\eps}}{\eps}.
$$
Then
$$\aligned
\psi_{\eps}(R_{\eps}, \lambda_{\eps})=& \frac{1}{2}\lambda_{\eps}^{\frac{p-(3+\alpha)}{p}}\|\nabla \tV\|_2^2(1+\O\big((R_{\eps}/\lambda_{\eps})^{\frac{p-2(3+\alpha)}{p}}\big)+
\frac{N\eps}{2}\O\big(\lambda^{\frac{3p-(3+\alpha)}{p}}(R_\eps/\lambda_\eps)^{\frac{3p-2(3+\alpha)}{p}}\big)\\
\leq& C\eps^{(\frac{3+\alpha-p}{p})^2}.
\endaligned
$$
Therefore it follows from \eqref{eq6414} that $t_{\eps}\to 1$ as $\eps\to 0$ and
$
t_{\eps}\leq 1+C\eps^{(\frac{3+\alpha-p}{p})^2}.
$

\medskip
\paragraph{\bf Conclusion of the proof for $N=3$.}
From $(i)-(iii)$ we deduce that
$$\aligned
c_{\eps}\leq& \mathcal{I}_{\eps}(\eta_{R_\eps}(x/t_{\eps})\tV_{\lambda_{\eps}}(x/t_{\eps}))\\
=&\frac{\alpha+2}{2(3+\alpha)}\|\nabla (\eta_{R_\eps}\tV_{\lambda_{\eps}})\|_2^2t_{\eps}+
\frac{\alpha}{3+\alpha}\Big(\frac{\eps}{2}\|\eta_{R_\eps}\tV_{\lambda_{\eps}}\|_2^2+\frac{1}{q}\|\eta_{R_\eps}\tV_{\lambda_{\eps}}\|_q^q\Big)t_{\eps}^3\\
\leq& c_{T\!F}+c_{T\!F}(t_{\eps}^{3}-1)+\O(\eps^3)+C\psi_{\eps}(R_{\eps}, \lambda_{\eps})\\
\leq& c_{T\!F}+\O(\eps^3)+
\left\{
\begin{array}{ll}
\eps^{\frac{3+\alpha-p}{2p}},\quad &p\in \left(\tfrac13(3+\alpha), \tfrac23(3+\alpha)\right),\\
\big(\eps\ln\frac{1}{\eps}\big)^{\frac{1}{4}},\quad &p=\tfrac23(3+\alpha),\smallskip\\
\eps^{(\frac{3+\alpha-p}{p})^2},\quad &p\in \left(\tfrac23(3+\alpha), (3+\alpha)\right).
\end{array}
\right.
\endaligned
$$
so the assertion follows.
\end{proof}

\begin{lemma}\label{lem642}
$$
\|\nabla u_{\eps}\|_2^2=\frac{(Np-N-\alpha)\eps}{(N+\alpha)-p(N-2)}\|u_{\eps}\|_2^2, \quad\|u_{\eps}\|_q^q+\frac{2p\eps}{(N+\alpha)-p(N-2)}\|u_{\eps}\|_2^2=\int_{\R^N}(I_{\alpha}*|u_\eps|^{p})|u_\eps|^{p}dx.
$$
\end{lemma}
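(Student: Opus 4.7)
The lemma is an algebraic consequence of combining the Nehari and Poho\v zaev identities for \eqref{eqPeps}, specialised to the critical Thomas--Fermi exponent $q=\frac{2Np}{N+\alpha}$. My plan is to extract the two stated relations by a pair of linear combinations; no estimates or limit arguments are needed beyond the identities already available.

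The starting point is the Nehari identity
\[
\|\nabla u_\eps\|_2^2+\eps\|u_\eps\|_2^2+\|u_\eps\|_q^q-\int_{\R^N}(I_\alpha*|u_\eps|^p)|u_\eps|^p\,dx=0,
\]
established in \eqref{e-Nehari}, together with the Poho\v zaev identity from Proposition \ref{p-Phz},
\[
\frac{N-2}{2}\|\nabla u_\eps\|_2^2+\frac{\eps N}{2}\|u_\eps\|_2^2+\frac{N}{q}\|u_\eps\|_q^q-\frac{N+\alpha}{2p}\int_{\R^N}(I_\alpha*|u_\eps|^p)|u_\eps|^p\,dx=0,
\]
which applies because $u_\eps\in L^1\cap C^2(\R^N)$ by Theorem \ref{thm01}. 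The critical hypothesis $q=\frac{2Np}{N+\alpha}$ yields the key simplification $\tfrac{N}{q}=\tfrac{N+\alpha}{2p}$, so the coefficients of $\|u_\eps\|_q^q$ and of the nonlocal term in Poho\v zaev are equal, after which multiplying by $\tfrac{2p}{N+\alpha}$ puts Poho\v zaev in a form directly comparable to Nehari.

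Subtracting the rescaled Poho\v zaev identity from Nehari eliminates both $\|u_\eps\|_q^q$ and the nonlocal term, leaving
\[
\frac{(N+\alpha)-p(N-2)}{N+\alpha}\|\nabla u_\eps\|_2^2=\frac{\eps(Np-(N+\alpha))}{N+\alpha}\|u_\eps\|_2^2,
\]
which rearranges to the first claimed identity; note that both coefficients are positive precisely because $\frac{N+\alpha}{N}<p<\frac{N+\alpha}{N-2}$. The second identity then follows by substituting this expression for $\|\nabla u_\eps\|_2^2$ back into the Nehari identity and simplifying the combined coefficient of $\eps\|u_\eps\|_2^2$ to $\frac{2p}{(N+\alpha)-p(N-2)}$. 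There is no genuine obstacle here: the entire argument is a two-line linear algebra exercise on the two conservation identities, and the only point worth checking is the arithmetic collapse $Np-(N+\alpha)+(N+\alpha)-p(N-2)=2p$ that produces the explicit constant in the final formula.
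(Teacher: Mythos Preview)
Your proposal is correct and matches the paper's approach exactly: the paper's proof is the single line ``Follows from Nehari and Poho\v zaev identities for \eqref{eqPeps},'' and you have supplied precisely the linear-algebra details (using $\tfrac{N}{q}=\tfrac{N+\alpha}{2p}$ to cancel the $L^q$ and nonlocal terms, then back-substituting into Nehari) that the paper leaves implicit.
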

\begin{proof}
	Follows from Nehari and Poho\v zaev identities for \eqref{eqPeps}.
\end{proof}

\begin{lemma}\label{lem643}
$\frac{(q-2)N\eps}{q(N-2)-2N}\|u_{\eps}\|_2^2\leq C\overline{\sigma}_{\eps}$.  Moreover, we have
$$
\lim_{\eps\to 0}\eps\|u_{\eps}\|_2^2=0, \quad \lim_{\eps\to 0}\|\nabla u_{\eps}\|_2^2=0, \quad \lim_{\eps\to 0}\|u_{\eps}\|_{2^*}^2=0.
$$
$$
\lim_{\eps\to 0}\|u_{\eps}\|_q^q=\lim_{\eps\to 0}
\int_{\R^N}(I_{\alpha}*|u_\eps|^{p})|u_\eps|^{p}dx= \frac{2Np}{\alpha}c_{T\!F}=\mathcal{S}_{T\!F}^{\frac{N+\alpha}{\alpha}}.
$$
\end{lemma}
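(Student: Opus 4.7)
The plan is to adapt the argument of Lemma \ref{lem633} to the critical Thomas--Fermi setting, substituting the manifold $\mathscr{P}_{T\!F}$ of the limit equation for the role previously played by $\mathscr{P}_{HL}$. Combining the Poho\v{z}aev identity for $(P_\eps)$ with Lemma \ref{lem641} gives at the outset
\begin{equation*}
c_{T\!F} + \overline\sigma_\eps \;=\; c_\eps \;=\; \mathcal{I}_\eps(u_\eps) \;=\; \tfrac{1}{N}\|\nabla u_\eps\|_2^2 + \tfrac{\alpha}{2Np}\int_{\R^N}(I_\alpha\ast|u_\eps|^p)|u_\eps|^p\,dx,
\end{equation*}
which immediately provides uniform upper bounds on $\|\nabla u_\eps\|_2^2$ and on the nonlocal term. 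From the Nehari identity one gets $\mathcal{P}_{T\!F}(u_\eps) = -\|\nabla u_\eps\|_2^2 - \eps\|u_\eps\|_2^2 < 0$, and applying the critical HLS inequality (noting $2p=q(N+\alpha)/N$) to $\|u_\eps\|_q^q \le \int(I_\alpha\ast|u_\eps|^p)|u_\eps|^p dx \le \mathcal{C}_\alpha\|u_\eps\|_q^{2p}$ yields the matching lower bound $\|u_\eps\|_q^q \gtrsim \mathcal{S}_{T\!F}^{(N+\alpha)/\alpha}$.

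Next I would run the projection onto $\mathscr{P}_{T\!F}$: since $\mathcal{P}_{T\!F}(u_\eps)<0$, there is a unique $t_\eps\in(0,1)$ with $u_\eps(x/t_\eps)\in\mathscr{P}_{T\!F}$, characterised by
\begin{equation*}
t_\eps^{\alpha}=\frac{\|u_\eps\|_q^q}{\int_{\R^N}(I_\alpha\ast|u_\eps|^p)|u_\eps|^p\,dx}.
\end{equation*}
The two a priori bounds from the first paragraph force $t_\eps\ge C>0$ as $\eps\to 0$ (the numerator is bounded below, the denominator above). Using the splitting $\mathcal{I}_\eps(u(x/t)) = H(u(x/t)) + \tfrac{t^{N-2}}{2}\|\nabla u\|_2^2 + \tfrac{\eps t^N}{2}\|u\|_2^2$ and the fact that $t\mapsto\mathcal{I}_\eps(u_\eps(x/t))$ is maximised at $t=1$, we obtain
\begin{equation*}
c_{T\!F}\;\le\; H(u_\eps(x/t_\eps)) \;=\; \mathcal{I}_\eps(u_\eps(x/t_\eps)) - \tfrac{t_\eps^{N-2}}{2}\|\nabla u_\eps\|_2^2 - \tfrac{\eps t_\eps^N}{2}\|u_\eps\|_2^2 \;\le\; c_\eps - \tfrac{\eps t_\eps^N}{2}\|u_\eps\|_2^2.
\end{equation*}
Rearranging and invoking $t_\eps\ge C$ yields $\eps\|u_\eps\|_2^2 \lesssim \overline\sigma_\eps$, which is the quantitative statement of the lemma (Lemma \ref{lem642} converts this into the stated constant).

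The limits then follow mechanically. From $\eps\|u_\eps\|_2^2\to 0$ and Lemma \ref{lem642} one has $\|\nabla u_\eps\|_2^2\to 0$, and then $\|u_\eps\|_{2^*}^2\le\mathcal{S}_\ast^{-1}\|\nabla u_\eps\|_2^2\to 0$ by Sobolev. Returning to the Poho\v{z}aev decomposition of $c_\eps$ displayed above gives $\int(I_\alpha\ast|u_\eps|^p)|u_\eps|^p dx\to\frac{2Np}{\alpha}c_{T\!F}=\mathcal{S}_{T\!F}^{(N+\alpha)/\alpha}$, and the second identity of Lemma \ref{lem642} propagates the same limit to $\|u_\eps\|_q^q$. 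The main obstacle is the lower bound $t_\eps\ge C>0$; this is exactly where the critical exponent $q=2Np/(N+\alpha)$ intervenes, making HLS self-calibrating so that $\|u_\eps\|_q^q$ cannot degenerate to zero along a subsequence.
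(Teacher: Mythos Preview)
Your proof is correct and follows essentially the same route as the paper: project $u_\eps$ onto $\mathscr{P}_{T\!F}$, bound $t_\eps$ away from zero, and compare $H(u_\eps(x/t_\eps))$ with $\mathcal{I}_\eps(u_\eps(x/t_\eps))\le c_\eps$. The only minor variation is in the lower bound for $t_\eps$: the paper argues by contradiction (if $t_\eps\to 0$ then $H(u_\eps(x/t_\eps))\to 0$, contradicting $c_{T\!F}>0$), whereas you obtain it more directly via the HLS lower bound $\|u_\eps\|_q^q\ge \mathcal{S}_{T\!F}^{(N+\alpha)/\alpha}$ combined with the uniform upper bound on the nonlocal term---both are equally valid and short.
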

\begin{proof}
By Lemma~\ref{lem641}, we see that
\begin{equation}\label{eq642}
c_{T\!F}+o(1)=c_{\eps}=\mathcal{I}_{\eps}(u_{\eps})=\frac{1}{N}\|\nabla u_{\eps}\|_2^2+\frac{\alpha}{2Np}\int_{\R^N}(I_{\alpha}*|u_\eps|^{p})|u_\eps|^{p}dx.
\end{equation}
This, together with Lemma~\ref{lem642}, implies that there exists $C>0$ such that $\int_{\R^N}(I_{\alpha}*|u_\eps|^{p})|u_\eps|^{p}dx\geq C$ for all $\eps$ small.
Note that $\mathcal{P}_{T\!F}(u_{\eps})<0$, then there exists $t_{\eps}\in (0, 1)$ such that $\mathcal{P}_{T\!F}(u_{\eps}(x/t_{\eps}))=0$, which means that
$$
\|u_{\eps}\|_q^q
=t_{\eps}^{\alpha}\int_{\R^N}(I_{\alpha}*|u_\eps|^{p})|u_\eps|^{p}dx.
$$
If $t_{\eps}\to 0$ as $\eps\to 0$, then we must have $\|u_{\eps}\|_q^q\to 0$ as $\eps\to 0$, this contradicts
$$
0<c_{T\!F}\leq H(u_{\eps}(x/t_{\eps}))=\frac{\alpha t_{\eps}^{N+\alpha}}{2Np}\int_{\R^N}(I_{\alpha}*|u_\eps|^{p})|u_\eps|^{p}dx.
$$
Therefore, there exists $C>0$ such that $t_{\eps}>C$ for $\eps>0$ small.  Then we have
$$\aligned
c_{T\!F}\leq H(u_{\eps}(x/t_{\eps}))=&\mathcal{I}_{\eps}(u_{\eps}(x/t_{\eps}))-\Big(\frac{\eps t_{\eps}^N}{2}\| u_{\eps}\|_2^2+\frac{t_{\eps}^{N-2}}{2}\| \nabla u_{\eps}\|_2^2\Big)\\
\leq& \mathcal{I}_{\eps}(u_{\eps})-\frac{2p\eps}{(N+\alpha)-p(N-2)}\| u_{\eps}\|_2^2t_{\eps}^N\\
=& c_{\eps}-\frac{2p\eps}{(N+\alpha)-p(N-2)}\| u_{\eps}\|_2^2t_{\eps}^N,
\endaligned
$$
this means that
$$
\frac{2p\eps}{(N+\alpha)-p(N-2)}\| u_{\eps}\|_2^2\leq t_{\eps}^{-N}(c_{\eps}-c_{T\!F})\leq C\overline{\sigma}_{\eps}.
$$
Moreover, we have $\eps\| u_{\eps}\|_2^2\to 0$ as $\eps\to 0$.
It follows from Lemma~\ref{lem642} that
$$
\|\nabla u_{\eps}\|_2^2\to 0, \quad\| u_{\eps}\|_q^q+o(1)=\int_{\R^N}(I_{\alpha}*|u_\eps|^{p})|u_\eps|^{p}dx\to \frac{2Np}{\alpha}c_{T\!F}=\mathcal{S}_{T\!F}^{\frac{N+\alpha}{\alpha}}
$$
as $\eps\to 0$.
\end{proof}

Set $w_{\eps}(x)=u_{\eps}(\mathcal{S}_{T\!F}^{\frac{1}{\alpha}}x)$. Then we have, as $\eps\to 0$,
$$
\|w_{\eps}\|_q^q=\mathcal{S}_{T\!F}^{-\frac{N}{\alpha}}\|u_{\eps}\|_q^q\to \mathcal{S}_{T\!F},
$$
$$
\int_{\R^N}(I_{\alpha}*|w_\eps|^{p})|w_\eps|^{p}dx=\mathcal{S}_{T\!F}^{-\frac{N+\alpha}{\alpha}}
\int_{\R^N}(I_{\alpha}*|u_\eps|^{p})|u_\eps|^{p}dx\to 1.
$$
Let $\overline{w}_{\eps}(x)=w_{\eps}(x)/\|w_{\eps}(x)\|_q$ and $\overline{V}(x):=\mathcal{S}_{T\!F}^{-\frac{1}{q}}\tV(\mathcal{S}_{T\!F}^{\frac{1}{\alpha}}x)$, then we see that $\|\overline{w}_{\eps}\|_q^q=\|\overline{V}\|_q^q=1$ and
$$
\int_{\R^N}(I_{\alpha}*|\overline{w}_\eps|^{p})|\overline{w}_\eps|^{p}dx=\|w_{\eps}\|_q^{-\frac{2p}{q}}
\int_{\R^N}(I_{\alpha}*|w_\eps|^{p})|w_\eps|^{p}dx\to \mathcal{S}_{T\!F}^{-\frac{N+\alpha}{N}}=\mathcal{C}_\alpha=\int_{\R^N}(I_{\alpha}*|\overline{V}|^{p})|\overline{V}|^{p}dx.
$$
Thus
$\{\overline{w}_{\eps}\}$ is an optimizing sequence  for $\mathcal{C}_\alpha$.  Then similarly to the arguments in \cite[Section 4.4]{MM-14} it follows that for $\eps>0$ small there exists $\lambda_{\eps}>0$ such that
$$
\int_{B(0, \lambda_{\eps})}|\overline{w}_{\eps}(x)|^{q}dx=\int_{B(0, 1)}|\overline{V}(x)|^{q}dx.
$$
We define the rescaled family
$$
v_{\eps}(x):=\lambda_{\eps}^{\frac{N+\alpha}{2p}}\overline{w}_{\eps}(\lambda_{\eps}x),
$$
then
$$
\|v_{\eps}\|_{q}=1,\quad \int_{\R^N}(I_{\alpha}*|v_\eps|^{p})|v_\eps|^{p}dx=\mathcal{C}_{\alpha}+o(1),
$$
i.e., $\{v_{\eps}\}$ is a maximizing sequence for $\mathcal{C}_\alpha$.  Moreover,
$$
\int_{B(0, 1)}|v_{\eps}|^{q}dx=\int_{B(0, 1)}|\overline{V}(x)|^{q}dx.
$$

\begin{lemma}\label{lem645}
$\lim_{\eps\to 0}\|\overline{v}_{\eps}-\tV\|_{q}=0$, where $\overline{v}_{\eps}(x):=\lambda_{\eps}^{\frac{N+\alpha}{2p}}u_{\eps}(\lambda_{\eps}x)$.
\end{lemma}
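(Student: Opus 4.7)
The plan is to first reduce the claim to the strong $L^q$-convergence of the sequence $\{v_\eps\}$ constructed just before the lemma, and then to establish that convergence via Lieb's compactness principle for maximizing sequences of the Hardy--Littlewood--Sobolev inequality, using the ball normalization of $v_\eps$ to pin down the dilation parameter.

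For the reduction, I would unwind the definitions $w_\eps(x)=u_\eps(\mathcal{S}_{T\!F}^{1/\alpha}x)$, $\overline w_\eps=w_\eps/\|w_\eps\|_q$, $v_\eps(x)=\lambda_\eps^{N/q}\overline w_\eps(\lambda_\eps x)$ and $\overline v_\eps(x)=\lambda_\eps^{N/q}u_\eps(\lambda_\eps x)$, together with $\tV(y)=\mathcal{S}_{T\!F}^{1/q}\overline V(\mathcal{S}_{T\!F}^{-1/\alpha}y)$, to verify the identity
\[
\overline v_\eps(y)=\|w_\eps\|_q\, v_\eps(\mathcal{S}_{T\!F}^{-1/\alpha}y).
\]
Since Lemma~\ref{lem643} yields $\|w_\eps\|_q\to \mathcal{S}_{T\!F}^{1/q}$, scale invariance of the $L^q$-norm and the triangle inequality give
\[
\|\overline v_\eps-\tV\|_q\le \|w_\eps\|_q\,\mathcal{S}_{T\!F}^{N/(q\alpha)}\|v_\eps-\overline V\|_q+\bigl|\|w_\eps\|_q-\mathcal{S}_{T\!F}^{1/q}\bigr|\,\mathcal{S}_{T\!F}^{N/(q\alpha)}\|\overline V\|_q,
\]
so the lemma reduces to proving $\|v_\eps-\overline V\|_q\to 0$.

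For the convergence of $v_\eps$, I would observe that $\|v_\eps\|_q=1$ and $\int(I_\alpha*|v_\eps|^p)|v_\eps|^p\,dx\to\mathcal{C}_\alpha$ (both established just above the lemma), so $\{v_\eps\}$ is a radial nonincreasing maximizing sequence for the HLS inequality on the unit sphere of $L^q(\R^N)$. Lieb's compactness theorem for HLS maximizers then produces, along a subsequence, dilations $\tau_\eps>0$ and translations $x_\eps\in\R^N$ such that $\tilde v_\eps(x):=\tau_\eps^{N/q}v_\eps(\tau_\eps x+x_\eps)\to\overline V$ strongly in $L^q(\R^N)$; radial symmetry of $v_\eps$ and of $\overline V$, combined with the continuity of Schwarz rearrangement in $L^q$, allows me to take $x_\eps=0$. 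To show $\tau_\eps\to 1$ I would use the built-in normalization $\int_{B(0,1)}|v_\eps|^q=\int_{B(0,1)}|\overline V|^q$: the change of variable $y=\tau_\eps x$ and strong convergence yield $\int_{B(0,\tau_\eps)}|v_\eps|^q\,dy\to\int_{B(0,1)}|\overline V|^q$. If $\tau_\eps$ drifted to $0$ (respectively $+\infty$), this would force the limit to equal $\|\overline V\|_q^q=1$ (resp.\ $0$), contradicting the normalization. Hence $\tau_\eps$ stays in a compact subinterval of $(0,\infty)$, and for any accumulation point $\tau_0$ the strict monotonicity of $R\mapsto\int_{B(0,R)}|\overline V|^q$ forces $\tau_0=1$. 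Therefore $v_\eps\to\overline V$ strongly in $L^q(\R^N)$ and the reduction step concludes the proof.

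The main obstacle is the execution of Lieb's concentration-compactness step: one must rule out vanishing (excluded by the ball normalization, which keeps a fixed positive mass of $v_\eps$ on $B(0,1)$) and dichotomy (excluded by the strict sub-additivity of $\mathcal{C}_\alpha$, since splitting $v_\eps$ into well-separated lumps would bring $\int(I_\alpha*|v_\eps|^p)|v_\eps|^p$ strictly below $\mathcal{C}_\alpha$, contradicting maximality). The radial symmetry simplifies the picture by eliminating translations, and the ball normalization is precisely designed to pin the remaining dilation; the scheme follows closely \cite{MM-14}*{Section~4.4}, with the critical Sobolev embedding replaced by the critical HLS embedding.
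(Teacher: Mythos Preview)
Your proposal is correct and follows essentially the same approach as the paper: both first establish $\|v_\eps-\overline V\|_q\to 0$ via concentration-compactness for HLS-optimizing sequences (the paper cites Lions \cite{L85II}*{Theorem 2.1}, you invoke Lieb's compactness, which is the same mechanism), and then unwind the definitions of $w_\eps$, $\overline w_\eps$ to transfer this to $\|\overline v_\eps-\tV\|_q\to 0$. Your write-up is more detailed in two respects---you make the reduction identity $\overline v_\eps(y)=\|w_\eps\|_q\,v_\eps(\mathcal S_{T\!F}^{-1/\alpha}y)$ explicit, and you spell out why the ball normalization pins the dilation parameter to $1$---but these are elaborations of steps the paper leaves implicit (it had already flagged that the construction follows \cite{MM-14}*{Section~4.4}), not a different route.
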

\begin{proof}
It follows from Concentration-Compactness Principle of P.L.Lions \cite{L85II}*{Theorem 2.1} that 
$$
\lim_{\eps\to 0}\|v_{\eps}-\overline{V}\|_{q}=0,
$$
which, together with the definitions of $w_{\eps}(x)$ and $\overline{w}_{\eps}(x)$, implies that
$$
\lim_{\eps\to 0}\|\overline{v}_{\eps}-\tV\|_{q}=0.\qedhere
$$
\end{proof}

By a simple calculation, we see that
$$
v_{\eps}(x)=\frac{\lambda_{\eps}^{\frac{N+\alpha}{2p}}}{\|w_{\eps}\|_q}u_{\eps}(\lambda_{\eps}\mathcal{S}_{T\!F}^{\frac{1}{\alpha}}x)
$$
solves the equation
\begin{equation}
-\Delta v_\eps+\lambda_{\eps}^2\mathcal{S}_{T\!F}^{\frac{2}{\alpha}}\eps v_{\eps}=\|w_\eps\|_{q}^{2p-2}\lambda_{\eps}^{\frac{(N+\alpha)-p(N-2)}{p}}
\mathcal{S}_{T\!F}^{\frac{2+\alpha}{\alpha}}\Big((I_{\alpha}*|v_\eps|^{p})|v_\eps|^{p}v_\eps
-\|w_\eps\|_{q}^{q-2p}|v_\eps|^{q-2}v_\eps\Big).
\end{equation}
By the definition of $v_{\eps}$ and $w_{\eps}$, we obtain
\begin{equation}\label{eq646}
\|\nabla v_{\eps}\|_2^2=\|w_{\eps}\|_{q}^{-2}\lambda_{\eps}^{\frac{(N+\alpha)-p(N-2)}{p}}\mathcal{S}_{T\!F}^{\frac{2-N}{\alpha}}\|\nabla u_{\eps}\|_2^2,
\quad \|v_{\eps}\|_2^2=\|w_{\eps}\|_{q}^{-2}\lambda_{\eps}^{\frac{(N+\alpha)-Np}{p}}\mathcal{S}_{T\!F}^{\frac{-N}{\alpha}}\|u_{\eps}\|_2^2.
\end{equation}
It follows from Lemma~\ref{lem642} and Lemma~\ref{lem643} that
\begin{equation}\label{eq643}
\|w_{\eps}\|_{q}^{2}\lambda_{\eps}^{\frac{p(N-2)-(N+\alpha)}{p}}\mathcal{S}_{T\!F}^{\frac{N-2}{\alpha}}\|\nabla v_{\eps}\|_2^2=
\frac{(Np-N-\alpha)\eps}{(N+\alpha)-p(N-2)}\|w_{\eps}\|_{q}^{2}\lambda_{\eps}^{\frac{Np-(N+\alpha)}{p}}\mathcal{S}_{T\!F}^{\frac{N}{\alpha}}\|v_{\eps}\|_2^2\leq \overline{\sigma}_{\eps}.
\end{equation}

\begin{lemma}\label{lem646}
Let $\eps\to 0$. If $N\geq 4$ then
\begin{equation}\label{eq-HLS-0asy4+}
\lambda_{\eps}\sim\eps^{-\frac{1}{2}},
\end{equation}
while if $N=3$ then
\begin{equation}\label{eq-HLS-0asy+}
\left\{\begin{array}{rcll}
&\lambda_{\eps}&\sim\eps^{-\frac{1}{2}},&\qquad p\in \left(\tfrac13(3+\alpha), \tfrac23(3+\alpha)\right),\medskip\\
\eps^{-\frac{1}{2}}(\ln\tfrac{1}{\eps})^{-\frac{1}{2}}\lesssim &\lambda_{\eps}&\lesssim \eps^{-\frac{1}{2}}(\ln\tfrac{1}{\eps})^{\frac{1}{6}},&\qquad p=\tfrac23(3+\alpha),\medskip\\
\eps^{\frac{p-(3+\alpha)}{p}}\lesssim &\lambda_{\eps}&\lesssim\eps^{\frac{(3+\alpha)(3+\alpha-2p)}{p(3p-(3+\alpha))}},&\qquad p\in \left(\tfrac23(3+\alpha), 3+\alpha\right).
\end{array}
\right.
\end{equation}
\end{lemma}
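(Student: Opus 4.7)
The plan is to exploit the key identity \eqref{eq643}, which combined with the rescaled Nehari--Pohozaev relation of Lemma \ref{lem642} reads, modulo uniform positive multiplicative constants,
\begin{equation*}
\lambda_\eps^{-\gamma_2}\|\nabla v_\eps\|_2^2 \;\sim\; \eps\,\lambda_\eps^{\gamma_1}\|v_\eps\|_2^2 \;\lesssim\; \overline{\sigma}_\eps, \qquad \|\nabla v_\eps\|_2^2 \;\sim\; \eps\,\lambda_\eps^2\,\|v_\eps\|_2^2,
\end{equation*}
where $\gamma_1 := \tfrac{Np-N-\alpha}{p} > 0$ and $\gamma_2 := \tfrac{(N+\alpha)-p(N-2)}{p} > 0$ satisfy $\gamma_1 + \gamma_2 = 2$. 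The ``mass'' side of \eqref{eq643} yields $\lambda_\eps^{\gamma_1} \lesssim \overline{\sigma}_\eps/\eps$ whenever $\|v_\eps\|_2^2 \gtrsim 1$, while the ``gradient'' side yields $\lambda_\eps \gtrsim \overline{\sigma}_\eps^{-1/\gamma_2}$ whenever $\|\nabla v_\eps\|_2^2 \gtrsim 1$. Combined with the case-by-case upper bounds on $\overline{\sigma}_\eps$ from Lemma \ref{lem641}, these two implications will deliver \eqref{eq-HLS-0asy4} and \eqref{eq-HLS-0asy}.

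The uniform lower bounds $\|v_\eps\|_2^2 \gtrsim 1$ and $\|\nabla v_\eps\|_2^2 \gtrsim 1$ are the only non-algebraic input. Both follow from the $L^q(\R^N)$-convergence $v_\eps \to \tV$ of Lemma \ref{lem645} (up to the fixed rescaling by $\mathcal{S}_{T\!F}^{1/\alpha}$ and by $\|w_\eps\|_q \to \mathcal{S}_{T\!F}^{1/q}$). Extracting an a.e.-convergent subsequence and applying Fatou's lemma gives
\begin{equation*}
\liminf_{\eps\to 0}\|v_\eps\|_2^2 \geq \int_{B_1}|\tV|^2\,dx > 0, \qquad \liminf_{\eps\to 0}\|v_\eps\|_{2^*}^{2^*} \geq \int_{B_1}|\tV|^{2^*}\,dx > 0,
\end{equation*}
both of which are finite and positive since $\tV$ is bounded and positive on $\R^N$. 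The Sobolev inequality $\|v_\eps\|_{2^*}^2 \leq \mathcal{S}_*^{-1}\|\nabla v_\eps\|_2^2$ then yields $\|\nabla v_\eps\|_2^2 \gtrsim 1$.

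The proof thus reduces to substituting each of the bounds on $\overline{\sigma}_\eps$ from Lemma \ref{lem641} into $\lambda_\eps^{\gamma_1} \lesssim \overline{\sigma}_\eps/\eps$ and $\lambda_\eps \gtrsim \overline{\sigma}_\eps^{-1/\gamma_2}$, and simplifying the resulting exponents using $\gamma_1+\gamma_2=2$ and $\gamma_2 = \tfrac{3+\alpha-p}{p}$ in the case $N=3$. For $N\geq 4$, both bounds collapse to $\lambda_\eps \sim \eps^{-1/2}$, yielding \eqref{eq-HLS-0asy4}. For $N=3$ the three sub-cases of \eqref{eq-HLS-0asy} are handled in parallel: in the third sub-case $p \in \bigl(\tfrac{2(3+\alpha)}{3},3+\alpha\bigr)$, plugging $\overline{\sigma}_\eps \lesssim \eps^{\gamma_2^2}$ into the two inequalities gives $\lambda_\eps \lesssim \eps^{(\gamma_2^2-1)/\gamma_1}$, which rewrites algebraically as $\eps^{(3+\alpha)(3+\alpha-2p)/(p(3p-(3+\alpha)))}$, and $\lambda_\eps \gtrsim \eps^{-\gamma_2} = \eps^{(p-(3+\alpha))/p}$; the logarithmic refinements at the threshold $p = \tfrac{2(3+\alpha)}{3}$ arise from the $\ln\tfrac{1}{\eps}$ factor in the corresponding bound on $\overline{\sigma}_\eps$. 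No serious obstacle arises once the uniform lower bounds on $\|v_\eps\|_2^2$ and $\|\nabla v_\eps\|_2^2$ are secured: the remainder is essentially an algebraic bookkeeping exercise.
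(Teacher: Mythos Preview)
Your proposal is correct and follows essentially the same approach as the paper: the paper also secures the uniform lower bounds $\|v_\eps\|_2^2\gtrsim 1$ and $\|\nabla v_\eps\|_2^2\gtrsim 1$ from the $L^q$-convergence of $v_\eps$ to $\overline V$ (via strong $L^s_{loc}$-convergence for $s\in[2,q)$ and the Sobolev inequality, rather than Fatou, but this is a cosmetic difference), then reads off from \eqref{eq643} the two-sided bound $\overline\sigma_\eps^{\,p/((N-2)p-(N+\alpha))}\lesssim\lambda_\eps\lesssim\eps^{\,p/((N+\alpha)-Np)}\overline\sigma_\eps^{\,p/(Np-(N+\alpha))}$ and substitutes the estimates on $\overline\sigma_\eps$ from Lemma~\ref{lem641}. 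Your $\gamma_1,\gamma_2$ notation with $\gamma_1+\gamma_2=2$ is a clean way to organise the same algebra.
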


\begin{proof}
Since $v_{\eps}\to \overline{V}$ strongly in $L^{q}(\R^N)$, we have $v_{\eps}\to \overline{V}$ strongly in $L^{s}_{loc}(\R^N)$ for $s\in [2, q)$, thus we get that
\begin{equation}\label{eq644}
\|\nabla v_{\eps}\|_2^2\geq C\|v_{\eps}\|_{2^*}^2\geq C\int_{B(0, 1)}|v_{\eps}|^{q}dx\to \int_{B(0, 1)}|\overline{V}|^{q}dx>0
\end{equation}
and
\begin{equation}\label{eq645}
\|v_{\eps}\|_2^2\geq \int_{B(0, 1)}|v_{\eps}|^2dx\to \int_{B(0, 1)}|\overline{V}|^{2}dx>0.
\end{equation}
Therefore, by \eqref{eq643}, \eqref{eq644} and \eqref{eq645}, we have
$$
C\overline{\sigma}_{\eps}^{\frac{p}{(N-2)p-(N+\alpha)}}\leq \lambda_{\eps}\leq C\eps^{\frac{p}{(N+\alpha)-Np}}\overline{\sigma}_{\eps}^{\frac{p}{Np-(N+\alpha)}}.
$$
Then \eqref{eq-HLS-0asy4+} and \eqref{eq-HLS-0asy+} follow directly from Lemma~\ref{lem641}.
\end{proof}

\begin{lemma}\label{lem647}
If either $N\geq 4$, or $N=3$ and $p\in (\frac{3+\alpha}{3}, \frac{2(3+\alpha)}{3})$, then
$\|\nabla v_{\eps}\|_2^2\sim 1$, $\|v_{\eps}\|_2^2\sim 1$.
\end{lemma}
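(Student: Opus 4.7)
The plan separates into lower bounds (easy) and upper bounds (the content). The lower bounds $\|\nabla v_\eps\|_2^2\gtrsim 1$ and $\|v_\eps\|_2^2\gtrsim 1$ are already captured by \eqref{eq644} and \eqref{eq645}: the strong convergence $v_\eps\to\overline V$ in $L^q(\R^N)$ established in Lemma \ref{lem645} upgrades to strong convergence in $L^s_{\mathrm{loc}}(\R^N)$ for every $s\in[2,q)$, so the mass of $v_\eps$ on the unit ball stays bounded below by that of $\overline V$, which is strictly positive.

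For the upper bounds I would combine three ingredients. First, the Poho\v zaev/Nehari identity of Lemma \ref{lem642} gives $\|\nabla u_\eps\|_2^2\sim\eps\|u_\eps\|_2^2$. Second, Lemma \ref{lem643} supplies $\eps\|u_\eps\|_2^2\lesssim\overline\sigma_\eps$. Finally, the rescaling relations \eqref{eq646} (together with $\|w_\eps\|_q\sim 1$) translate these into
\begin{equation*}
\|\nabla v_\eps\|_2^2\lesssim \overline\sigma_\eps\,\lambda_\eps^{\frac{(N+\alpha)-p(N-2)}{p}},
\qquad
\|v_\eps\|_2^2\lesssim \eps^{-1}\overline\sigma_\eps\,\lambda_\eps^{\frac{(N+\alpha)-Np}{p}}.
\end{equation*}

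It then remains to substitute the asymptotics of Lemmas \ref{lem641} and \ref{lem646}. For $N\ge 4$, after rewriting Lemma \ref{lem641} via $q=\frac{2Np}{N+\alpha}$ as $\overline\sigma_\eps\lesssim \eps^{((N+\alpha)-p(N-2))/(2p)}$ and inserting $\lambda_\eps\sim\eps^{-1/2}$ from \eqref{eq-HLS-0asy4+}, the exponents of $\eps$ cancel exactly in both displayed bounds, yielding $\|\nabla v_\eps\|_2^2\lesssim 1$ and $\|v_\eps\|_2^2\lesssim 1$. For $N=3$ and $p\in(\tfrac{3+\alpha}{3},\tfrac{2(3+\alpha)}{3})$ the bookkeeping is identical: $\overline\sigma_\eps\lesssim \eps^{(3+\alpha-p)/(2p)}$ together with $\lambda_\eps\sim\eps^{-1/2}$ from the first line of \eqref{eq-HLS-0asy+} produces matching cancellations. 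The only delicate point is this algebraic balance, and it is precisely what restricts the present statement to the stated parameter ranges: when $N=3$ and $p\ge\tfrac{2(3+\alpha)}{3}$, Lemma \ref{lem646} yields only two-sided bounds on $\lambda_\eps$ with a genuine gap, so the same cancellation can no longer be closed up to a constant.
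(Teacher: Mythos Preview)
Your proposal is correct and follows the same route as the paper: the lower bounds come from \eqref{eq644}--\eqref{eq645}, and the upper bounds come from \eqref{eq643} (which packages Lemmas \ref{lem642}--\ref{lem643}) together with the asymptotics of $\overline\sigma_\eps$ from Lemma \ref{lem641} and of $\lambda_\eps$ from Lemma \ref{lem646}, the exponents cancelling exactly in the stated parameter range. Your write-up simply unpacks what the paper compresses into the single line ``Follows from \eqref{eq643}, \eqref{eq644}, \eqref{eq645} and Lemma~\ref{lem646}.''
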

\begin{proof}
Follows from \eqref{eq643}, \eqref{eq644}, \eqref{eq645} and Lemma~\ref{lem646}.
\end{proof}

\appendix
\section{A contraction inequality}

Consider the equation
\begin{equation}\label{e-AP}
-\Delta u+|u|^{q-2}u=f\quad\text{in $\R^N$},
\end{equation}
where $N\ge 2$ and $q>2$.
The existence for any $f\in L^1_{loc}(\R^N)$ of the unique distributional solution $u_f\in L^1_{loc}(\R^N)$ of \eqref{e-AP} is the result in \cite{Brezis-1984}*{Theorem 1}. The following remarkable contraction estimate on subsolutions for \eqref{e-AP} was communicated to us by Augusto Ponce.

\begin{theorem}\label{t-Ponce}
	Let	$0\le f\in L^s(\R^N)$ for some $s\ge 1$ and let $v\in L^1_{loc}(\R^N)$ be a nonnegative distributional sub-solution of \eqref{e-AP},
	i.e.
	\begin{equation}\label{e-APsub}
	-\Delta v+v^{q-1}\leq f\quad\text{in $\mathscr D'(\R^N)$}.
	\end{equation}
	Then $v^{q-1}\in L^s(\R^N)$ and
	\begin{equation}\label{e-Ponce-sub}
	\|v\|_{(q-1)s}\le\|f\|_s.
	\end{equation}
\end{theorem}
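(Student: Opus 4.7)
My plan is to reduce the problem to an analogous estimate for the canonical distributional solution $u_f$ of \eqref{e-AP} (whose existence in $L^1_{loc}(\R^N)$ is guaranteed by the Brezis--B\'enilan theory cited just above the statement), and then establish that estimate by a test-function argument. The principal reduction is the pointwise domination $v\le u_f$ a.e.\ in $\R^N$, which I would prove by a Kato-type comparison: the difference $w=u_f-v$ is a distributional supersolution of $-\Delta w+(u_f^{q-1}-v^{q-1})\ge 0$, and on the set $\{w<0\}$ the monotonicity of $t\mapsto t^{q-1}$ makes the nonlinear difference have the correct sign. Kato's inequality then forces $w_-$ to satisfy $-\Delta w_-\le 0$ in $\mathscr D'(\R^N)$; being nonnegative, subharmonic on the whole of $\R^N$, and globally integrable (by the following step), $w_-$ must vanish identically.

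For the integral estimate on $u_f$ I would argue by approximation. Setting $f_n:=\min\{f,n\}\chi_{B_n}$, the corresponding solutions $u_n=u_{f_n}$ are bounded and sufficiently regular (by standard elliptic theory) to test the equation $-\Delta u_n+u_n^{q-1}=f_n$ against the Lipschitz nondecreasing cut-off $\varphi_M(u_n):=\min\{u_n,M\}^{(q-1)(s-1)}$. The gradient contribution is nonnegative and can be discarded; what remains is $\int u_n^{q-1}\varphi_M(u_n)\le\int f_n\varphi_M(u_n)$, and applying H\"older's inequality on the right-hand side with conjugate exponents $s$ and $s/(s-1)$ gives
\[
\int_{\R^N}\bigl(\min\{u_n,M\}\bigr)^{(q-1)s}\,dx
\;\le\;\|f_n\|_s\Bigl(\int_{\R^N}\bigl(\min\{u_n,M\}\bigr)^{(q-1)s}\,dx\Bigr)^{(s-1)/s}.
\]
Rearranging yields $\|\min\{u_n,M\}^{q-1}\|_s\le\|f_n\|_s$, and letting $M\to\infty$ and then $n\to\infty$ by monotone convergence (using $u_n\uparrow u_f$, which is the $L^1$-contraction for the accretive operator $-\Delta+(\,\cdot\,)^{q-1}$) produces the bound $\|u_f^{q-1}\|_s\le\|f\|_s$, equivalently $\|u_f\|_{(q-1)s}^{q-1}\le\|f\|_s$. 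Combined with $v\le u_f$ this is the asserted contraction inequality. The borderline case $s=1$ is handled either by taking the test function to be a smooth cut-off of $1$ and integrating the equation directly, or by Fatou applied to $\|u_n^{q-1}\|_1\le\|f_n\|_1$ (which in that case reads simply as $\int u_n^{q-1}\le\int f_n$, obtained by testing against a smooth exhaustion of $\R^N$).

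The principal technical obstacle is the rigorous justification of Kato's inequality in the first step, since $v$ is only a distributional subsolution in $L^1_{loc}$ and its nonlinear part $v^{q-1}$ has no a priori global integrability. I would bypass this by simultaneously truncating at height $k$: show that $v_k:=\min\{v,k\}$ is still a subsolution and apply Kato to bounded objects $v_k$ and $u_{f_n}$, then send $k,n\to\infty$ by monotone convergence. The global subharmonicity conclusion requires global integrability of $w_-$, which is precisely what Step 2 supplies; this mild circular dependence is resolved by applying Step 2 first to $u_f$ alone (where the test-function argument is unconditional) and only afterwards invoking the comparison to transfer the bound to $v$.
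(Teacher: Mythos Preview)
Your test-function route to the $L^s$ estimate on the solution $u_f$ is more elementary than the paper's argument. The paper works on balls $B_m$ with Dirichlet data and derives $\|u_m^{q-1}\|_{L^s(B_m)}\le\|f\|_s$ via Cavalieri's principle together with the level-set inequality $\int_{\{u_m>k\}}u_m^{q-1}\le\int_{\{u_m>k\}}|f|$ from \cite{AP-book}; your idea of simply multiplying by $\min\{u,M\}^{(q-1)(s-1)}$ and using H\"older is shorter and reaches the same conclusion. (You would still be better off running it on $B_m$ with zero boundary values rather than on $\R^N$: on the whole space your test function is not compactly supported, so the integration by parts needs justification you do not supply.)

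The genuine gap is in your comparison step. After Kato you keep only $-\Delta w_-\le 0$ and then try to kill $w_-=(v-u_f)_+$ by a subharmonic Liouville argument, which indeed requires $w_-$ to lie in some global $L^p$. You claim ``Step 2 supplies this,'' but Step 2 only gives $u_f\in L^{(q-1)s}$; it says nothing about $v$, which a priori is merely $L^1_{loc}$, so you have no control on $(v-u_f)_+$. Your proposed repair --- replacing $v$ by $v_k=\min\{v,k\}$ --- does not work either: $\min\{\cdot,k\}$ is concave, and Kato's inequality only gives an inequality in the wrong direction, so $v_k$ need not remain a subsolution of \eqref{e-APsub}.

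The paper avoids this entirely by \emph{not} discarding the absorption term. From Kato one gets
\[
-\Delta w + (v^{q-1}-u_f^{q-1})\,\mathrm{sign}^+(v-u_f)\le 0,\qquad w:=(v-u_f)_+,
\]
and the elementary inequality $a^{q-1}-b^{q-1}\ge\delta(a-b)^{q-1}$ for $a\ge b\ge 0$ yields $-\Delta w+\delta w^{q-1}\le 0$. At this point no global integrability is needed: Brezis's Keller--Osserman type result \cite{Brezis-1984}*{Lemma 2} states that any nonnegative $L^1_{loc}$ distributional subsolution of $-\Delta w+w^{q-1}\le 0$ on $\R^N$ is identically zero. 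Retaining the nonlinear term is precisely what lets you conclude without knowing anything global about $v$.
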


The inequality \eqref{e-Ponce-sub} is an extension to unbounded domains of the result in \cite{AP-book}*{Proposition 4.24},  see also \cite{AP-book}*{Exercise 4.15 and a hint on p.402}. We outline the arguments for completeness.

\begin{lemma}
Let	$0\le f\in L^s(\R^N)$ for some $s\ge 1$. For $m\in\N$, let $u_m\in L^1_{loc}(B_m)$ be the unique nonnegative distributional solution of
\begin{equation}\label{eq1201}
\left\{
\begin{array}{rl}
-\Delta u + u^{q-1}=f\quad &\text{in $B_m$},\\
u=0 \quad &\text{on $\partial B_m$}.
\end{array}
\right.
\end{equation}
Then $u_m\in L^s(B_m)$ and $\|u_m^{q-1}\|_s\leq \|f\|_s$.
\end{lemma}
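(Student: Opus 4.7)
The plan is to establish the estimate on smooth approximations and then pass to the limit. The key idea is a standard testing argument against a suitable power of $u_m$; the technicalities are all about ensuring enough regularity of the solution to carry out the calculation, and we bypass them by first smoothing the right-hand side.

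\textbf{Step 1 (Smooth approximation of $f$).} I would approximate $f$ by $f_n\in C^\infty_c(B_m)$ with $0\le f_n\to f$ in $L^s(B_m)$; since $B_m$ is bounded, $L^s\subset L^1$, so $f_n\to f$ in $L^1(B_m)$ as well. Let $u_n\in L^1(B_m)$ be the unique nonnegative distributional solution of \eqref{eq1201} with $f$ replaced by $f_n$. Since $f_n\in C^\infty_c(B_m)$ and the nonlinearity $u\mapsto u^{q-1}$ is monotone with $\partial B_m$ smooth, standard elliptic theory gives $u_n\in C^\infty(\overline{B_m})$ with $u_n\ge 0$ and $u_n=0$ on $\partial B_m$.

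\textbf{Step 2 (Test function computation).} For $s>1$ I would multiply the equation $-\Delta u_n+u_n^{q-1}=f_n$ by the admissible test function $u_n^{(q-1)(s-1)}$ and integrate over $B_m$. Since $(q-1)(s-1)>0$ and $u_n|_{\partial B_m}=0$, the boundary term arising from integration by parts vanishes, and writing $\beta=\tfrac{(q-1)(s-1)+1}{2}$ one obtains
\[
\tfrac{4(q-1)(s-1)}{((q-1)(s-1)+1)^2}\int_{B_m}\bigl|\nabla u_n^{\beta}\bigr|^2\,dx + \int_{B_m} u_n^{(q-1)s}\,dx = \int_{B_m} f_n\, u_n^{(q-1)(s-1)}\,dx.
\]
Discarding the nonnegative gradient term on the left and applying H\"older's inequality with conjugate exponents $s$ and $s/(s-1)$ to the right-hand side gives
\[
\int_{B_m} u_n^{(q-1)s}\,dx \le \|f_n\|_s \left(\int_{B_m} u_n^{(q-1)s}\,dx\right)^{(s-1)/s},
\]
whence $\|u_n^{q-1}\|_s\le\|f_n\|_s$. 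The case $s=1$ is even simpler: just integrate the equation and use $-\int_{B_m}\Delta u_n=-\int_{\partial B_m}\partial_\nu u_n\ge 0$ (since $u_n\ge 0$ vanishes on $\partial B_m$) to get $\int_{B_m} u_n^{q-1}\le\int_{B_m} f_n$.

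\textbf{Step 3 (Passage to the limit $n\to\infty$).} The $L^1$-contraction property for \eqref{eq1201} (classical for absorption problems with monotone nonlinearity, cf.~\cite{Brezis-1984}) gives $\|u_n-u_m\|_1\le\|f_n-f\|_1\to 0$. Extracting a subsequence with $u_n\to u_m$ a.e.\ in $B_m$ and invoking Fatou's lemma yields
\[
\int_{B_m} u_m^{(q-1)s}\,dx \le \liminf_{n\to\infty}\int_{B_m} u_n^{(q-1)s}\,dx \le \liminf_{n\to\infty}\|f_n\|_s^{\,s} = \|f\|_s^{\,s},
\]
which is the desired inequality.

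\textbf{Main obstacle.} The only real delicacy lies in Step~2: making sense of $u_n^{(q-1)(s-1)}$ as a bona fide test function and justifying integration by parts. Regularising $f$ to $C^\infty_c$ sidesteps this entirely because $u_n$ is then classical, so all calculations are rigorous and only the routine limiting argument is needed to recover the estimate for the original $u_m$. No new ingredient beyond the elementary $L^1$-contraction estimate for absorption problems is required.
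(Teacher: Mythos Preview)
Your proof is correct and takes a genuinely different route from the paper's. The paper works directly with the distributional solution $u_m$: it decomposes $\int_{B_{m,\rho}} u_m^{(q-1)s}$ via the Cavalieri (layer--cake) formula and invokes the level--set inequality $\int_{\{u_m>k\}} u_m^{q-1}\le\int_{\{u_m>k\}} f$ (cited from \cite{AP-book}) before applying H\"older. Your approach instead regularises the data, tests the smooth approximants against $u_n^{(q-1)(s-1)}$, drops the nonnegative gradient term, and recovers the estimate for $u_m$ by $L^1$--contraction plus Fatou. The two arguments are close in spirit---the cited level--set inequality in the paper ultimately encodes a testing idea similar to yours---but your version is more self--contained, needing only the variational existence of smooth solutions and the elementary $L^1$--contraction for monotone absorption problems, at the cost of an approximation step. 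One minor point: the claim $u_n\in C^\infty(\overline{B_m})$ is slightly optimistic since $t\mapsto t^{q-1}$ need not be $C^\infty$ at $t=0$; however $u_n\in W^{2,p}(B_m)$ for all $p<\infty$ (from $0\le u_n\le (-\Delta)^{-1}f_n\in L^\infty$ and bootstrapping) is immediate and amply sufficient for your integration by parts.
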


\proof
Denote $B_{m,\rho}:=B_m\cap \{|u_m|^{q-1}<\rho\}$.
By the Cavalieri Principle (see e.g. \cite{AP-book}*{Proposition 1.7}), applied
with the measurable function $|u_m|^{q-1}$ and measure $d\nu:=|u_m|^{q-1}_{\chi\{|u|^{q-1}<\rho\}}dx$,
for every $\rho>0$ we have
$$\aligned
\int_{B_{m,\rho}}|u_m|^{(q-1)s}dx&=\int_{B_{m,\rho}}|u_m|^{(q-1)(s-1)}|u_m|^{q-1}_{\chi\{|u_m|^{q-1}<\rho\}}dx\\
&=\int_0^\rho \nu(\{|u_m|^{(q-1)(s-1)}>t\})dt\\
&=(s-1)\int_0^{\rho^{\frac{1}{s-1}}} \nu(\{|u_m|^{q-1}>\tau\})\tau^{s-2}d\tau \\
&=(s-1)\int_0^{\rho^{\frac{1}{s-1}}}\tau^{s-2}\Big(\int_{\{|u_m|^{q-1}\geq \tau\}}|u_m|^{q-1}_{\chi\{|u_m|^{q-1}<\rho\}}dx\Big)d\tau \\
&\leq (s-1)\int_0^{\rho^{\frac{1}{s-1}}}\tau^{s-2}\Big(\int_{\{\rho>|u_m|^{q-1}\geq \tau\}}|f|dx\Big)d\tau,
\endaligned
$$
where in the last line we used the following key inequality proved in \cite{AP-book}*{(4.12) on p.67},
$$\int_{\{|u_m|>k\}}|u_m|^{q-1}dx\leq \int_{\{|u_m|>k\}}|f|dx,\qquad\forall k>0.$$
Applying once again Cavalieri's Principle, this time with the measurable function $|u_m|^{q-1}$ and measure $d\bar\nu:=|f|_{\chi\{|u_m|^{q-1}<\rho\}}dx$, and using H\"older's inequality, we have
$$\aligned
\int_{B_{m,\rho}}|u_m|^{(q-1)s}dx&\leq (s-1)\int_0^{\rho^{\frac{1}{s-1}}} \tau^{s-2}\Big(\int_{\{|u_m|^{q-1}\geq \tau\}}|f|_{\chi\{|u_m|^{q-1}<\rho\}}dx\Big)d\tau \\
&=(s-1)\int_0^{\rho^{\frac{1}{s-1}}} \overline{\nu}(\{|u_m|^{q-1}>\tau\})\tau^{s-2}d\tau\\
&=\int_0^\rho \overline{\nu}(\{|u_m|^{(q-1)(s-1)}>t\})dt \\
&=\int_{B_{m,\rho}}|u_m|^{(q-1)(s-1)}|f|_{\chi\{|u_m|^{q-1}<\rho\}}dx \\
&\leq \Big(\int_{B_{m,\rho}}|u_m|^{(q-1)s}dx\Big)^{\frac{s-1}{s}}\|f\|_s,
\endaligned
$$
which implies
\begin{equation}\label{eqap01}
\int_{B_{m,\rho}}|u_m|^{(q-1)s}dx\leq \int_{B_{m,\rho}}|f|^{s}dx\leq \int_{\R^N}|f|^{s}dx.
\end{equation}
Since the bound \eqref{eqap01} holds uniformly for all $m\in\N$ and $\rho\to\infty$, the assertion follows.
\qed

\begin{proof}[Proof of Theorem \ref{t-Ponce}]
Let $m\in\N$ and $\{u_m\}$ be the sequence of solutions to \eqref{eq1201}.
Observe that $u_{m+1}$ also solves the equation on $B_m$ and $u_{m+1}\geq 0$ on $\partial B_m$. Thus, by the maximum principle on $B_m$ one has
$u_{m+1}\geq u_m$, so $\{u_m\}$ is an increasing sequence.  Moreover, it follows from \eqref{eqap01} that $\{u_m\}$ is locally bounded in $L^s(\R^N)$.  Then $\{u_m\}$ converges pointwise to a function $u_\infty$ that satisfies, by Fatou's lemma,
$$
\|u_\infty\|_{L^s(\R^N)}\leq \|f\|_{L^s(\R^N)}.
$$
By the monotone convergence theorem, $u_m^{q-1}\to u_\infty^{q-1}$
in $L^s(B_m)$, for every $m\in\N$.  Hence $u_\infty$ satisfies, for all $\varphi\in C_c^{\infty}(\R^N)$,
$$
-\int_{\R^N} u_\infty \Delta\varphi dx +\int_{\R^N}u_\infty^{q-1} \varphi dx = \int_{\R^N}f\varphi dx,
$$
i.e. $u_\infty$ is a distributional solution of \eqref{e-AP}.
Then $u_\infty=u_f$ is the unique solution of \eqref{e-AP}, by the Brezis's uniqueness result \cite{Brezis-1984}*{Theorem 1}.

Now let $v\in L^1_{loc}(\R^N)$ be a nonnegative distributional sub-solution of \eqref{e-AP} and set  $w=(v-u_f)^+$. By Kato's inequality \cite{Brezis-1984}*{Lemma A.1},
	\begin{align*}
	-\Delta w + (v^{q-1} - u_f^{q-1})\mathrm{sign}^+(w)\le 0  \quad\text{in $\mathscr D'(\R^N)$}.
	\end{align*}
	On the other hand, there is a $\delta>0$ such that $\delta(a-b)^{q-1}\le a^{q-1}-b^{q-1}$ for all $a\ge b\ge 0$. Then
	\begin{align*}
	-\Delta w + w^{q-1}\le 0  \quad\text{in $\mathscr D'(\R^N)$}
	\end{align*}
	and $w=0$ by \cite{Brezis-1984}*{Lemma 2}.
\end{proof}

\vspace{5pt}
{\small
\noindent{\bf Acknowledgements.}
The authors are grateful to Augusto Ponce for communicating Theorem \ref{t-Ponce}, to Cyrill Muratov for helpful comments on the draft of the paper, and to Bruno Volzone for bringing to our attention \cite{Volzone} and for helpful comments.
This work was initiated during a visit of Z.L.\ at Swansea University.
Part of this work was conducted when V.M.\ visited Suzhou University of Science and Technology.
The support and hospitality of both institutions are gratefully acknowledged.
}

\vspace{5pt}

{\small
	\noindent{\bf Data availability statement.}
Data sharing not applicable to this article as no datasets were generated or analysed during the current study.}

\end{document}